\documentclass[11pt, reqno]{amsart}
\setlength{\textheight}{8.00in} \setlength{\oddsidemargin}{0.0in}
\setlength{\evensidemargin}{0.0in} \setlength{\textwidth}{6.4in}
\setlength{\topmargin}{0.18in} \setlength{\headheight}{0.18in}
\setlength{\marginparwidth}{1.0in}
\setlength{\abovedisplayskip}{0.2in}

\setlength{\belowdisplayskip}{0.2in}

\setlength{\parskip}{0.05in}
\usepackage{multirow}
\usepackage{tikz}
\setlength\intextsep{3pt}
\usepackage{caption}
\usetikzlibrary{
 decorations.pathmorphing,%
 decorations.markings,%
 decorations.pathreplacing,%
 decorations.text,%
 calc,%
 patterns,%
 shapes.geometric,%
 arrows,
 decorations.shapes,
 positioning,
 plotmarks,
 shadings,
 math,
 intersections,
 external
}

\definecolor{jeffColor}{RGB}{102, 0, 204}
\definecolor{yaizaColor}{RGB}{0, 153, 153}

\definecolor{periodColor}{RGB}{255, 167, 105}
\definecolor{dark-green}{RGB}{135, 194, 130}
\tikzset{>=latex} 
\tikzset{font=\small}
\tikzset{mark size=1.5pt, mark options=thin}
\tikzset{pin distance=4pt,
 every pin edge/.style={<-, thin, shorten <= -2pt}}
\usepackage{array,booktabs,tabularx}
\usepackage{mathrsfs}  
\usepackage{graphicx}
\usepackage{amssymb}
\usepackage{enumerate}
\usepackage{color}
\usepackage{esint}
\usepackage{mathtools}
\usepackage{dsfont}
\usepackage{ esint }
\usepackage{placeins}
\usepackage[final]{ed}
\usepackage[final]{showkeys}

\usepackage{comment}
\usepackage{enumitem}
\usepackage{soul}
\usepackage{needspace}
\usepackage[normalem]{ulem}

\usepackage{multicol}
\usepackage{tabto}
\usepackage{hyperref}

\newtheorem{lemma}{Lemma}
\newtheorem{theorem}[lemma]{Theorem}
\newtheorem{example}[lemma]{Example}

\newtheorem{corollary}[lemma]{Corollary}
\newtheorem{proposition}[lemma]{Proposition}

\newtheorem{conjecture}[lemma]{Conjecture}

\theoremstyle{definition}
\newtheorem{definition}[lemma]{Definition}
\newtheorem{remark}[lemma]{Remark}

\usepackage{dsfont}

\newcommand{\1}{\mathds{1}}

\newcommand{\R}{{\mathbb R}}

\newcommand{\sub}[1]{_{_{#1}}}

\newcommand{\h}{\hbar}

\newcommand{\mc}[1]{\mathcal{#1}}
\newcommand{\re}{\mathbb{R}}

\newcommand{\lp}{\mu}
\newcommand{\Op}[1]{\operatorname{Op}^{\!W}_{#1}}

\newcommand{\s}{SN}

\newcommand{\comp}{\operatorname{comp}}


\newcommand{\step}{\mathfrak{s}_n}

\newcommand{\w}{\omega}

\newcommand{\Sp}{S_{\operatorname{phg},\delta}}
\newcommand{\Spz}{S_{\operatorname{phg},0}}

\def\XXint#1#2#3{{\setbox0=\hbox{$#1{#2#3}{\int}$} \vcenter{\hbox{$#2#3$}}\kern-.5\wd0}}

\DeclareMathOperator{\vol}{vol}

\DeclareMathOperator{\diam}{diam}

\DeclareMathOperator{\supp}{supp}

\newcommand{\e}{\varepsilon}

\numberwithin{equation}{section}
\numberwithin{lemma}{section}
\renewcommand{\mathfrak}[1]{\mathbf{#1}}
\newcommand{\sem}[1]{\mathfrak{#1}}


\newcommand{\fcomp}{\operatorname{mcomp}}


\newcommand{\ad}{\operatorname{ad}}
\newcommand{\WF}{\operatorname{WF}}
\newcommand{\Diff}{\operatorname{Diff}}
\newcommand{\semOp}[1]{\sem{H}(#1)}
\newcommand{\normOp}[1]{H(#1)}


\newcommand {\bx}{\mathbf x}

\newcommand {\be}{\mathbf e}

\newcommand {\by}{\mathbf y}

\newcommand {\bth}{\boldsymbol\theta}

\newcommand {\bxi}{\boldsymbol\xi}

\newcommand{\bee}{\begin{equation}}
\newcommand{\ene}{\end{equation}}
\newcommand{\bet}{\begin{theorem}}
\newcommand{\ent}{\end{theorem}}
\newcommand{\QM}{{}^{^{\mc {M}}}\!V}

\newcommand{\QG}{{}^{^{\mc {G}}}\!V}
\newcommand{\vG}{\raisebox{-.2em}{${}^{^{\mc {G}}}$}\!v}
\newcommand{\HM}{{}^{^{\mc {M}}}\!\!H}
\newcommand{\HG}{{}^{^{\mc {G}}}\!\!H}
\newcommand{\N}{\mathcal{N}}

\newcommand{\cutMan}{\mathbb{X}}%
\newcommand{\cutEnergy}{\Phi}%
\newcommand{\cutEnergyFourier}{\nu}
\newcommand{\cutFreq}{\Xi}%
\newcommand{\cutPhase}{\mathbb{P}}
\newcommand{\cutThetaFreq}{\Theta}%
\newcommand{\auxCut}{f}%
\newcommand{\coA}{f}
\newcommand{\coB}{g}

\usepackage{scalerel,stackengine}
\stackMath
\renewcommand\widehat[1]{%
\savestack{\tmpbox}{\stretchto{%
  \scaleto{%
    \scalerel*[\widthof{\ensuremath{#1}}]{\kern-.6pt\bigwedge\kern-.6pt}%
    {\rule[-\textheight/2]{1ex}{\textheight}}
  }{\textheight}%
}{0.5ex}}%
\stackon[1pt]{#1}{\tmpbox}%
}

\newcommand{\scale}{\nu}

\newcommand{\USB}{C_b^\infty}

\newcommand{\referee}[1]{\blue{#1}}

\renewcommand{\referee}[1]{#1}
\renewcommand{\st}[1]{}


\title[Spectral Asymptotics in one dimension]{Classical Wave methods and modern gauge transforms:\\ Spectral Asymptotics in the one dimensional case}

\author[J. Galkowski]{Jeffrey Galkowski}
\address{}
\email{j.galkowski@ucl.ac.uk}
\address{Department of Mathematics, University College London, UK}
\author[L. Parnovski]{Leonid Parnovski}
\email{l.parnovski@ucl.ac.uk}
\address{Department of Mathematics, University College London, UK}
\author[R. Shterenberg]{Roman Shterenberg}
\email{shterenb@math.uab.edu}
\address{Department of Mathematics, University of Alabama at Birmingham, USA}

\begin{document}
\maketitle


\begin{abstract}
In this article, we consider the asymptotic behaviour of the spectral function of Schr\"odinger operators on the real line. Let $H: L^2(\mathbb{R})\to L^2(\mathbb{R})$ have the form
$$
H:=-\frac{d^2}{dx^2}+Q,
$$
where $Q$ is a formally self-adjoint first order differential operator with smooth coefficients, bounded with all derivatives. We show that the kernel of the spectral projector, $\mathds{1}_{(-\infty,\rho^2]}(H)$, has a complete asymptotic expansion in powers of $\rho$. This settles the 1-dimensional case of a conjecture made by the last two authors. 
\end{abstract}

\section{Introduction}

Consider a Schr\"odinger operator $H$ acting on $L^2(\mathbb{R})$ and given by 
\begin{equation}
\label{e:basicHA}
H:=D^2 +\referee{V},\qquad D:=-i\partial_x.
\end{equation}
We assume that \referee{the potential $V=V(x)$ is real valued, infinitely smooth and satisfies }
\begin{equation}
\label{LP:USB}
\|\partial_{x}^\alpha V\|_{L^\infty}<\infty,\qquad \alpha \in \mathbb{N}.
\end{equation}
We call any potential  $V$ satisfying condition \eqref{LP:USB} a \emph{uniformly smoothly bounded (USB)} potential and denote by $\USB(\mathbb{R})$ the class of such potentials. Let $E(H)(\rho)=\mathds{1}_{(-\infty,\rho^2]}(H)$ be the spectral projector for $H$ and $\referee{E}(H)(\rho;x,y)$ be its integral kernel (also called {\it the spectral function} of $H$). 
In this article, we study the behaviour of $\referee{E}(H)(\rho;\cdot,\cdot)$ when $\rho$ is large. One of our results is:

\begin{theorem}
\label{t:USBAsymptotics1}
Under the above assumptions, there are $\referee{f}_k\in \USB(\mathbb{R})$, $k=0,1,\dots$ such that for all $N\in \mathbb{N}$, there is $C_N>0$ such that for all $x\in \mathbb{R}$ \referee{and $\rho\geq 1$} we have
\begin{equation}
\label{LP:asymptotic}
\Big|\referee{E}(H)(\rho;x,x)- \sum_{k=0}^{N-1} \referee{f}_k(x) \rho^{1-2k}\Big|\leq C_N \rho^{1-2N}. 
\end{equation}
Here, $\referee{f}_0\equiv\frac{1}{\pi}$, and $\referee{f}_k(x)$, $k\geq 1$ can be written explicitly in terms of the derivatives of \referee{$V$} at $x$. 
\end{theorem}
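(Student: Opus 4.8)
The plan is to combine the classical short-time wave parametrix with a gauge transform and a Fourier Tauberian theorem. Since $H$ is self-adjoint, $V_0$ and $V_1$ are real, and conjugating $H$ by the unitary multiplication operator $e^{i\phi}$ with $\phi(x)=\int_0^x V_1$ removes the first-order part exactly: one finds $e^{i\phi}He^{-i\phi}=D^2+W$ with $W:=V_0-V_1^2\in\USB(\R)$. Because $|e^{i\phi(x)}|=1$, this conjugation leaves the diagonal of every operator kernel unchanged, so $e(H)(\rho)(x,x)=e(D^2+W)(\rho)(x,x)$ pointwise, and it suffices to treat the Schr\"odinger operator $H_0:=D^2+W$ with a single real $\USB$ potential (which, unlike in the periodic setting, one cannot expect to remove altogether, since a general $\USB$ potential need not have a mean).

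For $H_0$ I would pass to the semiclassical scale $h=\rho^{-1}$: since $E(H_0)(\rho)=E(h^2H_0)(1)$ and $h^2H_0=(hD)^2+h^2W$ has Weyl symbol $\xi^2+h^2W(x)$ — with no term of order $h$ — one constructs, for $|t|<\delta$, a parametrix for $\cos\!\big(t\sqrt{H_0+C}\big)$ ($C$ a fixed large constant, so that the square root is unambiguous near the bottom of the spectrum; the shift only perturbs lower-order parts of the coefficients), or equivalently for $e^{-it(H_0+C)}$ inverted in the spectral parameter $\lambda=\rho^2$. Since $W$ enters at order $h^2$, the underlying Hamilton flow on the time scale $|t|<\delta$ is the free flow $(x,\xi)\mapsto(x+2t\xi,\xi)$ — globally defined and caustic-free in the spectrally relevant regime $|\xi|\asymp\rho$ — and $W$ together with its derivatives enters only perturbatively, through Duhamel's formula; the evenness in $\xi$ of the full symbol accounts for the gaps in the exponents $\rho^{1-2k}$. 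Restricting to $x=y$, pairing with $\hat\psi(t)e^{it\rho}$ for $\hat\psi\in C_c^\infty$ supported in $\{|t|<\delta\}$, and feeding the resulting complete short-time expansion into a Fourier Tauberian theorem of the kind that produces full asymptotic expansions — whose hypotheses, the monotonicity of $\rho\mapsto e(H)(\rho)(x,x)$ and the crude bound $e(H)(\rho)(x,x)=O(\rho)$, are immediate here — one arrives at \eqref{LP:asymptotic}. Stationary phase in $(t,\xi)$ displays each $a_k(x)$ as a universal polynomial in finitely many derivatives of $W$, and hence of $V_0$ and $V_1$, at $x$; in particular $a_0=\tfrac{1}{2\pi}\big|\{\xi:\xi^2\le1\}\big|=\tfrac1\pi$, and one computes, e.g., $a_1=-\tfrac{1}{2\pi}\big(V_0-V_1^2\big)$.

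The heart of the matter is not the dynamics: in one dimension, at high energy the flow of $\xi^2+W$ has no returning trajectories (a particle with $|\xi|\asymp\rho$ escapes at speed $\asymp\rho$), so on the time scale $|t|<\delta$ the smoothed wave trace contains nothing but its $t=0$ singularity — there are no oscillatory contributions $e^{ic\rho}$ from closed orbits to spoil the pure power expansion, and the $N$-term truncation of the short-time parametrix already accounts for $e(H)(\rho)(x,x)$ up to $O(\rho^{1-2N})$. (This is precisely the structural feature that is unavailable in the general, higher-dimensional form of the conjecture.) The real difficulty is uniformity in $x$: because $V$ is only $\USB$ and does not decay, every estimate in the parametrix construction, every Duhamel iterate, and every constant emitted by the Fourier Tauberian theorem must be controlled using nothing beyond the bounds \eqref{LP:USB}, uniformly in $x$, and the stationary-phase bookkeeping must be organized so that the coefficient of $\rho^{1-2k}$ is manifestly a local functional of the jet of $V$ at $x$. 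Keeping all of this uniform and mutually consistent to every order $N$ — possibly with the help of a further pseudodifferential gauge transform streamlining the parametrix, in the spirit of the Parnovski--Shterenberg method — is where the bulk of the technical work lies.
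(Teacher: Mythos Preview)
Your reduction by conjugating with $e^{i\int_0^x V_1}$ is fine (the paper notes this option explicitly), and the short-time parametrix does give a complete asymptotic expansion for the \emph{smoothed} local density of states $\cutEnergyFourier_{\hbar,\delta}*\sem E(H)(\cdot)(x,x)$ at any fixed smoothing scale. The gap is in the Tauberian step. There is no Fourier Tauberian theorem that, from monotonicity of $\rho\mapsto e(H)(\rho)(x,x)$, the crude bound $O(\rho)$, and knowledge of $\int e^{it\rho}\,d_\rho e(H)(\rho)(x,x)$ only on $|t|<\delta$, produces a full expansion with remainder $O(\rho^{1-2N})$ for every $N$. Standard Tauberian arguments lose all but one order: they yield $e(H)(\rho)(x,x)=\tfrac{\rho}{\pi}+O(1)$ and no better. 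A toy obstruction is $N(\rho)=\rho+T^{-1}\sin(T\rho)$ with $T\gg\delta^{-1}$: it is monotone, $O(\rho)$, its $t$-transform agrees with that of $\rho$ on $|t|<\delta$, yet $N(\rho)-\rho$ is only $O(T^{-1})$, not $O(\rho^{-N})$. The absence of returning bicharacteristics tells you that the short-time trace has no contributions beyond $t=0$; it does \emph{not} tell you that smoothing at scale $\sim 1$ in $\rho$ (equivalently, time window $|t|<\delta$) is innocuous. To kill the remainder down to $O(\rho^{1-2N})$ you must smooth at scale $\rho^{-N}$, i.e., control the propagator for times $|t|\lesssim\rho^{N}$, and that long-time control is precisely what is not available for a general $\USB$ potential by parametrix methods alone.

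The paper's argument is organized around supplying exactly this long-time input. First (Section~\ref{s:comparison}), finite speed of propagation for the cosine propagator is used to show that replacing $V$ by any potential that agrees with it on $B(0,R(\hbar))$, $R(\hbar)\sim\hbar^{-N'}$, perturbs the local spectral function by $O(\hbar^{N'})$, \emph{provided} one already has the Lipschitz bound~\eqref{e:lip} for the modified operator on the fine scale $\hbar/T$, $T\sim\hbar^{-N'}$. The modified potential is chosen periodic (period $\sim\hbar^{-N'}$), and then (Section~\ref{s:gauge}) a serial--parallel gauge transform (the ``onion peeling'') conjugates the periodic operator to one that acts as a Fourier multiplier in the relevant energy window; for a multiplier the spectral projector kernel is explicit (Lemma~\ref{l:transformedProjector}), which both verifies~\eqref{e:lip} a posteriori and yields the full expansion. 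In short: your short-time computation is the easy part; the missing idea is a mechanism for the long-time/fine-scale Lipschitz control, and the paper's mass-transport plus layer-peeling gauge transform is that mechanism.
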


We will \referee{use the} notation $\referee{E}(H)(\rho;x,x)\sim\sum_{k=0}^{\infty} \referee{f}_k(x) \rho^{1-2k}$ to indicate that the estimates \eqref{LP:asymptotic} hold. 
To compute the explicit formulae for $\referee{f}_k$, $k\geq 1$, one can take the Laplace transform of~\eqref{LP:asymptotic} as in~\cite{KoPu:03} and use the results of~\cite{Hi:02,HiPo:03,HiPo:03b} (see also~\cite[Lemma 3.63, Theorem 3.64]{DyZw:19}). 
We also obtain a complete asymptotic expansion of $\referee{E}(H)(\rho;\cdot,\cdot)$ (and its derivatives) off the diagonal, see Section~\ref{s:formulate} for a precise formulation of these results. 

Note that the spectrum of operators of the form~\eqref{e:basicHA} can have any spectral type for large energies: absolutely continuous, singular continuous (see e.g.~\cite{Si:95}), or dense pure point (see e.g.~\cite{CaLa:90}). Moreover, examples exist for which the spectrum has Lebesgue measure zero and even arbitrarily small but positive Hausdorff dimension (see e.g.~\cite{DaFiGo:21}). Despite the potentially wild behavior of the spectrum, our results show that, at high energy, the spectrum wants to be absolutely continuous\referee{; see for example Corollaries~\ref{c:1}. \ref{c:2}, \ref{LP:cor1}, and~\ref{LP:corLya}.}

Similarly to $\USB(\mathbb{R})$, we define $\USB(\mathbb{R}^d)$ for any $d\ge 1$ as the class of functions $V:\mathbb{R}^d\to\mathbb{R}$ that are bounded together with all their partial derivatives \referee{(see also Definition~\ref{d:USBd})}. We then consider
a Schr\"odinger operator $H$ acting on $L^2(\mathbb{R}^d)$:
\begin{equation}
\label{LP:H}
H:=-\Delta +V,\qquad \referee{V\in \USB(\mathbb{R}^d)}.
\end{equation}
 In~\cite{PaSh:16} (two of) the authors of this article formulated the following conjecture.
\referee{\begin{conjecture}
\label{conj}
The spectral function of any operator \eqref{LP:H} admits a complete asymptotic expansion in powers of $\rho$ for large energy:
\begin{equation}
\label{LP:asymptotic1}
\referee{E}(H)(\rho\,;\,\bx,\bx)\sim\sum_{k=0}^{\infty} \referee{f}_k(\bx) \rho^{d-2k}, \ \ \bx\in{\mathbb {R}}^d.
\end{equation} 
\end{conjecture}
\begin{remark}
Notice that one consequence of~\eqref{LP:asymptotic1} is super-polynomial decay of spectral gaps. Therefore, no such asymptotic expansion can hold for potentials which are bounded below but grow as a power of $x$ towards infinity. \end{remark}
}
The intuition behind this conjecture is as follows: it is well known that geodesic loops (geodesics for the metric defining the Laplacian that start and finish at $x$) are usually responsible for preventing asymptotic expansions of this type, and the usual `rule of thumb' is that the fewer periodic geodesics exist, the more asymptotic terms in \eqref{LP:asymptotic1} (or its integrated versions) one can obtain. This leads to a natural guess that if there are NO looping geodesics, a complete asymptotic expansion of the form~\eqref{LP:asymptotic1} should exist. One should, of course, be careful with this type of reasoning since in general it is possible to have singularities in the spectral function that arise from loops of infinite length; i.e. where singularities in the wave propagator return from infinity. However, when the dynamics arise from $\mathbb{R}^d$, or, more generally, from an asymptotically flat metric, this type of return from infinity is not expected.

It is not difficult to see that this conjecture is equivalent to the following statement: suppose, $V_1$ and $V_2$ are two $\USB$ potentials that coincide in a neighbourhood of $\bx$ (or even simply have the same values of all the derivatives at $\bx$) and $H_j=-\Delta+V_j$. Then  \referee{$E(H_1)(\rho;\bx,\bx)-E(H_2)(\rho;\bx,\bx)=O(\rho^{-\infty})$} as $\rho\to\infty$. 

Before \cite{PaSh:16}, \referee{\st{this}} \referee{Conjecture~\ref{conj}} had been proved for smooth potentials with compact support \cite{PoSh:83,Va:83,Va:84,Va:85} using the standard wave equation methods \referee{\st{only instead of tauberian theorems the straightforward inverse Fourier transform of the solutions was constructed}} (see also~\cite{Iv:18b} for related problems in the semiclassical setting). In \cite{PaSh:16}, Conjecture~\ref{conj} was proved in the following three cases: 
\begin{itemize}
\item[(a)] $V$ smooth periodic,
\item[(b)]  $V$ quasi-periodic (a finite linear combination of complex exponentials) with one additional (generic) assumption,
\item[(c)] $V$ smooth almost-periodic with several additional assumptions ensuring that the Fourier coefficients of $V$ decay fast enough. 
\end{itemize}
See also~\cite{ShSh:85, Sa:88} for the 1-dimensional case and~\cite{Iv:18a} for related problems in the semiclassical setting. 

The method used in~\cite{PaSh:16} is often called the method of gauge transform. This method \referee{has appeared in many contexts and is also known by a variety of names; e.g. conjugation to quantum Birkhoff normal form or, in the theory of quasi-periodic operators, KAM. This method was used} in~\cite{Ro:78} to study the discrete spectra of one-dimensional pseudodifferential operators (see also~\cite{Ag:84,HeRo:82}). It was then adapted to periodic operators in \cite{So:05, So:06} and further developed in \cite{PaSo:10,PaSh:12}. \referee{Some examples of the use of this method occur in~\cite{ChVu:08,Sj:00,We:77}, but there are many others}. Since our article also relies on a version of the gauge transform method, we describe this method below in detail. 

To the authors' knowledge, the only other case in which~\eqref{LP:asymptotic1} is known is in dimension one with a certain generalization of almost periodic potentials where complex exponentials are multiplied by functions that are well behaved at infinity instead of constants~\cite{Ga:20}. In that case, the first author was able to apply the gauge transform method together with wave methods and some modern microlocal tools to prove the conjecture. It seems that new ideas would be required to extend these methods to higher dimensions.

The wave method and gauge transform method are intrinsically quite different from each other and it has proved difficult to combine them together. In fact, even obtaining \eqref{LP:asymptotic1} for a sum of a periodic potential and a potential with compact support is still an open question in dimensions larger than one.

This article is the first in a series of papers that aim to address this issue. Here, we prove \referee{Conjecture~\ref{conj}} in its complete generality (i.e. making no assumptions other than that $V$ is a $\USB$ potential) in the one-dimensional case. In (a) subsequent article(s) we plan to consider the case of several dimensions, where, unfortunately, it seems that we will have to impose more restrictions on the potential. 

\subsection{New methods}

First of all, we need some notation. Consider a pseudo-differential operator $V$ acting on $L^2(\R^d)$ with symbol $v=v(\bx,\bxi)$ \referee{satisfying
$$
|\partial_{\bx}^\alpha \partial_{\bxi}^\beta v(\bx,\bxi)|\leq C_{\alpha\beta}(1+|\bxi|^2)^{-|\beta|/2},\qquad (\bx,\bxi)\in T^*\mathbb{R}^d;
$$} 
 all our symbols will be considered in the Weyl quantisation, \referee{i.e.
$$
[Vu](\bx):=\frac{1}{(2\pi )^d}\int e^{i\langle \mathbf{x}-\mathbf{y},\bxi\rangle}v\Big(\frac{\mathbf{x}+\mathbf{y}}{2},\bxi\Big)u(\mathbf{y})d\mathbf{y}d\bxi.
$$} 
We denote by $\hat v=\hat v(\bth,\bxi)$ the Fourier transform of $v$ in the $\bx$ variable considered in the sense of  \referee{tempered} distributions; in analoguey with the periodic case, the variable $\bth$ will be called {\it a frequency}. If $v$ is periodic in $\bx$ with $\Gamma$ being its lattice of periods, then $\hat v$ is a linear combination of delta-functions located at the points of the dual lattice $\Gamma'$. We say that an operator $A$ with symbol $a$ is \referee{a Fourier multiplier} if $a$ does not depend on $\bx$, i.e. $\hat a$ is a multiple of the delta-function at $\bth=0$ (with coefficient depending on $\bxi$). An equivalent description of a Fourier multiplier is this: if we put 
\begin{equation}
\be_{\bxi}(\bx):=e^{i\langle\bxi,\bx\rangle},\qquad\text{then}\qquad 
\label{e:diagonalOp}
A\be_{\bxi}=a(\bxi)\be_{\bxi}.
\end{equation}
For simplicity, in this discussion we assume that $H=-\Delta+V$ is a Schr\"odinger operator. We \referee{take a large} $\rho$ and try to compute $\referee{E}(H)(\rho;\bx,\bx)$. We note that for any \referee{Fourier multiplier}, $A$, it is a relatively simple task to compute $\referee{E}(A)(\rho;\bx,\bx)$. Indeed, since $A$ becomes a multiplication operator after conjugation by the Fourier transform, the spectral function can be computed using the formula for the spectral projector of a multiplication operator \referee{and is given by
\begin{equation}
\label{e:fourierMultSpectral}
\referee{E}(A)(\rho;\bx,\by)=\frac{1}{(2\pi)^d}\int_{G(\rho)}e^{i\langle \bx-\by,\bxi\rangle}d\bxi,\qquad G(\rho):=\{ a(\bxi)\leq \rho^2\}.
\end{equation}} 
\referee{Sometimes we will call Fourier multipliers operators with constant coefficients or diagonal operators because they act diagonally in the Besicovitch space $B_2(\mathbb{R}^d)$.}

Now we will discuss the methods used to establish our results.  In the beginning of our paper, we will treat the case of arbitrary dimension and put $d=1$ only when it becomes necessary. Without loss of generality, we \referee{temporarily} put $\bx=0$ and \referee{\st{, for brevity,  define $\N(\rho;H):=$} call} $\referee{E}(H)(\rho;0,0)$ the \emph{local density of states} at 0. \referee{We usually denote by $N$ the exponent in the remainder in the asymptotic formula  \eqref{LP:asymptotic1}  (which means we can ignore terms $o(\rho^{-N})$). }

\subsubsection{Mass Transport}
\label{s:massTrans} The first step of our approach consists of replacing the operator $H$ with a different operator, $\HM=-\Delta+\QM$; the superscript stands for the {\it mass transport} -- a terminology we explain in a moment.   
 This operator is still a differential Schr\"odinger operator with a $\USB$ potential $\QM$ that `almost agrees' with $V$ on a large box, i.e. we have 
\bee
\label{LP:1}
|V(\bx)-\QM(\bx)|= O(\rho^{-N'}),\qquad\text{for any } \bx\in B(0,\rho^{N'}).
\ene
Here, $N'$ is a large number depending on $N$ and $B(0,R)$ is a ball in $\R^d$ with centre at $0$ and radius $R$.

\referee{The usefulness of this notion of mass transport follows from our next two claims. We} first claim that \referee{for any $N>0$ there is $N'>0$ such that} whenever  
\eqref{LP:1} is satisfied we have
\bee
\label{LP:j} 
\referee{E(H)(\rho;0,0)-E(\HM)(\rho;0,0)}=O(\rho^{-N}).
\ene
\referee{Second, we claim that for any $V\in \USB(\mathbb{R}^d)$, one can use the flexibility of choosing $\QM$ satisfying~\eqref{LP:1} to simplify the problem of computing the spectral function.} 

\begin{remark}
\referee{We expect that one could take $N'=2(N+d)$, but we do not attempt to follow the dependence of $N'$ on $N$ carefully.}
\end{remark}

\referee{Our first claim,~\eqref{LP:j},} may be surprising at first glance. We have made a potentially large change to the operator that does not arise from a unitary transformation and yet the density of states is affected only very mildly. To understand why this large change does not have a large effect on the spectral function at 0, we use the fact that solutions of the wave equation corresponding to $H$ and $\HM$ with the same initial conditions having support in a fixed neighbourhood of the origin agree
up to  $O(\rho^{-N'})$ for a very long time ($t\le \rho^{N'}$). Using the wave method, we are then able to convert this wave estimate into one on spectral functions. This is the only essential place in our approach where we use the wave equation method; we discuss this \referee{\st{mass transport}} method in more detail (and prove it) in Section~\ref{s:comparison}. 

\referee{\begin{remark}
The reason we refer to this process as mass transport is because, when the Fourier transform of $V$ is a measure, the estimate~\eqref{LP:1} holds whenever the natural \emph{mass transport distance}, the $1$-Wasserstein distance, (see e.g~\cite[Chapter 6]{Vi:09}) between $\hat{V}$ and $\widehat{\QM}$ is $O(\rho^{-2N'})$. See Figure~\ref{f:massTransport} for a schematic of this mass transport on the Fourier transform side. If $V$ is almost periodic, then working with $1$-Wasserstein distances of the Fourier transform of $V$ is more convenient than working directly with the values of $V$. Indeed, if $V$ is almost periodic, then our result shows that under certain mild extra assumptions a small change in its frequencies results in a small change of the spectral function. In fact, we arrived at the statement of Theorem~\ref{t:simpleCompare} by guessing that a small mass transport of this type should lead to a small change in the local density of states. \end{remark}}

\referee{To explain the second claim,} we ask the natural question: what is the best way to modify our potential $V$ outside of the box $B(0,\rho^{N'})$ so that we can compute the spectral function $\referee{E(\HM)(\rho;0,0)}$ of the resulting operator (up to a small error)? It may seem natural to choose $\QM$ with compact support, but we do not know of any `standard' microlocal methods that can handle a potential which is compactly supported, but with support depending badly on $\rho$. Instead, perhaps slightly surprisingly, we choose $\QM$ to be periodic (with period $\rho^{N'}$) 
and try to compute $\referee{E(\HM)(\rho;0,0)}$ using the periodic method of gauge transform (GT). \referee{The advantage of a periodic potential is that the support of its Fourier transform is discrete (at scale $\rho^{-N'}$).} The significant new difficulty, as compared to the `standard' setting of using the GT, is that now the frequencies (elements of the lattice dual to the lattice of periods) can become very small (of size $\rho^{-N'}$). In order to explain how we overcome this difficulty, we first describe the `standard' GT, referring, in the first instance, to~\cite{LMPPS} where this method is described in an abstract setting.

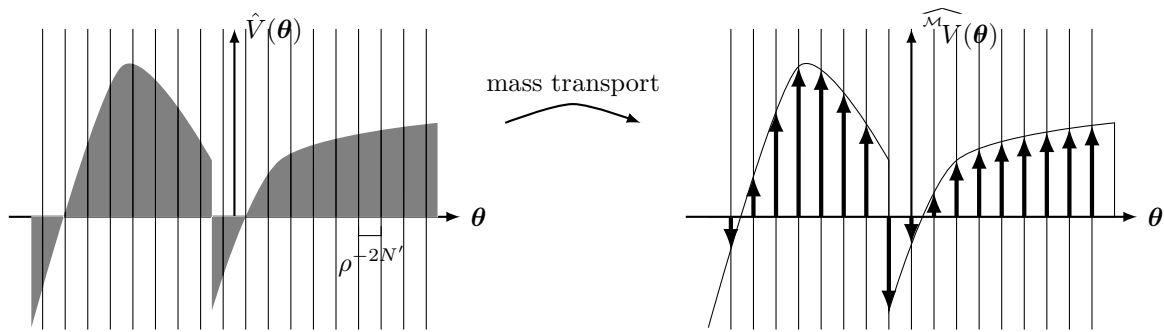
\begin{figure}
\begin{tikzpicture}
\def \w{3};
\def \h{2.5};
\def\min{.6};
\def \step{.15};
\draw [->,thick] (0,0)--(\w,0)node[right]{$\bth$};
\draw [thick] (0,0)--(-\w,0);
\draw[->,thick](0,0)--(0,\h)node[right]{$\hat{V}(\bth)$};
\draw [white, name path=hatV]plot [smooth]coordinates{(-\w*.9,-\h*.59)(-\w*.5,\h*.8)(-\w*.1,\h*.3)}--plot [smooth]coordinates{(-\w*.1,-\h*.5)(\w*.2,\h*.3)(\w*.9,\h*.5)}--(\w*.9,0)--(-\w*.9,0);

\fill[fill=gray,opacity=.6] plot [smooth]coordinates{(-\w*.9,-\h*.59)(-\w*.5,\h*.8)(-\w*.1,\h*.3)}--plot [smooth]coordinates{(-\w*.1,-\h*.5)(\w*.2,\h*.3)(\w*.9,\h*.5)}--(\w*.9,0)--(-\w*.9,0)--cycle;
\foreach \s in{-.85, -.75,...,.9}{
\draw[opacity=0, name path=line] (\s*\w,-\min*\h)--(\s*\w,\h);
\draw[name intersections= {of =line and hatV,total=\t}](\s*\w,0)--(intersection-1);
}
\draw  (.55*\w,-.05*\h)--(.55*\w,-.1*\h)--(.61*\w,-.1*\h) node[below]{$\rho^{-2N'}$}--(.65*\w,-.1*\h)--(.65*\w,-.05*\h);

\draw[->,thick] plot [smooth]coordinates {(1.2*\w,.5*\h) (1.5*\w,.6*\h)(1.8*\w,.5*\h)};
\draw[thin] node at(1.5*\w,.7*\h){mass transport};

\begin{scope}[xshift=3*\w cm]
\draw [->,thick] (0,0)--(\w,0)node[right]{$\bth$};
\draw [thick] (0,0)--(-\w,0);
\draw[->,thick](0,0)--(0,\h)node[right]{$\widehat{\QM}(\bth)$};
\draw [opacity=0, name path=hatV2]plot [smooth]coordinates{(-\w*.9,-\h*.59)(-\w*.5,\h*.8)(-\w*.1,\h*.3)}--plot [smooth]coordinates{(-\w*.1,-\h*.5)(\w*.2,\h*.3)(\w*.9,\h*.5)}--(\w*.9,0)--(-\w*.9,0);

\foreach \s in{-.8, -.7,...,.90}{
\draw[opacity=0, name path=line] (\s*\w,-\min*\h)--(\s*\w,\h);
\draw[name intersections= {of =line and hatV2,total=\t}, ultra thick,->](\s*\w,0)--(intersection-1);
}
\end{scope}
\end{tikzpicture}
\caption{\label{f:massTransport} The figure shows the process of mass transport for a potential on the Fourier transform side when $\hat{V}$ is a measure. When we replace $V$ with a periodic potential $\QM$ which agrees on $B(0,\rho^{N'})$, we replace its Fourier transform by a sum of delta functions with a lattice at scale $\rho^{-2N'}$. Roughly speaking, we transport the total mass of the potential near each lattice point to a delta function at that lattice point. }
\end{figure}

\subsubsection{The Standard Method of Gauge Transform}

\referee{Although the bulk of this article is written in dimension one, it is important to understand the context into which the methods fit. To this end, we review the method of gauge transform, as it applies to spectral asymptotics, in all dimensions. In the next subsection, we then focus specifically on dimension one and the new gauge transform methods developed in this article.}

\referee{In this subsection, we assume $V\in \USB(\mathbb{R}^d)$ or, more generally, $V$ is a pseudodifferential operator with symbol $v(\bx,\bxi)$ bounded with all derivatives. We denote by $\hat{v}(\bth,\bxi)$ the Fourier transform of $v(\bx,\bxi)$ in the $\bx$ variables considered as a tempered distribution.}
Given an operator $H(V):=-\Delta+V$, our ultimate goal (Task A) is to find a unitary operator $U$ such that, after conjugating by $U$, $H$ becomes simpler: 
\bee
\label{LP:aim}
U^{-1}HU=\referee{-\Delta}+\mathbf{a}(D)+\check{V},\qquad D:=-i\partial_x.
\ene
Here, $\mathbf{a}(D)$ is a \referee{Fourier multiplier} and \referee{$\|\check{V}\|_{\infty}=O(\rho^{-N})$ for any $N$ so} it does not contribute to the asymptotic expansion \referee{of the spectral function}. \referee{Therefore, we may compute the spectral function of the conjugated operator using~\eqref{e:fourierMultSpectral}.} At first, we notice that if we can construct $U$ to achieve the simpler task (Task $\text{A}'$) of $\check{V}$ being smaller than $V$ (for example, of smaller order), then we can iterate this process to make the non-diagonal part smaller and smaller, eventually making $\check{V}$ small enough to be negligible and completing Task A.

We look for $U$ of the form $U=e^{i\Psi}$ with $\Psi$ a self-adjoint pseudo-differential operator \referee{with symbol $\psi$}. Then, \referee{at least formally, we have} \referee{\st{the Liouville formula gives}} 
\bee
\label{LP:GT}
U^{-1}HU=H+i[H,\Psi]-\frac12[[H,\Psi],\Psi]+\dots=-\Delta+V+\referee{i[-\Delta,\Psi]+i[V,\Psi]}-\frac12[[H,\Psi],\Psi]+\dots,
\ene
where $[\cdot,\cdot]$ is a commutator and $\dots$ denotes terms involving higher order commutators with $\Psi$. Now we try to accomplish Task $\text{A}'$ by finding $\Psi$ that solves the equation

\bee
\label{LP:equation}
V+i[\referee{-\Delta},\Psi]=0.
\ene
If we can do this with $\Psi$ from a reasonable class of pseudodifferential operators \referee{of order less than zero}, this would finish Task $\text{A}'$. \referee{Since the symbol, $b$, of the pseudodifferential operator $B=[-\Delta,\Psi]$ satisfies
$$
 \hat{b}(\bth,\bxi)= 2\langle \bxi,\bth\rangle \hat\psi(\bth,\bxi),
$$
we see that a solution of this equation is given, ignoring possible small divisor problems,} by \referee{the pseudodifferential operator with symbol $\psi$ satisfying}
\bee
\label{LP:solution}
\hat\psi(\bth,\bxi)=\frac{i\hat v(\bth,\bxi)}{2\langle \bxi,\bth\rangle}.
\ene

\referee{
\begin{remark}
In this text we often use the convention that lower case letters denote the symbol of the operator denoted by the corresponding upper case letter, e.g. $v$ is the symbol of $V$. However, when $V$ is a function, we do not distinguish between the function $V$ and the operator of multiplication by $V$.
\end{remark}}

\referee{
\begin{remark}
Although~\eqref{LP:solution} is simple, it is not very convenient for obtaining $L^\infty$ type estimates. We will later replace it by~\eqref{e:niceFormula} which is more suited to this purpose.
\end{remark}}

We emphasize once again that we work in the Weyl quantisation because in other quantisations the form of the denominator is different (but may be more familiar to some readers). Now it is clear what the main obstacle to solving \eqref{LP:equation} is: the denominator of \eqref{LP:solution} may be very small (or indeed zero). A pair $(\bxi,\bth)$ for which the inner product $\langle\bxi,\bth\rangle$ is small will be called {\it resonant} and otherwise will be called {\it non-resonant}. If 
$(\bxi,\bth)$ is resonant, we will sometimes say that $\bxi$ is resonant with respect to $\bth$ and vice versa. 

Given this information, we can now modify our procedure. We split our perturbation $V$ into two parts:
\bee
V=V^{(r)}+V^{(n)}
\ene
(superscript r stands for `resonant' and n for `non-resonant') so that the support 
of $\hat v^{(n)}$ consists only of non-resonant pairs $(\bxi,\bth)$. Then, instead of \eqref{LP:equation}, we solve the equation 
\bee
\label{LP:equationn}
V^{(n)}+i[H,\Psi]=0.
\ene
Next, using \eqref{LP:GT}, we express the operator $U^{-1}HU$ in the form 
$-\Delta+V^{(r)}+V_1$, where $V_1$ is smaller than $V$. Finally, we repeat the procedure as many times as necessary.

\begin{remark}\label{r:serialParallel} There are two slightly different ways to iterate this procedure. One consists in writing our transform $U$ in the form $U=e^{i(\Phi_1+\Phi_2+\dots)}$. This method is called {\it a parallel} GT in \cite{LMPPS}. The second method (called {\it a serial} GT) looks for $U$ in the form $U=e^{i\Psi_1}e^{i\Psi_2}\dots$. These methods are often equivalent, but it may be more convenient to use either one of them in specific situations. While in the papers \cite{PaSo:10,PaSh:12,PaSh:16} a parallel GT was used, we will use a mixture of both  serial and parallel GTs in this paper. 
\end{remark} 

After repeating the above procedure as many times as necessary, we will arrive at the following  form of the conjugated operator: 
\bee
\label{LP:aim2}
U^{-1}HU=\referee{-\Delta+\mathbf{a}}(D)+V_n^{(r)}+\check{V}_n,
\ene
where $V_n^{(r)}$ is resonant and $\check{V}_n$ is so small that we can ignore it when computing the asymptotic expansion of the spectral function. This is usually where the GT method stops. We are left with having to analyse the operator 
$$\HG:=-\Delta\referee{+\mathbf{a}(D)}+\QG,\qquad \QG:=V_n^{(r)}.$$ 
In particular, we need to compute the spectral function for $\HG$. Note that if we \referee{had} started with a potential, $V$, \referee{which} is periodic with $(\Gamma,\Gamma')$ its lattice of periods and the corresponding dual lattice, then the end perturbation  $\QG$ \referee{would} also \referee{be} periodic (with the same lattice of periods, but possibly with more non-zero Fourier coefficients than $V$ had). We now examine the structure of $\QG$ in the periodic case more carefully. Since we are trying to compute the spectral function for large $\rho$, we can concentrate on points $\bxi$ with $|\bxi|\sim\rho$. 
Let us look at the following special cases:

\begin{itemize}
\item[I.] $d=1$. Then $|\langle\bxi,\bth\rangle|=|\bxi||\bth|$ and so for $\langle \bxi,\bth\rangle$ to be small, we must have $\bth=0$ (recall that $|\bxi|\sim\rho$ and $\bth\in\Gamma'$), so the operator $\QG$ is truly diagonal; see~\cite{So:06}. 
\item[II.] $d=2$. Then $\QG$ does not need to be diagonal. However, the following is true. Suppose, $(\bxi_1,\bth_1)$ and $(\bxi_2,\bth_2)$ are two resonance pairs with $|\bxi_j|\sim\rho$ and $\bth_1$ not parallel to $\bth_2$. Then $\bxi_1\ne\bxi_2$. This observation allows us to construct a large family of invariant subspaces for $\HG$; a careful analysis of the action of $\HG$ inside each of these subspaces then enables us to compute the spectral function; see \cite{PaSh:12}.
\item[III.]In the case $d\ge 3$ we use similar considerations to the case $d=2$, only the decomposition into invariant subspaces is a bit more involved; see \cite{PaSh:12}.  
\end{itemize}

\referee{There is one more technical detail related to the `classical' GT that we need to discuss. When the exponents $\Psi_j$ are small (\referee{their $L^2\to L^2$ operator norms} tend to zero as $\rho\to\infty$), the conjugation operator, $U$, is a small perturbation of the identity and, in fact, the higher order terms in~\eqref{LP:GT} become smaller even without taking account of possible cancellations in the commutators. This is sometimes called a \emph{weak gauge transform}. There are, however, many situations where the $\Psi_j$ are \emph{not} small as operators from $L^2\to L^2$. In this case, the only way we can think of higher order terms in~\eqref{LP:GT} as small errors, is by taking advantage of cancellations in the algebraic structure of successive commutators. Indeed, individual remainder terms like $\Psi_jH\Psi_j$ which occur in the higher order commutators $[[H,\Psi_j],\Psi_j]$ can be larger than corresponding terms at the previous step, e.g $H\Psi_j$. When the gauge transform involves $\Psi_j$'s which are not small, it is sometimes referred to as a \emph{strong gauge transform}. (See \cite{LMPPS} for further discussion of the difference between the two procedures.) 

The concepts of strong and weak gauge transforms can be applied in different settings. For example, if $-\Delta$ is replaced by a non-principally scalar system then there are typically no additional cancellations in the commutators and hence we can only take advantage of smallness of $\Psi_j$ and therefore use a weak gauge transform. Similarly, if $V$ is replaced by a pseudodifferential perturbation whose symbol has derivatives in $\bxi$ which behave badly, this property will pass to the $\Psi_j$ and destroy many cancellations in commutators. On the other hand, if we replace $V$ by a pseudodifferential operator of order $m\in[1,2)$, it will not be possible to solve~\eqref{LP:equation} with $\Psi_j$ having small norm and hence we must take advantage of cancellations in successive commutators, using a strong gauge transform. In this article, it will be necessary to use $\Psi_j$ whose $L^2\to L^2$ norms are, in fact, growing quickly as a function of $\rho$ and hence we will need to take advantage of cancellations in commutators.}

We now discuss the modifications needed in this process if $d=1$. \referee{The crucial feature which allows us to handle all $\USB$ potentials in 1-dimension but does not occur in higher dimensions is that the denominator in~\eqref{LP:solution} can only be small for $|\xi|\sim \rho$ when $\theta$ is close to $0$.} We try to apply the GT to $\QM$ -- a periodic potential obtained from a $\USB$ potential $V$ by the process of mass transport discussed in the previous subsection. \referee{In fact, we will replace $V$ by $\QM$ with $\QM\equiv V$ on $B(0,\rho^{N'})$ and periodic at scale $\rho^{N'}$ \referee{so that~\eqref{LP:1} is satisfied}. We will denote this particular approximation to $V$ as ${}^PV$. In this article, this is the only `mass transport' of $V$ that is used.} Recall, in particular, that the dual lattice, $\Gamma'$, now has elements of size $\rho^{-N'}$. \referee{Because of this, the usual GT method does not suffice and we must modify it in a way described in the next subsection.}\referee{\st{ As stated above, we plan to consider the case of $d>1$ in further publications, where we will have to impose additional restrictions on $V$. }}

\subsubsection{Onion peeling}
\label{s:onionIntro}
We now assume that $d=1$ and, for a while, that the initial $V$ is a sum of a smooth periodic function, $\referee{V_p}$, and a smooth function with compact support, $\referee{V_c}$, (or, more generally, smooth \referee{rapidly} decaying function). We periodise $V$ to $\referee{{}^PV}$, a periodic function \referee{with very large period of size $\rho^{N'}$ and proportional to the period of $V_p$ so that ${}^PV=V_p+{}^PV_c$}. A simple calculation shows that if a denominator in \eqref{LP:solution}, \referee{with $V={}^PV$}, is non-zero, but small (recall that $|\bxi|\sim\rho$, so this can happen only if $|\bth|$ is small), then the numerator of the same formula is also small. \referee{Indeed, if $0<|\theta|$ and $\theta$ is in the support of $\hat{V}_p$, then $|\theta|>c>0$. On the other hand, since the Fourier transform of $V_c$ is a smooth, rapidly decaying function, the Fourier coefficients of ${}^PV_c$ are of size $\rho^{-N'}$.} \referee{We then define the operator $\Psi$} by \eqref{LP:solution} with $\hat{v}=\widehat{\referee{{}^PV-\langle {}^PV\rangle}}$ and 
$$
\referee{\langle {}^PV\rangle=\lim_{R\to \infty}\frac{1}{2R}\int_{-R}^R{}^PV(r)dr}.
$$ 
Then, \referee{$\langle {}^PV\rangle$ is the mean of $\referee{{}^PV}$ and $\Psi$} belongs to the standard class of pseudo-differential operators of order 0 and the whole process described in the previous sub-section can be carried out as a weak GT. 
We will comment on this case later (see Remarks~\ref{decay1} and \ref{decay2}) just to illustrate the main ideas of our approach without going into many technicalities necessary in the general setting.  
\begin{figure}[htbp]
\begin{tikzpicture}
\def \w{5};
\def \h{1.5};
\def \start{1};
\def \scale{2.7};
\def \num{3};
\def \shift{1.7};
\def \min{.5};

\draw node at(1.1*\w,{\num*\shift*\h+\h}){$\hat{V}(\bth)$};

\foreach \s in {0,...,3}{
\begin{scope}[yshift=\shift*\h*\s cm]
\draw[->](0,0)--(0,.8*\h);
\draw [opacity=0, name path=hatQ2]plot [smooth]coordinates{(-\w*.9,\h*.5)(-\w*.5,\h*.8)(-\w*.1,\h*.3)}--plot [smooth]coordinates{(-\w*.1,\h*.9)(0,.4*\h)(\w*.1,-\h*.45)(\w*.2,-\h*.3)(\w*.4,\h*.8)(\w*.9,\h*.5)}--(\w*.9,0)--(-\w*.9,0);

\foreach \s in{-.9, -.8,...,.91}{
\draw[opacity=0, name path=line] (\s*\w,-\min*\h)--(\s*\w,\h);
\draw[name intersections= {of =line and hatQ2,total=\t}, ultra thick,->](\s*\w,0)--(intersection-1);
}

\fill[fill=white] (-\w,-\min*\h)--(-\w,.8*\h)--({-\w*\start/((\scale)^(\s))},.8*\h)--({-\w*\start/((\scale)^(\s))},-\min*\h)--cycle;
\fill[fill=white] (\w,-\min*\h)--(\w,.8*\h)--({\w*\start/((\scale)^(\s))},.8*\h)--({\w*\start/((\scale)^(\s))},-\min*\h)--cycle;
\draw [->,thick] (-\w,0)--(\w,0)node[right]{$\bth$};
\if
\fi
\end{scope}
}

\foreach \s in {1,...,3}{
\begin{scope}[yshift=\shift*\h*\s cm]
\draw[dashed] ({\w*\start/((\scale)^(\s))},-.1*\h)--({\w*\start/((\scale)^(\s))},.7*\h)node[right]{$\rho^{-\s/4}$};
\draw[dashed] ({-\w*\start/((\scale)^(\s))},-.1*\h)--({-\w*\start/((\scale)^(\s))},.7*\h)node[left]{$-\rho^{-\s/4}$};
\end{scope}
}

\foreach \s[evaluate=\s as \y using {\s-1}] in {1,...,3}{
\begin{scope}[yshift=\shift*\h*\y cm]
\draw [->] plot [smooth]coordinates {(0,.9*\h)(.05*\w,1.15*\h)(0,1.4*\h)};
\draw node at (.2*\w,1.15*\h){$e^{i\Psi_{\s}}$};
\end{scope}
}

\end{tikzpicture}
\caption{\label{f:serial} The effect of the serial gauge transform. Each successive conjugation removes a layer from the Fourier transform of $\referee{{}^PV}$. }
\end{figure}
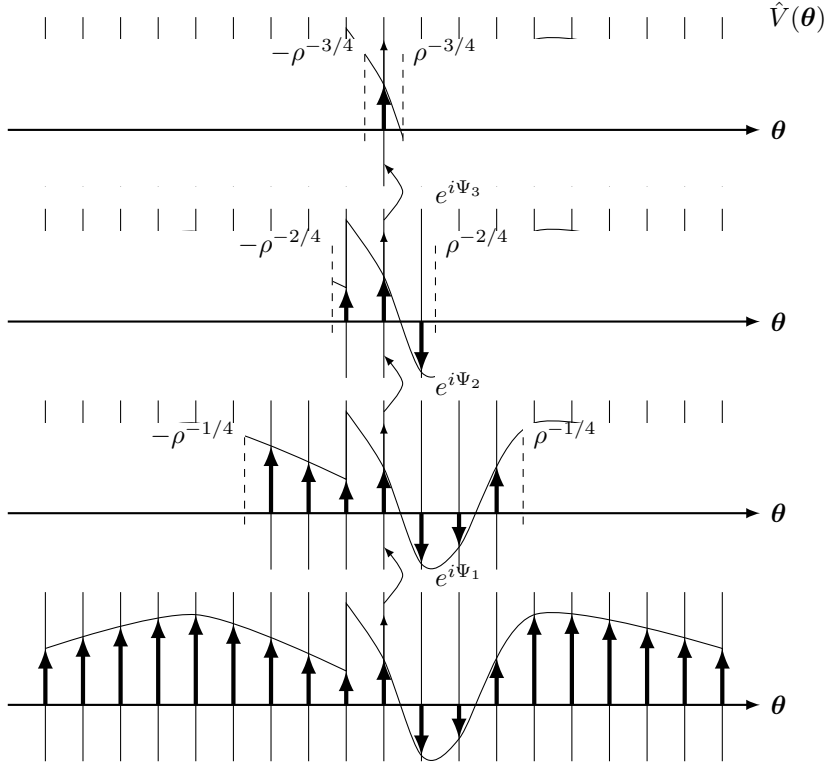 

Now we consider the general case of \referee{$V$ being a pseudodifferential operator with $\USB$ symbol and no further assumptions}. Then it can happen that $\bth$ is small, while $\hat v(\bth,\bxi)$ is not, so $\hat\psi(\bth,\bxi)$ obtained using \eqref{LP:solution} is large. This means that we cannot perform the weak GT, but we may ask whether there is a strong GT process that can succeed. Perhaps it is the case that, despite $\Psi$ being large, the commutator $[V,\Psi]$ is small nevertheless? The answer is yes and no. \referee{To illustrate this point, we consider the example $V=e^{i\theta_1x}+e^{i\theta_2 x}$. Notice that $[V,\Psi]$ appears in~\eqref{LP:GT} and hence we want this term to be smaller than $V$ itself. 
We compute
\begin{align*}
\{V,\psi\}(x,\xi)&=-\partial_x V(x)\partial_\xi \psi(x,\xi)= \frac{i}{2\xi^2}\Big(\theta_1 e^{i\theta_1x}+\theta_2e^{i\theta_2x}\Big)\Big(\frac{e^{i\theta_1x}}{\theta_1}+\frac{e^{i\theta_2x}}{\theta_2}\Big)\\
&=\frac{i}{2\xi^2}\Big[e^{2i\theta_1x}+e^{2i\theta_2x} +\Big(\frac{\theta_1}{\theta_2}+\frac{\theta_2}{\theta_1}\Big)e^{i(\theta_1+\theta_2)x}\Big].\end{align*}
This symbol is indeed smaller than that of $V$ (for $|\xi|\sim \rho$) if $\rho^{-2+0}\leq \frac{|\theta_1|}{|\theta_2|}\leq  \rho^{2-0}$ but not if the ratio $|\theta_1|/|\theta_2|$ is too small or too large. 
\begin{remark}We are aware that this example is not self-adjoint, but this is the simplest example to illustrate our point.\end{remark}
} 

\referee{\st{When we commute the parts of $V$ and $\Psi$ at frequencies $\bth$ of comparable length, the result is indeed small; however, when we commute the operator $e^{i\bth_1x}\hat v(\bth_1,D)$ with $e^{i\bth_2x}\hat\psi(\bth_2,D)$ and $\frac{|\bth_1|}{|\bth_2|}\gtrsim\rho$, the result will be large. Such terms ($V$ computed at some frequency commuting with $\Psi$ computed at a much smaller frequency) are the only contributions to our commutators that can be large; the rest of the terms are good in the sense that they decrease in size. }}

 This observation suggests the following modification of the basic GT. Given a $\USB$ potential $V$ (or rather a periodic potential $\referee{{}^PV}$ obtained from $V$ after mass transport), we first remove the part of $\widehat {\referee{{}^PV}}(\bth)$ corresponding to $|\bth|>1$. \referee{(See Lemma~\ref{l:layer0} for a precise description of this process.)} This can be done in one step since the potential $\referee{{}^PV}$ is smooth. \referee{The result of this, first, step of gauge transform is a pseudodifferential operator which we refer to as $\referee{{}^PV}_0$.}
 
 \referee{Next, we split $\referee{{}^PV}_0$ according to the size of the frequencies: we let $\referee{{}^PV}_{+,0}$ to be the part of $\referee{{}^PV}_0$ corresponding to frequencies $\theta$ satisfying $|\theta|\in[\rho^{-1/2},1]$. We then conjugate away $\referee{{}^PV}_{+,0}$ using the strong GT. This peels off the `outer layer' $\referee{{}^PV}_{+,0}$ (corresponding to the largest frequencies). Since all the frequencies in $\referee{{}^PV}_0$ are smaller than those of $\referee{{}^PV}_{+,0}$, during this process we never encounter the bad case of having to commute $V$ and  $\Psi$ where $\Psi$ has a frequency much smaller than some frequency of $V$. Strictly speaking, this process produces new terms with frequency $|\theta|\leq \rho^{-1/2}$, but we ignore this for simplicity. We now repeat this argument with $\referee{{}^PV}_{+,j}$ corresponding to frequencies $\theta$ satisfying $|\theta|\in[\rho^{-(j+1)/2},\rho^{-j/2}]$, always peeling away the piece of $\referee{{}^PV}$ with largest frequency first. At the end of $4N'$ steps of this process, we are left with the part of $\referee{{}^PV}$ with frequencies $|\theta|\leq \rho^{-(4N'+1)/2}$. Since we started with potential that was $\rho^{2N'}$-periodic, in fact, this part of $\referee{{}^PV}$ does not depend on $x$ and hence is a Fourier multiplier. \referee{(See Lemma~\ref{l:nextLayer} for a precise description of the iterative step.)}}

%

In fact, to peel each layer, we will need to perform a parallel GT with $U=e^{i\Psi_k}$. Each step in the parallel transform will decrease the size of $\widehat{\referee{{}^PV}}_k$ by a factor $\sim \rho^{-1}$. For instance, to peel the first layer, we find $\Psi_0\sim \sum _j \referee{\Phi_0^{(j)}}$, with $\referee{\Phi_0^{(j)}}$ naturally living in a certain class of of pseudodifferential operators such that 
$$
e^{-i\sum_{j=1}^N\referee{\Phi^{(j)}_0}}\referee{{}^P\!H} e^{i\sum_{j=1}^N\referee{\Phi^{(j)}_0}}= -\Delta +\QG_N,\qquad  \widehat{\vG_{_N}}1_{|\theta|\geq 1}=O(\rho^{-N-1});
$$
that is, the Fourier transform of $\QG_N$ is very small for $|\theta|\geq 1$. 
We then iterate this procedure for each layer, producing $\Psi_k\sim \sum _j \referee{\Phi_{k}^{(j)}}$ to remove the $k^{th}$ layer.  The reason for doing this mixed parallel-serial GT procedure is as follows: \referee{for each $k$, the $x$ derivatives of the symbol of $\referee{ \Phi_k^{(j)}}$, $j=1,\dots$ are comparable to one another}, for $k_1\neq k_2$, and any $j_1,j_2$, \referee{ the $x$ derivatives of the symbols of} $\referee{\Phi^{(j_1)}_{k_1}}$ and $\referee{\Phi^{(j_2)}_{k_2}}$ \referee{are not comparable}. In particular, as $k\to \infty$, $\referee{\Phi_k^{(j)}}$ may become very large for any fixed $j$. However, the derivatives of its symbols in $x$ will become correspondingly small. See Figure~\ref{f:parallel} for an illustration of the parallel process of removing a single layer and Figure~\ref{f:serial} for the serial process of removing successive layers.

\begin{figure}[htbp]
\begin{tikzpicture}
\def \w{5};
\def \h{1.8};
\def \start{1};
\def \scale{2};
\def \num{2};
\def \shift{1.7};
\def \scaley{.3};
\def \min{.5};

\draw node at(1.1*\w,{\num*\shift*\h+\h}){$\hat{V}(\bth)$};

\foreach \s in {0,...,1}{
\begin{scope}[yshift=\shift*\h*\s cm]
\draw[->](0,0)--(0,.8*\h);
\draw [opacity=0, name path=hatQ2]plot [smooth]coordinates{(-\w*.9,{(\scaley)^(\s)*\h*.5})(-\w*.7,{(\scaley)^(\s)*\h*.8})(-\w*.5,{(\scaley)^(\s)*\h*.6})}--plot[smooth]coordinates{(-\w*.5,\h*.6)(-\w*.4,\h*.3)(-\w*.3,-\h*.4)(-\w*.2,\h*.4)(-\w*.1,\h*.3)}--plot [smooth]coordinates{(-\w*.1,\h*.9)(\w*.2,\h*.3)(\w*.5,\h*.6)}--plot[smooth]coordinates{(\w*.5,{(\scaley)^(\s)*\h*.6})(\w*.7,{(\scaley)^(\s)*\h*.8})(\w*.9,{(\scaley)^(\s)*\h*.5})}--(\w*.9,0)--(-\w*.9,0);

\foreach \s in{-.9, -.8,...,.91}{
\draw[opacity=0, name path=line] (\s*\w,-\min*\h)--(\s*\w,\h);
\draw[name intersections= {of =line and hatQ2,total=\t}, ultra thick,->](\s*\w,0)--(intersection-1);
}

\draw [->,thick] (-\w,0)--(\w,0)node[right]{$\bth$};
\end{scope}
}

\foreach \s in {2}{
\begin{scope}[yshift=\shift*\h*\s cm]
\draw[->](0,0)--(0,.8*\h);
\draw [opacity=0, name path=hatQ2]plot [smooth]coordinates{(-\w*.9,{(\scaley)^(\s)*\h*.5})(-\w*.7,{(\scaley)^(\s)*\h*.8})(-\w*.5,{(\scaley)^(\s)*\h*.6})}--plot[smooth]coordinates{(-\w*.5,\h*.6)(-\w*.4,\h*.3)(-\w*.3,-\h*.4)(-\w*.2,\h*.4)(-\w*.1,\h*.3)}--plot [smooth]coordinates{(-\w*.1,\h*.9)(\w*.2,\h*.3)(\w*.49,\h*.6)}--plot[smooth]coordinates{(\w*.49,{(\scaley)^(\s)*\h*.6})(\w*.7,{(\scaley)^(\s)*\h*.8})(\w*.9,{(\scaley)^(\s)*\h*.5})}--(\w*.9,0)--(-\w*.9,0);

\foreach \s in{-.4, -.3,...,.49}{
\draw[opacity=0, name path=line] (\s*\w,-\min*\h)--(\s*\w,\h);
\draw[name intersections= {of =line and hatQ2,total=\t}, ultra thick,->](\s*\w,0)--(intersection-1);
}

\draw [->,thick] (-\w,0)--(\w,0)node[right]{$\bth$};
\end{scope}
}

\foreach \s in {1}{
\draw[dashed] ({\w*\start/((\scale)^(\s))},-.1*\h)--({\w*\start/((\scale)^(\s))},{\num*\shift*\h+.9*\h})node[right]{$\rho^{-\s/4}$};
\draw[dashed] ({-\w*\start/((\scale)^(\s))},-.1*\h)--({-\w*\start/((\scale)^(\s))},{\num*\shift*\h+.9*\h})node[left]{$-\rho^{-\s/4}$};
}

\begin{scope}[yshift=\shift*\h*0 cm]
\draw [->] plot [smooth]coordinates {(0,.9*\h)(.05*\w,1.15*\h)(0,1.4*\h)};
\draw node at (.18*\w,1.15*\h){$e^{i\Phi_{1}}$};
\end{scope}
\draw [->] plot [smooth]coordinates {(1.2*\w,.7*\h)(1.4*\w,{(.25*(\num*\shift+.2)+.4)*\h})(1.45*\w,{(.5*(\num*\shift+.2)+.4)*\h})(1.4*\w,{(.75*(\num*\shift+.2)+.4)*\h})(1.2*\w,{(\num*\shift+.3)*\h})};
\draw node at (1.7*\w,{(.5*(\num*\shift+.2)+.4)*\h}){$e^{i(\Phi_{1}+\Phi_2)}$};


\end{tikzpicture}
\caption{\label{f:parallel} The effect of the parallel gauge transform. Each successive conjugation  removes most of the outermost layer from $\hat{V}$. }
\end{figure}
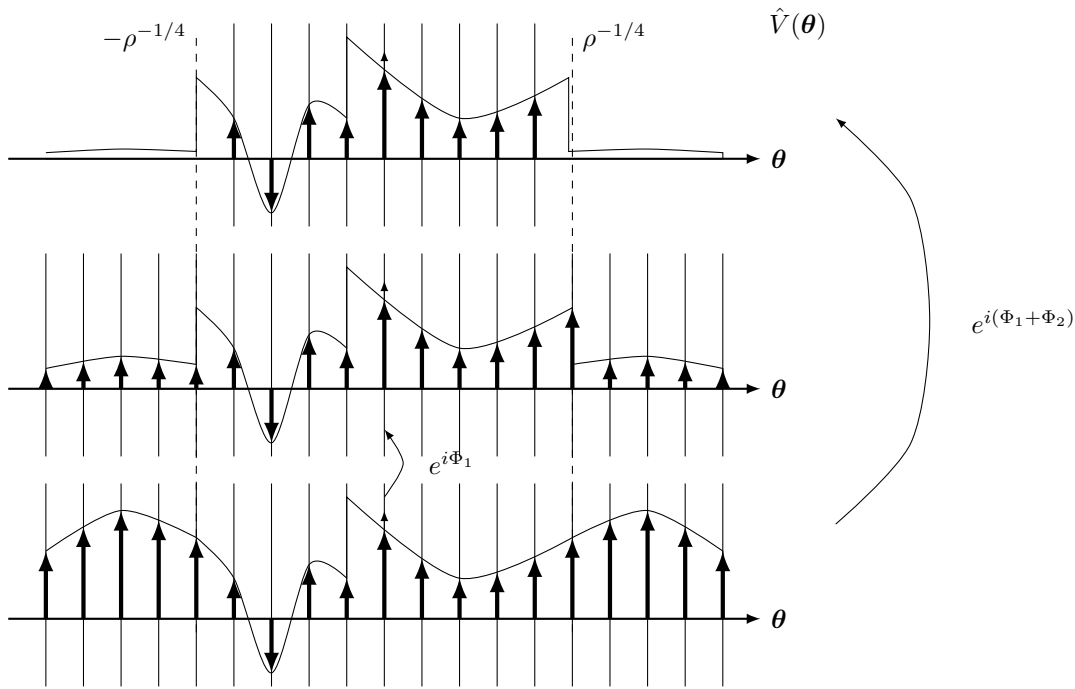

We remark that, unfortunately, this process, at least as formulated, cannot be used if $d\ge 2$.  This is because, in order to start the GT process in the second layer, we need to have removed {\bf{all}} the frequencies in the first (outer) layer.  In higher dimensions the resonant terms have to remain in the outermost layer. Thus, say, the commutators between $\Psi$ used to remove frequencies inside the second layer and the resonant part of $V$ from the outermost layer would still be large.  

\begin{remark}
\referee{Throughout the article, we attempt to present arguments in a way that is accessible to several communities: microlocal/semiclassical analysts, spectral geometers, and specialists in periodic and almost periodic operators. Often when we introduce terminology, we will try to give alternative versions familiar to each community. In addition, we try to include proofs of results that may be standard for one community but not the others. One consequence of this is that we first state our results on the spectral projector for a fixed operator $H$ as the energy, $\rho\to \infty$ in Section~\ref{s:formulate}. We then translate the results into their semiclassical formulation in Section~\ref{s:semiFormulate}, where we study families of operators, $\mathbf{H}(\hbar)$, depending on a small parameter, $\hbar\downarrow 0$. \referee{We write our proofs using the language of semiclassical analysis. There are two reasons why we do this. The first (and main) reason is that in the course of the proof, we will often need to quote results from microlocal analysis that exist in the literature in the semiclassical, but not the high-energy language. 
The second reason is that in the semiclassical setting we will be able to work with slightly more general classes of operators.} } 
\end{remark}

\subsection{Strategy of the proof}

\referee{The proof of Theorem~\ref{t:USBAsymptotics1} will proceed in four steps. Despite the fact that the proof is written in semiclassical language, we discuss it here using the language and notation of the high energy regime. The first step of the proof is to use mass transport to replace the potential $V$ by a periodic potential ${}^PV$ with period $R(\rho)\sim \rho^{N'}$ for some large $N'$.  This is done in Section~\ref{s:periodisePotential}, with the proof that periodising (or indeed any small mass transport) makes a small change to the spectral function done in Section~\ref{s:comparison}. Note that it is essential to periodise the potential before making any microlocal reductions because our theorem about the effect of mass transport on the spectral function applies only when one of the operators is differential.

The second step is to replace ${}^PV$ by a pseudodifferential operator ${}^P\tilde{V}$ whose full symbol satisfies
$$
{}^P\tilde{v}(x,\xi) =\chi(\rho^{-1}|\xi|){}^PV(x)
$$
for some $\chi \in C_c^\infty(0,\infty)$, with $\chi\equiv 1$ near $1$. This is done in Section~\ref{s:fourierMultiplier}. Next, we use the onion peeling gauge transform to replace ${}^P\tilde{V}$ by a Fourier multiplier, $V_1$, i.e. we find a unitary operator $U$ so that 
$$
U^*(-\Delta + {}^P\tilde{V})U= -\Delta +V_1.
$$
The existence of such an onion peeling operator, $U$, is proved in Section~\ref{s:gauge}, and this gauge transform is applied in Section~\ref{s:fourierMultiplier}.  It is then easy to compute  spectral function of $-\Delta +V_1$ in terms of $V_1$ and it remains to understand what conjugation by $U$ does to this function; in a sense `unpeeling' the onion. This final step is done in Section~\ref{s:unpeel}.
}

\subsection{Formulation of results on the local density of states}
\label{s:formulate}

\referee{Despite the fact that most of our results will be proved in dimension $1$, some will be proved in all dimensions. We therefore introduce notation in general dimension and then emphasize which results hold for $d=1$ and which for $d\geq 1$.}

We now \referee{\st{introduce the notation that will be used throughout the rest of the paper and}} formulate our results on the local density of states precisely. 
\referee{
\begin{definition}
\label{d:USBd}
We say that a smooth function, $V:\mathbb{R}^d\to \mathbb{C}$, is \emph{uniformly smoothly bounded (USB)}, writing $V\in \USB(\mathbb{R}^d)$, if for all $k\in \mathbb{N}$,
\begin{equation}
\label{e:Ck}
\|V\|_{C^k}:=\sum_{|\alpha|\leq k}\|\partial_x^\alpha V\|_{L^\infty}<\infty.
\end{equation}
We endow $\USB(\mathbb{R}^d)$ with the topology induced by the seminorms~\eqref{e:Ck}.
\end{definition}

\begin{definition}
We say that $Q:\mc{C}_c^\infty(\mathbb{R}^d) \to \mc{D}'(\mathbb{R}^d)$ is a \emph{differential operator of order $m$ with uniformly $C^k$ bounded coefficients}, and write $Q\in \Diff_k^m$ if 
$$
[Qu](x)=\sum_{|\alpha|\leq m } a_\alpha (x)D_x^\alpha u(x)
$$
with 
$$
\| a_\alpha \|_{C^k}<\infty,\qquad |\alpha|\leq m.
$$
We endow the space $\Diff_k^m(\mathbb{R})$ with the norm 
\begin{equation}
\label{e:semi}
\|Q\|_{\Diff_k^m}=\sum_{|\alpha|\leq m}\|a_\alpha\|_{C^k}.
\end{equation}
We also denote by $\Diff^m(\mathbb{R}^d)=\Diff^m_\infty(\mathbb{R}^d)=\cap_k\Diff^m_{k}(\mathbb{R}^d)$ the space of differential operators of order $m$ with uniformly smooth bounded coefficients. We endow $\Diff^m$ with the topology induced by the seminorms~\eqref{e:semi}.
\end{definition}
}
%

For a formally self-adjoint $Q\in \Diff_{k}^1(\mathbb{R}^{\referee{d}})$, consider an operator 
\begin{equation}
\label{e:Hv}
\normOp{Q}:=-\Delta+Q,
\end{equation}
acting in $L^2(\mathbb{R}^{\referee{d}})$. \referee{Recall that $\normOp{Q}$ is self-adjoint with domain $H^2(\mathbb{R}^d)$.}
\referee{
\begin{definition}
For a self-adjoint operator, $H$, we define
$$
\referee{E}(H)(\rho):=1_{(-\infty,\rho^2]}(H)
$$
 to be the spectral projector onto the spectrum of $H$ below $\rho^2$. For a subset $J\subset \mathbb{R}$, we also write 
  $$
  \referee{E}(H; J)=1_{J}(H)
  $$
  for the spectral projector onto the spectrum of $H(Q)$ in $J$.
 We define the \emph{spectral function for $\normOp{Q}$} to be the integral kernel
\begin{equation}
\label{e:spectralFunction}
\referee{E}(\normOp{Q})(\rho\referee{\,;\,}x,y)=1_{(-\infty,\rho^2]}(\normOp{Q})(x,y).
\end{equation}
  Note that, since $-\Delta+Q$ is elliptic, $\referee{E}(\normOp{Q})(\rho\referee{\,;\,x,y})$ is, in fact, a smooth function of $(x,y)$. (For a proof of smoothness see e.g.~\cite{AgKa:67},~\cite[B.7.1]{Si:82}~\cite{Si:84Err}. In fact, we also prove this below in~\eqref{e:smoothness}.) 
    \end{definition}}

\referee{Now, to state most of our main results, we specialise to the case $d=1$.}  Our first main theorem is a full asymptotic expansion for $\referee{E}(\normOp{Q})(\rho)$ and its derivatives on the diagonal. 
\begin{theorem}
\label{t:USBAsymptoticsNoh}
\referee{Let} $N,\referee{\tilde{M}}>0$. Then there is $K>0$ such that for any bounded subset $\mc{Q}\subset \Diff^1_{K}(\mathbb{R})$, there is $C>0$ such that for all formally self-adjoint $Q\in \mc{Q}$ and \referee{$\alpha,\beta\in \mathbb{N}$ with} $\alpha,\beta\leq \referee{\tilde{M}}$, there are $\coA_{j,\alpha,\beta}\in L^\infty(\mathbb{R})$ such that
\begin{equation}
\label{e:asymptotic2}
\Big|\partial_x^\alpha \partial_y^\beta \referee{E}(\normOp{Q})(\rho\,;\,x,y)|_{y=x}- \sum_{j=0}^{N-1} \coA_{j,\alpha,\beta}(x) \rho^{1+|\alpha|+|\beta|-j}\Big|\leq C \rho^{1+|\alpha|+|\beta|-N}.
\end{equation}
Moreover, $\coA_{0,0,0}\equiv\frac{1}{\pi}$, $\coA_{2\ell+1,0,0}\equiv 0$ for $\ell\geq 0$, $\coA_{j,0,0}(x)$ can be computed explicitly in terms of the coefficients of $Q$ and their derivatives at $x$, $\coA_{0,\alpha,\beta}\equiv0$ for $\alpha+\beta$ odd, and $\coA_{0,\alpha,\beta}\equiv\frac{(-1)^\beta}{\pi(\alpha+\beta+1)}$ for $\alpha+\beta$ even.
\end{theorem}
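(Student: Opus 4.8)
\noindent\emph{Proof strategy.}
By the translation $V(\cdot)\mapsto V(\cdot+x_0)$ it suffices to establish \eqref{e:asymptotic2} at $x=0$, i.e.\ to expand the local density of states $\N(\rho;\normOp V):=e(\normOp V)(\rho)(0,0)$ together with the finitely many mixed derivatives $\partial_x^\alpha\partial_y^\beta e(\normOp V)(\rho)(x,y)|_{x=y=0}$, $\alpha,\beta\le M$, uniformly over a bounded $\mc V\subset\Diff^1_K$. The plan has three stages. \emph{Stage 1 (mass transport).} Fix $N'=N'(N,M)$ large and replace $V$ by a potential $\VM$ that is periodic with a large period $\sim\rho^{N'}$ and satisfies \eqref{LP:1}, so that $\widehat{\vM}$ is supported on a fine lattice and any part of it at frequencies $\lesssim\rho^{-2N'}$ is a constant-coefficient operator. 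By the comparison result proved in Section~\ref{s:comparison} — obtained from finite propagation speed for the wave equations of $\normOp V$ and $\HM$, which agree to $O(\rho^{-N'})$ for times $t\le\rho^{N'}$, converted to the spectral side — we get not only \eqref{LP:j} but the same estimate for every $\partial_x^\alpha\partial_y^\beta$ with $\alpha,\beta\le M$, once $N'$ is chosen to absorb the fixed power of $\rho$ lost in differentiating the wave kernel $2M$ times. Thus it suffices to prove the expansion for $\HM$.

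\emph{Stage 2 (onion peeling).} The plan is to conjugate $\HM$ to a constant-coefficient operator modulo a negligible remainder, following Section~\ref{s:onionIntro}. First, smoothness of $\VM$ lets us remove $\widehat\vM(\bth,\cdot)\,1_{|\bth|>1}$ in a single strong GT step. Then write $\VM=\sum_{j=0}^{4N'}\VM_j+(\text{diagonal})$ with $\VM_j$ supported on $|\bth|\in[\rho^{-(j+1)/2},\rho^{-j/2}]$. Proceeding from the outermost layer inward, for each $k$ we perform a \emph{parallel} strong GT $U_k=\exp\!\big(i\sum_j{}^k\Phi_j\big)$ that removes $\VM_k$ up to $O(\rho^{-N-1})$, the gain at each parallel substep being a factor $\rho^{-1}$ coming from the algebraic cancellations of the strong GT. The scheme is arranged so that, while removing layer $k$, every frequency appearing in the conjugating symbols lies in $\rho^{-k/2}[\rho^{-1/2},1]$; hence one never commutes ``$V$ at a large frequency with $\Psi$ at a much smaller frequency'', which is the only source of large commutators. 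The bookkeeping rests on a hierarchy of $\rho$-dependent pseudodifferential classes, one adapted to each layer — in which ${}^k\Phi_j$ may grow in $\rho$ but has correspondingly small $x$-derivatives — together with the verifications that (i) the GT closes within each class, (ii) the conjugations of the successive layers compose to a single pseudodifferential operator $U=\prod_kU_k$ with controlled Schwartz kernel near the diagonal, and (iii) the accumulated remainder $V'$ has symbol $O(\rho^{-N})$ in a class harmless for the spectral function. Because $d=1$, the only surviving resonant frequency is $\bth=0$, so the output is genuinely constant-coefficient: $U^{-1}\HM U=\HG+V'$ with $\HG:=-\Delta+\VG$, $\VG=\mathbf a(D)$, $\mathbf a(\xi)=\xi^2+$ lower order, built explicitly from the symbol of $\VM$ and the $\Phi$'s.

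\emph{Stage 3 (diagonal computation and transfer back).} As in \eqref{e:diagonalOp}, $\HG$ is diagonalized by the Fourier transform and $e(\HG)(\rho)(x,y)=\frac1{2\pi}\int_{\{\mathbf a(\xi)\le\rho^2\}}e^{i\xi(x-y)}\,d\xi$, an integral over a bounded set whose boundary points have full expansions in powers of $\rho^{-1}$; differentiating in $x,y$, restricting to $y=x$, and expanding yields a complete asymptotic expansion of $\partial_x^\alpha\partial_y^\beta e(\HG)(\rho)(x,y)|_{y=x}$ in powers of $\rho$, with coefficients polynomial in the Taylor data of $\mathbf a$. Since $U$ is a controlled pseudodifferential operator, $e(\normOp V)(\rho)=U\,e(\HG)(\rho)\,U^*+O(\rho^{-N})$ in the sense of kernels and their derivatives up to order $2M$ near the diagonal, and applying the symbol calculus to $U\,e(\HG)(\rho)\,U^*$ produces the diagonal value and its derivatives, hence \eqref{e:asymptotic2}, with $\coA_{j,\alpha,\beta}(0)$ an explicit function of the $x$-derivatives of $V$ at $0$ (equivalently, of the coefficients of $V$ and their derivatives). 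At top order neither $\VG$ (order $\le1$) nor $U$ (leading symbol $1$) contributes, so $\coA_{0,\alpha,\beta}$ agrees with the corresponding coefficient for $-\Delta$, read off by differentiating the free spectral function $\frac1{2\pi}\int_{-\rho}^{\rho}e^{i\xi(x-y)}\,d\xi$; this vanishes for $\alpha+\beta$ odd and gives the stated value for $\alpha+\beta$ even, in particular $\coA_{0,0,0}\equiv\frac1\pi$. Finally, for $\alpha=\beta=0$: conjugating $\normOp V$ by the unimodular multiplier $e^{i\int_0^xV_1}$ leaves $e(\normOp V)(\rho)(x,x)$ unchanged and reduces $\normOp V$ to a scalar operator $-\partial_x^2+W_0$, so the vanishing $\coA_{2\ell+1,0,0}\equiv0$ and the explicit formulae for $\coA_{j,0,0}$ follow from the Laplace-transform computation of~\cite{KoPu:03} together with~\cite{Hi:02,HiPo:03,HiPo:03b} (cf.~\cite[Lemma~3.63, Theorem~3.64]{DyZw:19}).

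\emph{Main obstacle.} The crux is Stage~2: forcing the periodic gauge transform to tolerate the anomalously small frequencies $\sim\rho^{-N'}$ produced by mass transport. There the conjugating operators are not small, so one is in the strong-GT regime and must exploit cancellations; the difficulty is to design the layer-by-layer peeling, together with its hierarchy of $\rho$-dependent symbol classes, so that each layer is fully removed before the next is touched, no large commutator ever arises, and the whole composition $U=\prod_kU_k$ remains a pseudodifferential operator whose kernel near the diagonal one can still compute. Keeping the number $K=K(N,M)$ of derivatives of $V$ consumed finite, and propagating uniformity over bounded $\mc V\subset\Diff^1_K$ through every step, is the accompanying bookkeeping cost. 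By contrast the wave-equation input of Stage~1, although the conceptually new ingredient, is comparatively soft, resting only on finite propagation speed and a Tauberian-type passage to the spectral function.
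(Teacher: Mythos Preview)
Your outline is correct and tracks the paper's architecture: periodize via mass transport (Proposition~\ref{p:distantPerturb}), onion-peel by a mixed serial/parallel gauge transform (Proposition~\ref{p:gauge}), compute for the resulting Fourier multiplier, and transfer back (Section~\ref{s:compute}).

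Two refinements are worth flagging. In Stage~3 you describe $U=\prod_k U_k$ as a ``controlled pseudodifferential operator'' to which one applies symbol calculus. The paper does not --- and in any useful sense cannot --- place $U$ in a single $\Psi$DO class: each $U_k=e^{i\Op{\hbar}(g_k)}$ with $g_k\in r_k^{-1}\step^{-1}S_{r_k,\delta}^{\fcomp}$ is a Fourier integral operator at its own scale $\hbar r_k$ (Lemma~\ref{l:localForm}), and the scales differ layer to layer. Instead, Lemma~\ref{l:asymptotics1} ``unpeels the onion'': it represents each factor as an oscillatory integral at scale $\hbar r_k$, rescales via $T_{r_k}$ to pass to the next, and composes by iterated stationary phase, tracking that the composite phase remains of the form $(x_0-y_0)\Psi$ with $\Psi\sim\eta+\sum_j\hbar^j\step^{-j}\Psi_j$. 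Your ``main obstacle'' paragraph locates the difficulty correctly, but the mechanism is multi-scale FIO composition rather than $\Psi$DO symbol calculus.

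Second, the expansion produced by Lemma~\ref{l:asymptotics1} has coefficients $c_{k,n,\alpha,\beta}$ that depend on the discrete parameter $n$ through $\step=\mu_n^{-\delta}$, with bounds $|c_{k,n,\alpha,\beta}|\le C_{k\alpha\beta}\step^{-k}$. Extracting genuinely $n$-independent coefficients $\coA_{j,\alpha,\beta}$ requires the gluing argument of \cite[Lemma~3.6]{PaSh:16}, which exploits the overlap of the dyadic windows $\hbar\in[\mu_{n+1}^{-1},\mu_{n-1}^{-1}]$; this step is absent from your outline. (Relatedly, the paper takes layer width $\step=\mu_n^{-\delta}$ with $\delta$ strictly less than $\tfrac12$, not your $\rho^{-1/2}$; this matters precisely so that $\hbar^k\step^{-2k}\to 0$ in the off-diagonal analysis and in the gluing.)
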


\begin{remark}
In fact, we prove Theorem~\ref{t:USBAsymptoticsNoh} (and all further theorems) for $N=K=+\infty$ \referee{in the sense that~\eqref{e:asymptotic2} holds for any $N$ with constant $C_N$ depending on $N$}. However, the proofs for finite $N, K$ follow in exactly the same way. To see this, it is only necessary to recall that pseudodifferential calculi modulo an error of a certain order, $\rho^{-N}$, rely on only finitely many derivatives of the symbols involved (see also Remark~\ref{r:roughSymbols}). \referee{The reason why finite regularity statements are important for us is that we need, for each $N$ and $M$, the map sending $Q\in \Diff^1$ to the constant, $C$, on the right-hand side of~\eqref{e:asymptotic2} to be continuous. We use this to derive uniform asymptotics for all $x\in \mathbb{R}$ from the asymptotics at a fixed point $x$.}
\end{remark}

\referee{
\begin{remark}
We emphasize here that Theorem~\ref{t:USBAsymptoticsNoh}, as well as the rest of the theorems in this paper, is proved only for \emph{differential} perturbations of the Laplacian. The reason we are unable to treat pseudodifferential perturbations here is that our proof uses crucially finite speed of propagation for the wave group corresponding to $H(Q)$ (see Lemma~\ref{l:waveCompare}). We suspect that the results still hold for pseudodifferential perturbations but do not pursue this.
\end{remark}
}

Our next theorem is a full asymptotic expansion for $\referee{E}(\normOp{Q})(\rho;x,y)$ and its derivatives when \referee{$x$ is not too close to $y$}.

\begin{theorem}
\label{t:USBAsymptoticsOffNoh}
\referee{Let} $N,\referee{\tilde{M}},\delta, R>0$. Then there is $K>0$ such that for any bounded subset $\mc{Q}\subset \Diff^1_{K}(\mathbb{R})$, there is $C>0$ such that for all formally self-adjoint $Q\in \mc{Q}$ there are $\coB^+_{j}(x,y)$ and $\coB^-_{j}(x,y)$, $j=0,1,\dots $, such that for all $\alpha,\beta\in \mathbb{N}$ with \referee{$\alpha,\beta\leq \referee{\tilde{M}}$} and all $(x,y)\in \mathbb{R}$, with $\rho^{-1+\delta}\leq |x-y|\leq R$, we have
\begin{multline*}
\Big|\partial_x^\alpha\partial_y^\beta\Big(\referee{E}(\normOp{Q})(\rho\,;\,x,y)- \sum_{j=0}^{N-1}\rho^{-j} \big(e^{i\rho|x-y|}{\coB}^+_{j}(x,y)+e^{-i\rho|x-y|}\coB^-_{j}(x,y)\big)\Big)\Big|\\\leq C \rho^{-N+|\alpha|+|\beta|}|x-y|^{-N}.
\end{multline*}
Moreover, $\coB_0^{\pm}(x,y)=\pm\frac{1}{2i\pi|x-y|}$.
\end{theorem}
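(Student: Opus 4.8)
\textbf{Proof proposal for Theorem~\ref{t:USBAsymptoticsOffNoh}.}

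The plan is to run the same machinery as for the on-diagonal expansion (Theorem~\ref{t:USBAsymptoticsNoh}), keeping track of the dependence on $x-y$ throughout. First I would reduce, via the mass transport step of Section~\ref{s:comparison}, to the case of a periodic potential $\VM$ of period $\sim\rho^{2N'}$ which agrees with $V$ to order $\rho^{-N'}$ on a box of size $\rho^{N'}$ around the segment joining $x$ and $y$; since $|x-y|\leq R$ is bounded and the wave-propagation argument only uses finite speed of propagation over times $t\le\rho^{N'}$, the off-diagonal kernel $e(\normOp{V})(\rho)(x,y)$ differs from $e(\HM)(\rho)(x,y)$ by $O(\rho^{-\infty})$, which is absorbed in the stated error (here one uses $|x-y|\geq\rho^{-1+\delta}$ only to make the claimed bound $\rho^{-N+|\alpha|+|\beta|}|x-y|^{-N}$ meaningful — it is $\geq\rho^{-(1-\delta)N}$, still a negative power of $\rho$). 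Next I would apply the onion-peeling gauge transform of Section~\ref{s:onionIntro}: conjugating $\HM$ by a unitary $U=e^{i\Psi}$ built layer by layer, we arrive at $U^{-1}\HM U=-\Delta+\VG+V'$ with $\VG$ diagonal (this is the point of case~I, $d=1$: all resonant frequencies collapse to $\bth=0$) and $V'$ negligible, i.e. $\|V'\|$ bounded by $\rho^{-N}$ in the relevant operator norms. The kernel transforms as $e(\normOp{V})(\rho)(x,y)\approx \big(U\,e(-\Delta+\VG)(\rho)\,U^*\big)(x,y)$ up to $O(\rho^{-\infty})$ errors, and since $-\Delta+\VG=\mathbf a(D)$ is a constant-coefficient (Fourier multiplier) operator, its spectral projector $\1_{(-\infty,\rho^2]}(\mathbf a(D))$ has the explicit kernel
\[
e(\mathbf a(D))(\rho)(x,y)=\frac{1}{2\pi}\int_{\{\mathbf a(\xi)\leq \rho^2\}} e^{i\xi(x-y)}\,d\xi,
\]
an integral over a union of two intervals with endpoints of the form $\pm\rho+O(\rho^{-1})$.

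The heart of the matter is then a stationary/endpoint-phase analysis of this oscillatory integral, together with propagation of the $e^{\pm i\rho|x-y|}$ oscillation through the pseudodifferential conjugation by $U$. I would first treat the model term: writing $\{\mathbf a(\xi)\leq\rho^2\}=[-\rho_-(\rho),-\rho_+(\rho)']\cup[\rho_+(\rho)',\rho_-(\rho)]$ with the endpoints admitting expansions in powers of $\rho^{-1}$ determined by the coefficients of $\VG$, the integral is a sum of four boundary contributions of the form $\tfrac{1}{2\pi i (x-y)}e^{\pm i\rho_\bullet(\rho)(x-y)}$; Taylor-expanding the endpoint exponentials in $\rho^{-1}|x-y|$ — legitimate precisely because $|x-y|\leq R$ is bounded, so $\rho^{-j}|x-y|^j$ terms are genuinely lower order — produces the claimed form $\sum_j \rho^{-j}(e^{i\rho|x-y|}\coB^+_j + e^{-i\rho|x-y|}\coB^-_j)$ with $\coB_j^\pm$ polynomial in $|x-y|$ and smooth in $(x,y)$, and the leading term is $\coB_0^\pm=\pm\tfrac{1}{2\pi i|x-y|}$ since the leading endpoints are exactly $\pm\rho$. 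Then I would conjugate by $U$: each pseudodifferential operator in the calculus has a symbol that is a classical (polyhomogeneous in $\rho$) symbol of order zero, so applying $U(\cdot)U^*$ to a Lagrangian distribution with phase $\rho|x-y|$ preserves the Lagrangian structure — the phase is unchanged and each application contributes a full symbolic expansion in $\rho^{-1}$ — which is exactly a stationary-phase / transport-equation computation in the class of oscillatory integrals with phase $\pm\rho|x-y|$. Derivatives $\partial_x^\alpha\partial_y^\beta$ falling on $e^{\pm i\rho|x-y|}$ each cost a factor $\rho$, accounting for the $\rho^{|\alpha|+|\beta|}$ in the error, while derivatives on the amplitudes are harmless.

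The main obstacle, as I see it, is bookkeeping the $|x-y|^{-N}$ weight in the remainder uniformly: the error in the gauge transform, in the mass-transport step, and in truncating the symbol expansions must all be shown to be of the form $\rho^{-N}$ times a symbol that is $O(|x-y|^{-N})$ (equivalently, the remainder oscillatory integral has an amplitude supported where the phase is non-stationary, so $N$-fold non-stationary-phase integration by parts in $\xi$ gains $|x-y|^{-N}$). Concretely, one must check that the pseudodifferential remainders $V'$ and the tails of the onion-peeling series, when sandwiched between the explicit Fourier-multiplier kernel and $U$, produce kernels that decay like $|x-y|^{-N}$ off the diagonal — this is a standard but somewhat delicate microlocal estimate (the kernel of a zeroth-order $\rho$-dependent $\Psi$DO of size $\rho^{-N}$ is $O(\rho^{-N}|x-y|^{-N})$ away from the diagonal by integration by parts, and the product structure must be shown to respect this). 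A secondary technical point is uniformity over the bounded set $\mc V\subset\Diff^1_K(\mathbb R)$, which as in Theorem~\ref{t:USBAsymptoticsNoh} follows from the fact that the whole construction uses only finitely many derivatives of the coefficients for a fixed target order $N$.
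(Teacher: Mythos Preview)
Your overall architecture --- mass transport, onion-peeling gauge transform, Fourier-multiplier spectral kernel, endpoint contributions via integration by parts --- is exactly the paper's. The gap is in your treatment of the conjugation by $U$, which you flag only as a bookkeeping issue about $|x-y|^{-N}$ weights but is in fact the central analytic difficulty. You assert that ``each pseudodifferential operator in the calculus has a symbol that is a classical (polyhomogeneous in $\rho$) symbol of order zero'' and hence that $U(\cdot)U^*$ preserves the phase $\pm\rho|x-y|$ up to lower-order symbolic corrections. This is false in the general $\USB$ case. The onion-peeling transform (Proposition~\ref{p:gauge}) produces $U = e^{i\Op{\hbar}(g_{-1})}\cdots e^{i\Op{\hbar}(g_{M_\delta})}$ with $g_j \in r_j^{-1}\step^{-1} S^{\fcomp}_{r_j,\delta}$, $r_j=\step^j=\lp_n^{-j\delta}$; thus $|g_j|\sim \lp_n^{(j+1)\delta}$ \emph{grows} with $\rho$. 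The factors $e^{i\Op{\hbar}(g_j)}$ are therefore not pseudodifferential operators but genuine FIOs at a hierarchy of scales, and they do alter the phase. (Your claim would hold for periodic-plus-Schwartz potentials --- see Remarks~\ref{decay1} and~\ref{decay2} --- where a single weak gauge transform suffices, but not in general.)

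The paper instead runs the conjugation \emph{before} extracting endpoint terms: Lemma~\ref{l:asymptotics1} applies $U^*$ to $\delta_{x_0}$ layer by layer, using the FIO parametrix of Lemma~\ref{l:localForm} at each scale $r_j$ together with the rescalings $T_{r_j}$, wavefront-set control (Lemmas~\ref{l:waveFront1},~\ref{l:waveFront2},~\ref{l:frequencyPreserved}), and iterated stationary phase, to obtain an integral $\int_{G_\hbar(\omega)} e^{\frac{i}{\hbar}(x_0-y_0)\Psi(x_0,y_0,\eta)}e(x_0,y_0,\eta)\,d\eta$ with $\Psi\sim\eta+\sum_{k\geq 1}\hbar^k\step^{-k}\Psi_k$. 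Only then does integration by parts in $\eta$ produce boundary terms with phase $\tilde\Psi_{\pm}\sim\pm(x_0-y_0)\omega+(x_0-y_0)\hbar\step^{-1}\tilde\Psi_{1,\pm}+\cdots$; since the phase correction is of size $\hbar\step^{-1}\gg\hbar$, the subsequent Taylor expansion of $e^{i\tilde\Psi_\pm/\hbar}$ about $e^{\pm i\omega(x_0-y_0)/\hbar}$ requires the constraint $2\delta<\delta'$ so that $\hbar\step^{-2}|x_0-y_0|\ll 1$. Even then the resulting coefficients carry residual $\step^{-2k}$ growth and depend on the discrete parameter $n=n(\hbar)$; a separate gluing argument via \cite[Lemma~3.6]{PaSh:16} is needed to produce $n$-independent $\coB_j^\pm$, and Lemma~\ref{l:firstTerm} is invoked to pin down $\coB_0^\pm=\pm(2\pi i|x-y|)^{-1}$. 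None of this multi-scale machinery appears in your outline.
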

One slightly surprising aspect of these results is that they hold no matter what type the high-energy spectrum $\normOp{Q}$ has: absolutely or singular continuous or even pure point. 
An immediate (trivial) corollary of these results is this:
\begin{corollary}
\label{c:1}
For all $N>0$ there is $C_N>0$ such that for all $\rho >1$, if  $[\rho^2,\referee{\rho^2+\e}]$ is a spectral gap of $H$, then $\referee{\e}<C_N\rho^{-N}$. 
\end{corollary}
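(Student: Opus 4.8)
The plan is to exploit the fact that on a spectral gap the diagonal of the spectral projector is \emph{exactly} constant in the energy, and then to play this rigidity against the asymptotic expansion of Theorem~\ref{t:USBAsymptotics1}. Fix any $x\in\mathbb{R}$ (the expansion \eqref{LP:asymptotic} is uniform in $x$, so the choice is irrelevant). Suppose $[\rho^2,(\rho')^2]$ is a spectral gap of $H$, by which I mean $(\rho^2,(\rho')^2)\cap\operatorname{spec}(H)=\emptyset$. Then for every $\sigma\in[\rho,\rho')$ the operator $\mathds{1}_{(\rho^2,\sigma^2]}(H)$ vanishes, since its spectral support is contained in the empty set $(\rho^2,\sigma^2]\cap\operatorname{spec}(H)$; hence $e(H)(\sigma)(x,x)=e(H)(\rho)(x,x)$ for all such $\sigma$, with \emph{no} limiting or continuity argument needed. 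Now apply \eqref{LP:asymptotic} at the two energies $\rho$ and $\sigma$ (both assumed above the threshold beyond which the expansion is valid), subtract, and use the equality just noted together with $a_0\equiv\tfrac1\pi$ and $\sigma\ge\rho$, $1-2N<0$:
\[
\Big|\tfrac1\pi(\sigma-\rho)+\sum_{k=1}^{N-1}a_k(x)\big(\sigma^{1-2k}-\rho^{1-2k}\big)\Big|\le C_N\rho^{1-2N}+C_N\sigma^{1-2N}\le 2C_N\rho^{1-2N}.
\]
The left-hand side depends continuously on $\sigma$ and the right-hand side not at all, so letting $\sigma\uparrow\rho'$ gives
\[
\Big|\tfrac1\pi(\rho'-\rho)+\sum_{k=1}^{N-1}a_k(x)\big((\rho')^{1-2k}-\rho^{1-2k}\big)\Big|\le 2C_N\rho^{1-2N}.\qquad(\star)
\]

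Next I would bootstrap. Taking $N=1$ in $(\star)$ gives $\rho'-\rho\le 2\pi C_1\rho^{-1}$, so there is $\rho_0$ with $\rho'\le 2\rho$ whenever $\rho\ge\rho_0$. For such $\rho$ the mean value theorem yields $|(\rho')^{1-2k}-\rho^{1-2k}|\le(2k-1)\rho^{-2k}(\rho'-\rho)$ for each $k\ge1$, so the sum in $(\star)$ is bounded by $\big(\sum_{k=1}^{N-1}\|a_k\|_{L^\infty}(2k-1)\rho^{-2k}\big)(\rho'-\rho)$. This prefactor is a finite sum of negative powers of $\rho$, hence $\le\tfrac1{2\pi}$ once $\rho\ge\rho_1(N)$ for a suitable $\rho_1(N)$; absorbing it into the left side of $(\star)$ gives $\rho'-\rho\le 4\pi C_N\,\rho^{1-2N}$ for $\rho\ge\rho_1(N)$. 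Since $N$ is arbitrary and $1-2N\le -N$ for $N\ge1$, this is the asserted bound $\rho'-\rho\le C_N\rho^{-N}$ in the regime $\rho\ge\rho_1(N)$. The remaining range $1<\rho\le\rho_1(N)$ is disposed of by the elementary observation that any subinterval of a spectral gap is again a spectral gap: if $\rho'>\rho_1(N)$ then $[\rho_1(N)^2,(\rho')^2]$ is a gap and the bound already proved gives $\rho'\le\rho_1(N)+4\pi C_N\rho_1(N)^{1-2N}$, a fixed number, so $\rho'-\rho$ is bounded on $1<\rho\le\rho_1(N)$ while $\rho^{-N}\ge\rho_1(N)^{-N}$ there; enlarging $C_N$ accordingly finishes the proof. (For non-integer $N>0$ one simply runs the argument with $\lceil N\rceil$.)

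As the authors themselves indicate, this is a genuinely trivial corollary, and I do not expect any real obstacle. The only two points that require a modicum of care are: (i) making sure one does not need any continuity of $\sigma\mapsto e(H)(\sigma)(x,x)$ at the right endpoint of the gap --- which is handled by working at interior energies $\sigma<\rho'$, where the identity $e(H)(\sigma)(x,x)=e(H)(\rho)(x,x)$ is exact, and only passing to the limit on the (continuous, elementary) algebraic side of $(\star)$; and (ii) the bootstrap itself --- one cannot read off the $O(\rho^{-N})$ bound directly from $(\star)$ without first establishing the crude estimate $\rho'\le 2\rho$, because only then do the lower-order terms $(\rho')^{1-2k}-\rho^{1-2k}$ acquire a gain proportional to $\rho'-\rho$ that can be absorbed.
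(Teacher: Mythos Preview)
Your proof is correct and follows exactly the approach the paper has in mind: the authors call this ``an immediate (trivial) corollary'' and give no proof, precisely because constancy of $e(H)(\cdot)(x,x)$ on a gap forces the leading term $\tfrac{1}{\pi}\rho$ of \eqref{LP:asymptotic} to stay put while the remainders are $O(\rho^{1-2N})$. Your bootstrap (first $N=1$ to get $\rho'\le 2\rho$, then absorb the lower-order differences) and your careful handling of the endpoint $\sigma\uparrow\rho'$ are both fine elaborations of what the paper leaves implicit.
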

Another observation, also quite obvious, is that for any $\rho$ we have
$$
\sup_x\Big(\lim_{\rho'\to\rho^+}\referee{E}(H(Q))(\rho'\,;\,x,x)-\lim_{\rho'\to\rho^-}\referee{E}(H(Q))(\rho'\,;\,x,x) \Big)<C_N\rho^{-N}
$$
for any natural $N$, uniformly in $x$. This immediately implies:
\begin{corollary}
\label{c:2}
For all $Q\in \referee{\Diff^1}(\mathbb{R})$ and any natural $N$ there is a constant $C_N>0$ such that for all $\rho>1$ and any non-trivial eigenfunction $u\in L^2(\R)$ of $\normOp{Q}$ with eigenvalue $\rho^2$ we have
\begin{equation}
\frac{||u||_{L^{\infty}(\R)}}{||u||_{L^{2}(\R)}}\le C_N\rho^{-N}.
\end{equation}
\end{corollary}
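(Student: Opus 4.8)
The plan is to read off the estimate from the uniform control on the \emph{jump} of the spectral function in $\rho$ that is already recorded just before the corollary (and which is itself an immediate consequence of Theorem~\ref{t:USBAsymptoticsNoh}). Fix $V\in\USB(\mathbb{R})$ and $N\in\mathbb{N}$, let $\rho>1$, and let $u$ be a nontrivial $L^2$ eigenfunction of $\normOp V=-\Delta+V$ with eigenvalue $\rho^2$; by homogeneity we may normalize $\|u\|_{L^2}=1$. Since $\normOp V-\rho^2$ is a second order ordinary differential operator, its kernel is at most two dimensional, so the orthogonal projection $P_{\rho^2}$ onto $\ker(\normOp V-\rho^2)$ is finite rank with smooth kernel. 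Picking an orthonormal basis $u_1,\dots,u_m$ (with $m\le 2$) of the eigenspace we have $P_{\rho^2}(x,x)=\sum_{k}|u_k(x)|^2$, and writing $u=\sum_k c_k u_k$ with $\sum_k|c_k|^2=1$, the Cauchy--Schwarz inequality gives
\begin{equation}
\label{e:cs-eigenfunction}
|u(x)|^2=\Big|\sum_k c_k u_k(x)\Big|^2\le \sum_k|u_k(x)|^2=P_{\rho^2}(x,x),\qquad x\in\mathbb{R}.
\end{equation}

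The second step is to identify $P_{\rho^2}(x,x)$ with a jump of the spectral function and bound it. Since $t\mapsto E(\normOp V)(t)=1_{(-\infty,t^2]}(\normOp V)$ is monotone nondecreasing and $\lim_{t\uparrow\rho}E(\normOp V)(t)=1_{(-\infty,\rho^2)}(\normOp V)$, the difference $E(\normOp V)(\rho)-\lim_{t\uparrow\rho}E(\normOp V)(t)$ is exactly the spectral projection of the one-point set $\{\rho^2\}$, that is $P_{\rho^2}$. Passing to diagonal kernels, all of which exist and are nondecreasing in $t$ (hence have a left limit at $\rho$) since $-\Delta+V$ is elliptic and $E(\normOp V)(t)$ is an increasing family of projections, we obtain
\begin{equation*}
P_{\rho^2}(x,x)=e(\normOp V)(\rho)(x,x)-\lim_{t\uparrow\rho}e(\normOp V)(t)(x,x).
\end{equation*}
Now apply Theorem~\ref{t:USBAsymptoticsNoh} with $\mc V=\{V\}$ and $\alpha=\beta=0$: the polynomial part $\sum_{j=0}^{M-1}\coA_{j,0,0}(x)t^{1-j}$ is continuous in $t$ and so cancels in the difference above, while each remainder is bounded by $C\,t^{1-M}\le C\,\rho^{1-M}$ for $1<t\le\rho$ and $M\ge 1$. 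Therefore $P_{\rho^2}(x,x)\le 2C\,\rho^{1-M}$ uniformly in $x$.

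Combining this with~\eqref{e:cs-eigenfunction} gives $\|u\|_{L^\infty}^2\le 2C\,\rho^{1-M}$, i.e. $\|u\|_{L^\infty}/\|u\|_{L^2}\le\sqrt{2C}\,\rho^{(1-M)/2}$; taking $M=2N+1$ yields the claim with the constant $C_N:=\sqrt{2C}$, where $C$ is the constant furnished by Theorem~\ref{t:USBAsymptoticsNoh} for the index $M=2N+1$. There is no genuine obstacle: the argument is bookkeeping on top of Theorem~\ref{t:USBAsymptoticsNoh}. The only place that deserves a word of care is the passage from the operator inequality $P_{\rho^2}\ge\langle\,\cdot\,,u\rangle u$ to the pointwise inequality~\eqref{e:cs-eigenfunction} of diagonal kernels; I handle this by reducing to the (at most two dimensional) eigenspace and invoking Cauchy--Schwarz, though one could instead appeal to the fact that a nonnegative operator with continuous kernel has nonnegative diagonal.
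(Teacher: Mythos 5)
Your proof is correct and takes essentially the same route as the paper: the paper deduces the corollary from the jump estimate $\sup_x\big(\lim_{\rho'\to\rho^+}e(\normOp{V})(\rho')(x,x)-\lim_{\rho'\to\rho^-}e(\normOp{V})(\rho')(x,x)\big)\le C_N\rho^{-N}$ recorded immediately before it (itself read off from the asymptotic expansion, since the smooth terms cancel in the jump), and your argument reproves this jump bound from Theorem~\ref{t:USBAsymptoticsNoh} and then spells out the implicit final step via the rank-at-most-two eigenprojection and Cauchy--Schwarz. One small slip that does not affect the conclusion: for $t\le\rho$ and $M\ge1$ one has $t^{1-M}\ge\rho^{1-M}$ rather than $\le$, but since you send $t\uparrow\rho$ the remainder bound still yields $P_{\rho^2}(x,x)\le 2C\rho^{1-M}$.
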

Note that this result holds not just for one particular  eigenfunction, but for \referee{any linear combination} of eigenfunctions from a thin energy window $[\rho^2,\rho^2+O(\rho^{-\infty})]$.

Another (less obvious) corollary of Theorem \ref{t:USBAsymptoticsNoh} is related to the behaviour of any solution of the equation  
\begin{equation}
\label{LP:sol1}
\normOp{Q}u=\rho^2u,
\end{equation}
not just solutions belonging to $L^2(\R)$. 
We define, for any differentiable function $u$, the (renormalised) {\it  energy density} of $u$ at $x$ by
\begin{equation}
ED(u;x)=ED(u;x,\rho):=|u(x)|^2+\rho^{-2}|u'(x)|^2.
\end{equation}

\begin{corollary}
\label{LP:cor1}
For all $Q\in \referee{\Diff^1}(\mathbb{R})$ and any natural $N$ there is a constant $C_N$ such that for any non-trivial solution $u$ of \eqref{LP:sol1} and any $a,b\in\R$, $|b-a|\le\rho^N$ we have
\begin{equation}
\label{e:energyEstimate}
\frac{ED(u;b)}{ED(u;a)}<(1+C_N\rho^{-1}).
\end{equation}
\end{corollary}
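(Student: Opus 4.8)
The plan is to reduce Corollary~\ref{LP:cor1} to a uniform bound on the transfer matrix of $\normOp{V}u=\rho^2u$ and then to extract that bound from Theorem~\ref{t:USBAsymptoticsNoh} by a multiscale argument. For a solution $u$ write $\bm u(x):=(u(x),\rho^{-1}u'(x))^{\mathsf T}\in\C^2$, so $ED(u;x)=|\bm u(x)|^2$, and let $M(b,a;\rho)\in GL_2(\C)$ be the transfer matrix in these coordinates, $\bm u(b)=M(b,a;\rho)\bm u(a)$. Since $ED(u;b)\le\|M(b,a;\rho)\|_{op}^2\,ED(u;a)$ and $ED(u;a)>0$ for $u\not\equiv0$, it suffices to prove $\|M(b,a;\rho)\|_{op}\le 1+C_N\rho^{-1}$ whenever $|b-a|\le\rho^N$ and $\rho\ge\rho_0(N)$. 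Two elementary inputs will be used: $\det M(b,a;\rho)=\exp(-2i\int_a^bV_1)$ has modulus $1$; and the energy identity $\tfrac{d}{dx}|\bm u|^2=2\operatorname{Re}\langle B_\rho(x)\bm u,\bm u\rangle$ with $\|\operatorname{Re}B_\rho\|=O(\rho^{-1})$ — a bound stable under $O(\rho^{-N})$ changes of energy — gives the crude estimate $\|M(b,a;\rho)\|\le e^{C\rho^{-1}|b-a|}$. This crude bound alone already proves the corollary for $|b-a|\lesssim\rho$; the point is to push it to $|b-a|\le\rho^N$.

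The mechanism I would use is a "windowed on-shell density matrix". Set $\rho_\pm:=(\rho^2\pm\rho^{-N})^{1/2}$ and let $K_\rho:=e(\normOp V)(\rho_+)-e(\normOp V)(\rho_-)$ be the (smooth, positive-type) kernel of $\1_{(\rho_-^2,\rho_+^2]}(\normOp V)$. Define the Hermitian $2\times2$ matrix
\[
\BG_\rho(x,y):=\tfrac{1}{2\rho^{-N}}\,\Xi\begin{pmatrix}K_\rho(x,y)&\partial_yK_\rho(x,y)\\\partial_xK_\rho(x,y)&\partial_x\partial_yK_\rho(x,y)\end{pmatrix}\Xi,\qquad \Xi:=\operatorname{diag}(1,\rho^{-1}).
\]
Two facts drive everything. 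First, by Theorem~\ref{t:USBAsymptoticsNoh} with $\alpha,\beta\le1$ and sufficiently many terms — using $\coA_{1,0,0}\equiv0$ and that the leading coefficients $\coA_{0,\alpha,\beta}$ are universal, hence equal to their values for $-\Delta$ (so the $(1,1)$ and $(2,2)$ entries both have leading behaviour $\tfrac1{2\pi\rho}$ and the off-diagonal entries are lower order) — one gets, uniformly in $x$, $\BG_\rho(x,x)=\tfrac1{2\pi\rho}\Id+O(\rho^{-2})$; in particular $\BG_\rho(x,x)\ne0$, so the window always meets $\sigma(\normOp V)$ and \emph{no separate treatment of spectral gaps or band edges is needed}. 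Second, by the generalized eigenfunction expansion for the one-dimensional operator $\normOp V$, $K_\rho(x,y)=\int\bm\varphi^\mu(x)\,\bm\varphi^\mu(y)^{*}\,d\tau(\mu)$ for a finite positive measure $d\tau$ on $[\rho_-^2,\rho_+^2]$ and solution-jets $\bm\varphi^\mu$ of $\normOp V\varphi^\mu=\mu\varphi^\mu$ (a basis at each energy); hence $\BG_\rho(x,y)=\tfrac1{2\rho^{-N}}\int\bm\varphi^\mu(x)\bm\varphi^\mu(y)^*d\tau(\mu)$ is positive semidefinite, and since $\bm\varphi^\mu(b)=M(b,a;\mu)\bm\varphi^\mu(a)$ with $|\mu-\rho^2|\le\rho^{-N}$ on the window,
\[
\BG_\rho(b,b)=M(b,a;\rho)\,\BG_\rho(a,a)\,M(b,a;\rho)^{*}+\mathcal E,\qquad \|\mathcal E\|\le C\rho^{-N-2}|b-a|\,\mathcal M^{3},
\]
where $\mathcal M:=\sup\{\|M(s,s';\mu)\|:s,s'\in[a,b],\ |\mu-\rho^2|\le\rho^{-N}\}$ and the error comes from $\|M(\cdot;\mu)-M(\cdot;\rho^2)\|\le C|\mu-\rho^2|\rho^{-1}|b-a|\mathcal M^2$. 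Substituting $\BG_\rho(a,a),\BG_\rho(b,b)=\tfrac1{2\pi\rho}\Id+O(\rho^{-2})$ and isolating $\|M\|$ yields the \emph{self-improving inequality} $\|M(b,a;\rho)\|^2-1\le C\rho^{-1}(1+\mathcal M^{2})+C\rho^{-N-1}|b-a|\,\mathcal M^{3}$.

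Now I would bootstrap over the scales $\rho^0,\rho^1,\dots,\rho^N$: prove by induction on $k=0,1,\dots,N$ the statement $IH(k)$: $\|M(b,a;\mu)\|\le1+C_k\rho^{-1}$ for all $|b-a|\le\rho^k$ and all $|\mu-\rho^2|\le\rho^{-N}$. Here $IH(0)$ is the crude bound; for the step, given $|b-a|\le\rho^k$, cover $[a,b]$ by $\le\rho$ intervals of length $\le\rho^{k-1}$ and apply $IH(k-1)$ to each to obtain the $\rho$-independent a priori bound $\mathcal M\le(1+C_{k-1}\rho^{-1})^{\rho}\le e^{C_{k-1}}$; feeding this into the self-improving inequality (where now $\rho^{-N-1}|b-a|\mathcal M^{3}\le\rho^{k-N-1}e^{3C_{k-1}}\le\rho^{-1}e^{3C_{k-1}}$ since $k\le N$) gives $\|M(b,a;\mu)\|\le1+C_k\rho^{-1}$ with $C_k\sim e^{3C_{k-1}}$. (Centering the covariance identity at an arbitrary $\mu_0$ in the window rather than at $\rho^2$, using the same $\BG_\rho$, is what lets the induction range over all $\mu$ in the window.) Taking $k=N$ gives the transfer-matrix bound with $C_N$ a tower-type constant depending on $N$ — which is precisely the origin of the $N$-dependence of the constant in the statement.

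The conceptual heart, and the hardest part, is exactly this interplay: the one-shot ODE bound is exponential and hence useless by itself, while the spectral-function input is dimensionally strong but only approximate; the multiscale bootstrap is the mechanism that reconciles them. The technical work is concentrated in: (i) making the approximate covariance quantitative — in particular the generalized eigenfunction expansion of the windowed projector and the energy-perturbation estimate for $M(\cdot;\mu)$, both of which need some care in the presence of the first-order term $V_1$ (alternatively one can avoid the eigenfunction expansion and work directly with $(\normOp V-\rho^2)^pK_\rho$, whose operator norm is $O(\rho^{-pN})$, but this requires pairing against test functions rather than evaluating kernels pointwise); (ii) checking that Theorem~\ref{t:USBAsymptoticsNoh} really delivers $\BG_\rho(x,x)=\tfrac1{2\pi\rho}\Id+O(\rho^{-2})$ uniformly, i.e. pinning down the finitely many relevant coefficients $\coA_{j,\alpha,\beta}$ with $\alpha,\beta\le1$, $j\le1$ (and in particular the consistency $\coA_{0,1,1}=\tfrac1{3\pi}$, forced by $\BG_\rho(x,x)\ge0$ and the free case); and (iii) the bookkeeping of constants through the induction.
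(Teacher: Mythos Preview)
Your approach is correct but takes a genuinely different route from the paper's. The paper argues by contradiction and construction: assuming the energy density changes by more than the allowed factor over $[a,b]$, one first localises the potential to a compactly supported $\tilde V$ near $[a,b]$ and takes the solution $f_\omega$ with the same Cauchy data; outside a neighbourhood of $[a,b]$, $f_\omega$ is a free plane wave, and an elementary phase-matching step (Lemma~\ref{l:glueSol}) lets one splice shifted, rescaled copies of $f_\omega$ end to end so that the resulting function $F$ decays geometrically and lies in $L^2$. Thus $F$ is a genuine $L^2$-eigenfunction of a new Schr\"odinger operator with a USB potential (obtained by periodising $\tilde V$), and Corollary~\ref{c:2} --- itself an immediate consequence of the on-diagonal expansion in Theorem~\ref{t:USBAsymptoticsNoh} with $\alpha=\beta=0$ --- forces $\|F\|_{L^\infty}/\|F\|_{L^2}\le C_N\rho^{-N}$, which contradicts the construction unless $|b-a|\gtrsim\rho^{N}$. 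The first-order term $V_1$ is handled separately afterwards by conjugating with $e^{-i\int V_1}$ (Theorem immediately following Lemma~\ref{l:realEnergyChange}).

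Your route instead works directly with the transfer matrix: you use the full strength of Theorem~\ref{t:USBAsymptoticsNoh} (on-diagonal with $\alpha,\beta\le 1$) to see that the windowed density matrix $\BG_\rho$ is a scalar plus $O(\rho^{-2})$, invoke Weyl--Titchmarsh theory to transport $\BG_\rho$ by the transfer cocycle, and bootstrap over length scales $\rho^0,\rho^1,\dots,\rho^N$. Each approach has its merits. The paper's argument is more elementary --- it needs only the undifferentiated diagonal asymptotics and no spectral representation --- but hinges on the gluing trick, which is rather specific to one-dimensional second-order equations. Your argument is more systematic, handles the magnetic term $V_1$ uniformly without a separate reduction, and makes the tower-type $N$-dependence of $C_N$ explicit; the price is that you need the derivative asymptotics and the (standard but not entirely trivial) generalised eigenfunction expansion with its matrix spectral measure.
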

Estimate~\eqref{e:energyEstimate} shows that any solution to~\eqref{LP:sol1} is very close to a plane wave on extremely large scales.


Corollary \ref{LP:cor1} immediately implies the following result that was already obtained in \cite{DeFo:86}:

\begin{corollary}
\label{LP:corLya}
For all $Q\in \referee{\Diff^1}(\mathbb{R})$ and any natural $N$ there is a constant $C_N$ such that  any non-trivial solution $u$ of \eqref{LP:sol1} satisfies
\begin{equation}
\label{LP:Lya}
\lim\sup_{x\to\pm\infty} \frac {\ln|ED(u;x)|}{|x|}\le C_N\rho^{-N}.
\end{equation}
\end{corollary}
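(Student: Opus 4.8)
The plan is to deduce the estimate~\eqref{LP:Lya} directly from Corollary~\ref{LP:cor1} by a chaining argument over consecutive intervals of length $\rho^{N}$. The first (and essentially only) preliminary observation is that $ED(u;x)=|u(x)|^{2}+\rho^{-2}|u'(x)|^{2}$ is strictly positive for every $x\in\R$: if it vanished at some point $x_{0}$ then $u(x_{0})=u'(x_{0})=0$, and since $u$ solves the second order ODE~\eqref{LP:sol1}, uniqueness for the Cauchy problem would force $u\equiv 0$, contradicting non-triviality. In particular $\ln|ED(u;x)|=\ln ED(u;x)$ is finite for all $x$, and Corollary~\ref{LP:cor1} may be applied to any pair of base points.

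Now fix $N$, let $C_{N}$ denote the constant furnished by Corollary~\ref{LP:cor1}, and consider $x>0$. Set $k:=\lfloor x\rho^{-N}\rfloor$ and introduce the chain of points $x_{0}=0$, $x_{j}=j\rho^{N}$ for $1\le j\le k$, and $x_{k+1}=x$, so that consecutive points lie at distance at most $\rho^{N}$. Applying Corollary~\ref{LP:cor1} to each consecutive pair $(x_{j},x_{j+1})$ and multiplying the resulting inequalities gives $ED(u;x)\le ED(u;0)\,(1+C_{N}\rho^{-1})^{k+1}$; taking logarithms and using $\ln(1+t)\le t$ then bounds $\ln ED(u;x)$ by $\ln ED(u;0)+(k+1)C_{N}\rho^{-1}\le \ln ED(u;0)+(x\rho^{-N}+1)C_{N}\rho^{-1}$. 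Dividing by $x$ and letting $x\to+\infty$ kills the two $O(1/x)$ contributions and leaves $\limsup_{x\to+\infty}\frac{\ln ED(u;x)}{x}\le C_{N}\rho^{-N-1}$, which is $\le C_{N}\rho^{-N}$ for $\rho>1$. The case $x\to-\infty$ is treated identically, chaining instead through the points $x_{j}=-j\rho^{N}$; Corollary~\ref{LP:cor1} applies with no modification since it requires only $|b-a|\le\rho^{N}$, with no constraint on the sign of $b-a$. Combining the two one-sided bounds yields~\eqref{LP:Lya}.

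I do not expect any real obstacle here: the statement is, as the text says, an immediate consequence of Corollary~\ref{LP:cor1}. The only points that require any care are the positivity of $ED(u;\cdot)$ (handled above by ODE uniqueness) and the elementary bookkeeping that reaching distance $|x|$ takes $\sim|x|\rho^{-N}$ chaining steps, each contributing a multiplicative loss $1+C_{N}\rho^{-1}$, so that after taking logarithms and dividing by $|x|$ one recovers exactly the claimed exponential rate $\rho^{-N}$ (in fact the slightly stronger $\rho^{-N-1}$).
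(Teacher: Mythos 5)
Your proposal is correct and follows exactly the route the paper intends: Corollary~\ref{LP:corLya} is stated there as an immediate consequence of Corollary~\ref{LP:cor1}, and the implicit argument is precisely your chaining of the estimate~\eqref{e:energyEstimate} over $\sim|x|\rho^{-N}$ intervals of length $\rho^{N}$, together with the observation that $ED(u;\cdot)>0$ by ODE uniqueness. Your bookkeeping (multiplying the factors $1+C_N\rho^{-1}$, taking logs, dividing by $|x|$) even yields the slightly stronger rate $C_N\rho^{-N-1}$, which of course implies~\eqref{LP:Lya}.
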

\referee{Indeed, it is easy to see from Corollary~\ref{LP:cor1} that for any $N$ and any large enough $\rho$ the energy density cannot grow or decay by more than a factor of $2$ over distance $\rho^N$.}

As was noticed in \cite{DeFo:86}, a trivial consequence of this is the following bound on the Lyapunov exponents. Consider the situation when $Q$ is a random potential sampled from a uniformly $\USB(\R)$ family of potentials. 
Then (under standard conditions on the randomness) with probability one the spectrum of $H$ is pure point and the limit of the LHS of \eqref{LP:Lya} exists for each eigenfunction; this limit is called {\it the Lyapunov exponent}. In this case, Corollary \ref{LP:corLya} shows that the Lyapunov exponent decays faster than any power of $\rho$ as $\rho\to\infty$. 

One interpretation of \referee{these corollaries} is that, despite the possibility that the high-energy spectrum of $H$ may have  point or singular continuous components, it `wants' to be absolutely continuous. 

\referee{
We will reformulate Corollaries~\ref{c:1} to~\ref{LP:corLya} in semiclassical language and prove them in Section~\ref{s:consequences}.}

\subsection{\referee{Formulation of results on the local density of states for semiclassical operators}}
\label{s:semiFormulate}

Throughout most of this paper, we prefer to work in the semiclassical setting, studying a family of operators depending on a small parameter $\hbar>0$, where one should think of $\hbar$ as $\rho^{-1}$. 
 \referee{When confusion may arise between a semiclassical object and its non-semiclassical counterpart, we denote the semiclassical object with bold letters. }
\referee{
\begin{definition}

We say that $\mathbf{Q}=\mathbf{Q}(\hbar):\mc{C}_c^\infty(\mathbb{R}^d) \to \mc{D}'(\mathbb{R}^d)$ is a \emph{semiclassical differential operator of order $m$ with uniformly $C^k$ bounded coefficients} and write $\mathbf{Q}\in \sem{Diff}_{k}^m(\mathbb{R}^d)$ if
$$
[\mathbf{Q}u](x)=\sum_{|\beta|\leq m} \mathbf{q}_\beta(x;\hbar)(\hbar D_x)^\beta u(x)
$$
and there are $\mathbf{q}_{\beta,l}\in \USB(\mathbb{R}^d)$, $l=0,1,\dots$ independent of $\hbar$ such that for all $N$
\begin{gather}
\label{e:semiDiff}
 \|\mathbf{Q}\|_{\sem{Diff}_{k}^m,N}:=\sup_{0<\hbar<1}\hbar^{-N}\sum_{|\alpha|\leq k}\Big\|\partial_x^\alpha\Big( \mathbf{q}_\beta(\,\cdot\,;\hbar)-\sum_{l=0}^{N-1}\mathbf{q}_{\beta,l}(\cdot)\hbar^l\Big)\Big\|_{L^\infty}<\infty.
\end{gather}
We endow $\sem{Diff}_k^m$ with the seminorms~\eqref{e:semiDiff}. We also denote by $\sem{Diff}^m(\mathbb{R}^d)=\cap_k\sem{Diff}^m_{k}(\mathbb{R}^d)$ the space of semiclassical differential operators of order $m$ with uniformly smooth bounded coefficients and endow it with the topology induced by the seminorms~\eqref{e:semiDiff}.
\end{definition}
}
 Finally, for a self-adjoint $ \mathbf{Q}\in \sem{Diff}^1_{k}(\mathbb{R}^d)$ we denote 
\begin{equation}
\label{e:semiHQ}
\semOp{\mathbf{Q}}:=-\hbar^2\Delta +\hbar \mathbf{Q}.
\end{equation}

\referee{
\begin{definition}
For $\omega\in \mathbb{R}$, we define
$$
\sem{E}(\semOp{\mathbf{Q}})(\omega):=1_{(-\infty, \omega^2]}(\semOp{\mathbf{Q}}).
$$
 to be the spectral projector onto the spectrum of $\semOp{\mathbf{Q}}$ below $\omega^2$ and the \emph{spectral function for $\semOp{\mathbf{Q}}$} to be its integral kernel
\begin{equation}
\label{e:semiSpectralFunction}
\sem{E}(\semOp{\mathbf{Q}})(\omega\,;\,x,y):=1_{(-\infty, \omega^2]}(\semOp{\mathbf{Q}})(x,y).
\end{equation}
  \end{definition}}

We note that if $Q\in \Diff^1_{k}(\mathbb{R}^d)$, then 
\bee
\label{LP:semi}
\hbar^2\normOp{Q}=\semOp{\mathbf{Q}}
\ene 
for some $\mathbf{Q}\in \sem{Diff}^1_{k}(\mathbb{R}^d)$ and, thus, $E(\normOp{Q})(\hbar^{-1})=\sem{E}(\semOp{\mathbf{Q}})(1)$. However, the opposite is not necessarily true: there are operators $\mathbf{Q}\in\sem{Diff}^1_{k}$ that cannot be obtained using \eqref{LP:semi} from any operator $Q\in \Diff^1_{k}$. \referee{For instance, $\mathbf{H}(\mathbf{Q})$ with $\mathbf{Q}=\hbar V(x)$ for some $V\in \USB$ cannot be written as $\hbar^2H(Q)$ for some $Q\in \Diff_k^1$ since the zeroth order term in $\hbar^2H(Q)$ is $O(\hbar^2)$.} 
As a result, we can recover our Theorems~\ref{t:USBAsymptoticsNoh} and~\ref{t:USBAsymptoticsOffNoh} by putting $\hbar=\rho^{-1}$, $\omega=1$ in the following, more general, results about the asymptotic behaviour of \referee{$\sem{E}(\semOp{\mathbf{Q}})$}. \referee{The next two theorems assume $d=1$.}
\begin{theorem}
\label{t:USBAsymptotics}
Let \referee{\st{$d=1$ and}} $N,\referee{\tilde{M}},a,b>0$ with $a\leq b$. Then there is $K>0$ such that for any bounded subsets $\mc{Q}\subset \sem{Diff}^1_{K}(\mathbb{R})$, there is $C>0$ such that for all formally self-adjoint $\mathbf{Q}\in \mc{Q}$ and \referee{$\alpha,\beta\in \mathbb{N}$ with} $\alpha,\beta\leq \referee{\tilde{M}}$, there are $\coA_{j,\alpha,\beta}\in \referee{\USB}( [a,b]\times \mathbb{R})$ such that for all $x\in \mathbb{R}$, $\omega\in[a,b]$, we have
\begin{equation}
\label{e:semiAsymptotic2}
\Big|\partial_x^\alpha \partial_y^\beta \sem{E}(\semOp{\mathbf{Q}})(\omega\,;\,x,y)|_{y=x}- \sum_{j=0}^{N-1} \coA_{j,\alpha,\beta}(\omega,x) \hbar^{-1-|\alpha|-|\beta|+j}\Big|\leq C \hbar^{-1-|\alpha|-|\beta|+N}.
\end{equation}
Moreover, $\coA_{0,0,0}=\frac{\omega^2}{\pi}$, $\coA_{2j+1,0,0}\equiv 0$ for $j\geq 0$, $\coA_{j,0,0}(x)$ can be computed explicitly in terms of the coefficients of $\mathbf{Q}$ and its derivatives at $x$, $\coA_{0,\alpha,\beta}\equiv 0$ for $\alpha+\beta$ odd, and $\coA_{0,\alpha,\beta}\equiv\frac{(-1)^\beta\omega^2}{\pi(\alpha+\beta+1)}$ for $\alpha+\beta$ even.
\end{theorem}

\begin{theorem}
\label{t:USBAsymptoticsOff}
Let \referee{\st{$d=1$ and}} $N,\referee{\tilde{M}},\delta, R,a,b>0$ with $a\leq b$. Then there is $K>0$ such that for any bounded subsets $\mc{Q}\subset \sem{Diff}^1_{K}(\mathbb{R})$, there is $C>0$ such that for all formally self-adjoint  $\mathbf{Q}\in \mc{Q}$ there are $\coB^+_{j}(\omega,x,y)$ and $\coB^-_{j}(\omega,x,y)$, $j=0,1,\dots $ such that for all $\alpha,\beta\in \mathbb{N}$ \referee{with $\alpha,\beta\leq \referee{\tilde{M}}$} and all $(x,y)\in \mathbb{R}$, with $\hbar^{1-\delta}\leq |x-y|\leq R$, $\omega\in[a,b]$, we have
\begin{multline*}
\Big|\partial_x^\alpha\partial_y^\beta\Big(\sem{E}(\semOp{\mathbf{Q}})(\omega\,;\,x,y)- \sum_{j=0}^{N-1}\hbar^{j} \Big(e^{i|x-y|\omega/\hbar}\coB^+_{j}(\omega,x,y)+e^{-i|x-y|\omega/\hbar}\coB^-_{j}(\omega,x,y)\Big)\Big|\\\leq C \hbar^{N-|\alpha|-|\beta|}|x-y|^{-N}.
\end{multline*}
Moreover, $\coB^{\pm}_{0}\equiv\pm\frac{1}{2i\pi|x-y|}$. 
\end{theorem}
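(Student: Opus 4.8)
The plan is to prove Theorem~\ref{t:USBAsymptoticsOff} by the same reduction that yields Theorem~\ref{t:USBAsymptotics}, only carrying out the final model computation off the diagonal. Fix $x_{0}\in\mathbb{R}$; since the hypotheses on $Q$ are translation-uniform, all constants below will be uniform in $x_{0}$, so it suffices to treat $y$ with $|x_{0}-y|\le R$. First I would use the off-diagonal analogue of the mass-transport estimate~\eqref{LP:j} --- the wave propagators of $\semOp{Q}$ and $\semOp{\widetilde{Q}}$ applied to data supported in $[x_{0}-R,x_{0}+R]$ agree to $O(\hbar^{N'})$ for $|t|\le\hbar^{-N'}$ by finite speed of propagation, and the spectral function is read off from them by the Fourier-inversion argument of Section~\ref{s:comparison} --- to replace $Q$ by a $\hbar^{-N'}$-periodic potential $\widetilde{Q}$ agreeing with $Q$ to order $\hbar^{N'}$ on $B(x_{0},\hbar^{-N'})$; this changes $\partial_{x}^{\alpha}\partial_{y}^{\beta}\sem{E}(\semOp{Q})(x,y,\omega)$ by $O(\hbar^{N})$ for $x$ near $x_{0}$. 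Then I would run the onion-peeling gauge transform on $\semOp{\widetilde{Q}}$, producing a unitary $U=U(\hbar)$ with $U^{-1}\semOp{\widetilde{Q}}U=\mathbf{a}(\hbar D)+\mathfrak{R}$, where $\mathbf{a}(\xi;\hbar)=\xi^{2}+\hbar\,\mathbf{a}_{1}(\xi)+\hbar^{2}\mathbf{a}_{2}(\xi)+\cdots$ is a constant-coefficient classical symbol and $\mathfrak{R}$ is negligible for an $O(\hbar^{N})$ expansion; by the functional calculus, $\sem{E}(\semOp{\widetilde{Q}})(\omega)=U\,\sem{E}(\mathbf{a}(\hbar D))(\omega)\,U^{*}+O(\hbar^{N})$ locally in $(x,y)$.

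The model is explicit. Since $\mathbf{a}(\xi;\hbar)\to\xi^{2}$ and $\omega\in[a,b]$ is bounded away from $0$, the sublevel set $\{\xi:\mathbf{a}(\xi;\hbar)\le\omega^{2}\}$ is an interval $[\eta_{-},\eta_{+}]$ with $\eta_{\pm}=\eta_{\pm}(\omega;\hbar)=\pm\omega+\sum_{k\ge1}\hbar^{k}c_{k}^{\pm}(\omega)$ (the $c_{k}^{\pm}$ smooth and computable from the $\mathbf{a}_{j}$), so $\sem{E}(\mathbf{a}(\hbar D))(\omega)=\mathbf{1}_{[\eta_{-},\eta_{+}]}(\hbar D)$ has the exact kernel
$$
\frac{1}{2\pi\hbar}\int_{\eta_{-}}^{\eta_{+}}e^{i(x-y)\xi/\hbar}\,d\xi=\frac{e^{i(x-y)\eta_{+}/\hbar}-e^{i(x-y)\eta_{-}/\hbar}}{2\pi i(x-y)}.
$$
Factoring $e^{i(x-y)\eta_{\pm}/\hbar}=e^{\pm i(x-y)\omega/\hbar}\exp\!\bigl(i(x-y)\textstyle\sum_{k\ge1}\hbar^{k-1}c_{k}^{\pm}(\omega)\bigr)$ exhibits this, for $x\ne y$, as the two oscillatory families in the statement with amplitudes that are classical symbols in $\hbar$ (power series in $|x-y|$ over $|x-y|$). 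It remains to conjugate by $U$ and see that the form persists for $c\hbar^{1-\delta}\le|x-y|\le R$: the ``deep'' layers of the onion correspond to pieces of $\widetilde{Q}$ that vary on scales $\gg R$, so over the region of interest the corresponding $e^{i\Psi_{k}}$ act, up to $O(\hbar^{\infty})$, as Fourier multipliers and commute with $\mathbf{1}_{[\eta_{-},\eta_{+}]}(\hbar D)$, while the ``shallow'' conjugations have symbols tending to $0$ and perturb the identity by $O(\hbar)$. A stationary-phase expansion of the double oscillatory integral for $U\,\mathbf{1}_{[\eta_{-},\eta_{+}]}(\hbar D)\,U^{*}$ then keeps the phases $\pm|x-y|\omega/\hbar$, and matching powers of $\hbar$ yields the $\coB_{j}^{\pm}$ and, at $j=0$, the stated value $\pm\tfrac{1}{2\pi i|x-y|}$.

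The rest is bookkeeping. Each $\partial_{x}$ or $\partial_{y}$ differentiates the fast oscillation and costs $\hbar^{-1}$, giving the $\hbar^{-|\alpha|-|\beta|}$ in the remainder; the gain $|x-y|^{-N}$ comes from integrating by parts $N$ times in the phase $(x-y)(\,\cdot\,)/\hbar$ of the error terms, legitimate because $|x-y|\ge c\hbar^{1-\delta}$ makes that phase non-stationary with derivative $\gtrsim\hbar^{-\delta}$; uniformity in $x_{0}$ and over bounded $\mathcal{Q}$ is automatic since every bound depends on only finitely many $\sem{Diff}^{1}_{K}$-seminorms. The step I expect to be the real obstacle is controlling $U$ on off-diagonal oscillatory data and pinning down the leading amplitude $\coB_{0}^{\pm}$: in the strong-GT regime $U$ is not close to the identity in operator norm, so one must show that, together with the $O(1)$ factor produced by the subprincipal shift $\eta_{\pm}(\omega;\hbar)\mp\omega=O(\hbar)$ of the model, its top-order action reduces to $\pm\tfrac{1}{2\pi i|x-y|}$, uniformly under translation of $x_{0}$ and down to the threshold $|x-y|\sim\hbar^{1-\delta}$ at which the pseudodifferential localization of the gauge transform degenerates. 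This is the off-diagonal counterpart of the corresponding step for Theorem~\ref{t:USBAsymptotics}, and the bulk of the work is to carry the pseudodifferential/FIO bookkeeping of the onion-peeling construction through a Schwartz kernel at separated points rather than through a single value on the diagonal.
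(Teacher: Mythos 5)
Your outer reduction is the same as the paper's: periodize via Proposition~\ref{p:distantPerturb}, gauge-transform via Proposition~\ref{p:gauge} to an operator acting as a Fourier multiplier on $\{a\le|\xi|\le b\}$, and use the explicit kernel of the model projector (Lemma~\ref{l:transformedProjector}), whose boundary terms produce the two phases $e^{\pm i\omega|x-y|/\hbar}$. The genuine gap is in how you handle the conjugation by $U$. Your mechanism --- ``deep'' layers act as Fourier multipliers up to $O(\hbar^\infty)$ on the region of interest and commute with $\mathds{1}_{[\eta_-,\eta_+]}(\hbar D)$, while ``shallow'' conjugations are $O(\hbar)$ perturbations of the identity --- is false in the strong-GT regime used here. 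The exponent removing the $k$-th layer lies in $\step^{-1}r_k^{-1}S^{\fcomp}_{r_k,\delta}$, so its symbol has size $\mu_n^{(k+1)\delta}$ and its first $x$-derivative has size $\step^{-1}=\mu_n^{\delta}$; over the unit-scale window containing $x,y$ it is neither $x$-independent up to $O(\hbar^\infty)$ nor does $e^{i\Op{\hbar}(g_k)}$ commute with the model projector to that accuracy, and already $g_0\in\step^{-1}\Spz^{\fcomp}$ is not $O(\hbar)$-small. What replaces this in the paper is Lemma~\ref{l:asymptotics1}: one writes $U^*\partial^\beta\delta_{x_0}$ through the rescalings $T_{r_k}$, localizes with Lemmas~\ref{l:localForm},~\ref{l:waveFront1},~\ref{l:waveFront2} and~\ref{l:frequencyPreserved}, and performs iterated stationary phase, the decisive structural input being the estimates~\eqref{e:varphiForm}--\eqref{e:critical} (each $x_0$-derivative of the accumulated phases gains a factor $r_k$), which force the total phase to vanish on the diagonal, i.e. to equal $(x_0-y_0)\Psi(x_0,y_0,\eta)$ with $\Psi=\eta+O(\hbar\step^{-1})$. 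Without this divisibility by $(x_0-y_0)$ the claimed form $e^{\pm i\omega|x-y|/\hbar}$ times a controllable amplitude does not follow; your own closing paragraph flags exactly this step as unresolved, and the commutation heuristic you offer in its place does not close it.

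Two further points are missing even granting that step. First, the constant-coefficient symbol produced by the gauge transform is \emph{not} classical in $\hbar$: its expansion (and hence that of $G_{\hbar,\pm}(\omega)$, and of the amplitudes) is in powers of $\hbar\step^{-1}$ with coefficients of size $\step^{-k}$ depending on the discrete parameter $n(\hbar)$; one needs $2\delta<\delta'$, the restriction $|x-y|\ge \hbar^{1-\delta'}$, and the gluing argument of~\cite[Lemma 3.6]{PaSh:16} over the admissible choices of $n(\hbar)$ to convert this into an expansion with $\hbar$-independent coefficients $\coB^\pm_j$. Second, the paper does not extract $\coB_0^\pm=\pm\frac{1}{2i\pi|x-y|}$ from the conjugation computation at all; it is pinned down a priori in Appendix~\ref{a:firstTerm} (Lemma~\ref{l:firstTerm}) by comparing with compactly supported perturbations via Proposition~\ref{p:distantPerturb}, which sidesteps precisely the leading-amplitude difficulty you identify.
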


\begin{remark}
In fact, the coefficients in Theorems~\ref{t:USBAsymptotics} and~\ref{t:USBAsymptoticsOff} can be differentiated in $\omega$. Note, however, that the error is in general \emph{not} differentiable in $\omega$. (See also Lemma~\ref{l:derivativesOmega}.)
\end{remark}

\begin{remark}
\referee{Given Theorems~\ref{t:USBAsymptotics} and~\ref{t:USBAsymptoticsOff}, one might wonder whether it is possible to write a single oscillatory integral that is equal to $\sem{E}(\semOp{\mathbf{Q}})(\omega,x,y)$ modulo $O(\hbar^\infty)$ for all $(x,y)$ in any compact set. Unfortunately, we do not see how this is possible using our current methods. See Remark~\ref{r:noGlue} for further explanation. }
\end{remark}

\subsection{Comparison of spectral functions with large perturbations at infinity}

\referee{We now discuss results that hold in any dimension $d\geq 1$.}
In Section~\ref{s:comparison}, we show that, for differential operators, $\mathbf{Q}$, acting on $C^\infty(\mathbb{R}^d)$, one can make large perturbations of $\semOp{\mathbf{Q}}$  `at infinity' without modifying the spectral function of $\semOp{\mathbf{Q}}$ substantially. Indeed, one can even make changes to $\semOp{\mathbf{Q}}$ which completely change the nature of the spectrum of $\semOp{\mathbf{Q}}$ but, nevertheless, result in small changes to the spectral function in compact sets. We postpone the statement of the precise results to Section~\ref{s:comparison} and instead give a simpler \referee{version of these results here}. 

Let $\mathbf{Q}\in \sem{Diff}^{\referee{0}}(\mathbb{R}^d)$ be formally self-adjoint and put 
$$
\referee{\sem{H}_0:=\semOp{\mathbf{Q}}=-\hbar^2\Delta+\hbar \mathbf{q}(x;\hbar).}
$$ 
Let also
\referee{
\begin{equation}
\label{e:H1Temp}
\sem{H}_1:=-\hbar^2\Delta+\hbar\tilde{\mathbf{q}}(x;\hbar)).
\end{equation}}
We assume that \referee{\st{ $\tilde{\mathbf{q}}_1\in \USB(\mathbb{R}^d)$, that}} $\tilde{\mathbf{q}}\geq -C$ for some $C>0$, that $\tilde{\mathbf{q}}\in C^\infty(\mathbb{R}^d)$, \referee{and that $\sem{H}_1$ is essentially self-adjoint}. \referee{Note that $\tilde{\bf{q}}$ need not be bounded above and so is not necessarily $\USB$.}  In our next theorem, we compare the spectral function of $\sem{H}_0$ with that of $\sem{H}_1$ \referee{under certain closeness assumptions on $\mathbf{q}$ and $\tilde{\mathbf{q}}$. (See Example~\ref{example:1}.) }


Let $x\in \mathbb{R}^d$, $0<a<b$, \referee{$Z>0$} and define
\begin{multline*}
T_{\max}(\hbar,x,a,b,\referee{Z})\\
:=\sup \Big\{T>0\,:\, \sup_{\omega\in[a,b]}|\sem{E}(\sem{H}_1)(\omega;x,x)-\sem{E}(\sem{H}_1)(\omega-\hbar T^{-1};x,x)|\leq  \frac{\referee{Z}}{T}\hbar^{1-d}\Big\}.
\end{multline*}

\referee{
\begin{remark}
We use the notation $T_{\max}$ because it determines the minimal scale at which we can smooth the spectral projector while still maintaining control on the error and, hence, will determine the maximal time for which we use the wave propagator in the proof of Theorem~\ref{t:simpleCompare} below.
\end{remark}
}

Let $B(0,R)$ denote the ball of radius $R$ in $\mathbb{R}^d$ centered at $0$ and \referee{let} $\cutMan \in C_c^\infty(B(0,2))$ with $\cutMan \equiv 1$ on $B(0,1)$. Then put $\cutMan_R(x):=\cutMan(R^{-1}x)$. Define also
$$
\delta_k(R;\hbar):=\hbar^{-1}\|(\sem{H}_0-\sem{H}_1)\cutMan_{R}\|_{H_{\hbar}^{-k}\to H_{\hbar}^{-k-1}}+\hbar^{-1}\|(\sem{H}_0-\sem{H}_1)\cutMan_{R}\|_{H_{\hbar}^k\to H_{\hbar}^{k-1}}.
$$
Then a simple consequence of Proposition~\ref{p:distantPerturb} is the following theorem.
\begin{theorem}
\label{t:simpleCompare}
Let $d\geq 1$, $\referee{Z}>0$, $\referee{0<a<b}$ \referee{and suppose that} $\sem{H}_0$ and $\sem{H}_1$ are as above. There is $k>0$ such that for any $\e>0$, $R_0>0$, there are $C>0$ and $\hbar_0>0$ such that for $0<\hbar<\hbar_0$, $R(\hbar)>R_0+2$, $x\in B(0,R_0)$, $\omega\in[a+\e,b-\e]$, $T(\hbar)<\min(T_{\max}(\hbar,x,a,b,\referee{Z}), \hbar^{-1}(R(\hbar)-R_0-2)/2)$, we have
\begin{equation}
\label{e:aprilShowers}
\Big|\sem{E}(\sem{H}_1)(\omega;x,x)-\sem{E}(\sem{H}_0)(\omega;x,x)\Big|\leq C\Big(\frac{\hbar^{1-d}}{T(\hbar)}+\hbar^{-d}T(\hbar)\delta_k(R(\hbar);\hbar)\Big).
\end{equation}
\end{theorem}
\referee{\begin{remark}
\label{r:123}
Below, to make Theorem~\ref{t:simpleCompare} useful, we will find $\mathbf{H}_1$, $T(\hbar)$, and $R(\hbar)$ so that $T(\hbar)\geq \hbar^{-N}$ but $T(\hbar)\delta_k(R(\hbar);\hbar)\leq \hbar^N$ so that the right hand side of~\eqref{e:aprilShowers} is very small. \end{remark}}

In fact, an analogue of Theorem~\ref{t:simpleCompare} holds much more generally and we can, for example, make $\sem{H}_1$ a \referee{pseudodifferential} operator or even replace the infinite end of $\mathbb{R}^d$ by a \referee{boundary lying} sufficiently far away without changing $\sem{E}(\sem{H}_0)$ substantially.  We now give a few examples where we can effectively apply Theorem~\ref{t:simpleCompare}.
\begin{example}
\label{example:1}
\begin{enumerate}
\item[]
\item $\sem{H}_1=\semOp{\mathbf{Q}+\delta(\hbar)[R(\hbar)]^{-2}|x|^2}$. Here,  $\delta_k(R(\hbar);\hbar)\leq C\delta(\hbar)$. Notice that, despite the fact that $\sem{H}_1$ has discrete spectrum, the kernel of its spectral projector is close to that of $\sem{H}_0$ in compact sets.
\item Assume that $\mathbf{Q}=\mathbf{q}(x)$, with. Let $\sem{H}_1=\semOp{\referee{{}^{\mc{M}}\!\mathbf{q}}}$, where
\referee{
$$
\Big|\partial_x^\alpha (\mathbf{q}-{}^{\mc{M}}\!\mathbf{q})(x)\Big|\leq C_\alpha \delta(\hbar)\qquad \text{for}\qquad |x|\leq R(\hbar).
$$}
\item Assume that $\mathbf{Q}= \mathbf{q}(x)$. Our aim is to make $\mathbf{Q}$ periodic. To do this, we introduce ${}^P\!\mathbf{q}\in \USB(\mathbb{R}^d)$, such that ${}^P\!\mathbf{q}$ is periodic and ${}^P\!\mathbf{q}(x)=\mathbf{q}(x)$ for $x\in B(0,\referee{2}R(\hbar))$. We then define $\sem{H}_1=\semOp{{}^P\!\referee{\mathbf{q}}}$.
\referee{This is, in fact, the type of modification we make use of to prove our main theorems. In this case, $\delta_k(R(\hbar);\hbar)=0$ and we will see below that $T_{\max}(\hbar,x,a,b)\geq c_N\hbar^{-N}$ for any $N$ and hence, provided $R(\hbar)\leq C\hbar^{-N}$, we may take $T(\hbar)\geq c\hbar^{-N}$ so that the right-hand side of~\eqref{e:aprilShowers} is small.} 
\end{enumerate}
\end{example}
\referee{\begin{remark} If  $\widehat{\referee{\mathbf{q}}}$ a measure, and the $1$-Wasserstein distance (see e.g.~\cite[Chapter 6]{Vi:09}) between $\widehat{\partial_{x}^\alpha \mathbf{q}}$ and $\widehat{\partial_{x}^\alpha {}^{\mc{M}}\!\mathbf{q}}$ is bounded by $C_\alpha R(\hbar)^{-1}\delta(\hbar)$, then one can check that the conditions in (2) are satisfied. This reformulation in terms of  measures is the reason why we (admittedly somewhat loosely) call ${}^\mc{M}\mathbf{Q}$ the mass transport of $\mathbf{Q}$. \end{remark}}
\begin{remark}
In fact, one can check \referee{a posteriori} from Theorem~\ref{t:USBAsymptotics} that for all of the above cases in 1 dimension and $x\in B(0,R_0)$, we have, for any $N>0$, \referee{there is $Z>0$ such that} $T_{\max}(\hbar,x,a,b,\referee{Z})\gtrsim_N \min(R(\hbar), \hbar^{-N})$. \referee{Indeed Theorem~\ref{t:USBAsymptotics} implies that 
\begin{align*}
&\sem{E}(\semOp{\mathbf{Q}})(\omega; x, x)-\sem{E}(\semOp{\mathbf{Q}})(\omega-\hbar T^{-1}; x, x)\\
&\leq \sum_{j=0}^{N}(f_{j,0,0}(\omega,x)-f_{j,0,0}(\omega-\hbar T^{-1},x))\hbar^{j-1} +C_N\hbar^{N}\\
&\leq \frac{C_N}{T}+C_N\hbar^{N}.
\end{align*}}
\end{remark}

\subsection*{Outline of the paper}
Section~\ref{s:notation} introduces some notation and conventions used throughout the paper. \referee{Section~\ref{s:technical} then introduces some technical lemmata used in the proof.} Next, Section~\ref{s:comparison} proves that changing a differential operator outside a large ball has a small effect on the spectral function at the origin, in particular proving Theorem~\ref{t:simpleCompare}. In Section~\ref{s:pdo}, we review the standard notions of semiclassical pseudodifferential operators and semiclassical Sobolev spaces. We also introduce and collect some facts about an anisotropic pseudodifferential calculus which will be used in the gauge transform procedure.  Section~\ref{s:gauge} implements the parallel-serial gauge transform via a layer peeling argument, and Section~\ref{s:compute} combines the results of the gauge transform and modification of the potential outside a large ball to compute the asymptotic formulae for the spectral function; proving Theorems~\ref{t:USBAsymptotics} and~\ref{t:USBAsymptoticsOff}. Section~\ref{s:consequences} then extracts various consequences of our main theorem on generalized eigenfunctions of Schr\"odinger operators, proving \referee{the semiclassical analogues of} Corollaries~\ref{c:1} to~\ref{LP:corLya}. Finally, Appendix~\ref{a:firstTerm} computes the first term of the asymptotic expansion for the spectral function.
\bigskip

\noindent{\textsc{Acknowledgements:}} The authors would like to thank Y. Karpeshina, G. Rozenblioum, and M. Zworski for comments on an earlier draft which improved the exposition. The authors also thank S. Sodin for pointing out the paper~\cite{DeFo:86}. \referee{We are also grateful to the anonymous referees for their detailed reading of the text and many helpful comments.} J.G. would like to thank the EPSRC for support under Early Career Fellowship:  EP/V001760/1 and grant EP/V051636/1. L.P. is grateful to the EPSRC for support under grants EP/P024793/1 and EP/V051636/1. R.S. acknowledges support from the NSF through grant DMS-1814664.

\section{Basic Notation}
\label{s:notation}

Before proceeding to the main body of the paper, we introduce some notation that will be used throughout the text. 

\subsection{Spaces of smooth functions}
For $A\subset \mathbb{R}^d$, $\{0\}\subset B\subset \mathbb{C}$,  we use the notation
\begin{gather*}
C^\infty(A;B):=\{ u: \mathbb{R}^d\to \mathbb{C}\,|\, u\text{ is smooth},\, \supp u\subset \overline{A},\,u(z)\in B\text{ for all }z\in \mathbb{R}^d\},\\
C_c^\infty(A;B):=\{ u: \mathbb{R}^d\to \mathbb{C}\,|\, u\text{ is smooth},\, \supp u\Subset A,\,u(z)\in B\text{ for all }z\in \mathbb{R}^d\}.
\end{gather*}
When $B=\mathbb{C}$, we sometimes write $C^\infty(A;\mathbb{C})$ and $C_c^\infty(A;\mathbb{C})$ as, respectively, $C^\infty(A)$ and $C_c^\infty(A)$. Furthermore, if $A=\mathbb{R}^d$, we write $C^\infty(\mathbb{R}^d)=C^\infty$ and $C_c^\infty(\mathbb{R}^d)=C_c^\infty$. 

Finally, we write $\mathscr{S}(\mathbb{R}^d)$ for the space of Schwartz functions and $\mathscr{S}'(\mathbb{R}^d)$ for its dual space.

\referee{Below, we will allow functions in the spaces $C_c^\infty$ and $\mathscr{S}$ to depend on the small parameter $\hbar$. In this case, we will assume that the seminorms of these functions are uniformly bounded in $\hbar$ and, in the case of $C_c^\infty$, that the union of their supports is bounded. }

\subsection{Fourier transforms}

For $f\in \mathscr{S}'$, we recall that 
\begin{equation}
\label{e:FourierDef}
\hat{f}(\xi):=\int e^{-i\langle x,\xi\rangle }f(x)dx,\qquad\text{and}\qquad \check{f}(x):=\frac{1}{(2\pi)^d}\int e^{i\langle x,\xi\rangle}f(\xi)d\xi
\end{equation}
denote the Fourier transform of $f$ and the inverse Fourier transform of $f$ respectively.

\subsection{Semiclassical Sobolev spaces}

Next, we define the semiclassically weighted Sobolev spaces, $H_{\hbar}^s(\mathbb{R}^d)$ as the closure of $\mathscr{S}(\mathbb{R}^d)$ with respect to the norm
$$
 \|u\|_{H_{\hbar}^s}^2:=\frac{1}{(2\pi \hbar)^d}\int |\hat{u}(\xi/\hbar)|^2\langle \xi\rangle^{2s}d\xi,\qquad \langle \xi\rangle:=(1+|\xi|^2)^{1/2}.
$$

\subsection{Big O notation}
\referee{For a function $f=f(\hbar):(0,1]\to \mathbb{R}_+$, a family of topological vector spaces $X=X(\hbar)$ with topology induced by the seminorms $\{\|\cdot \|_{\alpha_X(\hbar)}\}_{\alpha_X\in \mc{A}(\hbar)}$, and $u=u(\hbar):(0,1]\to X$ we write $u=O(f(\hbar))_{X}$ when for every $\alpha_X\in \mc{A}$, there exists $C>0$ such that
$$
 \|u\|_{\alpha_X(\hbar)}\leq Cf(\hbar),\, 0<\hbar\leq1.
$$
In a similar way, for two families of Banach spaces $X=X(\hbar)$, $Y=Y(\hbar)$, and $A=A(\hbar):X(\hbar)\to Y(\hbar)$, we write $A=O(f(\hbar))_{X\to Y}$ when $A=O(f(\hbar))_{\mc{B}(X,Y)}$. Here $\mc{B}(X,Y)$ denotes the Banach space of bounded operators from $X$ to $Y$. We write $ u=O(\hbar^\infty)_X$  if $u=O(\hbar^N)_{X}$ for any $N>0$.}

\subsection{Cutoffs}

Throughout the text, we require a variety of smooth cutoff functions. Although we do not wish to fix these cutoffs once and for all, we introduce notation that indicates the role of each cutoff function.
\begin{center}
\renewcommand{\arraystretch}{1.3}
\begin{tabular}{|c|c|}
\hline
$\cutMan$ & Cutoffs in the physical space (where $x$ lives)\\
\hline
$\cutFreq$ & Cutoffs in the momentum space (where $\xi$ lives)\\
\hline
$\cutPhase$ & Cutoffs in the phase space (where $(x,\xi)$ lives)\\
\hline
$\cutEnergy$ & Compactly supported cutoffs in energy\\
\hline
$\cutEnergyFourier$ & Small scale ($\ll \hbar $) cutoffs in energy, usually with compact Fourier support\\
\hline
$\cutThetaFreq$ & Cutoffs in the dual to the physical space (usually with variable $\theta$)\\
\hline
$\auxCut$ & Other types of auxilliary cutoffs\\
\hline
\end{tabular}
\end{center}

When using these cutoffs in our analysis, we will not distinguish between the cutoff and the operator of multiplication by the cutoff. For example, we will write $\cutMan$ for both a function $\cutMan\in C^\infty(\mathbb{R}^d)$ and for the operator of multiplication by $\cutMan$ given by $[\cutMan(u)](x):=\cutMan(x)u(x).$

\subsection{Conventions on a discrete valued large parameter}
\label{s:muN}

Throughout the text, we work with functions of a small parameter $\hbar\in (0,1\referee{]}$. We will also want a discrete valued large parameter which plays the role of the scale of $\hbar^{-1}$.  To this end, we \referee{let} 
\begin{equation}
\label{e:defMoo}
\mu_n:=2^n
\end{equation}
 and work with functions $n=n(\hbar):(0,1\referee{]}\to \mathbb{N}$ such that 
\begin{equation}
\label{e:muNBound}
\frac{1}{1024}\hbar^{-1}\leq \mu_{n(\hbar)}\leq  1024\hbar^{-1}.
\end{equation}
In other words, 
$$
-10\leq n(\hbar)+\log_2\hbar\leq 10.
$$
The main ingredient in the proof of Theorem~\ref{t:USBAsymptotics} (and similarlty for Theorem~\ref{t:USBAsymptoticsOff}) is to establish that there are $\tilde{f}_{j,\alpha,\beta}:[a,b]\times \mathbb{R}\times \mathbb{N}\to \mathbb{R}$ such that for any $n(\hbar)$ satisfying~\eqref{e:muNBound}, we have
\begin{equation*}
\Big|\partial_x^\alpha \partial_y^\beta \sem{E}(\semOp{\mathbf{Q}})(\omega\,;\,x,y)|_{y=x}- \sum_{j=0}^{N-1}\tilde{ \coA}_{j,\alpha,\beta}(\omega,x,n(\hbar)) \hbar^{-1-|\alpha|-|\beta|+j}\Big|\leq C \hbar^{-1-|\alpha|-|\beta|+N}.
\end{equation*}
\referee{Since}, for each $\hbar\in (0,1\referee{]}$ small enough, we have several possible choices of $n(\hbar)$, we will then be able to use gluing arguments from~\cite{PaSh:16} to establish Theorem~\ref{t:USBAsymptotics}. 

\referee{Our goal is to obtain a full, polyhomogenous expansion of the spectral function in powers of $\hbar$. The reason we do not directly work with $\lp=\hbar^{-1}$ instead of $\lp_{n(\hbar)}$ is that, with the former choice, many of the operations we perform would not preserve polyhomogeneity in $\hbar$; for example, the decomposition used in the onion peeling argument does not preserve polyhomogeneity if $\lp=\hbar^{-1}$. We would like to emphasise that choosing $\lp=\hbar^{-1}$ still results in a formula for the spectral projector, it is just not clear that this formula has an expansion in powers of $\hbar$. Our method for recovering polyhomogeneity is inspired by that in~\cite{PaSh:09} and is based on the idea that this decomposition should \emph{not} depend on $\hbar$ for $\hbar$ in some small interval and hence, since we have several choices of the decomposition, we may glue the asymptotics in each interval. The reader familiar with~\cite{PaSh:09,PaSh:12,PaSh:16} should notice that $\lp_n$ here plays the same role as $\rho_n$ there.}

\referee{
\section{Abstract technical estimates}
\label{s:technical}
In this section we present technical estimates inspired by~\cite{PaSh:16} which will be used below.

Before proceeding to these estimates, we discuss the natural requirements for the spectral function of two operators to be close. First, notice that closeness of two operators, $H_1$ and $H_2$ in \emph{any} norm does not suffice for the spectral projectors, $E(H_j)(\lambda)$ to be close to each other. Indeed, an eigenvalue of $H_1$ may be perturbed out of $(-\infty,\lambda]$ and hence, a small perturbation may cause a large change in the spectral projector.  In addition to closeness of $H_1$ and $H_2$, we use the fact that
$$
E(H_1)(\lambda,x,y)=\langle E(H_1)(\lambda)\delta_x,\delta_y\rangle.
$$ 
In particular, an important ingredient in the proof is the smallness of 
\begin{equation}
\label{e:lipLP}
E(H_2;(\lambda-\iota,\lambda+\iota])\delta_x= E(H_2)(\lambda+\iota,x,x)-E(H_2)(\lambda-\iota,x,x)
\end{equation}
for small $\iota$.

We first recall~\cite[Lemma 4.2]{PaSh:16} that states that if two operators are close, then one can control the difference between their spectral projectors in the strong topology.
\begin{lemma}
\label{l:abstralSpectral}
Let $\mc{H}$ be a Hilbert space, $a\in \mathbb{R}$, $s\geq 0$, and $H_1, H_2$ be self-adjoint operators on $\mc{H}$ with $H_j\geq a$ for $j=1,2$. Define 
$$
\e:=\|(H_1-H_2)(H_2+(1-a)I)^s\|.
$$
Then, if $\e<1$, for any $f\in \mc{H}$, $\lambda\geq a+1$, and $\iota>0$,  we have
\begin{multline*}
\|[E(H_1)(\sqrt{\lambda})-E(H_2)(\sqrt{\lambda})]f\|_{\mc{H}}\leq 2\|E(H_2;[\lambda-\iota,\lambda+\iota])f\|_{\mc{H}}\\
+\frac{2\pi \e}{\iota}\Big(\|E(H_2)(\sqrt{\lambda})f\|_{\mc{H}}+\|(H_2+(1-a)I)^{-s}f\|_{\mc{H}}\Big).
\end{multline*}
\end{lemma}

We will actually need a slightly stronger version of Lemma~\ref{l:abstralSpectral} which, heuristically, says that if two operators are close near a particular energy level, then their spectral projectors are close in the strong topology near that energy level (see Lemma~\ref{l:abstralSpectralNew}). First, we prove the following lemma. 
\begin{lemma}
\label{l:abstralSpectralA}
Let $\mc{H}$ be a Hilbert space, $a\in \mathbb{R}$, $s\geq 0$, $J\subset \mathbb{R}$ an interval and $H_1, H_2$ be self-adjoint operators on $\mc{H}$ with $H_j\geq a$ for $j=1,2$. Define $J_-:=J^c\cap (-\infty,\inf J]$ and $J_+:=J^c\cap [\sup J,\infty)$, and
\begin{equation}
\label{e:theEpsilons}
\begin{aligned}
\e_1&:=\|E(H_1;J_-)(H_1-H_2)E(H_2;J_+)(H_2+(1-a)I)^s\|,\\
\e_2&:=\|(H_1-H_2)E(H_2;J)(H_2+(1-a)I)^s\|,\\
\e_3&:=\|E(H_1;J)(H_1-H_2)(H_2+(1-a)I)^s\|.
\end{aligned}
\end{equation}
Suppose that $\lambda -a \geq 1$ and $[\lambda-\iota,\lambda+\iota]\subset J$. Then,
$$
\|E(H_1;(-\infty,\lambda-\iota])E(H_2;[\lambda+\iota;\infty))(H_2-a+1)^s\|\leq \frac{\pi (\e_1+\e_2+\e_3)}{\iota}.
$$
\end{lemma}
\begin{proof}
We follow the proof of~\cite[Lemma 4.1]{PaSh:16}. Assume that
\begin{equation}
\label{e:phiForm}
\phi = E(H_1;(-\infty,\lambda-\iota])\phi,\qquad (H_2-a+1)^s\psi=E(H_2;[\lambda+\iota,\infty))(H_2-a+1)^s\psi,
\end{equation}
with $\|\phi\|=\|\psi\|=1$. Then we need to establish $|(\phi,(H_2-a+1)^s\psi)|\leq \frac{\pi(\e_1+\e_2+\e_3)}{\iota}$. Following the algebra in~\cite[Lemma 4.1]{PaSh:16}, we have 
\begin{align*}
(\phi, (H_s-a+1)^s\psi)&=\int_\gamma \langle (H_1-z)^{-1}\phi,(H_2-a+1)^s\psi\rangle dz\\
&=\int_\gamma \langle \phi, (H_1-\bar{z})^{-1}(H_2-a+1)^s\psi\rangle dz\\
&=\int_\gamma\langle\phi,(H_2-\bar{z})^{-1}+(H_1-\bar{z})^{-1}(H_1-H_2)(H_2-\bar{z})^{-1}(H_2-a+1)^s\psi\rangle dz\\
&=\int_\gamma \langle \phi, (H_1-\bar{z})^{-1}(H_1-H_2)(H_2-\bar{z})^{-1} (H_2-a+1)^s\psi\rangle dz\\
&=\int_\gamma ((H_1-z)^{-1}\phi, (H_1-H_2)(H_2-a+1)^s(H_2-\bar{z})^{-1}\psi\rangle dz
\end{align*}
where $\gamma=\gamma_N$ is the closed rectangular contour in the complex plane symmetric about $\mathbb{R}$ and intersecting $\mathbb{R}$ at $\lambda$ and $-N$ where $N>-a$ is large. Note that in the next to last line we have used that with $\bar{\gamma}$ the contour conjugate to $\gamma$,
$$
\int_{\bar{\gamma}}\langle (H_2-\bar{z})^{-1}E((\lambda+\iota,\infty];H_2)d\bar{z}=0.
$$
Now,
\begin{multline*}
(H_1-H_2)(H_2-a+1)^s\\
= E(H_1;J)(H_1-H_2)(H_2+(1-a)I)^s+E(H_1;J^c)(H_1-H_2)E(H_2;J^c)(H_2+(1-a)I)^s\\
+E(H_1;J^c)(H_1-H_2)E(H_2;J)(H_2+(1-a)I)^s.
\end{multline*}
Therefore, we need only to estimate the three terms
\begin{align*}
I&:=\Big|\int_\gamma ((H_1-z)^{-1}\phi, E(H_1;J^c)(H_1-H_2)E(H_2;J^c)(H_2-a+1)^s(H_2-\bar{z})^{-1}\psi)dz\Big|,\\
II&:= \Big|\int_\gamma ((H_1-z)^{-1}\phi, E(H_1;J)(H_1-H_2)(H_2-a+1)^s(H_2-\bar{z})^{-1}\psi)dz\Big|,\\
III&:=\Big|\int_\gamma ((H_1-z)^{-1}\phi, E(H_1;J^c)(H_1-H_2)E(H_2;J)(H_2-a+1)^s(H_2-\bar{z})^{-1}\psi)dz\Big|.
\end{align*}
For $I$, we observe using~\eqref{e:phiForm} that
\begin{align*}
I&= \Big|\int_\gamma ((H_1-z)^{-1}\phi, E(H_1;J_-)(H_1-H_2)E(H_2;J_+)(H_2-a+1)^s(H_2-\bar{z})^{-1}\psi)dz\Big|\\
&\leq \e_1\Big(\int_\gamma \|(H_1-z)^{-1}\phi\|^2|dz|\Big)^{1/2}\Big(\int_\gamma \|(H_2-z)^{-1}\psi\|^2|dz|\Big)^{1/2}\leq \frac{\pi\e_1}{\iota}.
\end{align*}
Similarly, we estimate 
$$
II+III\leq \frac{\pi (\e_2+\e_3)}{\iota}
$$
to finish the proof.
\end{proof}

The proof of the next lemma is identical to that of~\cite[Lemma 4.2]{PaSh:16} after replacing references to~\cite[Lemma 4.1]{PaSh:16} with references to Lemma~\ref{l:abstralSpectralA}.
\begin{lemma}
\label{l:abstralSpectralNew}
Let $\mc{H}$ be a Hilbert space, $a\in \mathbb{R}$, $s\geq 0$, and $H_1, H_2$ be self-adjoint operators on $\mc{H}$ with $H_j\geq a$ for $j=1,2$. Define $\e_1,\e_2,\e_3$ as in~\eqref{e:theEpsilons}.
Then, if $\e_1+\e_2+\e_3<1$, for any $f\in \mc{H}$, $\lambda\geq a+1$, and $\iota>0$, 
\begin{multline}
\label{e:strongProject}
\|[E(H_1)(\sqrt{\lambda})-E(H_2)(\sqrt{\lambda})]f\|_{\mc{H}}\leq 2\|E(H_2;[\lambda-\iota,\lambda+\iota])f\|_{\mc{H}}\\
+\frac{2\pi (\e_1+\e_2+\e_3) }{\iota}\Big(\|E(H_2)(\sqrt{\lambda})f\|_{\mc{H}}+\|(H_2+(1-a)I)^{-s}f\|_{\mc{H}}\Big).
\end{multline}
\end{lemma}
\begin{remark}
Given an operator $H_1$ our strategy will be to find an operator $H_2$ so that: 
\begin{itemize}
\item $H_1$ is close to $H_2$ in some sense
\item \eqref{e:lipLP} is small and hence the first term on the right-hand side of~\eqref{e:strongProject} is small.
\end{itemize} In fact, the smallness of~\eqref{e:lipLP} will be guaranteed by the existence of a full asymptotic expansion for the spectral function of $H_2$.
\end{remark}

We now state a small generalisation of~\cite[Lemma 3.6]{PaSh:16} which will be used to glue asymptotic expansions that work in closed intervals of $\hbar$ into a uniform asymptotic expansion for $\hbar\in (0,1]$. While the proof is almost identical to ~\cite[Lemma 3.6]{PaSh:16}, we present it here in Appendix~\ref{s:glue} for completeness and to accommodate the semi-classical notations from the present text.
\begin{lemma}[The gluing lemma]
\label{l:glue}
Let $p,\iota>0$,  $\xi_1,\xi_2\in\mathbb{R}$ with $\xi_1\neq \xi_2$, and suppose that for any $M>0$, there is $N>0$ such that 
\begin{equation}\label{preasymp}
f(\hbar)= e^{\frac{i}{\hbar}\xi_1}\sum_{j=0}^{N} a_{j,n(\hbar)} \hbar^{jp} + e^{\frac{i}{\hbar}\xi_2}\sum_{j=0}^{N} b_{j,n(\hbar)}  \hbar^{jp} +O(\lp_{n(\hbar)}^{-M}),
\end{equation}
for 
$$
-10\leq n(\hbar)+\log_2\hbar\leq 10,
$$
where $a_{j,n},b_{j,n}\in \mathbb{C}$, $j=0,1,\dots,$ and
\begin{equation}\label{asympest}
|a_{j,n}|+|b_{j,n}|\leq C_{j}\lp_n^{jp(1-\iota)}.
\end{equation}
Then there are $a_j',b_j'\in \mathbb{C}$, $j=0,1,\dots$ and for any $M>0$ there is $N'>0$ such that 
\begin{equation}\label{asymp}
f(\hbar)= e^{\frac{i}{\hbar}\xi_1}\sum_{j=0}^{N'} a_{j}' \hbar^{jp} + e^{\frac{i}{\hbar}\xi_2}\sum_{j=0}^{N'} b_{j}'  \hbar^{jp} +O(\hbar^{M}).
\end{equation}
If \eqref{preasymp} is uniform on a compact subset of $0<|\xi_1-\xi_2|<\infty$ then \eqref{asymp} is uniform on the same set.
\end{lemma}
}

\section{Comparison of spectral functions}
\label{s:comparison}

In this section, we show that one can make large perturbations of the potential outside a very large ball without modifying the local density of states for $\semOp{\mathbf{Q}}$ substantially. 
  
 In our applications, \referee{$\mathbf{Q}\in \sem{Diff}^1(\mathbb{R}^d)$ and} the change we make to $\semOp{\mathbf{Q}}$ replaces $\mathbf{Q}$ by a differential operator with periodic coefficients, $\referee{{}^P}\mathbf{Q}$, and hence does not change the \referee{domain of $\semOp{\mathbf{Q}}$}. However, we will see below that the fact that waves for $\semOp{\mathbf{Q}}$ travel at finite speed implies that \emph{any} reasonable perturbation of $\semOp{\mathbf{Q}}$ made outside of a large ball affects the local density of states for $\semOp{\mathbf{Q}}$ only mildly.

 We now set up some abstract assumptions with which we work throughout this section. Let $\referee{\mathscr{M}}$ be a smooth (potentially non-compact) manifold \referee{without boundary} with a Riemannian metric $g$ and 

 \begin{equation}
 \label{e:H0a}
 \sem{H}_0:=-\hbar^2\Delta_g + \hbar \mathbf{Q}:\mathcal{D}(\sem{H}_0)\to L^2(\referee{\mathscr{M}}),\qquad \mathcal{D}(\sem{H}_0)\subset L^2(\mathscr{M})
 \end{equation}
 with $\mathbf{Q}\in \sem{Diff}^1(\referee{\referee{\mathscr{M}}})$ formally self-adjoint. 
 
We assume that for all $s\in \mathbb{R}$, there is $C_s>0$ such that 
 \begin{equation}
 \label{e:assumed0}
 \|\mathbf{Q}\|_{H_{\hbar}^s\to H_{\hbar}^{s-1}}\leq C_s,\qquad 0<\hbar<1.
 \end{equation}
 
 \referee{
 \begin{definition}
We say that a family of functions $\cutMan=\{\cutMan(\hbar)\}_{0<\hbar<1}$ with $\cutMan(\hbar)\in C^\infty(\referee{\mathscr{M}})$ is \emph{semiclassical USB} and write $\cutMan\in \USB(\referee{\mathscr{M}})$ if for all $s$
 $$
 \sup_{0<\hbar<1}\|\cutMan(\hbar)\|_{H_\hbar^s(\mathscr{M})\to H_\hbar^s(\mathscr{M})}<\infty.
 $$
 \end{definition}
 }

 \referee{We now set up an abstract scheme which will allow us to compare the spectral projector of an operator with that of $\sem{H}_0$.}
 \referee{\begin{definition}
\referee{Let} $x_0\in \referee{\mathscr{M}}$ and a decreasing, positive function $R=R(\hbar)$ (usually, $\lim_{\hbar\to 0^+}R(\hbar)=\infty$) .  Let  $B_{\referee{\mathscr{M}}}(x_0,R(\hbar))$ be the metric ball of radius $R(\hbar)$ around $x_0$. We say that a \emph{family of expanding box Hilbert spaces} is
 $$\mc{H}=\mc{H}(\hbar):=L^2(B_{\referee{\mathscr{M}}}(x_0,R(\hbar)))\oplus \mc{H}_{\infty}$$
for some family of Hilbert spaces $\mc{H}_\infty= \mc{H}_\infty(\hbar)$. We call $\mc{H}_\infty$ the \emph{exterior Hilbert space}. 
\end{definition}
\begin{remark}
In all of the items from Example~\ref{example:1}, $\mc{H}_\infty=L^2(\mathbb{R}^d\setminus B(0,R(\hbar)))$. 
\end{remark}

\begin{definition}
We write $1_{B_{\referee{\mathscr{M}}}(x_0,R(\hbar))}:\mc{H}\to L^2(B_{\referee{\mathscr{M}}}(x_0,R(h)))$ for the orthogonal projection and, for $\cutMan\in\USB(\mathscr{M})$ with $\supp \cutMan\subset B_{\referee{\mathscr{M}}}(x_0,R(\hbar))$ and $(u_1,u_2)\in \mc{H}$, we write
 $$
 \cutMan u=(\cutMan u_1,0),
 $$
 and identify $\cutMan u$ with an element of $L^2(B_{\referee{\mathscr{M}}}(x_0,R(\hbar)))$. 
\end{definition}

}
\referee{
  \begin{definition}
  \label{d:expandBox}
  Let $\mc{H}$ be an expanding box Hilbert space with exterior Hilbert space $\mc{H}_\infty(\hbar)$. Let $\sem{H}_1(\hbar):\mc{H}(\hbar)\to \mc{H}(\hbar)$ be a family of unbounded, self-adjoint operators with dense domain $\referee{\mc{D}_{\hbar}}$.
  For $s\geq 0 $, we let $\referee{\mc{D}_{\hbar}}^s$ be the domain of $\sem{H}_1^s$ with the norm
 $$
 \|u\|_{\referee{\mc{D}_{\hbar}}^s}:=\|u\|_{\mc{H}}+\|\sem{H}_1^su\|_{\mc{H}}
 $$ 
 and for $s<0$, we let $\referee{\mc{D}_{\hbar}}^s:=(\referee{\mc{D}_{\hbar}}^{-s})^*$ with the implied norm. 
We say $\sem{H}_1$ is a \emph{family of expanding box operators for $\sem{H}_0$} if:
  \begin{itemize}
  \item $\sem{H}_1\geq -C_{\sem{H}_1}\hbar$.  
  \item $1_{B_{\referee{\mathscr{M}}}(x_0,R(h))}\referee{\mc{D}_{\hbar}}\subset H_{\hbar}^2(B_{\referee{\mathscr{M}}}(x_0,R(\hbar)))$. 

  \item   For any $\cutMan,\cutMan_+\in \USB(\referee{\mathscr{M}})$, with $\supp \cutMan_+ \subset B_{\referee{\mathscr{M}}}(x_0,R(\hbar))$ and \referee{$\supp (1-\cutMan_+)\cap \supp \cutMan=\emptyset$} the following holds. For all $s>0$ there is $C_{\cutMan,s}>0$ such that
 \begin{equation}
 \label{e:assumed1}
 \| \cutMan \sem{H}_1 (1-\cutMan_+)\|_{\referee{\mc{D}_{\hbar}}^{-s}\to H_\hbar^{s}}+ \|(1-\cutMan_+) \sem{H}_1\cutMan\|_{H_{\hbar}^{-s}\to \referee{\mc{D}_{\hbar}}^s}\leq C_{\cutMan,s}\hbar^s,\qquad 0<\hbar<1.
 \end{equation}
 \item To guarantee that the spectral functions for $\sem{H}_0$ and $\sem{H}_1$ are close near $x_0$, we similarly assume that for all $\cutMan, \cutMan_+$, and $s$ as above there are $C_{\cutMan,s}>0$ and $\referee{\tilde{\delta}}=\referee{\tilde{\delta}}(\hbar):(0,1]\to [0,1]$ such that
 \begin{equation}
 \label{e:assumed}
 \begin{gathered}
\|(\sem{H}_0-\sem{H}_1)\cutMan\|_{H_\hbar^s\to \referee{\mc{D}_{\hbar}}^{(s-1)/2}}\leq C_{\cutMan,s}\hbar \referee{\tilde{\delta}}(\hbar),\\
  \|\cutMan u\|_{H_{\hbar}^s}\leq C_{\cutMan,s}\|u\|_{\referee{\mc{D}_{\hbar}}^{s/2}},\quad u\in \mc{D}_{\hbar}^{s/2},\\ \|\cutMan u\|_{\referee{\mc{D}_{\hbar}}^{s/2}}\leq C_{\cutMan,s}\|\cutMan_+u\|_{H_\hbar^s},\quad u\in H_\hbar^s(\referee{\mathscr{M}}),
 \end{gathered}\quad\text{ for all }0<\hbar<1.
 \end{equation}

 \end{itemize}
 
   \end{definition}
 } 

  \begin{remark}
 Since our operators, $\sem{H}_1$, will be close to $\sem{H}_0$ on $L^2(B_{\referee{\mathscr{M}}}(x_0,R(\hbar)))$,  we think of the subspace $\mc{H}_\infty$ as the part of $\mc{H}$ `at infinity'.
 \end{remark}
 \begin{remark}
 The function $\referee{\tilde{\delta}}(\hbar)$ controls how closely $\sem{H}_1$ approximates $\sem{H}_0$ \referee{on the ball of radius $R(\hbar)$ around $x_0$}. Similarly, we will later choose a function $T=T(\hbar):(0,1]\to (0,\infty) $ which controls the length of time we will propagate waves in our arguments.
 \end{remark}
 
 \referee{
 \begin{remark}
 The language used in defining expanding box operators is inspired by the black box formalism from~\cite{SjZw:91}, but notice that in our setting the `black box' is exterior rather than interior.
 \end{remark}
 }

 We now provide some examples of $\sem{H}_1$ when $\sem{H}_0=\semOp{Q}$ for some $Q\in \USB(\mathbb{R}^d)$.
 
 \noindent{\bf{Examples:}}
\begin{enumerate}
\item\label{ex:1} $\sem{H}_1=\semOp{Q_1}$ for $Q_1$ a $100R(h)\mathbb{Z}^d$-periodic function with $Q_1(x)=Q(x)$ for $x\in B(0,R(h))$.  In this case, $\referee{\tilde{\delta}}(\hbar)=0$, $\mc{H}=L^2(\mathbb{R}^d)$. This is the transformation we will use to prove Theorems~\ref{t:USBAsymptotics} and~\ref{t:USBAsymptoticsOff}.
\item\label{ex:2} $\sem{H}_1$ is the Dirichlet realization of $\sem{H}_0$ on $B(0,R(h)+1)$ i.e. $\sem{H}_1 =\semOp{Q}$, $\mc{H}=L^2(B(0,R(h)+1))$, $\referee{\mc{D}_{\hbar}}=H_{0}^1(B(0,R(h)+1))\cap H^2(B(0,R(h)+1))$. 
\item\label{ex:3} $\sem{H}_1=\semOp{Q+\referee{\tilde{\delta}}(\hbar)R(\hbar)^{-2}|x|^2}$ with $\mc{H}=L^2(\mathbb{R}^d)$. 
 \end{enumerate}
 Notice that Examples~\ref{ex:2} and~\ref{ex:3} both have \emph{discrete} spectrum, while $\sem{H}_0$ may have pure a.c. spectrum. Nevertheless, our next proposition shows that one can approximate the spectral projector of $\sem{H}_0$ using that of $\sem{H}_1$ (or vice versa).

 
In this section we prove the following proposition which allows us to compare the spectral functions for $\sem{H}_0$ and $\sem{H}_1$.
 \begin{proposition}
 \label{p:distantPerturb}
Let $x,y\in B_{\referee{\mathscr{M}}}(x_0,R_0)\subset \referee{\mathscr{M}}$, $0<a<b$, $R(\hbar)>0$, $\referee{\tilde{\delta}}(\hbar)\geq 0$,  $\frac{a}{2}>\e>0$, $\e\leq T(\hbar)\leq (R(\referee{\hbar})-R_0-2)/2$, $C_{\sem{H}_1}>0$, $C_s>0$ and $C_{\cutMan,s}>0$. Then for all $C_1>0$ there is $C_0>0$ such that the following holds.  Suppose \referee{$H_0$ satisfies~\eqref{e:H0a},~\eqref{e:assumed0}, that $\sem{H}_1$ is a family of expanding box operators for $\sem{H}_0$} and for $0<\hbar<1$, $\omega\in[a-\e,b+\e]$, and $\lambda\in [-\e,\e]$,
\begin{equation}
\label{e:lip}
\begin{aligned}\Big|\sem E(\sem{H}_1)(\omega+\lambda;x,x)-\sem E(\sem{H}_1)(\omega;x,x)\Big|\leq C_1\frac{\hbar^{1-d}}{T(\hbar)}(1+|\hbar^{-1}T(\hbar)\lambda|).
\end{aligned}
\end{equation}
Then for $\omega\in[a,b]$ and $0<\hbar<1$,
\begin{equation}
\label{e:bigBallOk}
\begin{aligned}
|\sem E(\sem{H}_0)(\omega;x,x)-\sem E(\sem{H}_1)(\omega;x,x)|\leq \frac{C_0 \hbar^{1-d}}{T(\hbar)}\referee{\Big(1+\hbar^{-1}\referee{\tilde{\delta}}(\hbar)T^2(\hbar)\Big)}.\end{aligned}
\end{equation}
If, in addition, for all $\alpha,\beta\in \mathbb{N}^d$ with $|\alpha|\leq k,\,|\beta|\leq l$, 
\begin{equation}
\label{e:lip2}
\Big|(\hbar \partial_{x})^{\alpha}(\hbar \partial_y)^\beta\Big(\sem E(\sem{H}_1)(\omega+\lambda;x,y)-\sem E(\sem{H}_1)(\omega;x,y)\Big)\Big|\leq C_1\frac{\hbar^{1-d}}{T(\hbar)}(1+|\hbar^{-1}T(\hbar)\lambda|),
\end{equation}
then for all $\alpha,\beta\in \mathbb{N}^d$ with $|\alpha|\leq k,|\beta|\leq l$,
\begin{equation}
\label{e:bigBallOk2}
\Big|(\hbar \partial_{x})^{\alpha}(\hbar \partial_y)^\beta\Big(\sem E(\sem{H}_0)(\omega;x,y)-\sem E(\sem{H}_1)(\omega;x,y)\Big)|\leq \frac{C_0 \hbar^{1-d}}{T(\hbar)}\referee{\Big(1+\hbar^{-1}\referee{\tilde{\delta}}(\hbar)T^2(\hbar)\Big)}.
\end{equation}
\end{proposition}
\referee{
\begin{remark}
Observe that the assumption~\eqref{e:lip} is precisely the same as the assumption that~\eqref{e:lipLP} is small and hence that the first term on the right-hand side of~\eqref{e:strongProject} is small.
\end{remark}
}

Proposition~\ref{p:distantPerturb} immediately implies Theorem~\ref{t:simpleCompare}. Let $\mathbf{Q}\in \sem{Diff}^{\referee{0}}$, $\sem{H}_0=\semOp{\mathbf{Q}}$, and $\sem{H}_1$ as in~\eqref{e:H1Temp}. Then~\eqref{e:assumed} automatically holds and $\sem{H}_1\geq -C\hbar$  as required. Next, observe that Theorem~\ref{t:simpleCompare} is trivial when $\referee{\tilde{\delta}}_k(R(\hbar);\hbar)T(\hbar)\gg 1$ or $|T(\hbar)|\ll \hbar$. Therefore, we will assume that $\referee{\tilde{\delta}}_k\leq C \hbar^{-1}$ and $T(\hbar)\geq c\hbar$. In particular, for $k$ large enough depending on $s$, this implies that \referee{the assumptions in Definition~\ref{d:expandBox} hold} with \referee{$R(\hbar)$ replaced by $\frac{1}{2}R(\hbar)$\st{$R=\frac{1}{2}R(\hbar)$}}. It only remains to check that~\eqref{e:lip} holds for $T\leq T_{\max}(\hbar)$, $\omega\in [a+\frac{2\e}{3},b-\frac{2\e}{3}]$ and $\lambda\in [-\frac{\e}{3},\frac{\e}{3}]$. 

To see this, observe that $\sem{E}(\sem{H}_1)(x,x,\omega)$ is monotone increasing in $\omega$. Therefore, for $\lambda\geq 0$, $T(\hbar)<T_{\max}(\hbar,x,a,b,\referee{Z})$, $ \omega\in [a+\frac{2\e}{3},b-\frac{2\e}{3}]$ and $\lambda\in [-\frac{\e}{3},\frac{\e}{3}]$, we have
\begin{align*}
0&\leq \sem E(\sem{H}_1)(x,x,\omega+\lambda)-\sem E(\sem{H}_1)(x,x,\omega)\\
&\leq \sum_{j=1}^{\lceil T\lambda\hbar^{-1}\rceil}\sem E(\sem{H}_1)(x,x,\omega+j\hbar/T(\hbar))-\sem E(\sem{H}_1)(x,x,\omega+(j-1)\hbar/T(\hbar))\\
&\leq \sum_{j=1}^{\lceil T\lambda\hbar^{-1}\rceil}C\hbar^{1-d}/T(\hbar)\leq C \lceil T\lambda\hbar^{-1}\rceil\hbar^{1-d}/T(\hbar)\leq C (1+\lambda  T\hbar^{-1})\hbar^{1-d}/T(\hbar),
\end{align*}
as claimed. A similar argument now applies for $\lambda\leq 0$ and this concludes the proof of~\eqref{e:lip} and hence also of Theorem~\ref{t:simpleCompare}.

We now outline the strategy for proving Proposition~\ref{p:distantPerturb}. The proof will use the `wave' approach to spectral asymptotics. That is, we will study certain smoothed versions of the spectral projector.  Using the Fourier transform, one can write these smoothed spectral projectors in terms of the half-wave propagator for $ \sem{H}_{\referee{j}}$. 
In order to take advantage of the finite speed of propagation for $\cos(t\sqrt{\sem{H}_0}/\hbar)$, we will, at the cost of an acceptable error, rewrite these smoothed spectral projectors in terms of the cosine propagator (\S\ref{s:cosine}) and use the finite speed of propagation property for $\cos(t\sqrt{\sem{H}_0}/\hbar)$ to show that $\cos (t\sqrt{\sem{H}_0}/\hbar)$ and $\cos(t\sqrt{\sem{H}_1}/\hbar)$ are close in an appropriate sense (\S\ref{s:finiteSpeed}). This will show that the smoothed projectors for $\sem{H}_0$ and $\sem{H}_1$ are close. Once this is done, we use standard Tauberian lemmas with minor modifications (\S\ref{s:taub}) to show that the unsmoothed spectral projectors are close to their smoothed versions. The proof of Proposition~\ref{p:distantPerturb} is implemented in \S\ref{s:distantPerturb}.

Before proceeding with the proof, we show that we can reduce \referee{the problem} to the case $\sem{H}_j\geq c>0$. First, observe that, for $\iota$ small enough,
\begin{equation}
\label{e:projectorsEqual}
\sem{E}\big(\sem{H}_j+\referee{\iota}\big)(\omega)=\sem{E}(\sem{H}_j)\big(\sqrt{\omega^2-\referee{\iota}}\big).
\end{equation}
Therefore, taking $0<\referee{\iota}\ll \e$,~\eqref{e:lip} and~\eqref{e:lip2} imply the corresponding estimates for $\sem{H}_1+\referee{\iota}$ when $\lambda\in [-\e,\e]$ and $\sqrt{\omega^2-\referee{\iota}}\in [a-\e,b+\e]$. Fix such an $\referee{\iota}$. Then, since $\sem{H}_j\geq -C\hbar$, we see that $\hbar<\hbar_0(\referee{\iota})$ implies $\sem{H}_j\geq \frac{\referee{\iota}}{2}$. Using~\eqref{e:projectorsEqual} again, we see that~\eqref{e:bigBallOk} and~\eqref{e:bigBallOk2} with $\sem{H}_j$ replaced by $\sem{H}_j+\referee{\iota}$ and $\sqrt{\omega^2-\referee{\iota}}\in[a,b]$ imply the estimates~\eqref{e:bigBallOk} and~\eqref{e:bigBallOk2}. We thus are allowed to assume from now on that $\sem{H}_j\geq c>0$. 
 
 \subsection{Basic properties of the wave group}
 \label{s:finiteSpeed}
To begin with, we need a lemma comparing the solution of two wave problems: one with a local potential $\mathbf{Q}$, and the other with an, in principle pseudodifferential, potential that agrees with $\mathbf{Q}$ on a large ball. For this, we recall the standard finite speed of propagation lemma \referee{and prove it for the sake of completeness.}
\begin{lemma}
 \label{l:finiteSpeed}
 Let $\sem{H}_0$ \referee{satisfy~\eqref{e:H0a}}, let $R_0>0$, and suppose $u_0\in H^1(\referee{\mathscr{M}})$,  $u_1\in L^2(\referee{\mathscr{M}})$ with $\supp u_i\cap B_{\referee{\mathscr{M}}}(x_0,R_0)=\emptyset$. Let $u(t,x)$ be the unique solution of 
 $$
 (\hbar^2\partial_t^2+\sem{H}_0)u=0,\qquad u|_{t=0}=u_0,\qquad u_t|_{t=0}=u_1.
 $$
 Then, 
 $
 u(t,x)\equiv 0\text{ on } B_{\referee{\mathscr{M}}}(x_0,R_0-|t|).
 $
 \referee{In particular, 
 $$
 \cos(t\sqrt{\sem{H}_0})u_0=\frac{\sin(t\sqrt{\sem{H}_0})}{\sqrt{\sem{H}_0}}u_1=0,\qquad \text{ on }B_{\referee{\mathscr{M}}}(x_0,R_0-|t|).
 $$}
 \end{lemma}
 \begin{proof}
 Let 
 $
 \referee{\mathcal{K}}_t:= B_{\referee{\mathscr{M}}}(x_0,R_0-|t|)
 $
 and define 
 $$
 E(t):= \frac{1}{2}\Big(\int_{\referee{\mathcal{K}}_t}|\hbar \partial_t u(t,x)|^2+|\hbar d u(t,x)|_g^2+|u(t,x)|^2\referee{d\vol_g(x)}\Big)\geq 0,
 $$
 \referee{where the $1$-form $du$ is the exterior derivative of $u$.}
 Then, for $|t|<R_0$,
 $$
 \begin{aligned}
 &\hbar\partial_t E(t)\\
 &= \Re \Big(\int_{\referee{\mathcal{K}}_t} \big(\hbar^2\partial_t^2u\overline{\hbar\partial_tu}+ u\overline{\hbar \partial_t u}+ \langle \hbar du,\hbar^2d\partial_t  u\rangle_g \big)\referee{d\vol_g(x)}\Big)\\
 &\qquad-\frac{\hbar}{2}\int_{\partial \referee{\mathcal{K}}_t}\big(|\hbar\partial_tu|^2+|\hbar d u|_g^2+|u|^2\big)dS(x) \\
  &=\Re \Big(-\int_{\referee{\mathcal{K}}_t} \big(( \mathbf{Q}_0-1) u\overline{\hbar\partial_t u}\big)\referee{d\vol_g(x)}\Big)-\frac{\hbar}{2}\int_{\partial \referee{\mathcal{K}}_t}\big(|\hbar\partial_tu|^2+|\hbar du|_g^2-2\Re(\hbar\partial_\nu u\overline{\hbar\partial_t u})+|u|^2\big)dS(x) \\
  &\leq \Re \Big(-\int_{\referee{\mathcal{K}}_t}\big( ( \mathbf{Q}_0-1) u\overline{\hbar\partial_t u}\big)\referee{d\vol_g(x)}\Big)\leq CE(t).
 \end{aligned}
 $$
 \referee{Here, we have used Green's formula on the third term in the first line to obtain the second equality.}
 Therefore, since $E(0)=0$, Gr\"onwall's inequality implies $E(t)\equiv 0$ for $|t|<R_0$ and, in particular, $u\equiv 0$ on $\referee{\mathcal{K}}_t$.
 \end{proof}

With Lemma~\ref{l:finiteSpeed} in place, we can now compare the wave problem for $\sem{H}_0$ with that for $\sem{H}_1$.
 \begin{lemma}
\label{l:waveCompare}
Suppose that \referee{$H_0$ satisfies~\eqref{e:H0a} and~\eqref{e:assumed0} and that $\sem{H}_1$ is a family of expanding box operators for $\sem{H}_0$.} Then for $u\in H_{\hbar}^1(B_{\referee{\mathscr{M}}}(x_0,R_0))$ and $|t|\leq R(\hbar)-R_0-1$ we have
$$
\Big\|\big[\cos\big(t\sqrt{ \sem{H}_0}/\hbar\big)-\cos\big(t\sqrt{\sem{H}_1}\referee{/}\hbar\big)\big]u\Big\|_{\referee{\mc{D}_{\hbar}}^{1/2}}\leq C\referee{\tilde{\delta}}|t|\|u\|_{H_{\hbar}^1}.
$$
\end{lemma}
\referee{
\begin{remark}
Our proof of Lemma~\ref{l:waveCompare} uses crucially finite speed of propagation for $\sem{H}_0$. Since finite speed of propagation only holds for differential operators $\sem{H}_0$, we are unable to prove Lemma~\ref{l:waveCompare} for e.g. pseudodifferential perturbations of the Laplacian.
\end{remark}
}
\begin{proof}
Let $w_j=\cos\big(t\sqrt{  \sem{H}_j}/\hbar\big)u$, $\referee{j=0,1}$. Then, since there is $C>0$, depending only on $C_{\referee{1}}, C_{\cutMan,\referee{1}}$, \referee{(where $C_{\cutMan,1}$ and $C_1$ are defined in~\eqref{e:assumed} and~\eqref{e:assumed0})} such that
$$
\tfrac{1}{C}\|v\|_{H_{\hbar}^1}\leq (\|\sqrt{\sem{H}_0}v\|_{L^2}+\|v\|_{L^2})\leq C\|v\|_{H_{\hbar}^1},
$$
we have 
$$
\|w_0(t)\|_{H_{\hbar}^1}\leq C\|u\|_{H_{\hbar}^1},\qquad \|w_1(t)\|_{\referee{\mc{D}_{\hbar}}^{1/2}}\leq C\|u\|_{\referee{\mc{D}_{\hbar}}^{1/2}}.
$$

In order to compare $w_j$, $j=0,1$, we claim $w_0\in\mc{H}$. Indeed, by Lemma~\ref{l:finiteSpeed} $\cos\big(t\sqrt{\sem{H}_0}/\hbar\big)$ has unit speed of propagation and, in particular, for $\auxCut\in C^\infty(\re;[0,1])$, with $\auxCut \equiv 1$ on $(-\infty, R_0)$, $\supp \auxCut\subset (-\infty,R_0+\frac{1}{2})$, we have
$$
\auxCut(\referee{dist}(x_0,x)-|t|)w_0=w_0.
$$
Thus, $w_0\in \mc{H}$, since $|t|\leq R-R_0-1$ implies $\supp \auxCut(\referee{dist}(x_0,\cdot)-|t|)\subset B_{\referee{\mathscr{M}}}(x_0,R-\frac{1}{2})$.

We may then observe that
$$
(\hbar^2\partial_t^2 +\sem{H}_1)(w_1-w_0)=(\sem{H}_0-\sem{H}_1)\auxCut(\referee{dist}(x_0,x)-|t|)w_0,\qquad (w_1-w_0)|_{t=0}=\partial_t(w_1-w_0)|_{t=0}=0.
$$
Using again that $|t|\leq R-R_0-1$ implies $\supp \auxCut(\referee{dist}(x_0,\cdot)-|t|)\subset B_{\referee{\mathscr{M}}}(x_0,R-\frac{1}{2})$ and letting $\cutMan \in  \USB(\referee{\mathscr{M}})$ with $\cutMan \equiv 1$ on $B_{\referee{\mathscr{M}}}(x_0,R-\frac{1}{2})$ and $\supp \cutMan\subset B_{\referee{\mathscr{M}}}(x_0,R)$, we have, \referee{ by~\eqref{e:assumed}},
\begin{align*}
\|(\sem{H}_0-\sem{H}_1)\auxCut(\referee{dist(x_0,x)}-|t|)w_0\|_{\mc{H}}&=\|(\sem{H}_0-\sem{H}_1)\cutMan\auxCut(\referee{dist}(x_0,x)-|t|)w_0\|_{\mc{H}}\\
&\leq C_{\cutMan,\referee{1}}\hbar  \referee{\tilde{\delta}}\|w_0\|_{H_{\hbar}^1}\leq C_{\cutMan\referee{,1}}\hbar\referee{\tilde{\delta}} \|u\|_{H_{\hbar}^1}.
\end{align*}
\referee{We then have} Duhamel's formula
\begin{gather*}
\begin{pmatrix}w_1-w_0\\ \hbar\partial_t(w_1-w_0)\end{pmatrix}\referee{(t)}=\hbar^{-1}\int_0^t U(t-s)\begin{pmatrix}0\\ (\sem{H}_0-\sem{H}_1)\referee{f(dist(x_0,x)-|s|)}w_0(s)\end{pmatrix}ds,\\
U(t):=\begin{pmatrix}\cos\big(t\sqrt{\sem{H}_1}/\hbar\big)&\frac{\sin\big(t\sqrt{\sem{H}_1}/\hbar\big)}{\sqrt{\sem{H}_1}}\\-\sqrt{\sem{H}_1}\sin\big(t\sqrt{\sem{H}_1}/\hbar\big)&\cos\big(t\sqrt{ \sem{H}_1}/\hbar \big)
\end{pmatrix}.
\end{gather*}
Using that $\sem{H}_1\geq c>0$, we have \referee{
\begin{align*}
\|(w_1-w_0)(t)\|_{\mc{D}_{\hbar}^{1/2}}&\leq \hbar^{-1}\int_0^t\Big\|\frac{\sin((t-s)\sqrt{\mathbf{H}_1}/\hbar)}{\sqrt{\mathbf{H}_1}}(\mathbf{H}_0-\mathbf{H}_1)f(dist(x_0,x)-|s|)w_0(s)\Big\|_{\mc{D}_h^{\frac{1}{2}}}ds\\
&\leq C\hbar^{-1}\int_0^t\Big\|(\mathbf{H}_0-\mathbf{H}_1)f(dist(x_0,x)-|s|)w_0(s)\Big\|_{\mc{H}}ds\\
&\leq C_{\cutMan,1}\referee{\tilde{\delta}}|t| \|u\|_{H_{\hbar}^1}.
\end{align*}}
\end{proof}

\subsection{Tauberian lemmas}
\label{s:taub}

Before proceeding to our analysis of the local density of states, we recall two Tauberian lemmas which will allows us to compare smoothed local densities of states to their unsmoothed counterparts.

The first Lemma shows that if the local density of states $\sem E(\sem{H}_j)(x,y,\omega)$ is Lipschitz at sufficiently small scales, then it is close to its smoothed version.
\begin{lemma}[Lemma 5.3 \cite{CaGa:20}]
\label{l:lip}
Let $\{K_j\}_{j=0}^\infty\subset \mathbb{R}_+$. Then there exists $C>0$ and for all $N\in \mathbb{R}$, $N>0$, there is $C\sub{N}>0$ such that the following holds. Let $\{\cutEnergyFourier_\hbar \}_{\hbar >0}\subset \mathscr{S}(\mathbb{R})$ be a family of functions and $\referee{\sigma_{\hbar}=\sigma(\hbar):(0,1]\to \mathbb{R}_+}$ such that for all $j\geq 1$, $\hbar >0$, and $s\in \mathbb{R}$ we have
$$
|\cutEnergyFourier_\hbar (s)|\leq \sigma_\hbar  K_j\langle \sigma_\hbar  s\rangle^{-j}. 
$$
Let \referee{$L_\hbar =L(\hbar):(0,1]\to \mathbb{R}_+$}, \referee{$B_\hbar =B(\hbar):(0,1]\to \mathbb{R}_+$}, $\{\referee{\tilde{w}_\hbar} :\mathbb{R}\to \mathbb{C}\}_{\hbar >0}$, $I_\hbar \subset [-K_0,K_0]$, $\hbar _0>0$ and $\e_0>0$ be such that 
$$
|\referee{\tilde{w}_\hbar} (t-s)-\referee{\tilde{w}_\hbar} (t)|\leq L_\hbar \langle \sigma_\hbar  s\rangle,\,\,t\in I_\hbar ,\,|s|\leq \e_0,\qquad |\referee{\tilde{w}_\hbar} (s)|\leq B_\hbar \langle s\rangle^{N_0}\,\text{ for all }s\in \mathbb{R}.
$$
Then for all $0<\hbar <\hbar _0$ and $t\in I_\hbar $ we have
$$
\Big|(\cutEnergyFourier_\hbar *\referee{\tilde{w}_\hbar} )(t)-\referee{\tilde{w}_\hbar} (t)\int \cutEnergyFourier_\hbar (s)ds\Big|\leq CL_\hbar +C\sub{N}B_\hbar \sigma_\hbar ^{-N}\e_0^{-N}.
$$
\end{lemma}
\begin{proof}
For all {$0<\hbar<\hbar_0$} and $t\in I_{\hbar}$ we have
\begin{align*}
&\Big|(\cutEnergyFourier_{\hbar} *w_{\hbar})(t)-w_{\hbar}(t)\int_{\re} \cutEnergyFourier_{\hbar}(s)ds\Big|
=\Big|\int_{\re} \cutEnergyFourier_{\hbar}(s)\big(w_{\hbar}(t-s)-w_{\hbar}(t)\big)ds\Big|\\
& \leq L_h\int_{|s|\leq \e_0} |\cutEnergyFourier_{\hbar}(s)|\langle \sigma_{\hbar} s\rangle ds
+B_{\hbar}\int_{|s|\geq \e_0}|\cutEnergyFourier_{\hbar}(s)|\Big( \langle t-s\rangle^{N_0}+\langle t\rangle^{N_0}\Big)ds\\
&\leq L_{\hbar}\int_{|s|\leq \e_0}\!\!\! \sigma_h K_{3} \langle \sigma_{\hbar} s\rangle^{-2} ds
+B_{\hbar}\!\!\int_{|s|\geq \e_0} \!\!\!K\sub{N_0+2+N}\sigma_{\hbar} \langle \sigma_{\hbar} s\rangle^{-(N_0+2+N)}\!\Big( \langle t-s\rangle^{N_0}+\!\langle t\rangle^{N_0}\Big)ds.
\end{align*}
The existence of $C$ and $C\sub{N}$ follows from the integrability of each term {and the boundedness of $I_{\hbar}$}.
\end{proof}

The next lemma is similar to~\cite[Lemma 17.5.6]{Ho:07} and will be used to show that $\sem E(\sem{H}_0)(x,x,\omega)$ inherits the Lipschitz nature of $\sem E(\sem{H}_1)(x,x,\omega)$. 
\begin{lemma}
\label{l:monotoneTaub}
Let $\phi\in \mathscr{S}(\mathbb{R};[0,\infty))$ with $\phi>0$ on $[-1,1]$ and \referee{for $\gamma>0$} put $\phi_{\referee{\gamma}}(t):={\referee{\gamma}}^{-1}\phi({\referee{\gamma}}^{-1}t)$. Then there is $C>0$ and for all $N>0$ there is $C\sub{N}>0$ such that the following holds. Suppose that $\{\mu_\hbar \}_{\hbar >0}$ is a family of monotone increasing functions, $\{\alpha_\hbar \}_{\hbar >0}$ is a family of functions of locally bounded variation and that there are $\e>0$, $\referee{\iota}>0$, \referee{$\gamma_\hbar=\gamma(\hbar):(0,1]\to \mathbb{R}_+$},  \referee{$M_\hbar=M(\hbar):(0,1]\to \mathbb{R}_+$}, $N_0>0$,  \referee{$B_\hbar=B(\hbar):(0,1]\to \mathbb{R}_+$}, $C>0$, and $ \hbar _0>0$ such that for $0<\hbar <\hbar _0$,  $\referee{\gamma_{\hbar}}<\hbar^\referee{\iota}$, and we have
\begin{gather*}
\int_{\omega-\referee{\gamma_{\hbar}} }^{\omega+\referee{\gamma_{\hbar}} }|d\alpha_\hbar |\leq \referee{\gamma_{\hbar}} M_\hbar ,\qquad
|(d\mu_\hbar -d\alpha_\hbar )*\phi_{\referee{\gamma_{\hbar}} }(\omega)|\leq B_\hbar ,\qquad \referee{\omega\in[a-\e,b+\e]},\\
|\mu_\hbar (\omega)|+|\alpha_\hbar (\omega)|\leq \hbar ^{-N_0} \langle \omega\rangle^{N_0},\qquad \referee{\omega\in \mathbb{R}}.
\end{gather*}
\referee{(For a function $f$ of bounded variation, we denote by $df$ the derivative of $f$ considered as a measure and $|df|$ its total variation.)}
Then for $|s|\leq \e/2$ and $\omega\in[a,b]$ we have
$$
|\mu_\hbar (\omega)-\mu_\hbar (\omega-s)|\leq C\referee{\gamma_{\hbar}} (M_\hbar +B_\hbar +C\sub{N}\hbar ^N)\langle \referee{\gamma_{\hbar}} ^{-1}s\rangle.
$$
\end{lemma}
\begin{proof}
Let $\omega_0\in [a,b]$. Since $d\mu_\hbar  \geq 0$, for $\omega\in[a-\e,b+\e]$,
\begin{align*}
|\mu_\hbar (\omega)-\mu_\hbar (\omega-\referee{\gamma_{\hbar}} )|= \int_{\omega-a_{\hbar}}^\omega d\mu_h(s)&\leq C_\phi \referee{\gamma_{\hbar}} \Big( \int \phi_{\referee{\gamma_{\hbar}} }(\omega-s)d\big(\mu_\hbar (s)-\mu_\hbar (\omega_0)\big)\Big)\\
&=C\referee{\gamma_{\hbar}} ( \phi_{\referee{\gamma_{\hbar}} }*d(\mu_\hbar -\mu_\hbar (\omega_0)))(\omega).
\end{align*}
First, we estimate
\begin{align*}
|( \phi_{\referee{\gamma_{\hbar}} }*d\mu_\hbar )(\omega)|&\leq |\phi_{\referee{\gamma_{\hbar}} }*d(\alpha_\hbar -\alpha_\hbar (\omega_0))(t)|+|(\phi_{\referee{\gamma_{\hbar}} }*d(\mu_\hbar -\mu_\hbar (\omega_0)-\alpha_\hbar +\alpha_\hbar (\omega_0)))(\omega)|=:I +II.
\end{align*}
Now, 
\begin{align*}
I&\leq  \referee{\gamma_{\hbar}} ^{-1}\int \phi(\referee{\gamma_{\hbar}} ^{-1}(\omega-s))|d(\alpha_\hbar (s)-\alpha_\hbar (\omega_0))|\leq \referee{\gamma_{\hbar}} ^{-1}\int_{|\omega-s|\leq \referee{\gamma_{\hbar}} \hbar ^{-\referee{\iota}/2}} \langle \referee{\gamma_{\hbar}} ^{-1}(\omega -s)\rangle^{-N}|d\alpha_\hbar (s)|+O(\hbar^\infty)\\
&\leq \sum_{|k|\leq \hbar ^{-\referee{\iota}/2}}\langle k\rangle^{-N} M_\hbar + O(\hbar ^\infty)\leq CM_\hbar +O(\hbar ^\infty).
\end{align*}
Next, 
$$
II= \int_{\omega_0}^\omega |(d\mu_\hbar -d\alpha_\hbar )*\phi_{\referee{\gamma_{\hbar}}}(s)|ds\leq B_\hbar |\omega-\omega_0|\leq \referee{(b-a+\e)}B_\hbar .
$$
Therefore, 
\begin{equation}
\label{e:step}
|\mu_\hbar(\omega)-\mu_\hbar(\omega-\referee{\gamma_{\hbar}})|\leq C \referee{\gamma_{\hbar}}(M_\hbar+B_\hbar +O(\hbar^\infty)).
\end{equation}
The claim now follows from adding terms like~\eqref{e:step}.
\end{proof}

\subsection{Local densities of states and the cosine propagator}
\label{s:cosine}

We need two more preliminary lemmas before analyzing the local density of states. These lemmas, modulo \referee{controllable} errors, rewrite the spectral projection operator and its derivatives in terms of the cosine propagator. This crucial step allows us to use Lemma~\ref{l:waveCompare} to show that the smoothed densities of states for $\sem{H}_0$ and $\sem{H}_1$ are close.
For $\cutEnergyFourier\in\mathscr{S}(\mathbb{R})$ and $T>0$, we \referee{recall that}
$$
\cutEnergyFourier_{_{\!\referee{T/\hbar}}}(s)=\hbar^{-1}T\cutEnergyFourier(\hbar^{-1}Ts).
$$
\begin{lemma}
\label{l:compareTo1}
Let $\cutEnergyFourier \in \mathscr{S}(\mathbb{R})$ with $\supp \hat{\cutEnergyFourier}\subset (-2,2)$, $\e>0$, and  $T=T(\hbar)\geq \e$. Then for $\omega\in [a-2\e,b-2\e]$, $j=0,1$, and all $N\geq 0$ we have
\begin{equation}
\label{e:toFiniteSpeed}
\partial_\omega \big(\cutEnergyFourier_{_{\!\referee{T/\hbar}}}*\sem E(\sem{H}_j)\big)(\omega)=\frac{1}{\pi \hbar}\int \hat{\cutEnergyFourier}(T^{-1}\tau) e^{it\tau\omega/\hbar}\cos \big(\tau \sqrt{ \sem{H}_j}/\hbar\big) d\tau +O(\hbar^N)_{\referee{\mc{D}}^{-N}\to \referee{\mc{D}}^N},
\end{equation}
\referee{where $\mc{D}^N$ denotes the domain of the corresponding operator $\mathbf{H}_j^N$ and $\mc{D}^{-N}$ that of $\mathbf{H}_j^{-N}$}.
\end{lemma}
\begin{proof}
First, recall that 
\begin{align}
\partial_\omega\big( \cutEnergyFourier_{_{\!\referee{T/\hbar}}}*\sem E(\sem{H}_j)\big)(\omega)&=\cutEnergyFourier_{_{\!\referee{T/\hbar}}}(\omega-\sqrt{\sem{H}_j})=\frac{1}{2\pi \hbar}\int \hat{\cutEnergyFourier}(T^{-1}\tau)e^{i\frac{\tau}{\hbar} (\omega-\sqrt{\sem{H}_j})}d\tau\label{e:flowers}\\
&=\frac{1}{2\pi \hbar}\int \hat{\cutEnergyFourier}(T^{-1}\tau)e^{i\frac{\tau}{\hbar} \omega}\big(2 \cos\big(\tau \sqrt{\sem{H}_j}/\hbar\big)-e^{i\tau\sqrt{\sem{H}_j}/\hbar}\big)d\tau\nonumber\\
&=\frac{1}{\pi \hbar}\int \hat{\cutEnergyFourier}(T^{-1}\tau)e^{i\frac{\tau}{\hbar} \omega}\cos\big(\tau \sqrt{\sem{H}_j}/\hbar\big)d\tau -\cutEnergyFourier_{_{\!\referee{T/\hbar}}}(\omega+\sqrt{\sem{H}_j}).\nonumber
\end{align}
Next, since $\sem{H}_j\geq 0$, we have
\begin{align*}
\|\cutEnergyFourier_{_{\!\referee{T/\hbar}}}(\omega+\sqrt{\sem{H}_j})(1+\sem{H}_j)^N\|_{L^2\to L^2}&\leq \sup_{s\geq 0}\cutEnergyFourier_{_{\!\referee{T/\hbar}}}(\omega+s)(1+s^2)^N\\
&\leq \sup_{s\geq 0}C_N \hbar^{-1}T\langle \hbar^{-1}T(\omega+s)\rangle^{-2N-1}(1+s^2)^N\leq C_N \hbar^{N}T^{-N}.
\end{align*}
Therefore, since
$$
\|u\|_{\referee{\mc{D}}^{N}}\leq C_N\|(1+\sem{H}_j)^Nu\|_{L^2},
$$
the estimate~\eqref{e:toFiniteSpeed} follows.
\end{proof}

\begin{lemma}
\label{l:differenceNew}
Let $\cutEnergyFourier \in \mathscr{S}(\mathbb{R})$ with $\hat{\cutEnergyFourier}$ even, $\e>0$, and $T(\hbar)\geq \e$. Then for $\omega\in [a-2\e,b+2\e]$, $j=0,1$, we have
\begin{equation}
\label{e:compareNew}
 \cutEnergyFourier_{_{\!\referee{T/\hbar}}}*\sem E(\sem{H}_j)(\omega)=\frac{1}{\pi }  \int\frac{1}{\tau}\hat{\cutEnergyFourier}(T^{-1}\tau)\sin( h^{-1}\tau \omega) \cos\big(\tau \sqrt{\sem{H}_j}/\hbar\big)d\tau + (\cutEnergyFourier_{_{\!\referee{T/\hbar}}}*\sem E(\sem{H}_j))(-\omega).
\end{equation}
\end{lemma}

\begin{proof}
\referee{Using formula~\eqref{e:flowers} in the second line, we have}
\begin{align*}
 \big(\cutEnergyFourier_{_{\!\referee{T/\hbar}}}*\sem E(\sem{H}_j)\big)(\omega)&= \int_{-\omega}^\omega ( \partial_\omega\cutEnergyFourier_{_{\!\referee{T/\hbar}}}*\sem E(\sem{H}_j))(s)ds+ (\cutEnergyFourier_{_{\!\referee{T/\hbar}}}*\sem E(\sem{H}_j))(-\omega)\\
&=\frac{1}{2\pi \hbar} \int_{-\omega}^\omega \int\hat{\cutEnergyFourier}(T^{-1}\tau)e^{i\frac{\tau}{\hbar} (s-\sqrt{\sem{H}_j})}d\tau ds+ (\cutEnergyFourier_{_{\!\referee{T/\hbar}}}*\sem E(\sem{H}_j))(-\omega)\\
&=\frac{1}{\pi }  \int\frac{1}{\tau}\hat{\cutEnergyFourier}(T^{-1}\tau)\sin( h^{-1}\tau \omega) e^{-i\frac{\tau}{\hbar} \sqrt{\sem{H}_j}}d\tau + (\cutEnergyFourier_{_{\!\referee{T/\hbar}}}*\sem E(\sem{H}_j))(-\omega).
\end{align*}
Note that after changing variables, $\tau\to -\tau$, we have
$$
\frac{1}{2\pi }  \int\frac{1}{\tau}\hat{\cutEnergyFourier}(T^{-1}\tau)\sin( h^{-1}\tau \omega) e^{-i\frac{\tau}{\hbar} \sqrt{\sem{H}_j}}d\tau=\frac{1}{2\pi }  \int\frac{1}{\tau}\hat{\cutEnergyFourier}(T^{-1}\tau)\sin( h^{-1}\tau \omega) e^{i\frac{\tau}{\hbar} \sqrt{\sem{H}_j}}d\tau.
$$
Therefore, 
$$
 \cutEnergyFourier_{_{\!\referee{T/\hbar}}}*\sem E(\sem{H}_j)(\omega)=\frac{1}{\pi }  \int\frac{1}{\tau}\hat{\cutEnergyFourier}(T^{-1}\tau)\sin( h^{-1}\tau \omega) \cos\big(\tau \sqrt{\sem{H}_j}/\hbar\big)d\tau + (\cutEnergyFourier_{_{\!\referee{T/\hbar}}}*\sem E(\sem{H}_j))(-\omega).
$$
\end{proof}

\referee{We estimate the last term in~\eqref{e:compareNew} in the next lemma.
\begin{lemma}
\label{l:difference}
Let $\cutEnergyFourier \in \mathscr{S}(\mathbb{R})$ with $\hat{\cutEnergyFourier}$ even, $\e>0$, and $T(\hbar)\geq \e$. Then for $\omega\in [a-2\e,b+2\e]$, $j=0,1$, and all $N\geq 0$,
\begin{equation}
\label{e:compareTo1}
\cutEnergyFourier_{_{\!\referee{T/\hbar}}}*\sem E(\sem{H}_j)(\omega)=\frac{1}{\pi }  \int\frac{1}{\tau}\hat{\cutEnergyFourier}(T^{-1}\tau)\sin( h^{-1}\tau \omega) \cos\big(\tau \sqrt{\sem{H}_j}/\hbar\big)d\tau+O(\hbar^\infty)_{\referee{\mc{D}}^{-N}\to \referee{\mc{D}}^N}. 
\end{equation}
\end{lemma}

\begin{proof}
Using~\eqref{e:compareNew}, it remains to check that 
$$
 (\cutEnergyFourier_{_{\!\referee{T/\hbar}}}*\sem E(\sem{H}_j))(-\omega)=O(\hbar^\infty)_{\referee{\mc{D}}^{-N}\to \referee{\mc{D}}^N}.
$$

Since $\sem{H}_j\geq 0$, $\sem E(\sem{H}_j)(s)\referee{=1_{(-\infty,s]}(\sqrt{\sem{H}_j})}\equiv 0$ for $s<0$. Thus, for all $N,L\geq 0$ {there is $C\sub{L,N}>0$ such that}
\begin{align*}
\|\cutEnergyFourier_{_{\!\referee{T/\hbar}}}*\sem E(\sem{H}_j)(-\omega)\|_{\referee{\mc{D}}^{-N}\to \referee{\mc{D}}^N}&\leq \int_{\re}\tfrac{T}{\hbar}\cutEnergyFourier\big(\tfrac{T}{\hbar}s\big)\|\sem E(\sem{H}_j)(-\omega-s)\|_{\referee{\mc{D}}^{-N}\to \referee{\mc{D}}^N}ds\\
&\leq C\sub{L,N} \int_{s\leq -\frac{\omega}{2}}\tfrac{T}{h}\big\langle\tfrac{T}{h}s\big\rangle^{-L}\langle s\rangle^{N}.
\end{align*}
The claim follows after choosing $L$ large enough.
\end{proof}}

\subsection{Comparison of the local densities of states}
\label{s:distantPerturb}

This section contains the proof of Proposition~\ref{p:distantPerturb}. We start by showing that, when smoothed at scale $\sim\!\!1$, spectral projectors for $\sem{H}_0$ and $\sem{H}_1$ are close.  In other words, \referee{when $\cutEnergy\in\mathscr{S}$,} $\cutEnergy(\sem{H}_0)$ and $\cutEnergy(\sem{H}_1)$ are close when acting on subsets of $B(0,R(\hbar))$. 
\begin{lemma}
\label{l:basicCutoff}
Let $R_0>0$, $R(\hbar)>R_0+1$, $\referee{\tilde{\delta}}(\hbar)>0$, and suppose that \referee{$H_0$ satisfies~\eqref{e:H0a} and~\eqref{e:assumed0} and that $\sem{H}_1$ is a family of expanding box operators for $\sem{H}_0$.} Let $\cutEnergy \in \mathscr{S}(\mathbb{R})$, $\cutMan,\tilde{\cutMan}\in C_c^\infty(B(0,R_0))$.  Then, for all $N\geq 0$,
\begin{equation}
\label{e:noCoffee}
\tilde{\cutMan}[\cutEnergy(\sem{H}_0)-\cutEnergy(\sem{H}_1)]\cutMan=O(\hbar \referee{\tilde{\delta}}(\hbar))_{\Psi^{-\infty}}+O(\hbar^\infty)_{\Psi^{-\infty}}.
\end{equation}
Moreover,  if $\tilde{\cutMan}\equiv 1$ in a neighbourhood of $\supp\cutMan$, then
\begin{gather}
\label{e:H0Local}
(1-\tilde{\cutMan})\cutEnergy(\sem{H}_0)\cutMan=O(\hbar^\infty)_{\Psi^{-\infty}},\qquad \cutMan\cutEnergy(\sem{H}_0)(1-\tilde{\cutMan})=O(\hbar^\infty)_{\Psi^{-\infty}},\\
\label{e:H1Local}(1-\tilde{\cutMan})\cutEnergy(\sem{H}_1)\cutMan=O(\hbar \referee{\tilde{\delta}}(\hbar)+\hbar^\infty)_{H_\hbar^{-N}\to \referee{\mc{D}_{\hbar}}^N},\qquad \cutMan\cutEnergy(\sem{H}_1)(1-\tilde{\cutMan})=O(\hbar\referee{\tilde{\delta}}(\hbar)+\hbar^\infty)_{\referee{\mc{D}_{\hbar}}^{-N}\to H_{\hbar}^N}.
\end{gather}
\end{lemma}
\begin{proof}
Put $\cutEnergy_1(t):=\cutEnergy(t^2)$. Then $\cutEnergy_1\in \mathscr{S}$ and, since $\sem{H}_i\geq 0$, we have $\cutEnergy_1(\sqrt{\sem{H}_i})=\cutEnergy(\sem{H}_i)$. Next, observe that $\cutEnergy_1$ is even and hence so is $\hat{\cutEnergy}_1$. Therefore,
$$
\begin{aligned}
\cutEnergy(\sem{H}_i)=\cutEnergy_1(\sqrt{\sem{H}_i})&=\frac{1}{2\pi}\int \hat{\cutEnergy}_1(t)e^{it\sqrt{\sem{H}_0}} dt=\frac{1}{2\pi\hbar}\int \hat{\cutEnergy}_1(s/\hbar)\cos(s\sqrt{\sem{H}_i}/\hbar)ds.
\end{aligned}
$$

We first \referee{prove~\eqref{e:H0Local} and~\eqref{e:H1Local}. Thus, we assume that} $\tilde{\cutMan}\equiv 1$ in a neighbourhood of $\supp \cutMan$. Let $r>0$ be chosen so that $dist\big(\supp \cutMan,\supp( 1-\tilde{\cutMan})\big)>r$ and let $\auxCut\in C_c^\infty((-r,r))$ with $\auxCut\equiv 1$ near $0$. Then, using Lemma~\ref{l:finiteSpeed} \referee{to pass from the second to the third line, we have}
\begin{align*}
&(1-\tilde{\cutMan})\cutEnergy(\sem{H}_0)\cutMan\\
&=\frac{1}{2\pi\hbar}\int \hat{\cutEnergy}_1(s/\hbar)(1-\tilde{\cutMan})\cos(s\sqrt{\sem{H}_0}/\hbar)\cutMan ds\\
&=\frac{1}{2\pi\hbar}\int \hat{\cutEnergy}_1(s/\hbar)(1-\tilde{\cutMan})(1-\auxCut(s))\cos(s\sqrt{\sem{H}_0}/\hbar)\cutMan ds=O(\hbar^\infty)_{L^2\to L^2}.
\end{align*}
Since 
\begin{equation}
\label{e:yellowPaint}
(\cdot+i)^k\cutEnergy(\cdot)\in \mathscr{S}\text{ for any $k$},
\end{equation}
this implies
$$
(1-\tilde{\cutMan})\cutEnergy(\sem{H}_0)\cutMan=O(\hbar^\infty)_{\Psi^{-\infty}},
$$
which, taking adjoints, implies~\eqref{e:H0Local}.

To prove~\eqref{e:H1Local}, we again write
\begin{align*}
&(1-\tilde{\cutMan})\cutEnergy(\sem{H}_1)\cutMan=\frac{1}{2\pi\hbar}\int \hat{\cutEnergy}_1(s/\hbar)(1-\tilde{\cutMan})\cos(s\sqrt{\sem{H}_1}/\hbar)\cutMan ds\\
&=\frac{1}{2\pi\hbar}\int \hat{\cutEnergy}_1(s/\hbar)(1-\tilde{\cutMan})\auxCut(s)\cos(s\sqrt{\sem{H}_1}/\hbar)\cutMan ds\\
&\qquad+\frac{1}{2\pi\hbar}\int \hat{\cutEnergy}_1(s/\hbar)(1-\tilde{\cutMan})(1-\auxCut(s))\cos(s\sqrt{\sem{H}_1}/\hbar)\cutMan ds\\
&=\frac{1}{2\pi\hbar}\int \hat{\cutEnergy}_1(s/\hbar)(1-\tilde{\cutMan})(1-\auxCut(s))\cos(s\sqrt{\sem{H}_1}/\hbar)\cutMan ds+O(\hbar \referee{\tilde{\delta}})_{H_\hbar^1\to \mc{D}^{1/2}_{1}}\\
&=(\hbar \referee{\tilde{\delta}}(\hbar)+\hbar^\infty)_{H_\hbar^1\to \mc{H}},
\end{align*}
where in the \referee{fourth} line we use Lemma~\ref{l:finiteSpeed} and Lemma~\ref{l:waveCompare}. Using~\eqref{e:yellowPaint} again, this implies~\eqref{e:H1Local}.

Finally, \referee{we prove~\eqref{e:noCoffee}, no longer assuming that $\tilde{\cutMan}\equiv 1$ in a neighbourhood of $\supp \cutMan$.}  Write
$$
\begin{aligned}
&\tilde{\cutMan}(\cutEnergy(\sem{H}_0)-\cutEnergy(\sem{H}_1))\cutMan\\
&=\frac{1}{2\pi\hbar}\Big(\int_{|s|\leq R-R_0-1} \hat{\cutEnergy}_1(s/\hbar)\tilde{\cutMan}(\cos(s\sqrt{\sem{H}_0}/\hbar)-\cos(s\sqrt{\sem{H}_1}/\hbar))\cutMan ds \\
&\qquad +\int_{|s|> R-R_0-1} \hat{\cutEnergy}_1(s/\hbar)\tilde{\cutMan}(\cos(s\sqrt{\sem{H}_0}/\hbar)-\cos(s\sqrt{\sem{H}_1}/\hbar))\cutMan ds\Big)\\
&= \frac{1}{2\pi\hbar}\Big(\int_{|s|\leq R-R_0-1} C_N\langle s/\hbar\rangle^{-N}O(\referee{\tilde{\delta}} s)_{H_{\hbar}^1\to H_\hbar^1 } ds +\int_{|s|> R-R_0-1} C_N\langle s/\hbar\rangle^{-N}O(1)_{L^2\to L^2}ds\Big)\\
&=O(\hbar \referee{\tilde{\delta}}(\hbar))_{H_{\hbar}^1\to H_{\hbar}^1}+O(\hbar^\infty)_{L^2\to L^2}.
\end{aligned}
$$

Next, using~\eqref{e:yellowPaint}, we have
\begin{equation}
\label{e:aNumber}
\tilde{\cutMan}[(\sem{H}_0+i)^k\cutEnergy(\sem{H}_0)-(\sem{H}_1+i)^k\cutEnergy(\sem{H}_1)]\cutMan = O(\hbar \referee{\tilde{\delta}}(\hbar)+\hbar^\infty)_{H_\hbar^1\to \referee{H_{\hbar}^1}}.
\end{equation}

Let $\cutMan_1,\cutMan_2\in C_c^\infty(B_{\referee{\mathscr{M}}}(x_0,R_0))$ with $\cutMan_1\equiv 1$ on $\supp \tilde{\cutMan}\cup\supp \cutMan$ and $\cutMan_2\equiv 1$ on $\supp\cutMan_1$ .
\referee{Next, observe that 
\begin{align*}
&\cutMan_1(\sem{H}_0+i)^k\cutMan_2[\cutEnergy(\sem{H}_0)-\cutEnergy(\sem{H}_1)]\cutMan \\
&=\cutMan_1(\sem{H}_0+i)^k\cutMan_2\cutEnergy(\sem{H}_0)\cutMan-\cutMan_1(\sem{H}_1+i)^k\cutMan_2\cutEnergy(\sem{H}_1)\cutMan +O(\hbar \referee{\tilde{\delta}}(\hbar)+\hbar^\infty)_{\Psi^{-\infty}}
\end{align*}
by~\eqref{e:assumed} together with $\cutEnergy(\sem{H}_0)=O(1)_{\Psi^{-\infty}}$, $\cutEnergy(\sem{H}_1)=O(1)_{\referee{\mc{D}_{\hbar}}^{-\infty}\to \referee{\mc{D}_{\hbar}}^\infty}$. Next, using again $\cutEnergy(\sem{H}_0)=O(1)_{\Psi^{-\infty}}$, $\cutEnergy(\sem{H}_1)=O(1)_{\referee{\mc{D}_{\hbar}}^{-\infty}\to \referee{\mc{D}_{\hbar}}^\infty}$, together with~\eqref{e:H0Local},~\eqref{e:H1Local}, we have
\begin{align*}
&\cutMan_1(\sem{H}_0+i)^k\cutMan_2\cutEnergy(\sem{H}_0)\cutMan-\cutMan_1(\sem{H}_1+i)^k\cutMan_2\cutEnergy(\sem{H}_1)\cutMan +O(\hbar \referee{\tilde{\delta}}(\hbar)+\hbar^\infty)_{\Psi^{-\infty}}\\
&=\cutMan_1[(\sem{H}_0+i)^k\cutEnergy(\sem{H}_0)-(\sem{H}_1+i)^k\cutEnergy(\sem{H}_1)]\cutMan +O(\hbar \referee{\tilde{\delta}}(\hbar)+\hbar^\infty)_{\Psi^{-\infty}}.
\end{align*}
Finally, using~\eqref{e:aNumber}, we obtain
\begin{align*}
\cutMan_1[(\sem{H}_0+i)^k\cutEnergy(\sem{H}_0)-(\sem{H}_1+i)^k\cutEnergy(\sem{H}_1)]\cutMan +O(\hbar \referee{\tilde{\delta}}(\hbar)+\hbar^\infty)_{\Psi^{-\infty}}=O(\hbar\referee{\tilde{\delta}}(\hbar)+\hbar^\infty)_{H_{\hbar}^{\referee{1}}\to \referee{H_\hbar^1}}.
\end{align*}
In particular, 
$$
\cutMan_1(\sem{H}_0+i)^k\cutMan_2[\cutEnergy(\sem{H}_0)-\cutEnergy(\sem{H}_1)]\cutMan =O(\hbar\referee{\tilde{\delta}}(\hbar)+\hbar^\infty)_{H_{\hbar}^{\referee{1}}\to \referee{H_\hbar^1}}.
$$
}
%
Therefore, by local elliptic regularity, 
$$
\tilde{\cutMan}(\cutEnergy(\sem{H}_0)-\cutEnergy(\sem{H}_1))\cutMan=O(\hbar\referee{\tilde{\delta}}(\hbar)+\hbar^\infty)_{H_{\hbar}^1\to H_{\hbar}^{\infty}}.
$$
Making a similar argument for $\cutMan[\cutEnergy(\sem{H}_0)-\cutEnergy(\sem{H}_1)]\cutMan_1(\sem{H}_0+i)^k\cutMan_2 $ then completes the proof of the lemma.
\end{proof}

The next lemma shows that the spectral projectors for $\sem{H}_1$ and $\sem{H}_0$ smoothed at scale $\hbar/T$ are close when acting on compact sets.
\begin{lemma}
\label{l:marcus}
Let $\cutMan \in C_c^\infty(B(0,R_0))$, $\e>0$, $R(\hbar)>0$, $\referee{\tilde{\delta}}(\hbar)>0$,  $\e<T(\hbar)\leq (R(h)-R_0-2)/2$, and suppose \referee{$\sem{H}_0$ satisfies~\eqref{e:H0a} and~\eqref{e:assumed0} and that $\sem{H}_1$ is a family of expanding box operators for $\sem{H}_0$}. Let $\cutEnergyFourier \in \mathscr{S}(\mathbb{R})$ with $\supp \hat{\cutEnergyFourier}\subset (-2,2)$.  Then, for all $N\geq 0$ and $\omega\in[a-2\e,b+2\e]$ we have
\begin{gather}
\label{e:octopus}
\cutMan\partial_\omega \big(\cutEnergyFourier_{_{\!\referee{T/\hbar}}}*\sem E(\sem{H}_0)\big)(\omega)\cutMan= \cutMan \partial_\omega \big(\cutEnergyFourier_{_{\!\referee{T/\hbar}}}*\sem E(\sem{H}_1)\big)(\omega)\cutMan +O(\hbar^{-1}\referee{\tilde{\delta}}(\hbar)T\referee{(\hbar)}^2+\hbar^\infty)_{H_{\hbar}^{-N}\to H_{\hbar}^N}.
\end{gather}
If, in addition, $\referee{\hat\nu}$ is even, then
\begin{gather}
\label{e:squid}
\cutMan\big( \cutEnergyFourier_{_{\!\referee{T/\hbar}}}*\sem E_{\sem H _0}\big)(\omega)\cutMan=\cutMan \big(\cutEnergyFourier_{_{\!\referee{T/\hbar}}}*\sem E(\sem{H}_1)\big)(\omega)\cutMan+O(\referee{\tilde{\delta}}(\hbar)\referee{T(\hbar)}+\hbar^\infty)_{H_{\hbar}^{-N}\to H_{\hbar}^N}.
\end{gather}
\end{lemma}
\begin{proof}
By Lemma~\ref{l:compareTo1}, 
\begin{equation}
\label{e:propIsCosine}
\begin{aligned}
\cutEnergyFourier_{_{\!\referee{T/\hbar}}}(\omega-\sqrt{\sem{H}_0})
& = \frac{1}{\pi \hbar }\int \hat{\cutEnergyFourier}(T^{-1}\tau)e^{it\tau\omega/\hbar} \cos\big(\tau\sqrt{\sem{H}_0}/\hbar\big)d\tau +O(\hbar^\infty)_{H_\hbar^{-N}\to H_\hbar^{N}}.
\end{aligned}
\end{equation}
Next, let $\cutEnergy\in C_c^\infty(\mathbb{R})$ with $\cutEnergy \equiv 1$ on $[\frac{a}{2}-\e,2(b+2\e)]$. Then
$$
\cutEnergyFourier_{_{\!\referee{T/\hbar}}}(\omega-\sqrt{\sem{H}_0})=\cutEnergy(\sem{H}_0)\cutEnergyFourier_{_{\!\referee{T/\hbar}}}(\omega-\sqrt{\sem{H}_0})\cutEnergy(\sem{H}_0) +O(\hbar^\infty)_{\Psi^{-\infty}}
$$
and, by~\eqref{e:H0Local}, for $\tilde{\cutMan}\in C_c^\infty(B(0,R_0))$ with $\tilde{\cutMan} \equiv 1$ on $\supp \cutMan$, we have
$$
(1-\tilde{\cutMan})\cutEnergy(\sem{H}_0)\cutMan=O(\hbar^\infty)_{H_\hbar^{-N}\to H_\hbar^N},\qquad\cutMan\cutEnergy(\sem{H}_0)(1-\tilde{\cutMan})=O(\hbar^\infty)_{H_\hbar^{-N}\to H_\hbar^N}.
$$

 Therefore, by Lemma~\ref{l:waveCompare},
\begin{multline}
\label{e:itsACone}
\cutMan\cutEnergy(\sem{H}_0)\cos\big(\tau\sqrt{\sem{H}_0}/\hbar\big)\cutEnergy(\sem{H}_0)\cutMan=\\\cutMan\cutEnergy(\sem{H}_0)\tilde{\cutMan} \cos\big(\tau\sqrt{\sem{H}_1}/\hbar\big)\tilde{\cutMan}\cutEnergy(\sem{H}_0)\cutMan +O(|\tau|\referee{\tilde{\delta}}(\hbar))_{H_{\hbar}^{-N}\to H_{\hbar}^{N}}+O(\hbar^\infty)_{\Psi^{-\infty}}
\end{multline}
for $\tau \leq R(h)-R_0-1$.  In particular, since $T(\hbar)\leq (R(h)-R_0-2)/2$, and $\supp \hat \cutEnergyFourier\subset (-2,2)$, Lemma~\ref{l:compareTo1} implies that
$$
\begin{aligned}\cutMan\partial_\omega \big(\cutEnergyFourier_{_{\!\referee{T/\hbar}}}*\sem E(\sem{H}_0)\big)(\omega)\cutMan&=\frac{1}{2\pi \hbar}\int \hat{\cutEnergyFourier}(T^{-1}\tau) e^{it\tau\omega/\hbar}\cutMan\cutEnergy(\sem{H}_0)\tilde{\cutMan}\cos\big(\tau\sqrt{\sem{H}_1}/\hbar\big)\tilde{\cutMan}\cutEnergy(\sem{H}_0)\cutMan d\tau \\
&\qquad+O(\hbar^{-1}\referee{\tilde{\delta}}(\hbar)T^2+\hbar^\infty)_{\Psi^{-\infty}}.
\end{aligned}
$$
Finally, using Lemma~\ref{l:basicCutoff} to replace $\cutEnergy(\sem{H}_0)$ by $\cutEnergy(\sem{H}_1)$, $\cutMan\cutEnergy(\sem{H}_1)\tilde{\cutMan}$ by $\cutMan\cutEnergy(\sem{H}_1)$, and $\tilde{\cutMan}\cutEnergy(\sem{H}_1)\cutMan$ by $\cutEnergy(\sem{H}_1)\cutMan$, we obtain
$$
\begin{aligned}
\cutMan\partial_\omega \big(\cutEnergyFourier_{_{\!\referee{T/\hbar}}}*\sem E(\sem{H}_0)\big)(\omega)\cutMan&= \cutMan \partial_\omega\big(\cutEnergyFourier_{_{\!\referee{T/\hbar}}}*\sem E(\sem{H}_1)\big)(\omega)\cutMan +O(\hbar^{-1}\referee{\tilde{\delta}}(\hbar)T^2+\hbar^\infty)_{\Psi^{-\infty}},
\end{aligned}
$$
which is~\eqref{e:octopus}.

To prove~\eqref{e:squid}, we use Lemma~\ref{l:difference}. Indeed, 
$$\begin{aligned}\cutMan \big(\cutEnergyFourier_{_{\!\referee{T/\hbar}}}*\sem E(\sem{H}_0)\big)(\omega)\cutMan&=\cutMan\cutEnergy(\sem{H}_0)\big(\cutEnergyFourier_{_{\!\referee{T/\hbar}}}*\sem E(\sem{H}_0)\big)(\omega)\cutEnergy(\sem{H}_0)\cutMan+O(\hbar^\infty)_{\Psi^{-\infty}}\\
&=-i\int \frac{1}{\tau} \hat{\cutEnergyFourier}\Big(\frac{\tau}{T}\Big) \sin(\hbar^{-1}\tau \omega)\cutMan\cutEnergy(\sem{H}_0)\cos\big(\tau\sqrt{\sem{H}_0}/\hbar\big)\cutEnergy(\sem{H}_0)\cutMan d\referee{\tau}+O(\hbar^\infty)_{\Psi^{-\infty}}.
\end{aligned}$$
Then, using~\eqref{e:itsACone}, Lemma~\ref{l:basicCutoff} and Lemma~\ref{l:difference} once again,~\eqref{e:squid} follows.
\end{proof}

We now prove Proposition~\ref{p:distantPerturb}
\begin{proof}[Proof of Proposition~\ref{p:distantPerturb}]
Let $\cutEnergyFourier \in \mathscr{S}$ with \referee{$\nu\geq 0$}, $\hat{\cutEnergyFourier}\equiv 1$ on $[-1,1]$, $\supp \hat{\cutEnergyFourier} \subset (-2,2)$, and $\hat{\cutEnergyFourier}$ even. Observe that
$$
\begin{aligned}&(\partial_\omega \sem E(\sem{H}_i)-\partial_\omega \cutEnergyFourier_{_{\!\referee{T/\hbar}}}*\sem E(\sem{H}_i))*\cutEnergyFourier_{_{\!\referee{T/3\hbar}}}(\omega)\\
&=\frac{1}{2\pi \hbar}\int (1-\hat{\cutEnergyFourier}(T^{-1}\tau))\hat{\cutEnergyFourier}(3T^{-1}\tau)e^{i\tau/\hbar (\omega-\sqrt{\sem{H}_i})}d\tau =0.
\end{aligned}
$$
Observe that for $\tilde{\cutMan}\equiv 1$ on $\supp \cutMan$ and any $s\in \mathbb{R}$,
$$
(1-\tilde{\cutMan})(\sem{H}_0+1)^s\cutMan=O(\hbar^\infty)_{H_{\hbar}^{-N}\to H_{\hbar}^N},\qquad \cutMan (\sem{H}_0+1)^s(1-\tilde{\cutMan})=O(\hbar^\infty)_{H_{\hbar}^{-N}\to H_{\hbar}^N}.
$$ 
Therefore, using~\eqref{e:octopus}, for any $s_1,s_2\in \mathbb{R}$ we have
\begin{equation}
\label{e:aardvark}
\cutMan(\sem{H}_0+1)^{s_1}\big(\partial_\omega \sem E(\sem{H}_0)-\partial_\omega (\cutEnergyFourier_{_{\!\referee{T/\hbar}}}*\sem E(\sem{H}_1))\big)(\sem{H}_0+1)^{s_2}\cutMan*\cutEnergyFourier_{\hbar,T/3}(\omega)=O(\hbar^{-1}\referee{\tilde{\delta}}(\hbar)T^2+\hbar^\infty)_{H_{\hbar}^{-N}\to H_{\hbar}^N}.
\end{equation}

Next, using~\eqref{e:lip} and Lemma~\ref{l:lip} with $\cutEnergyFourier_{\hbar}=\cutEnergyFourier_{_{\!\referee{T/\hbar}}}$, we have
\referee{
$$
|\cutEnergyFourier_{_{\!\referee{T/\hbar}}}*\sem E(\sem{H}_1)(x,x,s)-\sem E(\sem{H}_1)(x,x,s)|\leq C\frac{\hbar^{1-d}}{T(\hbar)}+C_N\hbar^N.
$$
Then, using that $\nu\geq 0$, and hence that $\cutEnergyFourier_{_{\!\referee{T/\hbar}}}*\sem E(\sem{H}_1)(x,x,s)$ is monotone in $s$, and~\eqref{e:lip} again we have
\begin{align*}
&\int_{\omega-2\hbar T^{-1}}^{\omega+2\hbar T^{-1}}\Big|\partial_{s} \big(\cutEnergyFourier_{_{\!\referee{T/\hbar}}}*\sem E(\sem{H}_1)\big)(x,x,s)\Big|ds\\
&=\cutEnergyFourier_{_{\!\referee{T/\hbar}}}*\sem E(\sem{H}_1)(x,x,\omega+2\hbar T^{-1})-\cutEnergyFourier_{_{\!\referee{T/\hbar}}}*\sem E(\sem{H}_1)(x,x,\omega-2\hbar T^{-1})\\
&=\sem E(\sem{H}_1)(x,x,\omega+2\hbar T^{-1})-\sem E(\sem{H}_1)(x,x,\omega-2\hbar T^{-1})+O\Big(\frac{\hbar^{1-d}}{T(\hbar)}+\hbar^N\Big)\\
&\leq C\frac{\hbar^{1-d}}{T(\hbar)}.
\end{align*}
}

Therefore, using 
\begin{equation}
\label{e:deltaEst}
\|(\sem{H}_0+1)^k\delta_x\|_{H_{h}^{-s-2k}}\leq Ch^{-\frac{d}{2}}\qquad\text{for }s>\frac{d}{2}
\end{equation}
together with Lemma~\ref{l:monotoneTaub} with $\mu_\hbar=\sem E (\sem{H}_0)(x,x,\cdot)$, $\alpha_\hbar=\cutEnergyFourier_{_{\!\referee{T/\hbar}}}*\sem E(\sem{H}_1)(x,x,\cdot)$, $a_{\hbar}=c\hbar T^{-1}$, $M_\hbar=C\hbar^{-d}$ $B_\hbar=C\hbar^{-1-d}\referee{\tilde{\delta}}(\hbar)T^2+O(\hbar^\infty)$,  we have that the hypotheses of Lemma~\ref{l:lip} hold with $\referee{\tilde{w}_\hbar}=\sem E (\sem{H}_0)(x,x,\cdot)$ $\sigma_\hbar= c\hbar^{-1} T$, $L_{\hbar}= c\hbar T^{-1}(C\hbar^{-d}+C\hbar^{-1-d}\referee{\tilde{\delta}}(\hbar)T^2+O(\hbar^\infty))$, $B_\hbar=\hbar^{-d}$, and hence
\begin{equation}
\label{e:unsmooth0}
|\sem E(\sem{H}_0)(x,x,\omega)-\cutEnergyFourier_{_{\!\referee{T/\hbar}}}* \sem E (\sem{H}_0)(x,x,\omega)|\leq C\frac{\hbar^{1-d}}{T(\hbar)} +C\hbar^{-d}\referee{\tilde{\delta}}(\hbar)T(\hbar).
\end{equation}

Again using~\eqref{e:lip} and Lemma~\ref{l:lip} with $\cutEnergyFourier_{\hbar}=\cutEnergyFourier_{_{\!\referee{T/\hbar}}}$, we obtain
\begin{equation}
\label{e:unsmooth1}
\Big|(\sem E(\sem{H}_1)(x,x,\omega)-\cutEnergyFourier_{_{\!\referee{T/\hbar}}}*\sem E(\sem{H}_1)(x,x,\omega)\Big|\leq C \frac{\hbar^{1-d}}{T(\hbar)}+O(\hbar^\infty).
\end{equation}

Thus,~\eqref{e:squid} and~\eqref{e:deltaEst} imply
\begin{equation}
\label{e:firstCompare}
\begin{aligned}
\cutEnergyFourier_{_{\!\referee{T/\hbar}}}*\sem E(\sem{H}_0)(x,x,\omega)
&=\cutEnergyFourier_{_{\!\referee{T/\hbar}}}*\sem E(\sem{H}_1)(x,x,\omega) + O(\hbar^{-d}T(\hbar)\referee{\tilde{\delta}}(\hbar)+\hbar^\infty).
\end{aligned}
\end{equation}
Combining this with~\eqref{e:unsmooth0} and~\eqref{e:unsmooth1}, we have~\eqref{e:bigBallOk}.

Now, appealing to~\eqref{e:lip2} rather than~\eqref{e:lip} and using that 
$$
(\sem{H}_0+1)^k\sem E(\sem{H}_0)(\sem{H}_0+1)^l(x,x,\omega)
$$
is monotone increasing in $\omega$ we may make the same argument to obtain 
$$
(\sem{H}_0+1)^k\sem E(\sem{H}_0)(\sem{H}_0+1)^l(x,x,\omega)=(\sem{H}_0+1)^k\sem E(\sem{H}_1)(\sem{H}_0+1)^l(x,x,\omega)+ O(\hbar^{-d}T(\hbar)\referee{\tilde{\delta}}(\hbar)+\hbar^\infty).
$$


With this in hand, we can complete the proof. Indeed, notice that \referee{since for $s\geq 0$, the operator $(\sem{H}_0+1)^k[\sem E(\sem{H}_0)(\omega+s)-\sem E(\sem{H}_0)(\omega)](\sem{H}_0+1)^l$ is positive, we have 
\begin{align*}
0&\leq \langle (\sem{H}_0+1)^k(\sem E(\sem{H}_0)(\omega+s)-\sem E(\sem{H}_0)(\omega))(\sem{H}_0+1)^l(\delta_x+\delta_y),\delta_x+\delta_y\rangle \\
&=2((\sem{H}_0+1)^k\sem E(\sem{H}_0)(\sem{H}_0+1)^l)(x,y,\omega+s)+((\sem{H}_0+1)^k\sem E(\sem{H}_0)(\sem{H}_0+1)^l)(x,x,\omega+s)\\
&\qquad +((\sem{H}_0+1)^k\sem E(\sem{H}_0)(\sem{H}_0+1)^l)(y,y,\omega+s)\\
&\qquad-2((\sem{H}_0+1)^k\sem E(\sem{H}_0)(\sem{H}_0+1)^l)(x,y,\omega)+((\sem{H}_0+1)^k\sem E(\sem{H}_0)(\sem{H}_0+1)^l)(x,x,\omega)\\
&\qquad +((\sem{H}_0+1)^k\sem E(\sem{H}_0)(\sem{H}_0+1)^l)(y,y,\omega).
\end{align*}
In particular, the function}
\begin{multline*}
\alpha_0(\omega):=((\sem{H}_0+1)^k\sem E(\sem{H}_0)(\sem{H}_0+1)^l)(x,y,\omega)\\+\frac{1}{2}(((\sem{H}_0+1)^k\sem E(\sem{H}_0)(\sem{H}_0+1)^l)(x,x,\omega)+((\sem{H}_0+1)^k\sem E(\sem{H}_0)(\sem{H}_0+1)^l)(y,y,\omega))
\end{multline*}
is monotone increasing in $\omega$ and, using~\eqref{e:aardvark} and~\eqref{e:deltaEst}, we have
$$
\cutMan\partial_\omega \cutEnergyFourier_{_{\!\referee{T/\hbar}}}*\alpha_0(\omega)\cutMan=\cutMan\partial_\omega \cutEnergyFourier_{_{\!\referee{T/\hbar}}}*\alpha_1(\omega)\cutMan+O(\hbar^{-1-d}\referee{\tilde{\delta}}(\hbar)T^{2}),
$$
where
\begin{multline*}
\alpha_1(\omega):=((\sem{H}_0+1)^k\sem E(\sem{H}_1)(\sem{H}_0+1)^l)(x,y,\omega)\\+\tfrac{1}{2}(((\sem{H}_0+1)^k\sem E(\sem{H}_1)(\sem{H}_0+1)^l)(x,x,\omega)+((\sem{H}_0+1)^k\sem E(\sem{H}_1)(\sem{H}_0+1)^l)(y,y,\omega)).
\end{multline*}
Therefore, by exactly the same argument we used to obtain~\eqref{e:firstCompare}, but using~\eqref{e:lip2} instead of~\eqref{e:lip}, we have
$$
(\sem{H}_0+1)^k\sem E(\sem{H}_0)(\sem{H}_0+1)^l(x,y,\omega)=(\sem{H}_0+1)^k\sem E(\sem{H}_1)(\sem{H}_0+1)^l(x,y,\omega)+ O(\hbar^{-d}T(\hbar)\referee{\tilde{\delta}}(\hbar)+\hbar^\infty).
$$
Finally, the fact that for $U\Subset V$ and $s\in \mathbb{R}$ we have
$$
\|v\|_{H_{\hbar}^{2s}(U)}\leq C_s\|(\sem{H}_0+1)^sv\|_{L^2(V)}+O_{N,s}(\hbar^N)\|v\|_{H_{\hbar}^{-N}},
$$
completes the proof.
\end{proof}

\section{Pseudodifferential calculus in anisotropic symbol classes}
\label{s:pdo}

We first recall the standard notation for semiclassical pseudodifferential operators on $\mathbb{R}^d$ in the Weyl calculus. Throughout this article, we will work with the calculus of polyhomogeneous symbols, although we will need a slight modification. 

\referee{
\begin{definition}
We say that $a\in C^\infty(\mathbb{R}^{2d})$ is a \emph{symbol of order $m$} and write $a\in S^m(\mathbb{R}^{2d})$ if $a=a(x,\xi;\hbar)=a(x,\xi)\in C^\infty(\mathbb{R}^{2d})$ for all $\alpha,\beta\in\mathbb{N}^d$  (where we write $\mathbb{N}=\{0,1,\dots\}$) there is $C_{\alpha\beta}>0$ such that 
\begin{equation}
\label{e:symbolClass}
\sup_{0<\hbar<1}|\partial_x^\alpha\partial_\xi ^\beta a(x,\xi;\hbar)|\leq C_{\alpha\beta}\langle \xi\rangle^{m-|\beta|}.
\end{equation}
Below, we often implicitly allow symbols to depend on $\hbar$, suppressing $\hbar$ in the notation. We write $S^{-\infty}=\bigcap_m S^{m}$ and $S^\infty=\bigcup_m S^m$. \referee{\st{We write $C_c^\infty(\mathbb{R}^{2d})$ for the class of functions in $S^0$ with support in an $\hbar$-independent compact set and with norm inherited from $S^0$.}}
\end{definition}
}

We will need a small variation on the set of polyhomogeneous symbols. To this end, we let 
$$
\referee{\mu_n\equiv \mu_n(\hbar)=\mu_{n(\hbar)}}
$$
 as in Section~\ref{s:muN} with $n(\hbar)$ satisfying~\eqref{e:muNBound}. 
\referee{
\begin{definition}
For $0\leq \delta<1$, we define the \emph{(semiclassically) polyhomogeneous symbols}, $\Sp^m$ as follows.  We say $a\in \Sp^m$ if there are $\{a_j\}_{j=0}^\infty$, $a_j\in \lp_n^{j\delta}S^{m}$, independent of $\hbar$, but depending on $\lp_n$ such that 
\begin{equation}
\label{e:borscht}
a-\sum_{j=0}^{N-1}\hbar^{j}a_j\in \hbar^{N}\lp_{n}^{N\delta}S^{m}.
\end{equation}
Here, we write $a\in f(\hbar,\lp_n)S^m$ if~\eqref{e:symbolClass} holds with $C_{\alpha\beta}$ replaced by $f(\hbar,\lp_n)C_{\alpha\beta}$. 
\end{definition}}

\referee{\begin{remark}
We recall that, as discussed in Section~\ref{s:muN}, it is crucial that $\lp_n$ is locally constant as a function $\hbar$ so that we may glue asymptotics together across intervals. Choosing $\lp_n=\hbar^{-1}$ would not suffice and, although many statements below hold for $\lp_n$ replaced by any $\lp\leq C\hbar^{-1}$, we choose to keep the $n$ in the notation to emphasize the importance of this local constancy.
\end{remark}}
\begin{remark}
One can, of course, replace $\hbar^N\lp_n^{N\delta}$ by $\lp_n^{N(\delta-1)}$ \referee{on} the right-hand side of~\eqref{e:borscht}, but, since these estimates usually occur when the remainder consists of a function whose failure to have one-step polyhomogeneity comes only from the large parameter, $\lp_n$, we choose to keep the notation as is to help the reader.
\end{remark}

\begin{remark}
\referee{The reason that we cannot simply take $\delta=0$ is that, in the onion peeling procedure, we are only able to take finitely many (i.e. a number independent of $\hbar$) steps. On the other hand, if we took $\delta=0$, then to gauge transform away a potential periodic at some scale $\sim \hbar^{-N}$ for some $N$, we would need $|\log \hbar|$ steps. Therefore, we take $\delta>0$ and,
for most purposes,} the reader may think of $\delta=\frac{1}{4}$. \referee{For instance, if the reader is only interested in on-diagonal asymptotics of the spectral function, it suffices to take $\delta=\frac{1}{4}$.}  It is only at the very end of the proof, \referee{when $0<|x-y|=o(1)$,} where we will take $\delta$ arbitrarily small, \referee{see Remark~\ref{r:deltaIsSmall}}. \end{remark}

\begin{definition}
 We define the set of \emph{pseudodifferential operators of order $m$}, $\Psi^m_\delta$, by saying that $A\in \Psi_{\delta}^m$ if there is $a\in \Sp^m$ such that for all $N\in \mathbb{R}$
$$
A=\Op{\hbar}(a) +O(\hbar^{\infty})_{H_{\hbar}^{-N}\to H_\hbar^N},\quad \big[\Op{\hbar}(a)u\big](x;\hbar):=\frac{1}{(2\pi\hbar)^d}\int e^{i\langle x-y,\xi\rangle/\hbar}a\Big(\frac{x+y}{2},\xi;\hbar\Big)u(y)dyd\xi.
$$ 
Here the superscript $\operatorname{W}$ stands for Weyl.
\end{definition}

\begin{remark}
Since we use the Weyl quantisation, $\Op{\hbar}(a)$ \referee{with $a\in \Sp^0$} is self-adjoint on $L^2(\referee{\mathbb{R}^d})$ if $a$ is real valued.
\end{remark}


\referee{
\begin{definition}
We write $a\in S_\delta^{\fcomp}$ and say $a$ is \emph{momentum compactly supported} if $a\in \Sp^0$ and there is an $\hbar$-independent, compact set $\referee{\mathcal{K}}\subset \mathbb{R}^d$ such that for all $\hbar\in (0,1]$
$$
\supp a\subset \mathbb{R}^d\times \referee{\mathcal{K}}.
$$
We write $\Psi_\delta^{\fcomp}$ for the corresponding class of operators; here, $\fcomp$ stands for momentum compact.
\end{definition}}

\referee{\begin{definition}
We say that a distribution, $u$, is \emph{$\hbar$-tempered} if there is $N>0$ and $C>0$ such that for all $\hbar \in(0,1]$, we have
$$
\|u\|_{H_{\hbar}^{-N}}\leq C\hbar^{-N}.
$$
\end{definition}

\begin{definition}
For an $\hbar$-tempered distribution, $u$, we define the \emph{wavefront set of $u$}, $\WF_{\hbar}(u)$, as follows.  For $(x_0,\xi_0)\in \mathbb{R}^{2d}$, we say that $(x_0,\xi_0)\notin \WF_{\hbar}(u)$ if there is $a\in C_c^\infty(\mathbb{R}^{2d})$ independent of $\hbar$ such that $a(x_0,\xi_0)=1$ and for all $N$ and $\hbar\in (0,1]$
$$
\|\Op{\hbar}(a)u\|_{H_{\hbar}^{N}}\leq C_N \hbar^{N}.
$$
\end{definition}
\begin{definition}
We say that $u$ is \emph{$\hbar$-compactly microlocalized} if there is $\cutPhase \in C_c^\infty(\mathbb{R}^{2d})$ independent of $\hbar$ and for all $N$ there is $C_N>0$ such that 
$$
\|\Op{\hbar}(1-\cutPhase)u\|_{H_{\hbar}^{N}}\leq C_N \hbar^{N}.
$$
\end{definition}}

\subsection{Anisotropic Pseudodifferential operators}
In this subsection, we study a class of pseudodifferential operators which improve after differentiation in $x$. These classes will be required in the onion peeling process (see Section~\ref{s:gauge}). 

\referee{\begin{definition} Let $r:[1,\infty)\to (0,1]$ be non-increasing. We write $a\in S^{m}_{r,\delta}$ if $a\in \Sp^m$ with $a\sim \sum_j \hbar^{j}a_j$, and for all $\alpha,\beta\in\mathbb{N}^d$, there is $C_{\alpha\beta j}>0$ such that
$$
|\partial_x^\alpha\partial_\xi ^\beta a_j(x,\xi;\mu_n)|\leq C_{\alpha\beta j}\lp_{n}^{j\delta}r(\lp_{n})^{|\alpha|}\langle \xi\rangle^{m-j-|\beta|}.
$$
We write $\Psi_{r,\delta}^m$ for the corresponding class of operators, with $S^{-\infty}_{r,\delta},\,S^{\infty}_{r,\delta}, S^{\fcomp}_{r,\delta}$, and $\Psi^{-\infty}_{r,\delta},\Psi^\infty_{r,\delta},\Psi_{r,\delta}^{\fcomp}$ as above. Note that $\Sp^m=S^m_{1,\delta}$. 
\end{definition}}

\begin{remark}
\label{r:roughSymbols}
Although we make the assumption that $a_j$ $j=0,1,\dots$ are infinitely smooth, it is clear from standard results in the pseudodifferential calculus (see e.g.~\cite[Theorem 4.23]{Zw:12}) that, if one is only interested in the pseudodifferential calculus modulo remainders of size $h^N$ for some $N$, then there is a $K>0$ such \referee{that bounds on the $C^K$ norm of the $a_j$'s} are enough for proving the results of this paper. 
\end{remark}

\begin{remark}
In reality, we will need only the values of $r$ at the discrete points $\lp_n$ and we will be interested only in $r(\mu)=\mu^{-\gamma}$ for some $\gamma>0$. However, for notational convenience we use a function $r$. Below, when we write the letter $r$, we will mean the function $r(\mu_{n(\hbar)})$. 
\end{remark}

\referee{
We will often use the following analogue of Borel summation for our symbols, the proof of which follows the standard one (see e.g.~\cite[Theorem 4.5]{Zw:12}).
\begin{lemma}
\label{l:borelSum}
Let $0\leq \delta <1$, $\referee{\mathcal{K}}\subset \mathbb{R}^n$ compact and $\{g_j\}_{j=0}^\infty \in S_{r,\delta}^{\fcomp}$ such that $\supp g_j\subset \mathbb{R}^n\times \referee{\mathcal{K}}$. Then there is $g\in S_{r,\delta}^{\fcomp}$ such that 
$$
g\sim \sum_j h^j\mu_n^{j\delta}g_j
$$
in the sense that 
$$
g-\sum_{j=0}^{N-1}h^j\mu_n^{j\delta}g_j\in h^N\mu_n^{N\delta}S_{r,\delta,}^{\fcomp}
$$
and, moreover, $\supp g\subset \mathbb{R}^n\times \referee{\mathcal{K}}$.
\end{lemma}
}

\referee{
\begin{definition}
For $r>0$, we define unitary operators $U_{r}:L^2\to L^2$ by 
$$
U_{r}u(x):=r^{\frac{d}{2}}u(r x).
$$ 
Their adjoints are given by $U_{r}^*:L^2\to L^2$ with
 $$
 U_{r}^*u(x):=r^{-\frac{d}{2}}u(r^{-1}x).
 $$
 Note also that $U_{r}U_s=U_{rs}$. 
\end{definition}
} 
\begin{lemma}
\label{l:unitaryEquiv}
Let $0\leq \delta<1$, and $a\in S_{r,\delta}^m$. Then
$$
U_{r}^*\Op{\hbar}(a)U_{r} = \Op{r\hbar}(\tilde{a}_r),
$$
where $\tilde{a}_r\in \Sp^m$ is defined by 
\begin{equation}
\label{e:ar}\tilde{a}_r(x,\xi;\hbar):=a(r^{-1}x,\xi;\hbar ).
\end{equation}
\end{lemma}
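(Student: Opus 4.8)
The statement is a conjugation identity relating the $\hbar$-Weyl quantization of an anisotropic symbol $a\in S^m_{r,\delta}$ to an ordinary $(r\hbar)$-Weyl quantization, under the rescaling unitary $T_r$. The strategy is a direct computation with the Weyl quantization formula, tracking the change of variables carefully, followed by a verification that the rescaled symbol $\tilde a_r$ lands in the claimed class $\Sp^m$. The whole thing is really two independent routine checks: (i) the algebraic identity, and (ii) the symbol-class membership. I expect (ii) to be where the anisotropic structure actually gets used, so that is the conceptual heart, but it is still short.

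\emph{Step 1: the quantization identity.} Write out $\big[\Op{\hbar}(a)T_r u\big](x) = (2\pi\hbar)^{-d}\int e^{i\langle x-y,\xi\rangle/\hbar} a\big(\tfrac{x+y}{2},\xi;\hbar\big) r^{d/2}u(ry)\,dy\,d\xi$, and then apply $T_r^*$ on the left, which replaces $x$ by $r^{-1}x$ and multiplies by $r^{-d/2}$. Now substitute $y = r^{-1}y'$ in the integral, picking up a Jacobian $r^{-d}$, and substitute $\xi = r\xi'$, picking up a Jacobian $r^{d}$; the two Jacobians together with the $r^{d/2}$ and $r^{-d/2}$ prefactors combine so that the overall normalization becomes $(2\pi\hbar)^{-d}$ against the new variables — but the phase becomes $e^{i\langle r^{-1}x - r^{-1}y', r\xi'\rangle/\hbar} = e^{i\langle x-y', \xi'\rangle/(r\hbar)}$, so the effective small parameter is $r\hbar$, and we need the normalization $(2\pi r\hbar)^{-d}$; the leftover factor of $r^{-d}$ from converting $(2\pi\hbar)^{-d}$ to $(2\pi r\hbar)^{-d}$ is exactly absorbed by one of the Jacobians. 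Meanwhile the symbol argument $\tfrac{r^{-1}x + y}{2} = \tfrac{r^{-1}x + r^{-1}y'}{2} = r^{-1}\tfrac{x+y'}{2}$, so the symbol evaluated in the new integral is $a\big(r^{-1}\tfrac{x+y'}{2}, \xi';\hbar\big) = \tilde a_r\big(\tfrac{x+y'}{2},\xi';\hbar\big)$. This is precisely $\big[\Op{r\hbar}(\tilde a_r)u\big](x)$.

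\emph{Step 2: $\tilde a_r \in \Sp^m$.} Here I use the definition of $S^m_{r,\delta}$: $a\sim\sum_j \hbar^j a_j$ with $|\partial_x^\alpha\partial_\xi^\beta a_j(x,\xi;\mu_n)|\le C_{\alpha\beta j}\,\lp_n^{j\delta}\,r(\lp_n)^{|\alpha|}\,\langle\xi\rangle^{m-j-|\beta|}$. Since $r = r(\mu_{n(\hbar)})$ is a fixed number once $\hbar$ (hence $n(\hbar)$) is fixed, differentiating $\tilde a_r(x,\xi;\hbar) = a(r^{-1}x,\xi;\hbar)$ in $x$ produces a factor $r^{-|\alpha|}$ from the chain rule. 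Writing $\tilde a_r\sim\sum_j \hbar^j \widetilde{(a_j)}_r$ with $\widetilde{(a_j)}_r(x,\xi;\mu_n) = a_j(r^{-1}x,\xi;\mu_n)$, we get $|\partial_x^\alpha\partial_\xi^\beta \widetilde{(a_j)}_r| \le r^{-|\alpha|}\cdot C_{\alpha\beta j}\,\lp_n^{j\delta}\,r^{|\alpha|}\,\langle\xi\rangle^{m-j-|\beta|} = C_{\alpha\beta j}\,\lp_n^{j\delta}\,\langle\xi\rangle^{m-j-|\beta|}$, so the anisotropic factor $r^{|\alpha|}$ is exactly cancelled and $\widetilde{(a_j)}_r\in \lp_n^{j\delta}S^m$. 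The polyhomogeneous remainder estimate~\eqref{e:borscht} transforms the same way: $a - \sum_{j<N}\hbar^j a_j \in \hbar^N\lp_n^{N\delta}S^m$ gives, after the same chain-rule bookkeeping applied to the $S^m$ seminorms, $\tilde a_r - \sum_{j<N}\hbar^j\widetilde{(a_j)}_r \in \hbar^N\lp_n^{N\delta}S^m$. Hence $\tilde a_r\in\Sp^m = S^m_{1,\delta}$, as claimed. One should also note the identity~\eqref{e:ar} is literally the definition we used, so nothing further is needed there.

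\emph{Main obstacle.} There is no genuine obstacle; the only thing to be careful about is the bookkeeping of Jacobians and prefactors in Step 1 — it is easy to drop a factor of $r^{\pm d/2}$ or mis-assign which substitution carries the $(2\pi\hbar)^{-d}\to(2\pi r\hbar)^{-d}$ conversion — and, in Step 2, making sure one genuinely uses that $r$ is $\hbar$-fixed (a number, not an operator) so that differentiation only sees it through the chain rule. Both are handled by writing the substitutions out explicitly once and checking the powers of $r$ balance, which they do by the design of the definition of $S^m_{r,\delta}$.
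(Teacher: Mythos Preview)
Your overall strategy matches the paper's exactly, and Step~2 is correct. However, your execution of Step~1 contains a genuine bookkeeping error.

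You substitute in \emph{both} $y$ and $\xi$, but the $\xi$ substitution is unnecessary and you then mishandle it. With $y=r^{-1}y'$ and $\xi=r\xi'$, the phase
\[
\langle r^{-1}x - r^{-1}y',\, r\xi'\rangle/\hbar = \langle x-y',\xi'\rangle/\hbar,
\]
so the effective parameter is $\hbar$, not $r\hbar$ as you claim; the factors of $r$ cancel. Moreover, after substituting $\xi=r\xi'$ the symbol becomes $a\big(r^{-1}\tfrac{x+y'}{2},\, r\xi'\big)$, not $a\big(r^{-1}\tfrac{x+y'}{2},\, \xi'\big)$. So with both substitutions you end up with $\Op{\hbar}\big(a(r^{-1}\cdot,\, r\cdot)\big)$, which is not the desired $\Op{r\hbar}(\tilde a_r)$.

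The fix is simply to drop the $\xi$ substitution. After applying $T_r^*$ and expanding $T_ru$, substitute only $w=ry$ (so $dy=r^{-d}dw$): the phase becomes $\langle r^{-1}(x-w),\xi\rangle/\hbar=\langle x-w,\xi\rangle/(r\hbar)$, the symbol argument becomes $r^{-1}\tfrac{x+w}{2}$, and the factor $r^{-d}$ converts $(2\pi\hbar)^{-d}$ to $(2\pi r\hbar)^{-d}$. This is exactly the paper's computation. Your own ``main obstacle'' paragraph correctly anticipated that the danger lies in this bookkeeping; you just need to carry it out once more, carefully, without the extra substitution.
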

\begin{proof}
\begin{align*}
(U_{r}^*\Op{\hbar}(a)U_{r} u)(x)&=r^{-\frac{d}{2}}\frac{1}{(2\pi\hbar )^d} \int e^{i \langle r^{-1}x-y,\xi\rangle/\hbar}a(\tfrac{r^{-1} x+y}{2},\xi)[U_{r} u](y)dyd\xi\\
&=\frac{1}{(2\pi \hbar)^d} \int e^{i \langle r^{-1}(x-r y),\xi\rangle/\hbar}a( \tfrac{r^{-1} x+y}{2},\xi)u(r y)dyd\xi\\
&=\frac{1}{(2\pi r \hbar )^d} \int e^{i \langle x- w,\xi\rangle/(r\hbar)}a( \tfrac{r^{-1} (x+w)}{2},\xi)u( w)dw d\xi\\
&=[\Op{r \hbar}(\tilde{a}_r)u](x).
\end{align*}
The fact that $\tilde{a}_r\in \Sp^m$ follows easily from the definition of $S^m_{r,\delta}$.
\end{proof}

\referee{
\begin{remark}
Notice that the proof of Lemma~\ref{l:unitaryEquiv} shows that the pseudodifferential calculus can be used in the classes $S_{r,\delta}^m$. In particular, if $a\in S_{r,\delta}^{m_1}$ and $b\in S_{r,\delta}^{m_2}$, then $\Op{\hbar}(a)\Op{\hbar}(b)=\Op{\hbar}(e)$ for some $e\in S^{m_1+m_2}_{r,\delta}$.  
\end{remark}
}

\referee{\st{Since the gauge transform involves commutators, we need to be able to compute the symbol of the commutator of two pseudodifferential operators. } Our next lemma will allow us to understand conjugation of pseuodifferential operators by $e^{iG}$ for $G=\Op{\hbar}(g)$ and $g\in S_{r,\delta}$.} \referee{Denote} 
$$
\operatorname{ad}_AB:=[A,B].
$$
\begin{lemma}
\label{l:conjugate}
Let $0\referee{\leq}\delta<1$, $N_0>0$, $m\in \mathbb{R}$, and suppose that $ \hbar^{N_0}\leq  r(\lp_{n})\leq 1$. Suppose that $a\in \Sp^m$, $g\in r^{-1}\lp_{n}^\delta S_{r,\delta}^{\fcomp}$ is real valued, and $b\in S_{r,\delta}^{\referee{\fcomp}}$ are such that for all $N$
\begin{equation}
\label{e:commuteReferee}
[\Op{\hbar}(g),\Op{\hbar}(a)]=\lp_{n}^\delta\hbar\Op{\hbar}(b) +O(\hbar^{\infty})_{H_{\hbar}^{-N}\to H_{\hbar}^N}.
\end{equation}
Then, with $G:=\Op{\hbar}(g)$,
$$
e^{-i\referee{G}}\Op{\hbar}(a)e^{i\referee{G}}\sim \sum_{j=0}^{\infty} \frac{ \ad_{G}^j \Op{\hbar}(a)}{i^j j!}.
$$
 This asymptotic formula holds in the sense that for all $N>0$ and $\ell\in\mathbb{R}$, there is $\referee{L}_0\in\mathbb{R}$ such that for $\referee{L\geq L_0}$ we have
 $$
 \Big\|e^{-i\referee{G}}\Op{\hbar}(a)e^{i\referee{G}}- \sum_{j=0}^{\referee{L}} \frac{ \ad_{G}^j \Op{\hbar}(a)}{i^j j!}\Big\|_{H_{\hbar}^{-\ell}\to H_\hbar^{\ell}}\leq C_{N,\ell}\hbar^{N}.
 $$
 In addition, for all $\ell$, $e^{i\referee{G}}:H_{\hbar}^{\ell}\to H_{\hbar}^\ell$ is bounded.
\end{lemma}
\referee{\begin{remark}
Although in this paper we only use Lemma~\ref{l:conjugate} when $a\in S_{r,\delta}^{m}$, in which case the proof can be reduced to the standard one by conjugating with $U_r$, we expect the statement above to be useful in other contexts and therefore choose to make a more general formulation.
\end{remark}}
\begin{remark}
\referee{Notice that since $g$ is momentum compact, the left hand side of~\eqref{e:commuteReferee} maps $H_{\hbar}^{-N}$ to $H_{\hbar}^N$ for any $N$ and hence it is natural to assume that $b$ can be taken independent of $N$.}
\end{remark}
\begin{proof}

\referee{We first show that for any $\ell\in \mathbb{R}$ and $t\in[-1,1]$, there is $C_\ell>0$ such that 
\begin{equation}
\label{e:coffeeClaim}
\|e^{itG}\|_{H_{\hbar}^\ell\to H_{\hbar}^\ell}\leq C_\ell. 
\end{equation}
To see this, observe that using Lemma~\ref{l:unitaryEquiv},  we have
$$
U_r^*e^{itG}U_r=e^{itU_r^*GU_r}= e^{it\Op{\hbar r}(\tilde{g})}
$$
for some $\tilde{g}\in \Sp^{\fcomp}$. In particular, this implies $U_r^*e^{itG}U_r =\Op{r\hbar}(b)$ for some $b\in \Sp^0$ and hence that 
\begin{equation}
\label{e:teaClaim}
\|U_r^*e^{itG}U_r\|_{H_{r\hbar}^\ell\to H_{r\hbar}^\ell}\leq C_\ell.
\end{equation}
Now, since 
$$
\|(\hbar r\partial_x)^\alpha U_r^* u\|_{L^2}=\|U_r(\hbar r\partial_x)^\alpha U_r^* u\|_{L^2}=\|(\hbar \partial_x )^{\alpha}u\|_{L^2},
$$
we have 
$$
\|U_r^*u\|_{H_{r\hbar}^\ell}=\|u\|_{H_\hbar^\ell}, 
$$
and hence~\eqref{e:coffeeClaim} follows from~\eqref{e:teaClaim}.

}

\referee{By Taylor's formula,} for $N\geq 1$
\begin{align*}
&e^{-i\referee{G}}\Op{\hbar}(a)e^{i\referee{G}}\\&=\sum_{k=0}^{N-1}\frac{\ad_{\referee{G}}^k\Op{\hbar}(a)}{i^kk!}-\int_{0}^1\frac{(1-s)^{N-1}}{i^N(N-1)!}e^{-is\referee{G}}\ad_G^N\Op{\hbar}(a)e^{is\referee{G}}ds\\
&=\sum_{k=0}^{N-1}\frac{\ad_{\referee{G}}^k\Op{\hbar}(a)}{i^kk!}-\int_{0}^1\frac{\hbar \lp_n^\delta(1-s)^{N-1}}{i^N(N-1)!}e^{-is\referee{G}}U_{r}\ad_{\Op{\hbar r}(\tilde{g})}^{N-1}\Op{\hbar r}(\tilde b)U_{r}^*e^{is\referee{G}} ds\\
&\qquad +O(\hbar^{\infty})_{H_{\hbar}^{-N}\to H_{\hbar}^N},
\end{align*}
where the last equality follows from Lemma~\ref{l:unitaryEquiv} with $\tilde{g}$ and $\tilde{b}$ given by~\eqref{e:ar}.

Now, since $\tilde{g}\in r^{-1}\lp_{n}^\delta \Sp^{\fcomp}$ and $\tilde{b}\in \Sp^{\fcomp}$, 
$$
\ad_{\Op{\hbar r}(\tilde{g})}^{N-1}\Op{\hbar r}(\tilde b)\in \hbar^{(N-1)}\lp_n^{\delta(N-1)} \Psi_r^{\fcomp}.
$$
Using Lemma~\ref{l:unitaryEquiv} again, 
$$
E_{N}:=U_{r}\ad_{\Op{\hbar r}(\tilde{g})}^{N-1}\Op{\hbar r}(\tilde b)U_{r}^*\in \hbar^{N-1}\lp_{n}^{\delta(N-1)} \Psi^{\fcomp}, 
$$ 
and hence
\begin{equation}
\label{e:integratedForm}
\begin{aligned}
&e^{-i\referee{G}}\Op{\hbar}(a)e^{i\referee{G}}\\
&=\sum_{k=0}^{N-1}\frac{\ad_{\referee{G}}^k\Op{\hbar}(a)}{i^kk!}-\int_{0}^1\frac{\hbar \referee{\lp_n^\delta} (1-s)^{N-1}}{i^N(N-1)!}e^{-is\referee{G}}E_N e^{is\referee{G}} ds +O(\hbar^{\infty})_{H_{\hbar}^{-N}\to H_{\hbar}^N}.
\end{aligned}
\end{equation}
Now, using\referee{~\eqref{e:coffeeClaim}\st{ that $e^{is\referee{G}}$ is unitary, }}we obtain that for $N\geq 1$ \referee{and any $\ell\in \mathbb{R}$} we have
\begin{equation}
\label{e:firstConj}
e^{-i\referee{G}}\Op{\hbar}(a)e^{i\referee{G}}=\sum_{k=0}^{N-1}\frac{\ad_{\referee{G}}^k\Op{\hbar}(a)}{i^kk!} +O(\hbar^{N}\lp_{n}^{N\delta})_{\referee{H_\hbar^{-\ell}\to H_{\hbar}^{\ell}}}.
\end{equation}

\end{proof}

\referee{\begin{remark}
In principle, one could work directly on the conjugated side, writing asymptotic formulae for $U_r^*e^{-iG}\Op{\hbar}(a)e^{iG}U_r$ instead of those in Lemma~\ref{l:conjugate} by using Lemma~\ref{l:unitaryEquiv}, but we have chosen not to do this.
\end{remark}
}

We will also need the next lemma which controls how the operator $e^{i\Op{\hbar}(g)}$ moves singularities.
\begin{lemma}
\label{l:waveFront1}
Let $0\leq \delta<1$, and $N_0>0$, $c>0$. Suppose that $\hbar^{N_0}<r=r(\lp_n )\leq 1$ and $g\in r^{-1}\lp_{n}^\delta S_{r,\delta}^{\fcomp}$ is real valued. Then for all $a,b\in S^0$ with $\referee{dist}(\supp a,\supp b)>c>0$, we have
$$
\Op{\hbar r}(b)U_{r}^*e^{i\Op{\hbar}(g)}U_{r}\Op{\hbar r}(a) =O(\hbar^{\infty})_{H_{\hbar r}^{-\ell}\to H_{\hbar r}^\ell}.
$$
\end{lemma}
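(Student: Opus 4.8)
The plan is to reduce the statement to a standard non-intersecting-support estimate on the semiclassical propagator generated by $\Op{\hbar}(g)$, after rescaling. First I would apply the unitary conjugation of Lemma~\ref{l:unitaryEquiv}. By that lemma, $T_r^*\Op{\hbar}(g)T_r=\Op{\hbar r}(\tilde g_r)$ where $\tilde g_r(x,\xi;\hbar)=g(r^{-1}x,\xi;\hbar)$, and it is straightforward from the definition of $S^{\fcomp}_{r,\delta}$ that $\tilde g_r\in \hbar^{0}\mu_n^{\delta}r^{-1}\Sp^{\fcomp}$; note that because $r>\hbar^{N_0}$ and $\mu_n\sim\hbar^{-1}$ the effective semiclassical parameter is $\tilde\hbar:=\hbar r\in(0,1)$, and the rescaled symbol $\tilde g_r$ has size $\mu_n^\delta r^{-1}\lesssim \tilde\hbar^{-1}\hbar^{1-\delta}\cdot(\text{harmless powers})$, so $\Op{\tilde\hbar}(\tilde g_r)=\tilde\hbar^{-1}\cdot(\text{a bounded-order, momentum-compact operator with a gain }\hbar^{1-\delta})$. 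Since conjugation by $T_r$ intertwines $H^\ell_{\hbar r}$ with $H^\ell_{\hbar}$, it suffices to prove
$$
\Op{\tilde\hbar}(b)\,e^{i\Op{\tilde\hbar}(\tilde g_r)}\,\Op{\tilde\hbar}(a)=O(\tilde\hbar^\infty)_{H^{-\ell}_{\tilde\hbar}\to H^\ell_{\tilde\hbar}},
$$
with $a,b\in S^0$ having $d(\supp a,\supp b)>c$.

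Next I would run the standard Egorov-type / propagation-of-support argument for $U(t):=e^{it\Op{\tilde\hbar}(\tilde g_r)}$, $t\in[0,1]$. Introduce an intermediate cutoff: choose $\chi_0\in C_c^\infty$ equal to $1$ on $\supp a$ and supported in the $c/3$-neighbourhood of $\supp a$, and a family $\chi_t$ interpolating so that $d(\supp\chi_t,\supp b)>c/3$ for all $t\in[0,1]$. Set $F(t):=\Op{\tilde\hbar}(b)U(t)\Op{\tilde\hbar}(\chi_t)$ and compute $\partial_t F(t)$ using $\partial_t U(t)=iU(t)\Op{\tilde\hbar}(\tilde g_r)$; the key point is that $\Op{\tilde\hbar}(\chi_t)\Op{\tilde\hbar}(\tilde g_r)$ and $\Op{\tilde\hbar}(\tilde g_r)\Op{\tilde\hbar}(\chi_t)$ differ from $\Op{\tilde\hbar}(\chi_t\tilde g_r)$ by $O(\tilde\hbar^\infty\cdot\mu_n^\delta r^{-1})$ errors because $\tilde g_r$ is momentum-compact (its $\xi$-support is in a fixed compact $K$, so the non-intersecting-$\xi$-support part of $\chi_t$ is irrelevant and the symbol composition is exact to all orders in the overlapping region, with the commutator's leading term containing $\dot\chi_t+H_{\tilde g_r}\chi_t$-type expressions). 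One chooses $\chi_t$ so these leading symbols vanish on a neighbourhood of $\supp b$, giving $\partial_t F(t)=R(t)+\Op{\tilde\hbar}(\tilde g_r)\text{-errors}$ with $R(t)$ a bounded family of $O(\tilde\hbar^\infty)$ smoothing operators — crucially the gain $\hbar^{1-\delta}$ per order beats the loss $\mu_n^\delta r^{-1}$ per commutator because $r>\hbar^{N_0}$ is only a finite negative power of $\hbar$, so a Gronwall / bootstrap over finitely many iterations of this Duhamel identity closes. Integrating from $0$ to $1$ and using $F(0)=\Op{\tilde\hbar}(b)\Op{\tilde\hbar}(\chi_0)=O(\tilde\hbar^\infty)$ (disjoint supports) yields $F(1)=O(\tilde\hbar^\infty)_{L^2\to L^2}$; composing with $\Op{\tilde\hbar}(\langle\xi\rangle^{\pm\ell})$ and using the $H^\ell_{\tilde\hbar}\to H^\ell_{\tilde\hbar}$ boundedness of $e^{i\Op{\tilde\hbar}(\tilde g_r)}$ from Lemma~\ref{l:conjugate} upgrades this to the $H^{-\ell}_{\tilde\hbar}\to H^\ell_{\tilde\hbar}$ statement; finally $\Op{\tilde\hbar}(\chi_0)\Op{\tilde\hbar}(a)=\Op{\tilde\hbar}(a)+O(\tilde\hbar^\infty)$ removes the auxiliary cutoff.

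The main obstacle I anticipate is bookkeeping the \emph{large} parameters: unlike the textbook case, the generator is not uniformly bounded — $\Op{\tilde\hbar}(\tilde g_r)$ carries a factor $\mu_n^\delta r^{-1}$ which can grow in $\hbar$ — so the Duhamel iteration does not obviously terminate with gains. The resolution, already built into the hypotheses, is that $r\geq \hbar^{N_0}$ forces $\mu_n^\delta r^{-1}\leq C\hbar^{-\delta-N_0}$, while each commutator against $\tilde g_r$ in the momentum-compact, overlapping-support region produces a genuine factor of $\tilde\hbar=\hbar r$ (not merely boundedness), i.e. a net gain of $\hbar^{1-\delta}$ per step once one accounts for the $r^{-1}$ already present in $\tilde g_r$; hence after $\sim N/(1-\delta)$ steps one reaches any prescribed power $\hbar^N$. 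Equivalently, one can avoid the iteration entirely by noting that the whole rescaled situation is a $\Psi_\delta$-calculus statement at parameter $\tilde\hbar$ about two pseudodifferential operators with disjoint (compact) phase-space supports conjugated by a unitary whose generator is \emph{microsupported} away from the relevant region, so the non-characteristic/ellipticity-off-the-support machinery of the calculus (as used already in the proof of Lemma~\ref{l:conjugate}, cf.\ the estimate~\eqref{e:upgrade1}) applies directly; I would present whichever of these two routes is cleaner once the constants are tracked. The remaining steps — verifying $\tilde g_r\in \mu_n^\delta r^{-1}\Sp^{\fcomp}$, checking the support-propagation of $\chi_t$, and assembling the Sobolev mapping bounds — are routine given Lemmas~\ref{l:unitaryEquiv} and~\ref{l:conjugate}.
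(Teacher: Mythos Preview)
Your ``equivalently'' remark at the end is essentially the paper's actual proof, but the paper executes it in three lines and with the rescaling going the \emph{other} way. Rather than pushing $g$ to the $\hbar r$-scale, the paper conjugates $\Op{\hbar r}(a)$ and $\Op{\hbar r}(b)$ back to the $\hbar$-scale via $T_r(\cdot)T_r^*$, obtaining
\[
T_r^*\,\Op{\hbar}(\tilde b)\,e^{i\Op{\hbar}(g)}\,\Op{\hbar}(\tilde a)\,T_r
= T_r^*\,e^{i\Op{\hbar}(g)}\bigl(e^{-i\Op{\hbar}(g)}\Op{\hbar}(\tilde b)\,e^{i\Op{\hbar}(g)}\bigr)\Op{\hbar}(\tilde a)\,T_r,
\]
with $\tilde a,\tilde b$ the $x$-dilates of $a,b$ (still satisfying $d(\supp\tilde a,\supp\tilde b)>c$). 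Lemma~\ref{l:conjugate} now applies verbatim, since it is stated precisely for $g\in r^{-1}\mu_n^\delta S^{\fcomp}_{r,\delta}$ at the $\hbar$-scale: each term $\ad_G^j\Op{\hbar}(\tilde b)$ in the resulting asymptotic sum has, modulo $O(\hbar^\infty)$, symbol supported in $\supp\tilde b$, so its composition with $\Op{\hbar}(\tilde a)$ is $O(\hbar^\infty)$; the Sobolev-boundedness of $e^{iG}$ (also Lemma~\ref{l:conjugate}) and of $T_r$ finishes the argument. The advantage of rescaling this way is that no analogue of Lemma~\ref{l:conjugate} needs to be rederived at a different scale.

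Your Duhamel/propagation route, by contrast, has a concrete error as written. The assertion that $\Op{\tilde\hbar}(\chi_t)\Op{\tilde\hbar}(\tilde g_r)$ differs from $\Op{\tilde\hbar}(\chi_t\tilde g_r)$ by $O(\tilde\hbar^\infty\cdot\mu_n^\delta r^{-1})$ is false: momentum-compactness of $\tilde g_r$ only kills the part of $\chi_t$ whose $\xi$-support misses $K$, and on the overlap the Moyal expansion still has subprincipal terms starting at order $\tilde\hbar$, not $\tilde\hbar^\infty$. Likewise, the alternative claim that the generator is ``microsupported away from the relevant region'' is not in the hypotheses --- nothing forbids $\supp g$ from meeting both $\supp a$ and $\supp b$. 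What actually makes everything work is not any support separation of $g$, but the fact that conjugation by $e^{iG}$ \emph{does not move microsupports} (each commutator with $G$ gains a net factor $\hbar\mu_n^\delta\ll 1$), and that is exactly the content of Lemma~\ref{l:conjugate}. If you insist on a time-dependent argument, the correct object to differentiate is $U(t)^*\Op{\tilde\hbar}(b)U(t)$, which produces the $\ad$-series directly; the auxiliary cutoff $\chi_t$ is superfluous.
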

\begin{proof}
Observe that 
\begin{align*}
\Op{\hbar r}(b)U_{r}^*e^{i\Op{\hbar}(g)}U_{r}\Op{\hbar r}(a)&=U_{r}^*U_{r}\Op{\hbar r}(b)U_{r}^*e^{i\Op{\hbar}(g)}U_{r}\Op{\hbar r}(a)U_{r}^*U_{r}\\
&=U_{r}^* \Op{\hbar}(\tilde{b})e^{i\Op{\hbar}(g)}\Op{\hbar}(\tilde{a})U_{r}\\
&=U_{r}^*e^{i\Op{\hbar}(g)}e^{-i\Op{\hbar}(g) }\Op{\hbar}(\tilde{b})e^{i\Op{\hbar}(g)}\Op{\hbar}(\tilde{a})U_{r},
\end{align*}
where $\tilde{a}$ and $\tilde{b}$ are as in~\eqref{e:ar}. The lemma now follows from Lemma~\ref{l:conjugate}.
\end{proof}

Later, we will need an oscillatory integral formula for $e^{\frac{i}{r\hbar}t\Op{\hbar}(g)}$. This is given in our next lemma.
\begin{lemma}
\label{l:localForm}
Suppose that $N_0>0$, $\hbar^{N_0}\leq r\leq 1$, $\referee{S}>0$ $g\in S_{r,\delta}^{\fcomp}$. Then for $(x_0,\xi_0)\in \mathbb{R}^{2d}$, there is a neighbourhood $U$ of $(x_0,\xi_0)$ and $\varphi \in C^\infty([-\referee{S},\referee{S}]\times U)$ and $\referee{b}\in C^\infty([-\referee{S},\referee{S} ];\Sp^{\fcomp})$ such that for any $u$ with $\WF_{\hbar r}(u)\subset U$, we have
\begin{equation}
\label{e:form}
U_{r}^*e^{\frac{i}{r\hbar}t\Op{\hbar}(g)}U_{r} u(x)=\frac{1}{(2\pi r \hbar)^d}\int e^{\frac{i}{\hbar r}(\varphi(t,x,\eta)-\langle y,\eta\rangle)} \referee{b}(t,x,\eta)u(y)dy d\eta +O(\hbar^{\infty})_{H_{\hbar}^{-N}\to H_{\hbar}^N}.
\end{equation}
Moreover, 
$$
\partial_t \varphi(x,\eta)= g(r^{-1}x,\partial_x\varphi(x,\eta)),\qquad \varphi(0,x,\eta)=\langle x,\eta\rangle,
$$
and
$$
\referee{b}(t,x,\eta)=(\det \partial_{x\eta}\varphi)^{1/2}+O(\hbar r)_{C_c^\infty}.
$$
\end{lemma}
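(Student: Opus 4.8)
The plan is to reduce, via the rescaling $T_r$, to a standard WKB/Egorov construction for a semiclassical propagator with a compactly (momentum-)supported generator, but with the semiclassical parameter $\hbar r$ in place of $\hbar$, and then to track the dependence of all constants on $r$ using that $\hbar^{N_0}\leq r\leq 1$. First I would apply Lemma~\ref{l:unitaryEquiv} to write $T_r^*\Op{\hbar}(g)T_r=\Op{\hbar r}(\tilde g_r)$ with $\tilde g_r(x,\xi)=g(r^{-1}x,\xi)\in \Sp^{\fcomp}$, so that
$$
T_r^*e^{\frac{i}{r\hbar}t\Op{\hbar}(g)}T_r=e^{\frac{i}{r\hbar}t\Op{\hbar r}(\tilde g_r)}.
$$
Set $\tilde\hbar:=\hbar r$. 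We are thus studying $e^{\frac{it}{\tilde\hbar}\Op{\tilde\hbar}(\tilde g_r)}$ applied to $u$ with $\WF_{\tilde\hbar}(u)\subset U$; note $\tilde g_r$ is $\tilde\hbar$-dependent (through $r$) but lies in a bounded subset of $\Sp^{\fcomp}$ uniformly. Because $\tilde g_r$ has compact momentum support, the Hamilton flow of its principal symbol $g_0(r^{-1}x,\xi)$ is complete, and for $|t|\leq T$ the time-$t$ flow of a point near $(x_0,\xi_0)$ stays in a fixed compact set; standard Hamilton--Jacobi theory then produces, on a small neighbourhood $U$ of $(x_0,\xi_0)$ and for $|t|\leq T$ (after possibly shrinking $U$ to keep the flowout from developing caustics), a smooth generating function $\varphi(t,x,\eta)$ solving the eikonal equation $\partial_t\varphi=g_0(r^{-1}x,\partial_x\varphi)$ with $\varphi(0,x,\eta)=\langle x,\eta\rangle$. (Here one must check the solvability is uniform in $r$: the vector field $(r^{-1}x)$-dependence is harmless because $r^{-1}x$ ranges over a compact set when $x$ does and we only need finitely many derivatives bounded; the nondegeneracy $\det\partial_{x\eta}\varphi\neq0$ holds for small $|t|$ by continuity from $t=0$.)

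Next I would construct the amplitude by the usual transport-equation hierarchy. Plug the ansatz
$$
v(t,x)=\frac{1}{(2\pi\tilde\hbar)^d}\int e^{\frac{i}{\tilde\hbar}(\varphi(t,x,\eta)-\langle y,\eta\rangle)}a(t,x,\eta)u(y)\,dy\,d\eta
$$
into $(\tilde\hbar D_t-\Op{\tilde\hbar}(\tilde g_r))v=0$, apply stationary phase / the Weyl composition formula to expand $\Op{\tilde\hbar}(\tilde g_r)$ acting on the oscillatory integral, and match powers of $\tilde\hbar$. The $O(1)$ equation is the eikonal equation, already solved; the $O(\tilde\hbar)$ equation is a linear transport equation along the flow for $a_0$, whose solution with $a_0(0,x,\eta)=1$ is $a_0=(\det\partial_{x\eta}\varphi)^{1/2}$ (the half-density / Van Vleck factor); the lower-order equations determine $a_1,a_2,\dots$ recursively, each $a_j\in C^\infty([-T,T];\Sp^{\fcomp})$ with the $r$-dependence entering only through $\tilde g_r$ and $\varphi$, hence uniformly controlled. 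Truncating at order $K$ gives an approximate solution with residual $O(\tilde\hbar^{K+1})=O((\hbar r)^{K+1})$ in the relevant operator norms; since $r\geq \hbar^{N_0}$, this is still $O(\hbar^{(K+1)(1-N_0)})$, so summing (Borel) and taking $K\to\infty$ yields an $O(\hbar^\infty)$ residual. Then Duhamel's formula together with the energy estimate $\|e^{\frac{it}{\tilde\hbar}\Op{\tilde\hbar}(\tilde g_r)}\|_{H_{\tilde\hbar}^\ell\to H_{\tilde\hbar}^\ell}\leq C_\ell$ — which follows from Lemma~\ref{l:conjugate} (boundedness of $e^{\pm i\Op{\hbar}(g)}$ on $H_\hbar^\ell$, rescaled via Lemma~\ref{l:unitaryEquiv}) — converts this into $v=e^{\frac{it}{\tilde\hbar}\Op{\tilde\hbar}(\tilde g_r)}u+O(\hbar^\infty)_{H_{\tilde\hbar}^{-N}\to H_{\tilde\hbar}^N}$ on the microlocal neighbourhood where $\WF_{\tilde\hbar}(u)\subset U$; unwinding $\tilde\hbar=\hbar r$ and $T_r$ gives~\eqref{e:form}. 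The stated properties $\partial_t\varphi=g(r^{-1}x,\partial_x\varphi)$ (to leading order; the full symbol correction is absorbed into $a$) and $a=(\det\partial_{x\eta}\varphi)^{1/2}+O(\hbar r)_{C_c^\infty}$ are then read off from the construction.

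The main obstacle, and the only genuinely non-routine point, is making the Hamilton--Jacobi and transport constructions \emph{uniform in $r$} over the allowed range $\hbar^{N_0}\leq r\leq 1$ — i.e. that the neighbourhood $U$, the time $T$, and all the symbol seminorm bounds on $\varphi$ and the $a_j$ can be chosen independently of $r$. This works because the $r$-dependence of the generator is only through the slow variable $r^{-1}x$, and on compact $x$-sets all needed finitely-many $x$-derivatives of $g(r^{-1}x,\xi)$ are bounded uniformly in $r$ (by~\eqref{LP:USB}-type bounds built into $\Sp^{\fcomp}$), while the loss $r^{-1}$ appearing after each $x$-differentiation is exactly the loss already built into the anisotropic class $S^{\fcomp}_{r,\delta}$ and is never fatal since it is paired with gains of $\tilde\hbar=\hbar r$. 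A secondary bookkeeping issue is the $\tilde\hbar$-dependence (through $r$) of $\tilde g_r$ itself, so that the propagator is not a fixed-symbol propagator; this is handled exactly as in the proof of Lemma~\ref{l:conjugate}, by noting that the construction only ever uses boundedness in $\Sp^{\fcomp}$, which holds uniformly.
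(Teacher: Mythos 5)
Your proposal is correct and follows essentially the same route as the paper: after rescaling by $T_r$ via Lemma~\ref{l:unitaryEquiv} to reduce to $e^{\frac{it}{\hbar r}\Op{\hbar r}(\tilde g_r)}$ with $\tilde g_r\in \Sp^{\fcomp}$ uniformly, the paper simply cites the standard WKB propagator construction (\cite[Theorem 10.4]{Zw:12}), which is exactly the eikonal/transport argument you carry out by hand. Your explicit uniformity-in-$r$ bookkeeping is the content that makes the citation legitimate, so nothing is missing.
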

\begin{proof}
The lemma is a direct consequence of Lemma~\ref{l:unitaryEquiv} and~\cite[Theorem 10.4]{Zw:12}.
\end{proof}

Finally, we record the following lemma on changing scales.
\begin{lemma}
\label{l:waveFront2}
Let $N_0>0$ and $r_1,r_2:[1,\infty)\to (0,1]$ be non-increasing \referee{functions} with $\hbar^{N_0}\leq r_1(\mu_{n(\hbar)})\leq r_2(\mu_{n(\hbar)})\leq 1$. Suppose that $u$ is $\hbar r_2$-compactly microlocalized.
Then
$$
\WF_{\hbar r_1}(U_{r_1}^*U_{r_2}u)\subset \bigcap_{0<\hbar_0<1}\overline{\bigcup_{0<\hbar<\hbar_0} \{(r_1r_2^{-1}x,\xi)\mid (x,\xi)\in \WF_{\hbar r_2}(u)\}}
$$
and $U_{r_1}^*U_{r_2}u$ is $\hbar r_1$-compactly microlocalized.
\end{lemma}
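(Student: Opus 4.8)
The plan is to reduce everything to the single observation that $T_{r_1}^*T_{r_2}$ is a dilation, and that dilations intertwine Weyl quantizations at different values of the semiclassical parameter. Write $r_i=r_i(\mu_{n(\hbar)})$ and set $s:=r_2/r_1$, so that $1\le s\le\hbar^{-N_0}$ by hypothesis. Since the $T_r$ form a group of unitaries with $T_r^*=T_{r^{-1}}$, we have $T_{r_1}^*T_{r_2}=T_s$, and $s\,\hbar r_1=\hbar r_2$. The computation proving Lemma~\ref{l:unitaryEquiv} is a pure change of variables, so for \emph{any} symbol $a$ it gives $\Op{\hbar r_1}(a)T_s=T_s\Op{\hbar r_2}(a_s)$ with $a_s(x,\xi):=a(s^{-1}x,\xi)$. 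I will use two consequences: taking $a=\langle\xi\rangle^\ell$ (for which $a_s=a$) shows $T_s\colon H^\ell_{\hbar r_2}\to H^\ell_{\hbar r_1}$ is an isometric isomorphism for every $\ell$; and for $\hbar$-independent $a\in C_c^\infty(\mathbb{R}^{2d})$, because $s\ge1$, the symbol $a_s$ is uniformly bounded and momentum-compact with the same fixed $\xi$-support as $a$ (its $x$-derivatives only improve, since $\partial_x^\alpha a_s=s^{-|\alpha|}(\partial_x^\alpha a)(s^{-1}\cdot\,,\cdot)$). Finally, since $\hbar^{N_0+1}\le\hbar r_i\le\hbar$, the notions $O((\hbar r_1)^\infty)$, $O((\hbar r_2)^\infty)$ and $O(\hbar^\infty)$ coincide, and I write them all as $O(\hbar^\infty)$.

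For the compact microlocalization, let $\chi\in C_c^\infty$ witness that $u$ is $\hbar r_2$-compactly microlocalized and fix a compact set $K_0\supset\supp\chi$; since $s\ge1$, the set $\{(s^{-1}x,\xi):(x,\xi)\in K_0\}$ lies in a fixed compact $K_0'$. Choose $\psi\in C_c^\infty$ with $\psi\equiv1$ on a fixed neighbourhood of $K_0'$, and write $u=\Op{\hbar r_2}(\chi)u+\Op{\hbar r_2}(1-\chi)u$. The intertwining gives $\Op{\hbar r_1}(1-\psi)T_su=T_s\Op{\hbar r_2}((1-\psi)_s)u$; since $(1-\psi)_s$ vanishes on a fixed neighbourhood of $\supp\chi$, the standard disjoint-support composition estimate for $\Op{\hbar r_2}((1-\psi)_s)\Op{\hbar r_2}(\chi)$ together with $\Op{\hbar r_2}(1-\chi)u=O(\hbar^\infty)$ gives $\Op{\hbar r_1}(1-\psi)T_su=O(\hbar^\infty)$ in every $H^\ell_{\hbar r_1}$. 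Hence $T_{r_1}^*T_{r_2}u$ is $\hbar r_1$-compactly microlocalized.

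For the wavefront inclusion, suppose $(x_0,\xi_0)$ lies outside the right-hand side. Then there are $\hbar_0>0$ and $c_1>0$ such that the ball $B((x_0,\xi_0),2c_1)$ is disjoint from $\{(r_1r_2^{-1}x,\xi):(x,\xi)\in\WF_{\hbar r_2}(u)\}$ for all $\hbar<\hbar_0$. Pick $a\in C_c^\infty$ with $\supp a\subset B((x_0,\xi_0),c_1)$ and $a\equiv1$ near $(x_0,\xi_0)$. The intertwining gives $\Op{\hbar r_1}(a)(T_{r_1}^*T_{r_2}u)=T_s\Op{\hbar r_2}(a_s)u$, and since $(x,\xi)\mapsto(s^{-1}x,\xi)$ is $1$-Lipschitz, $d\big(\supp a_s,\WF_{\hbar r_2}(u)\big)\ge d\big(\supp a,\{(s^{-1}x,\xi):(x,\xi)\in\WF_{\hbar r_2}(u)\}\big)\ge c_1$ for $\hbar<\hbar_0$. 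Because $u$ is $\hbar r_2$-compactly microlocalized and $a_s$ is a uniformly bounded, momentum-compact symbol whose support stays a fixed distance from $\WF_{\hbar r_2}(u)$, the standard properties of the semiclassical wavefront set give $\Op{\hbar r_2}(a_s)u=O(\hbar^\infty)$ in every $H^\ell_{\hbar r_2}$; applying the isometry $T_s$ yields $\Op{\hbar r_1}(a)(T_{r_1}^*T_{r_2}u)=O(\hbar^\infty)$ in every $H^\ell_{\hbar r_1}$ (for $\hbar\in[\hbar_0,1]$ the bound is immediate from $\hbar$-temperedness). Thus $(x_0,\xi_0)\notin\WF_{\hbar r_1}(T_{r_1}^*T_{r_2}u)$, which is the claimed inclusion.

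The pseudodifferential composition and wavefront facts invoked above are routine; the only genuine point — the main obstacle — is that the dilation factor $s=r_2/r_1$ is not bounded, only polynomially bounded in $\hbar^{-1}$, so the rescaled symbols $a_s$ are no longer compactly supported in $x$ and the error orders $O((\hbar r_i)^\infty)$ a priori differ. Both difficulties are dealt with by the observations isolated in the first paragraph — that $a_s$ remains a uniformly good momentum-compact symbol with improved $x$-derivative bounds (so the compact microlocalization of $u$ localizes it), and that the bounds $\hbar^{N_0}\le r_i\le1$ collapse all the $O(\hbar^\infty)$-type errors — after which the argument is the standard "conjugation by a dilation" manipulation of semiclassical wavefront sets.
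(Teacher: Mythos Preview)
Your proof is correct and follows essentially the same route as the paper's: both arguments reduce to the intertwining identity $\Op{\hbar r_1}(a)\,T_{r_1}^*T_{r_2}=T_{r_1}^*T_{r_2}\,\Op{\hbar r_2}(\tilde a)$ with $\tilde a(x,\xi)=a(r_1r_2^{-1}x,\xi)$, and then conclude by a disjoint-support estimate together with the compact microlocalization of $u$. You are slightly more explicit than the paper about why the rescaled symbol $\tilde a=a_s$ (which is no longer compactly supported in $x$) still behaves well and why the various $O((\hbar r_i)^\infty)$ errors collapse to $O(\hbar^\infty)$, but these are exactly the points the paper leaves implicit.
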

\begin{proof}
First, observe that
$$\referee{\mathcal{K}}:=\bigcap_{0<\hbar_0<1}\overline{\bigcup_{0<\hbar<\hbar_0} \{(r_1r_2^{-1}x,\xi)\mid (x,\xi)\in \WF_{\hbar r_2}(u)\}} $$
is closed. Therefore, if $(x_0,\xi_0)\notin \referee{\mathcal{K}}$, there is a neighbourhood, $U$, of $(x_0,\xi_0)$ such that $U\cap \referee{\mathcal{K}}=\emptyset.$  Suppose that $a\in C_c^\infty(U)$, $a(x_0,\xi_0)=1$. 
Then
\begin{align*}
[\Op{\hbar r_1}(a)]U_{r_1}^*U_{r_2}u&=\frac{1}{(2\pi \hbar r_1)^d} r_2^{\frac{d}{2}}r_1^{-\frac{d}{2}}\int e^{\frac{i}{\hbar r_1}\langle x-y,\xi\rangle}a(\tfrac{x+y}{2},\xi)u(r_2r_1^{-1}y)dyd\xi\\
&=\frac{1}{(2\pi  \hbar r_2)^d} r_2^{\frac{d}{2}}r_1^{\frac{-d}{2}}\int e^{\frac{i}{\hbar r_2}\langle r_2r_1^{-1}x-w,\xi\rangle}a(\tfrac{r_1r_2^{-1}(r_2r_1^{-1}x+w)}{2},\xi)u(w)dwd\xi\\
&=U^*_{r_1r_2^{-1}}[\Op{\hbar r_2 }(\tilde{a})u],
\end{align*}
where $\tilde{a}=\tilde{a}_{r_1 r_2^{-1}}\in S^0$ is defined as in~\eqref{e:ar}.  Moreover, by construction, $\supp\tilde{a}\cap \WF_{\hbar r_2}(u)=\emptyset$, and, since $u$ is $\hbar r_2$-compactly microlocalized, 
$$
[\Op{\hbar r_2}(\tilde{a})u]=O(\hbar^{\infty})_{H_{\hbar}^\ell}.
$$

The compact microlocalization of $U_{r_1}^*U_{r_2}u$ follows from the fact that $\pi_{x}\referee{\mathcal{K}}$ is compact, there is $\cutPhase \in C_c^\infty(\mathbb{R}^{2d})$ such that $u=\Op{\hbar r_2}(\cutPhase)u+O(\hbar^{\infty})_{H_{\hbar}^\ell}$, and that if $d(\supp \tilde{a},\supp \cutPhase)>0$, then
\begin{align*}
[\Op{\hbar r_1}(a)]U_{r_1}^*U_{r_2}\Op{\hbar r_2}(\cutPhase)=U^*_{r_1r_2^{-1}}\Op{\hbar r_2}(\tilde{a})\Op{\hbar r_2}(\cutPhase)=O(\hbar^{\infty})_{H_{\hbar}^{-\ell}\to H_{\hbar}^\ell}.
\end{align*}
\end{proof}

\section{The Gauge Transform for USB potentials}
\label{s:gauge}
Let $\mathbf{q}_0\in  S^1(\mathbb{R}^2)$ be real valued \referee{and satisfy 
\begin{equation}
\label{e:supportMe}
\supp \mathbf{q}_0\subset\{a<|\xi|< b\},
\end{equation}
for some $0<a<b$. In Section~\ref{s:fourierMultiplier}, we will show that, for the purposes of computing the spectral function at some energy $\omega\in(a,b)$,  we may assume that~\eqref{e:supportMe} holds.}
We consider the operator
\begin{equation}
\label{e:H0}
\semOp{\mathbf{Q}_0}:=-\hbar^2\Delta +\hbar\Op{\hbar}(\mathbf{q}_0).
\end{equation}
The goal for this section is, given $\referee{N}>0$, to perform a Gauge transform with a unitary operator $U$ such that $U^*\semOp{\mathbf{Q}_0}U=\semOp{\mathbf{Q}_1}\referee{+O(\hbar^\infty)_{\Psi^{-\infty}}}$ \referee{with}
$$
\semOp{\mathbf{Q}_{\referee{1}}}=-\hbar^2\Delta+ \hbar\Op{\hbar}(\referee{\mathbf{q}_1}),
$$
where $\referee{\mathbf{q}_1}\in  S^1$ is real valued and
$$\supp \referee{\hat{\mathbf{q}}_1}(\theta,\xi)\cap 
\{|\theta |\geq\lp_n^{-\referee{N}}\}=\emptyset.$$ 
\referee{(Recall the definition of $\lp_n$ from~\eqref{e:defMoo} and~\eqref{e:muNBound}.)}

\begin{remark}
For the gauge transform we do not need to assume that $\mathbf{q}_0$ is periodic. What is important is that if $\mathbf{q}_0$ is periodic, then so is $\referee{\mathbf{q}_1}$ and $\referee{\mathbf{q}_1}$ has the same period as $\mathbf{q}_0$. Thus, when we apply this gauge transform to a $\mathbf{q}_0$ with period $\ll \lp_n^{\referee{N}}$ we will have that 
$$\supp \referee{\hat{\mathbf{q}}_1}(\theta,\xi)\subset\{\theta=0\},$$
so $\referee{\mathbf{Q}_1}$ is a Fourier.  
We will use this fact to obtain a formula for the spectral function of $\semOp{\referee{\mathbf{Q}_1}}$. 
\end{remark}

We will use an `onion peeling' strategy to perform the Gauge transform. In particular, we will remove the frequencies of $\mathbf{q}_0$ in layers starting from those with frequency larger than 1 and then removing successive layers. These layers will be evenly spaced in a logarithmic with the factor \referee{$\lp_n^{-\delta}$},
i.e. of the form $\referee{\lp_n^{-(k+1)\delta}}\leq |\theta|\leq \referee{\lp_n^{-k\delta}}$, $k=0,1,\dots, \lfloor \referee{N}\delta^{-1}\rfloor$. 

\subsection{Two useful lemmas}
We will need the following two lemmas to perform the gauge transform. These lemmas allow us to find a \referee{symbol} $g$ that solves the equation $[-\hbar^2\Delta,\Op{\hbar}(g)]=\Op{\hbar}(\mathbf{q})$ under certain assumptions on the support of the Fourier transform of $\mathbf{q}$.

The next lemma is, in fact, about functions of the single variable with $\xi$ playing the role of a parameter.
\begin{lemma}
\label{l:integrate}
There is $C>0$ such that for all $\referee{\iota}>0$ and $\mathbf{q}\in S^0$ with $\supp \hat {\mathbf{q}}\subset \{|\theta|\geq \referee{\iota}\}$, setting 
$$
g(x,\xi):=\int_0^x \mathbf{q}(s,\xi)ds,
$$
we have 
$$
\|\partial_x^\alpha \partial_\xi^\beta g(\cdot,\xi)\|_{L^\infty}\leq C\referee{\iota}^{-1}\| \partial_x^\alpha \partial_{\xi}^\beta \mathbf{q}(\cdot,\xi)\|_{L^\infty}\quad \alpha,\beta\in \mathbb{N},\qquad \supp \hat{g}\subset \supp \hat{\mathbf{q}}\cup\{\theta=0\}.
$$
Moreover, if $\mathbf{q}$ is $L$-periodic in $x$, then so is $g$. 
\end{lemma}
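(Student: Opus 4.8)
The plan is to establish the three claims about $g(x,\xi):=\int_0^x q(s,\xi)\,ds$ in turn, the only nontrivial one being the $L^\infty$ bound on the derivatives.

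First I would deal with the two easy assertions. For the Fourier support: since $\partial_x g = q$, we have $\widehat{\partial_x g}(\theta,\xi)=i\theta\,\hat g(\theta,\xi)=\hat q(\theta,\xi)$, so $\hat g(\theta,\xi)=\hat q(\theta,\xi)/(i\theta)$ for $\theta\ne 0$, which is well-defined as a distribution because $\hat q$ is supported in $\{|\theta|\ge r\}$, bounded away from $\theta=0$. This immediately gives $\supp\hat g\subset\supp\hat q\cup\{\theta=0\}$, the extra $\{\theta=0\}$ accounting for the possible additive constant in $x$ (indeed $g(0,\xi)=0$ pins it down, but the Fourier transform in $x$ may still carry a delta at $\theta=0$). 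For periodicity: if $q(\cdot,\xi)$ is $L$-periodic, then $\hat q(\cdot,\xi)$ is supported on $\frac{2\pi}{L}\mathbb Z$; dividing by $i\theta$ keeps the support in $\frac{2\pi}{L}\mathbb Z\setminus\{0\}$ (plus possibly $\theta=0$), which is again $L$-periodic — equivalently, one checks $g(x+L,\xi)-g(x,\xi)=\int_x^{x+L}q(s,\xi)\,ds$ is the mean of $q$ over a period, and the mean vanishes precisely because $q$ has no $\theta=0$ Fourier component (as $\supp\hat q\subset\{|\theta|\ge r\}$ with $r>0$), so $g$ is $L$-periodic.

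The main point is the derivative estimate. Since $\partial_x^\alpha\partial_\xi^\beta g(x,\xi) = \partial_x^{\alpha-1}\partial_\xi^\beta q(x,\xi)$ when $\alpha\ge 1$, that case is trivial, so the real content is $\alpha=0$: bounding $\|\partial_\xi^\beta g(\cdot,\xi)\|_{L^\infty}$ by $Cr^{-1}\|\partial_\xi^\beta q(\cdot,\xi)\|_{L^\infty}$. The naive bound $|g(x,\xi)|\le |x|\,\|q(\cdot,\xi)\|_{L^\infty}$ is useless since it grows in $x$; the gain comes from oscillation. The clean way is to write, for $p:=\partial_\xi^\beta q(\cdot,\xi)\in S^0$ (in the $x$ variable) with $\supp\hat p\subset\{|\theta|\ge r\}$, the representation $\partial_\xi^\beta g(x,\xi)=\int_0^x p(s)\,ds$ and use a Fourier multiplier argument: choose $\chi\in C_c^\infty(\mathbb R)$ equal to $1$ on $\{|\theta|\ge r\}\cap\{|\theta|\le 1\}$-type considerations — more robustly, pick an even $\phi\in C^\infty(\mathbb R)$ with $\phi(\theta)=1/(i\theta)$ for $|\theta|\ge r$ and $\phi$ smooth with $|\phi^{(k)}(\theta)|\le C_k r^{-1-k}$, so that $g(\cdot,\xi)=\Op(\phi)p + (\text{const in }x)$, and since $g(0,\xi)=0$ the constant is controlled by the same multiplier evaluated at $x=0$. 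Then $\|\Op(\phi)p\|_{L^\infty}\le \|\check\phi\|_{L^1}\|p\|_{L^\infty}$, and by scaling $\check\phi(x)=r^{-1}\psi(rx)$ for a fixed Schwartz-class $\psi$ (or at least an $L^1$ function with a constant independent of $r$), giving $\|\check\phi\|_{L^1}=\|\psi\|_{L^1}\le C$, whence $\|\Op(\phi)p\|_{L^\infty}\le Cr^{-1}\cdot r \cdot r^{-1}\|p\|_{L^\infty}$ — I need to be careful with the exact power: the correct normalization is that $\phi(\theta)=\frac{1}{i\theta}\eta(\theta/r)$ with $\eta$ cutting off $|\theta|\lesssim 1$, so $\check\phi(x)=\frac1i\int e^{ix\theta}\frac{\eta(\theta/r)}{\theta}d\theta = $ (a fixed profile in $rx$), bounded in $L^1$ by $Cr^{-1}\cdot r = C$ after the substitution $\theta=r\theta'$ — wait, $\frac{d\theta}{\theta}$ is scale-invariant, so actually $\check\phi(x)=F(rx)$ with $F$ fixed and $\|\check\phi\|_{L^1}=r^{-1}\|F\|_{L^1}$, yielding exactly the claimed $Cr^{-1}$.

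The step I expect to be the main obstacle is getting the $r$-dependence of the multiplier bound exactly right and verifying that the cutoff profile $F=\check\phi$ is genuinely in $L^1$ uniformly — this requires $\phi$ to decay like $1/|\theta|$ at infinity (so $\check\phi$ is not smooth at $0$, only $L^1$) and to be smooth and bounded by $r^{-1}$ near $|\theta|=r$; one must check $F\in L^1(\mathbb R)$ by hand (it behaves like $\log$ near $0$ in $x$? no — like a bounded function decaying at infinity, integrable). An alternative, perhaps cleaner, route avoiding any of this: write $q(s,\xi)=\sum_{|\theta|\ge r}\hat q(\theta,\xi)e^{is\theta}$ in the periodic case and $\partial_\xi^\beta g(x,\xi)=\sum_{\theta}\frac{\hat q(\theta,\xi)}{i\theta}(e^{ix\theta}-1)$, then estimate the $L^\infty$ norm using $|\hat q(\theta,\xi)|\le \|p\|_{C^k}\langle\theta\rangle^{-k}$ and $|\theta|\ge r$; summing $\sum_{|\theta|\ge r}\frac{\langle\theta\rangle^{-k}}{|\theta|}\le Cr^{-1}$ for $k\ge 1$ large enough. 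The non-periodic case reduces to this by density or is handled directly by the multiplier argument above. I would present the multiplier/convolution-kernel proof as the main one since it handles periodic and non-periodic cases uniformly and makes the $x$-translation invariance (hence the bound on all $\partial_\xi^\beta$ simultaneously, with $\partial_x$ trivial) transparent.
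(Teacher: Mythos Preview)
Your approach is essentially the same as the paper's: both construct a convolution kernel whose Fourier transform equals $\tfrac{1}{i\theta}$ on $\{|\theta|\ge r\}$ and then use Young's inequality together with the scaling $\|\check\phi\|_{L^1}=r^{-1}\|F\|_{L^1}$. The paper makes the $L^1$ bound you worried about completely explicit by writing the kernel as $I(x)=1_{[0,\infty)}(x)-\int_{-\infty}^x\check f(s)\,ds$ for $f\in C_c^\infty(-1,1)$ with $f\equiv 1$ near $0$, so that $|I(x)|\le C_N\langle x\rangle^{-N}$ directly; one small caution is that your aside ``$\alpha\ge1$ is trivial since $\partial_x^\alpha\partial_\xi^\beta g=\partial_x^{\alpha-1}\partial_\xi^\beta q$'' does not give the stated bound with $\|\partial_x^\alpha\partial_\xi^\beta q\|_{L^\infty}$ on the right-hand side --- you should simply apply the convolution argument uniformly in $\alpha,\beta$, as you note at the end.
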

\begin{remark}
\referee{To see that $g$ is periodic when $\mathbf{q}$ is, we use crucially that $\{\theta=0\}\notin \supp \hat{\mathbf{q}}$}. 
\end{remark}
\begin{proof}
Let $\mathbf{q}(x,\xi)\in S^1$ with $\supp \hat{\mathbf{q}}\subset \{|\theta|\geq \referee{\iota}\}$. Let $\auxCut \in C_c^\infty(-1,1)$ with $\auxCut \equiv 1$ on $[-\tfrac{1}{2},\tfrac{1}{2}]$, $\check{\auxCut}$ real valued, and define
$$
I(x):=1_{[0,\infty)}(x)-\int_{-\infty}^x \check{\auxCut}(s)ds.
$$
\referee{Observe that  for $x\geq 0$, 
\begin{align*}
|I(x)|&=\Big|1_{[0,\infty)}(x) -\int_{-\infty}^x \referee{\check{f}}(s)ds\Big|\\
&=\Big|1 -\int_{-\infty}^x \referee{\check{f}}(s)ds\Big|=\Big|\int_x^\infty \referee{\check{f}}(s)ds\Big| \leq C_N\int_x^\infty \langle s\rangle^{-N}ds\leq C_{N}\langle x\rangle^{-N+1}.
\end{align*}
Next, for $x<0$,
\begin{align*}
|I(x)|&=\Big|1_{[0,\infty)}(x) -\int_{-\infty}^x \referee{\check{f}}(s)ds\Big|\\
&=\Big|-\int_{-\infty}^x \referee{\check{f}}(s)ds\Big| \leq C_N\int_{-\infty}^x \langle s\rangle^{-N}ds\leq C_{N}\langle x\rangle^{-N+1}.
\end{align*}
Combining these two estimates, we obtain $I\in L^1$.}

Let $I_{\referee{\iota}}(x):=I(\referee{\iota}x)$ and 
$$
\tilde{g}(x,\xi):=[I_{\referee{\iota}}(\cdot)*\mathbf{q}(\cdot,\xi)](x).
$$
We compute 
$$
\partial_x \tilde{g}(x,\xi)= [(\delta_0(\cdot)-\referee{\iota}\referee{\check{f}}(\referee{\iota}\cdot))*\mathbf{q}(\cdot,\xi)](x)=\mathbf{q}(x,\xi),
$$
since $\supp \hat{\mathbf{q}}\cap \{|\theta|< \referee{\iota}\}=\emptyset$ \referee{(here $\delta_0$ is the Dirac delta function)}.

Since $I\in L^1$, we have
\begin{equation}
\label{e:derEst}
\|I_{\referee{\iota}}* \partial_{x}^\alpha\partial_{\xi}^\beta \mathbf{q}\|_{L^\infty}\leq C\referee{\iota}^{-1} \|\partial_{x}^\alpha\partial_{\xi}^\beta \mathbf{q}\|_{L^\infty}.
\end{equation}

Now observe that $g(x,\xi)=\tilde{g}(x,\xi)-\tilde{g}(0,\xi)$. \referee{Then~\eqref{e:derEst}  implies the derivative estimates on $g$. In addition, \referee{since $\tilde{g}(0,\xi)$ does not depend on $x$, we have} $\supp \widehat{\tilde{g}(0,\xi)}\subset \{\theta=0\}$ and hence $\supp \widehat{g(\cdot,\xi)}\subset \supp \widehat{\tilde{g}(\cdot,\xi)}\cup\{\theta=0\}$, which completes the proof.}

The statement about periodicity of $g$ is obvious.

\referee{
\begin{remark}
The reader may wonder why we choose to prove the lemma via $\tilde{g}$ as opposed to simply putting $\hat{g}(\theta,\xi)=\frac{1}{i\theta}\hat{\mathbf{q}}(\theta,\xi)$. To us it seems simpler to check $L^\infty$ bounds on the physical than on the Fourier side.\end{remark}
}

\end{proof}

Lemma~\ref{l:integrate} has the following immediate consequence.
\begin{lemma}
\label{l:commute1}
Let $0\leq \delta<1$, $0<a<b$. There is $C>0$ such that for $r>0$ and $\mathbf{q}(x,\xi)\in \Sp^{\fcomp}$ real valued with 
$$
 \supp \hat{\mathbf{q}}(\theta,\xi)\subset \{a\leq |\xi|\leq b,\,|\theta|>r\},
$$
 there is $g\in \Sp^{\fcomp}$ real valued with 
\begin{gather}
\supp \hat{g}\cap \{ 0<|\theta |\leq r\}=\emptyset,\nonumber\\
\label{e:oneMillion} \|\partial_\xi^\beta\partial_{x}^\alpha g(\cdot,\xi)\|_{L^\infty}\leq C r^{-1}\|\partial_x^\alpha\partial_\xi^\beta \mathbf{q}(\cdot,\xi)\|_{L^\infty}, \quad \alpha,\beta\in \mathbb{N},
\end{gather}
and 
$$
i[\Op{\hbar}(g),-\hbar^{2}\Delta]=\hbar\Op{\hbar}(\mathbf{q}).
$$
In particular,\referee{~\eqref{e:oneMillion} implies that} for any $\referee{r_1=r_1(\mu_n)>0}$, if $\mathbf{q}\in S_{r_1,\delta}^{\fcomp}$, then $g\in r^{-1}S^{\fcomp}_{r_1,\delta}$. 
Moreover, if $\mathbf{q}$ is $L$-periodic in $x$, then so is $g$.
\end{lemma}
\begin{proof}
By Lemma~\ref{l:integrate}, there is a real valued $g\in \Sp^{\fcomp}$ such that $\supp \hat{g}\subset \supp \hat{\mathbf{q}}$ and 
$$
-2\xi\partial_x g (x,\xi)= \mathbf{q}(x,\xi),\qquad \|\partial_x^\alpha \partial_{\xi}^\beta g(\cdot,\xi)\|_{L^\infty} \leq Cr^{-1}\|\partial_x^\alpha \partial_\xi^\beta \mathbf{q}(\cdot,\xi)\|_{L^\infty}.
$$
In particular, 
\begin{equation}
\label{e:niceFormula}
g=-\frac{1}{2\xi}\int_0^x \mathbf{q}(s,\xi)ds.
\end{equation}

Direct computations show that 
$$
i[\Op{\hbar}(g),-\hbar^{2}\Delta]=-2\hbar\Op{\hbar}(\xi \partial_x g)=\hbar\Op{\hbar}(\mathbf{q}),
$$
 that if $\mathbf{q}\in S_{r_1,\delta}^0$, then $g\in r^{-1}S^{\fcomp}_{r_1,\delta}$, and that if $\mathbf{q}$ is $L$-periodic in $x$, then so is $g$.
\end{proof}

\begin{remark}
Observe that~\eqref{e:niceFormula} is essentially the same as~\eqref{LP:solution}, but~\eqref{LP:solution} is not very convenient for obtaining $L^\infty$ type estimates.
\end{remark}

\subsection{The onion peeling argument}

The gauge transform will proceed by a layer peeling type argument. That is, we remove successive layers of the Fourier transform of the perturbation. Each layer will be removed by a parallel gauge transform. We start, in Lemma~\ref{l:layer0}, by removing frequencies larger than 1. Then, in Lemma~\ref{l:nextLayer}, we show that it is possible to remove lower frequencies in layers of the form \referee{$\lp_n^{-\delta} r\referee{(\mu_n)}<|\theta|<r\referee{(\mu_n)}$\st{$\step r\referee{(\mu_n)}<|\theta|<r\referee{(\mu_n)}$}} for any $N>0$ and $r\referee{(\mu_n)}>\hbar^{N}$. These lemmas are combined in Proposition~\ref{p:gauge} to complete our onion peeling argument. For a more detailed heuristic description of this procedure, we refer the reader to Section~\ref{s:onionIntro}. We start by using a parallel gauge transform to remove frequencies larger than 1.
\begin{lemma}
\label{l:layer0}
Let $0<a<b$ and suppose that $\semOp{\mathbf{\mathbf{Q}}_0}$ satisfies~\eqref{e:H0}. Then there is \referee{$G=\Op{\hbar}(g)\in \Psi_0^{\fcomp}$} such that 
$$
e^{-iG}\semOp{\mathbf{\mathbf{Q}}_0}e^{iG}=-\hbar^2\Delta +\hbar \Op{\hbar} (\mathbf{q}_1) +O(\hbar^{\infty})_{H_{\hbar}^{-N}\to H_{\hbar}^{N}},
$$
with real-valued $\mathbf{q}_1\in \Spz^1$ satisfying
\begin{equation}
\label{e:scaresLeonid}
\supp \hat{\mathbf{q}}_1(\theta,\xi) \cap \{|\theta|\geq 1\}=\emptyset,\qquad \referee{\supp \mathbf{q}_1\subset \{a\leq |\xi|\leq b\}}.
\end{equation}
In addition, $\mathbf{q}_1\referee{\in\Spz^1}$ and $\referee{g}\referee{\in S_0^{\fcomp}}$ depend continuously on $\mathbf{q}\referee{\in S^1}$ \referee{ in the corresponding topologies} and if $\mathbf{q}$ is $L$-periodic in $x$, then so are $\mathbf{q}_1$ and $\referee{g}$.
\end{lemma}
\begin{remark}\label{decay1}
As it was discussed at the beginning of Section 1.1.3, if we impose stricter conditions on the potential, for example, $Q$ being the sum of a smooth periodic potential and a potential from the Schwartz class, technicalities simplify. In particular, following the proof of the lemma above or the construction from~\cite[Section 6]{PaSh:16}, one can show that the statement of Lemma~\ref{l:layer0} holds for $|\theta|>0$, and thus, further onion peeling (see Lemma~\ref{l:nextLayer}) is not needed. This also leads to the significant simplification of the concluding arguments about actual asymptotics from Section~\ref{s:gauge} (see Remark~\ref{decay2}).
\end{remark}
\begin{proof}
Let $\cutThetaFreq \in C_c^\infty((-1,1);[0,1])$ with $\cutThetaFreq \equiv 1$ in a neighbourhood of $[-1/2,1/2]$, with $\check \cutThetaFreq$ real valued \referee{\st{and $\cutFreq\in C_c^\infty((0,\infty);[0,1])$ real valued with $\cutFreq\equiv 1$ in a neighbourhood of $[a,b]$,}} and let 
$$
\mathbf{q}_{\mc{H}}(x,\xi):= \mathbf{q}_{0,\mc{H}}(x,\xi),\qquad \mathbf{q}_{0,\mc{H}}:=(1-\cutThetaFreq(D_x))\mathbf{q}_0(\cdot,\xi).
$$
Observe that, since 
$$
\|\partial_x^\alpha \cutThetaFreq(D_x)\mathbf{q}_0(\cdot,\xi)\|_{L^\infty} \leq C_\alpha \|\mathbf{q}_0(\cdot,\xi)\|_{L^\infty},
$$
we have $\mathbf{q}_{\mc{H}}\in \Spz^{\fcomp}$.

By Lemma~\ref{l:commute1}, there is $g_0\in \Spz^{\fcomp}$ real valued with $\supp \hat{g}_0\subset\supp \hat{\mathbf{q}}_{\mc{H}}$, 
$$
\|\partial_x^\alpha \partial_\xi^\beta g_0\|_{L^\infty}\leq C \|\partial_x^\alpha \partial_\xi^\beta \mathbf{q}_{\mc{H}}\|_{L^\infty},
$$
and such that 
$$
i[\Op{\hbar}(g_0),-\hbar^{2}\Delta]=\hbar\Op{\hbar} (\mathbf{q}_{\mc{H}}).
$$
Now, by Lemma~\ref{l:conjugate},
$$
e^{-i\Op{\hbar}(g_0)}\semOp{\mathbf{Q}_0}e^{i\Op{\hbar}(g_0)}= -\hbar^{2}\Delta+\hbar(\Op{\hbar}(\mathbf{q}_0-\Op{\hbar}(\mathbf{q}_{\mc{H}})) +\hbar^{2}\Op{\hbar}(e_0) +O(\hbar^{\infty})_{H_\hbar^{-N}\to H_\hbar^N}
$$
with $e_0\in \Spz^{\fcomp}$ real valued. Now we proceed by induction. Suppose we have found $g_0,g_1,\dots g_N\in \Spz^{\fcomp}$ such that, for $G_N:=\sum_{j=0}^N \hbar^{j}g_j$, we have
$$
e^{-i\Op{\hbar}(G_N)}\semOp{\mathbf{Q}_0}e^{i\Op{\hbar}(G_N)}=  -\hbar^{2}\Delta +\hbar\Op{\hbar}(\mathbf{q}_{1,N}) +\hbar^{N+2}\Op{\hbar}(e_{N})+O(\hbar^{\infty})_{H_\hbar^{-N}\to H_\hbar^N}
$$
where $\mathbf{q}_{1,N}\in \Spz^1$, $e_N\in \Spz^{\fcomp}$ are real valued with $\supp \hat{\mathbf{q}}_{1,N} \cap \{|\theta|\geq 1\}=\emptyset$, \referee{$\supp \mathbf{q}_{1,N}\subset\{a\leq |\xi|\leq b\}$.}

Then, put $e_{N,\mc{H}}= (1-\cutThetaFreq(D_x))e_N$  so that 
$$
\|\partial_{\xi}^\beta \partial_x^\alpha e_{N,\mc{H}}\|\leq C_{\beta}\sum_{j=0}^{|\beta|}\|\partial_\xi^j\partial_x^\alpha e_N\|_{L^\infty}
$$
and let $g_{N+1}\in \Spz^{\fcomp}$ be real valued with $\supp \hat{g}_{N+1}\subset \supp \hat{e}_{N,\mc{H}}$ such that 
$$
\|\partial_x^\alpha \partial_\xi^\beta g_{N+1}\|_{L^\infty}\leq C \|\partial_x^\alpha \partial_\xi^\beta e_{N,\mc{H}}\|_{L^\infty}
$$
and
$$
i[\Op{\hbar}(g_{N+1}),-\hbar^{2}\Delta]=\hbar\Op{\hbar} (e_{N,\mc{H}}).
$$
Then, putting $G_{N+1}:=G_N+\hbar^{N+1}g_N,$ we have by Lemma~\ref{l:conjugate}
$$
e^{-i\Op{\hbar}(G_{N+1})}\semOp{\mathbf{Q}_0}e^{i\Op{\hbar}(G_{N+1})}=-\hbar^{2}\Delta +\hbar\Op{\hbar}(\mathbf{q}_{1,N+1}) +\hbar^{N+3}\Op{\hbar}(e_{N+1})+O(\hbar^{\infty})_{H_\hbar^{-N}\to H_\hbar^N}
$$
with $e_{N+1}\in \Spz^{\comp}$, $\mathbf{q}_{1,N+1}\in \Spz^1$ real valued, $\supp \hat {\mathbf{q}}_{1,N+1}\cap\{|\theta|\geq 1\}=\emptyset$, \referee{and $\supp \mathbf{q}_{1,N+1}\subset \{a\leq |\xi|\leq b\}$. }

In particular, putting $g\sim \sum_{j=0}^\infty \hbar^{j}g_j$ (\referee{see Lemma~\ref{l:borelSum}}) completes the proof of the lemma.
\end{proof}

Next, we show how to peel off layers of the form $\mu_n^{-k\delta}\leq |\theta|\leq \mu_n^{-(k-1)\delta}$ from the Fourier transform of the pseudodifferential potential.

\begin{lemma}[layer peeling lemma]
\label{l:nextLayer}
Let $0<a<b$, $\referee{N_0}>0$, $0<\delta<1$, and $\hbar^{\referee{N_0}}\leq \referee{r=r(\mu_n)}\leq 1$. Suppose \referee{that for any $N>0$},
$$
\sem H:=-\hbar^2 \Delta +\hbar \Op{\hbar}(\mathbf{q})+O(\hbar^{\infty})_{H_{\hbar }^{\referee{-N}}\to H_{\hbar}^{\referee{N}}}
$$
for some real valued $\mathbf{q}\in \Sp^1$ satisfying 
\begin{equation}
\label{e:terrifiesLeonid}
\supp \hat{\mathbf{q}}\cap \{|\theta| \geq r\}=\emptyset,\qquad\referee{\supp \hat{\mathbf{q}}\subset \{a\leq |\xi|\leq b\}}.
\end{equation}
Then there is a real valued $g\in r^{-1}\referee{\lp_n^{\delta}} S_{r,\delta}^{\fcomp}$ supported in $\{a\leq |\xi|\leq b\}$ such that \referee{for any $N$}
$$
e^{-iG}\sem He^{iG}= -\hbar^2\Delta +\hbar \Op{\hbar}(\mathbf{q}_1)+O(\hbar^{\infty})_{H_\hbar^{-N}\to H_{\hbar}^N},\qquad G:=\Op{\hbar}(g)
$$
for some $\mathbf{q}_1\in \Sp^1$ satisfying 
\begin{equation}
\label{e:freightensLeonid}
\supp \hat{\mathbf{q}}_1\cap  \{|\theta| \geq r\referee{\lp_n^{-\delta}}\}=\emptyset,\qquad \referee{\supp\mathbf{q}_1\subset \{a\leq |\xi|\leq b\}}.
\end{equation}
In addition, $\mathbf{q}_1$, $g$ depend continuously on $\mathbf{q}$ and, if $\mathbf{q}$ is $L$-periodic, then so are $\mathbf{q}_1$ and $g$.
\end{lemma}
\begin{proof}
The proof is similar to that of Lemma~\ref{l:layer0} except that we must keep more careful track of derivatives of the various $g$'s. 

We first let $\cutThetaFreq \in C_c^\infty((-1,1);[0,1])$ with $\cutThetaFreq \equiv 1$ in a neighbourhood of $[-1/2,1/2]$ and $\check \cutThetaFreq$ real valued.


Now we find $g_0\in r^{-1}\referee{\lp_n^{\delta}} S^{\fcomp}_{r.\delta}$, $\mathbf{q}_{1,0}\in \Sp^1$, and $e_0\in S_{r,\delta}^{\fcomp}$ real valued such that,  $\supp g_0\subset \referee{\{a\leq |\xi|\leq b\}}$, 
 $\supp \hat{\mathbf{q}}_{1,0}\cap \{|\theta| \geq r\referee{\lp_n^{-\delta}}\}=\emptyset$, \referee{$\supp \mathbf{q}_{1,0}\subset \{a\leq |\xi|\leq b\}$,} and 
 \begin{equation}
 \label{e:inductForm0}
e^{-i\Op{\hbar}(g_0)}\sem He^{i\Op{\hbar}(G_0)}=  -\hbar^{2}\Delta +\hbar\Op{\hbar}(\mathbf{q}_{1,0}) +\hbar^{2}\referee{\lp_n^{\delta}}\Op{\hbar}(e_{0})+O(\hbar^{\infty})_{H_\hbar^{-N}\to H_\hbar^N}.
\end{equation}

Put $\mathbf{q}_{\mc{H}}:=[(1-\cutThetaFreq( r^{-1}\referee{\lp_n^{\delta}}D_x))\mathbf{q}](x,\xi)$ so that $\mathbf{q}_{\mc{H}}$ is real valued and
$$
\|\partial_{\xi}^\beta \partial_x^\alpha \mathbf{q}_{\mc{H}}(\cdot,\xi)\|_{L^\infty}\leq C_{\alpha \beta}\sum_{j=0}^{|\beta|}\sup_{|\xi|\in \supp \cutFreq_{-1}}\|\partial_x^\alpha \partial_\xi^j \mathbf{q}(\cdot,\xi)\|_{L^\infty}\leq C_{\alpha \beta} r^{|\alpha|}.
$$
In particular, $\mathbf{q}_{\mc{H}}\in S_{r,\delta}^{\fcomp}$.
We then use Lemma~\ref{l:commute1} to find $g_0\in r^{-1}\referee{\lp_n^{\delta}}S_{r,\delta}^{\fcomp}$ supported in $\referee{\{a\leq |\xi|\leq b\}}$, real valued, satisfying
$$
\|\partial_\xi^\beta\partial_x^\alpha g_0\|_{L^\infty} \leq Cr^{-1}\referee{\lp_n^{\delta}}\|\partial_{\xi}^\beta \partial_x^\alpha \mathbf{q}_{\mc{H}}\|_{L^\infty}\leq C_{\alpha \beta}r^{-1+|\alpha|}\referee{\lp_n^{\delta}}
$$
and 
$$
i[\Op{\hbar} (g_0),-\hbar^{2}\Delta]=\hbar\Op{\hbar}(\mathbf{q}_{\mc{H}}).
$$
Thus, by Lemma~\ref{l:conjugate},
$$
e^{-i\Op{\hbar}(g_0)}\sem He^{i\Op{\hbar}(G_0)}=  
-\hbar^{2}\Delta +\hbar\Op{\hbar}(\mathbf{q}-\mathbf{q}_{\mc{H}}) +\hbar^{2}\referee{\lp_n^{\delta}}\Op{\hbar}(e_{0})+O(\hbar^{\infty})_{H_\hbar^{-N}\to H_\hbar^N},
$$
with $e_0\in S^{\fcomp}_{r,\delta}$. Since $\supp (\widehat{\mathbf{q}}-\widehat{\mathbf{q}_{\mc{H}}})\cap  \{(\theta,\xi)\,:\, |\theta| \geq r\referee{\lp_n^{-\delta}}\}=\emptyset$, we may put $\mathbf{q}_{1,0}=\mathbf{q}-\mathbf{q}_{\mc{H}}$ to obtain~\eqref{e:inductForm0}.

We again proceed by induction. Let $N\geq 0$ and suppose we have found $g_{0},\dots, g_N\in r^{-1}\referee{\lp_n^{\delta}} S_{r,\delta}^{\fcomp}$ real valued, supported in $\referee{\{a\leq |\xi|\leq  b\}}$ such that with $G_N=\sum_{j=0}^N \hbar^{j}g_j$, we have
$$
e^{-i\Op{\hbar}(G_N)}\sem He^{i\Op{\hbar}(G_N)}=  -\hbar^{2}\Delta +\hbar\Op{\hbar}(\mathbf{q}_{1,N}) +\hbar^{1+(N+1)}\referee{\lp_n^{\delta(N+1)}}\Op{\hbar}(e_{N})+O(\hbar^{\infty})_{H_\hbar^{-N}\to H_\hbar^N},
$$
where $\mathbf{q}_{1,N}\in \Sp^1$, $e_N\in S_{r,\delta}^{\fcomp}$ are real valued, $\supp \hat{\mathbf{q}}_{1,N}\cap \{(\theta,\xi)\,:\,|\theta| \geq r\referee{\lp_n^{-\delta}}\}=\emptyset$, \referee{and $\supp \mathbf{q}_{1,N}\subset \{a\leq |\xi|\leq b\}$.}

Then, put $e_{N,\mc{H}}= [(1-\cutThetaFreq( r^{-1}\referee{\lp_n^{\delta}}D_x))e_N(\cdot,\xi)](x)$ so that $b_{N,\mc{H}}$ is real valued and
$$
\|\partial_{\xi}^\beta \partial_x^\alpha e_{N,\mc{H}}\|\leq C_{\alpha \beta}\sum_{j=0}^{|\beta|}\|\partial_x^\alpha \partial_\xi^j e_N\|_{L^\infty}\leq C_{\alpha \beta} r^{|\alpha|}.
$$
In particular, $e_{N,\mc{H}}\in S_{r,\delta}^{\fcomp}$.

Now, by Lemma~\ref{l:integrate}, there is $g_{N+1}\in r^{-1}\referee{\lp_n^{\delta}}S_{r,\delta}^{\fcomp}$ real valued with $\supp \hat{g}_{N+1}\subset \supp \hat{b}_{N,\mc{H}}\cup\{\theta=0\}$ such that 
$$
\|\partial_x^\alpha \partial_\xi^\beta g_{N+1}\|_{L^\infty}\leq C_\alpha r^{-1}\referee{\lp_n^{\delta}} \|\partial_x^\alpha\partial_\xi^\beta  e_{N,\mc{H}}\|_{L^\infty}\leq C_{\alpha\beta}  r^{-1+|\alpha|}\referee{\lp_n^{\delta}}$$
and such that 
$$
i[\Op{\hbar}(g_{N+1}),-\hbar^{2}\Delta]=\hbar\Op{\hbar} (e_{N,\mc{H}}).
$$
Then, by Lemma~\ref{l:conjugate}, with $G_{N+1}=G_N+\hbar^{N+1}\referee{\lp_n^{\delta(N+1)}}g_{N+1}$, we have
\begin{align*}
&e^{-i\Op{\hbar}(G_{N+1})}\sem H e^{i\Op{\hbar}(G_{N+1})}\\
&=  -\hbar^{2}\Delta +\hbar\Op{\hbar}(\mathbf{q}_{1,N}) +\hbar^{1+(N+1)}\referee{\lp_n^{\delta(N+1)}}\Op{\hbar}(e_{N}-e_{N,\mc{H}})\\
&\qquad+\hbar^{1+(N+2)}\referee{\lp_n^{\delta(N+2)}}\Op{\hbar}(\tilde{e}_{N+1})+
O(\hbar^{\infty})_{H_\hbar^{-N}\to H_\hbar^N},
\end{align*}
where $\tilde{e}_N\in S_{r,\delta}^{\fcomp}$.
Since $\supp( \widehat{e_N}-\widehat{e_{N,\mc{H}}})\cap \{ (\theta,\xi)\,:\, |\theta |\geq r\referee{\lp_n^{-\delta}}\}=\emptyset,$ we may define 
$$
\mathbf{q}_{1,N+1}=\mathbf{q}_{1,N}+\hbar^{N+1}\referee{\lp_n^{\delta(N+1)}}(e_N-e_{N,\mc{H}})
$$
in order to have the required properties for $\mathbf{q}_{1,N+1}$.

We can now put $g\sim \sum_j \hbar^{j}\referee{\lp_n^{j\delta}}g_j$ \referee{(see Lemma~\ref{l:borelSum})} to finish the proof of the lemma.

\end{proof}

\begin{remark}
The proofs of Lemmas~\ref{l:layer0} and~\ref{l:nextLayer} may look as though they require performing infinitely many parallel gauge transform steps; something that experts in the gauge transform could be concerned about. However, the proofs actually rely on being able to make a finite but arbitrarily large number of such steps. Morally, we do not make the sets on left hand sides of~\eqref{e:scaresLeonid},~\eqref{e:terrifiesLeonid}, and~\eqref{e:freightensLeonid} empty, instead making the corresponding part of $q$ smaller than $\hbar^N$ for \referee{some} arbitrarily large $N$. \referee{We then apply the Borel summation lemma (Lemma~\ref{l:borelSum}).} \referee{Recall also that $n=n(\hbar)$ and satisfies~\eqref{e:muNBound} and hence, since the remainders are controlled in $\hbar$, they are controlled in $n$.}
\end{remark}

The final proposition of this section shows that, using a serial gauge transform, one can remove frequencies which are larger than any fixed power of $\hbar$ from the potential. 
\begin{proposition}
\label{p:gauge}
Suppose that
$$
\semOp{\mathbf{Q}_0}=-\hbar^2\Delta +\hbar\Op{\hbar}(\mathbf{q}_0)
$$
for some \referee{$\mathbf{q}_0\in \Spz^1$} real valued. Let  $0<a<b$ \referee{such that $\supp \mathbf{q}_0\subset\{a\leq |\xi|\leq b\}$}. $0< \delta <1$, $M>0$ \referee{\st{and suppose $K^{-1}\lp_{n}^{-\delta}\leq \step\leq K\lp_{n}^{-\delta}$}}. Put $r_{-1}=1$, $r_j=\referee{\lp_n^{-\delta j}}$, $j=0,1,\dots$. Then there are $g_{-1}\in \Spz^{\fcomp}$, and $g_{j}\in r_j^{-1}\referee{\lp_n^{\delta}}S^{\fcomp}_{r_j,\delta}$, $j=0,1,\dots, M$ real valued such that such that  for all $\referee{N}$,
\begin{gather*}
U_{M}^*\dots U_0^*U_{-1}^*\semOp{\mathbf{Q}_0}U_{-1} U_0\dots U_{M}= -\hbar^2\Delta +\hbar \Op{\hbar}(\mathbf{q}_1)+O(\hbar^{\infty})_{H_\hbar^{-\referee{N}}\to H_{\hbar}^{\referee{N}}},\\
U_j :=e^{i\Op{\hbar} (g_j)},
\end{gather*}
and 
$$
\supp \hat{\mathbf{q}}_1\cap \{ |\theta|\geq \referee{\lp_n^{-M\delta}}\}=\emptyset,\qquad \referee{\supp \mathbf{q}_1\subset \{a\leq |\xi|\leq b\}.}
$$
Moreover, $g_j$ and $\mathbf{q}_1$ depend continuously on \referee{$\mathbf{q}_0$ and, if $\mathbf{q}_0$} is $L\mathbb{Z}$-periodic, then so are $\mathbf{q}_1$ and $g_j$.
\end{proposition}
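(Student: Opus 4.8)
The plan is to build the unitary $U_{-1}U_0\cdots U_M$ as a serial composition of the elementary conjugations furnished by Lemma~\ref{l:layer0} and Lemma~\ref{l:nextLayer}, stripping away the Fourier support of the potential inside the window $\{a\le|\xi|\le b\}$ one layer at a time, the layers being dyadic in the parameter $\step=\lp_n^{-\delta}$ (recall $r_{-1}=r_0=1$ and $r_j=\step^j$). First I would apply Lemma~\ref{l:layer0} to $\semOp{Q_0}$ to get a real-valued $g_{-1}\in\Spz^{\fcomp}$, so that $U_{-1}:=e^{i\Op{\hbar}(g_{-1})}\in\Psi_0^{\fcomp}$ is unitary and
$$
U_{-1}^*\semOp{Q_0}U_{-1}=-\hbar^2\Delta+\hbar\Op{\hbar}(q_{(0)})+O(\hbar^\infty)_{H_\hbar^{-\ell}\to H_\hbar^\ell},
$$
with $q_{(0)}\in\Spz^1$ real valued and $\supp\hat{q}_{(0)}\cap\{a\le|\xi|\le b,\ |\theta|\ge 1\}=\emptyset$.

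Next I would iterate Lemma~\ref{l:nextLayer}. Inductively, suppose that conjugating $\semOp{Q_0}$ by $U_{-1}U_0\cdots U_{k-1}$ has produced $-\hbar^2\Delta+\hbar\Op{\hbar}(q_{(k)})+O(\hbar^\infty)_{H_\hbar^{-\ell}\to H_\hbar^\ell}$ with $q_{(k)}\in\Sp^1$ real valued and $\supp\hat{q}_{(k)}\cap\{a\le|\xi|\le b,\ |\theta|\ge r_k\}=\emptyset$. I would then apply Lemma~\ref{l:nextLayer} with scale parameter $r_k$ (and with an auxiliary large constant in place of the ``$M$'' of that lemma), obtaining a real-valued $g_k\in r_k^{-1}\step^{-1}S^{\fcomp}_{r_k,\delta}$ supported in $\{\tfrac a2\le|\xi|\le b+1\}$ and a real-valued $q_{(k+1)}\in\Sp^1$ with $\supp\hat{q}_{(k+1)}\cap\{a\le|\xi|\le b,\ |\theta|\ge r_k\step=r_{k+1}\}=\emptyset$ and $U_k^*(-\hbar^2\Delta+\hbar\Op{\hbar}(q_{(k)}))U_k=-\hbar^2\Delta+\hbar\Op{\hbar}(q_{(k+1)})+O(\hbar^\infty)_{H_\hbar^{-\ell}\to H_\hbar^\ell}$, where $U_k:=e^{i\Op{\hbar}(g_k)}$. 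Running this for $k=0,1,\dots,M$ and setting $\tilde q:=q_{(M+1)}$ yields the asserted normal form, the support condition holding at scale $\step^{M+1}$ and hence a fortiori at scale $\step^M$.

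Several routine points are needed to make the iteration legitimate. To invoke Lemma~\ref{l:nextLayer} at layer $k$ one needs $\hbar^{M_0}\le r_k\le 1$ for some fixed $M_0$; since $r_k=\step^k$ and $K^{-1}\lp_n^{-\delta}\le\step\le K\lp_n^{-\delta}$, the bound~\eqref{e:muNBound} gives $r_k\ge c\,\hbar^{\delta k}\ge\hbar^{\delta M+1}$ for $k\le M$ and $\hbar$ small, so $M_0=\delta M+1$ works. Because $\step=\lp_n^{-\delta}$, the symbol $g_k$ lies in $r_k^{-1}\lp_n^{\delta}S^{\fcomp}_{r_k,\delta}$, precisely the class to which Lemma~\ref{l:conjugate} (through Lemma~\ref{l:unitaryEquiv}) applies; in particular each $e^{i\Op{\hbar}(g_k)}$ is unitary (as $g_k$ is real) and bounded $H_\hbar^\ell\to H_\hbar^\ell$ uniformly in $\hbar$, so conjugating an $O(\hbar^\infty)_{H_\hbar^{-\ell}\to H_\hbar^\ell}$ remainder by the remaining $U_j$'s preserves it, and since there are only $M+2$ steps the accumulated error is still $O(\hbar^\infty)_{H_\hbar^{-\ell}\to H_\hbar^\ell}$ for every $\ell$. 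The symbol classes are stable along the recursion: $S^m_{r,\delta}\subset\Sp^m$ whenever $r\le1$, and Lemma~\ref{l:nextLayer} returns its output potential in $\Sp^1$, so each iteration receives an admissible input. Finally, continuous dependence of the $g_j$ and $\tilde q$ on $q$, and preservation of $L\mathbb{Z}$-periodicity, are inherited at each step from the corresponding clauses of Lemma~\ref{l:layer0} and Lemma~\ref{l:nextLayer} and survive composition.

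The substantive work is not in this proposition: it lies in Lemmas~\ref{l:layer0} and~\ref{l:nextLayer}, which rest on the integration and commutator Lemmas~\ref{l:integrate} and~\ref{l:commute1} and on the conjugation expansion in Lemma~\ref{l:conjugate}. Within the proof of Proposition~\ref{p:gauge} proper, the only mildly delicate point is verifying that $\hbar^{M_0}\le r_k$ holds uniformly over all layers $k\le M$ (for $\hbar$ small), so that Lemma~\ref{l:nextLayer} may indeed be applied at every layer; this follows directly from the assumed two-sided bounds on $\step$ together with~\eqref{e:muNBound}.
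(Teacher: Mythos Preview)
Your proposal is correct and follows essentially the same approach as the paper's proof: apply Lemma~\ref{l:layer0} once, then iterate Lemma~\ref{l:nextLayer} at scales $r_0,r_1,\dots,r_M$. Your write-up is in fact more careful than the paper's on two points the paper glosses over, namely that the hypothesis $\hbar^{M_0}\le r_k$ holds uniformly for $k\le M$ (via $\step\sim\lp_n^{-\delta}$ and~\eqref{e:muNBound}) and that the $O(\hbar^\infty)$ remainders survive conjugation by the remaining $U_j$ because $e^{i\Op{\hbar}(g_j)}$ is $H_\hbar^\ell$-bounded by Lemma~\ref{l:conjugate}.
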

\begin{remark}
\referee{In order to remove all of the frequencies of $\sem{q}_0$ larger than $\lp_n^{-N}$, we take $M=\lceil N\delta^{-1}\rceil$.}
\end{remark}
\begin{remark}
\referee{We prove Proposition~\ref{p:gauge} using a serial sequence of parallel gauge transforms (see Remark~\ref{r:serialParallel}). Indeed, notice that the proof of Lemma~\ref{l:nextLayer} involves a parallel gauge transform which we apply a large, independent of $\hbar$ number of times.}
\end{remark}
\begin{proof}
By Lemma~\ref{l:layer0}, there is $g_{-1}\in \Spz^{\fcomp}$ such that 
$$
\semOp{\mathbf{Q}_{\referee{[1]}}}:=U_{-1}^*\semOp{\mathbf{Q}_0}U_{-1}=-\hbar^2\Delta+\hbar\Op{\hbar}(\mathbf{q}_{\referee{[1]}})+O(\hbar^{\infty})_{H_{\hbar}^{-\referee{N}}\to H_{\hbar}^{\referee{N}}}
$$
with $\mathbf{q}_{\referee{[1]}}\in \Spz^1$ real valued and satisfying
$$
\supp \hat{\mathbf{q}}_{\referee{[1]}}(\theta,\xi)\cap \{|\theta| \geq 1\}=\emptyset,\qquad \referee{\supp \mathbf{q}_{\referee{[1]}}\subset \{ a\leq |\xi|\leq b\}.}
$$
Setting $\mathbf{q}$ to $\mathbf{q}_{\referee{[1]}}$ in Lemma~\ref{l:nextLayer}, we find $g_0\in \referee{\lp_n^{\delta}} \Sp^{\fcomp}$ such that 
$$
\sem H_{\referee{[2]}}:=U_{1}^*\semOp{\mathbf{Q}_{\referee{[1]}}}U_1=-\hbar^2\Delta+\hbar\Op{\hbar}(\mathbf{q}_{\referee{[2]}})+O(\hbar^{\infty})_{H_{\hbar}^{-\referee{N}}\to H_{\hbar}^{\referee{N}}}, 
$$
with $\mathbf{q}_{\referee{[2]}}\in \Sp^1$ real valued and satisfying
$$
\supp \hat{\mathbf{q}}_{\referee{[2]}}(\theta,\xi)\cap \{ |\theta| \geq r_1=\referee{\referee{\lp_n^{-\delta}}}\}=\emptyset,\qquad \referee{\supp \mathbf{q}_{\referee{[2]}}\subset\{a\leq |\xi|\leq b\}.}
$$
Iterating this process $M$ times completes the proof of the proposition.
\end{proof}

\section{Computing the local density of states}
\label{s:compute}
Before we apply the gauge transform procedure from the previous section, it will be crucial to replace $\semOp{\mathbf{Q}_0}$ by a periodic operator.  \referee{Let }$\referee{N}\in\mathbb{R}$. We aim to compute the local density of states modulo errors of size $\referee{O}(\hbar^{\referee{N}})$ \referee{(or, equivalently, $\referee{O(}\lp_n^{-N})$)}. 

As stated in Section~\ref{s:muN}, we fix a discrete sequence $\{\lp_n\referee{=2^n}\}_{n=1}^\infty$ and work with 
$$
\hbar\in [2^{-10}\lp^{-1}_n,2^{10}\lp^{-1}_{n}].
$$ 
In order to compute the local density of states,  we start by replacing $\semOp{\mathbf{Q}_0}$ by a periodic operator, $\semOp{\referee{{}^P\!\mathbf{Q}_{0}}}$ with period at scale $\lp_n^\referee{N}$. The local densities of states for the two operators are close by Proposition~\ref{p:distantPerturb}. We then study the local density of states for $\semOp{\referee{{}^P\!\mathbf{Q}_{0}}}$ by applying the gauge transform from Proposition~\ref{p:gauge}. This will result in an operator which acts as a Fourier multiplier for semiclassical energies $\omega\in [a,b]$. As we will see \referee{in Corollary~\ref{c:transformedProjector}}, computing the local density of states for such operators is relatively straightforward. Finally, in order to complete the proof of the main theorem, we will need to understand how the unitary operator found in Proposition~\ref{p:gauge} acts on delta functions. In some sense, this corresponds to  `unpeeling' (or rebuilding) the onion peeled by the gauge transform.

\subsection{Periodising the \referee{perturbation}}
\label{s:periodisePotential}

We now periodise the \referee{perturbation} in a way that will \referee{have a negligible} effect on the local density of states. Let $\cutMan \in C_c^\infty((-\tfrac{1}{2},\tfrac{1}{2});[0,1])$ with $\cutMan \equiv 1$ on $[-\tfrac{1}{4},\tfrac{1}{4}]$, and put $\cutMan_n(x):=\cutMan(\lp_n^{-\referee{N}}x)$. 
Suppose that 
$$
\mathbf{Q}_0:=\referee{\mathbf{V}}^{1}(x)\hbar D_x+\hbar D_x \referee{\mathbf{V}}^1(x)+ \referee{\mathbf{V}}^0(x)\in \sem{Diff}^1.
$$

Then, put 
$$
\referee{\mathbf{V}}^j_{n}(x):=\sum_{k\in \mathbb{Z}} \cutMan_n(x-k\lp_n^\referee{N})\referee{\mathbf{V}}^j(x-k\lp_n^\referee{N}),\qquad j=0,1
$$
so that $\referee{\mathbf{V}}^j_{n}(x)=\referee{\mathbf{V}^j}$ on $|x|\leq \tfrac{1}{4}\lp_n^\referee{N}$, and
$$
|\partial_{x}^\alpha \referee{\mathbf{V}^{j}_n}(x)|\leq C_\alpha,\qquad x\in \mathbb{R}.
$$
Define
$$
\referee{{}^P\!\mathbf{Q}_{0}}:=\referee{\mathbf{V}}^1_n(x)\hbar D_x+\hbar D_x \referee{\mathbf{V}}^1_n(x)+ \referee{\mathbf{V}}_n^0.
$$
\referee{Here, we use the notation ${}^P$ to remind the reader that ${}^P\!\mathbf{Q}_{0}$ is the periodised version of $\mathbf{Q}_0$ (see also Example~\ref{example:1} part (3)).}

\referee{We claim that  for $\omega\in [a-\e,b+\e]$ and $\lambda\in [-\e,\e]$, $\semOp{\referee{{}^P\!\mathbf{Q}_{0}}}$ satisfies
\begin{equation}
\label{e:periodisedIsLipschitz}
|\sem E(\semOp{\referee{{}^P\!\mathbf{Q}_{0}}})(x,y,\omega)-\sem E(\semOp{\referee{{}^P\!\mathbf{Q}_{0}}})(x,y,\omega +\lambda)|\leq C\lp_n^{-\referee{N}}\langle \lp_n^{\referee{N}+1}\lambda\rangle.
\end{equation}
Once we prove~\eqref{e:periodisedIsLipschitz}, Proposition~\ref{p:distantPerturb} \referee{applied} with $T=\tfrac{1}{8}\lp_n^{\referee{N}}$ will show that 
\begin{equation}
\label{e:comparison}
|\sem E(\semOp{\mathbf{Q}_0})(x,y,\omega)-\sem E(\semOp{\referee{{}^P\!\mathbf{Q}_{0}}})(x,y,\omega)|\leq C\lp_n^{-\referee{N}},\qquad \omega\in[a,b].
\end{equation}}

It therefore remains only to compute $\sem E(\semOp{\referee{{}^P\!\mathbf{Q}_{0}}})(x,y,\omega)$ \referee{and prove~\eqref{e:periodisedIsLipschitz}}.


\subsection{Analysis of $\sem E(\semOp{\referee{{}^P\!\mathbf{Q}_{0}}})$: reduction to a Fourier multiplier}
\label{s:fourierMultiplier}

We first fix $\referee{\delta'\in(0,1]}$ and work either on diagonal or \referee{assume} $|x-y|\geq ch^{1-\delta'}$. Then, let  $a,b,\delta \in \mathbb{R}$ such that $0<a<b$, $0<\delta<\min(\frac{1}{2},\frac{\referee{\delta'}
}{2})$.
The goal of this section is to show that~\eqref{e:periodisedIsLipschitz} holds \referee{with constants depending on all the parameters introduced above but uniformly over $\hbar\in (0,1]$} and to compute an asymptotic formula for $\sem{E}(\semOp{\referee{{}^P\!\mathbf{Q}_{0}}})$.

\referee{\begin{remark}
\label{r:deltaIsSmall}
Observe that it is only necessary to work with $\delta\ll1$ in order to obtain asymptotics very close to, but not on, the diagonal. Indeed, the requirement $\delta<\frac{\delta'}{2}$ is the only reason we cannot simply fix $\delta$ from the outset. Consequently, the reader only interested in on-diagonal asymptotics may work with $\delta=\frac{1}{4}$ for example. (See also Remark~\ref{r:deltaNonzero}.)
\end{remark}
}

\referee{We now reduce to the case where $\referee{{}^P\!\mathbf{Q}_{0}}$ is supported in $a< |\xi|<b$. We use Lemma~\ref{l:abstralSpectralNew} to prove the following lemma.} \referee{Since} we expect this lemma to be useful in future work, we prove it in arbitrary dimension. 

\referee{
\begin{lemma}
\label{l:transformedProjector}
Let $0<a<b$. Suppose that $\mathbf{q}_1,\mathbf{q}_2\in S^1(T^*\mathbb{R}^d)$ are real valued and for all $a<|\xi|<b$, $x\in \mathbb{R}^d$ we have
$$
\mathbf{q}_1(x,\xi)=\mathbf{q}_2(x,\xi).
$$ 
Put $\mathbf{Q}_j:=\Op{\hbar}(\mathbf{q}_j):H_h^1(\mathbb{R}^d)\to L^2(\mathbb{R}^d)$. For all $N>0$ and $\e>0$ there are $C>0$, $L>0$ such that if for all $x\in \mathscr{K}$ and all $\omega\in[a,b]$ we have
 $$
\sum_{|\alpha|\leq N}\|\sem E(\semOp{\mathbf{Q}_2};[\omega^2-\hbar^L,\omega^2+\hbar^L])\partial^\alpha\delta_x\|_{L^2}\leq C^{-1}\hbar^N,
$$
then for all $\omega \in (a+\e,b-\e)$ we have
\begin{equation}
\label{e:forLeonid}
\Big\|\sem E(\semOp{\mathbf{Q}_1})(\cdot,\cdot,\omega)-\sem E(\semOp{\mathbf{Q}_2})(\cdot,\cdot,\omega)\Big\|_{C^N(\mathscr{K}\times \mathscr{K})}\leq C\hbar^N.
\end{equation}
\end{lemma}}

\referee{\begin{proof}

We will apply Lemma~\ref{l:abstralSpectralNew} with $J=((a+\e)^2,(b-\e)^2)$ and
$$
H_1:=\semOp{\mathbf{Q}_1},\qquad H_2:=\semOp{\mathbf{Q}_2}.
$$
Then, let $\chi \in C_c^\infty(a^2,b^2)$ with $\chi \equiv 1$ on $((a+\e)^2,(b-\e)^2)$ so that 
$$
(H_1-H_2)E(H_2;J)=(H_1-H_2)\chi(H_2)E(H_2;J)=O(\hbar^\infty)_{\Psi^{-\infty}}.
$$
Similarly,
$$
E(H_1;J)(H_1-H_2)=E(H_1;J)\chi(H_1)(H_1-H_2)=O(\hbar^\infty)_{\Psi^{-\infty}}.
$$
In particular, the hypotheses of Lemma~\ref{l:abstralSpectralNew} hold with $\e_1=\e_2=\e_3=O(\hbar^\infty)$ for any $s\in \mathbb{R}$.

\referee{In order to apply \referee{Lemma~\ref{l:abstralSpectralNew}}, we estimate
$$
\|\sem E(\semOp{\mathbf{Q}_2})(\omega)\partial_x^\alpha\delta_x\|_{L^2}.
$$
To do this, observe that for any $s\in \mathbb{R}$
$$
\|\sem E(\semOp{\mathbf{Q}_2})(\omega)(\semOp{\mathbf{Q}_2}+1)^s\|_{L^2\to L^2}\leq C(\omega^2+1)^s
$$
and the principal symbol $\sigma(\semOp{\mathbf{Q}_2}+1)=|\xi|^2+1$ is non-vanishing. In particular, $(\semOp{\mathbf{Q}_2}+1)^{-s}\in \Psi^{-2s}$ exists. Therefore, 
\begin{equation}
\label{e:smoothness}
\begin{aligned}
\|\sem E(\semOp{\mathbf{Q}_2})(\omega)\partial_x^\alpha\delta_x\|_{L^2}&\leq
\|\sem E(\semOp{\mathbf{Q}_2})(\omega)(\semOp{\mathbf{Q}_2}+1)^{s}\|_{L^2\to L^2} \|(\semOp{\mathbf{Q}_2}+1)^{-s}\partial_x^\alpha\delta_x\|_{L^2}\\
&\leq C(\omega+1)^s\|\partial_x^\alpha\delta_x\|_{H_{\hbar}^{-2s}}\leq C(\omega+1)^s\hbar^{-|\alpha|-\frac{d}{2}}
\end{aligned}
\end{equation}
for any $s>\frac{d}{4}+|\alpha|$. 
}
Thus, by \referee{Lemma~\ref{l:abstralSpectralNew}}

\begin{equation}
\label{e:comparisonNow}
\sem E(\semOp{\mathbf{Q}_1})(\omega;x,y)=\sem E(\semOp{\mathbf{Q}_2})(\omega;x,y)+O(\hbar^{\infty})_{C^\infty}.
\end{equation}

\begin{remark}
Above, we apply the statement in \referee{Lemma~\ref{l:abstralSpectralNew}} for each $\hbar$ to obtain~\eqref{e:comparisonNow}.
\end{remark}

\end{proof}}

\referee{Lemma~\ref{l:transformedProjector} has the following useful corollary.}
\begin{corollary}
\label{c:transformedProjector}
Suppose that $\mathbf{q}\in S^1(T^*\mathbb{R}^d)$ is real valued and for all $a<|\xi|<b$, $x\in \mathbb{R}^d$ we have
$$
\mathbf{q}(x,\xi)=\tilde{\mathbf{q}}(\xi).
$$ 
Put $\mathbf{Q}:=\Op{\hbar}(\mathbf{q}):H_h^1(\mathbb{R}^d)\to L^2(\mathbb{R}^d)$. Then, for all \referee{$\referee{\mathcal{K}}\subset \mathbb{R}^d\times \mathbb{R}^d$ compact}, $N>0$ and $\e>0$ there is $C_N>0$ such that for all $\omega \in (a+\e,b-\e)$ we have
\begin{equation}
\label{e:forLeonid}
\Big\|\sem E(\semOp{\mathbf{Q}})(x,y,\omega)-\frac{1}{(2\pi \hbar)^{d}}\int_{G_\hbar(\omega)} e^{i \langle x-y,\xi\rangle/ \hbar}d\xi\Big\|_{C^N\referee{(\referee{\mathcal{K}})}}\leq C_N\hbar^N,
\end{equation}
where 
$$
G_\hbar(\omega):=\{ \xi \mid |\xi|^2+\hbar \tilde{\mathbf{q}}(\xi)\leq \omega^2\}.
$$
\end{corollary}
\begin{remark}
The fact that the right-hand size of~\eqref{e:forLeonid} is non-zero, albeit small, is due to our use of the Weyl quantisation rather than the left quantisation. See \referee{e.g.~\cite[(7.17)]{PaSh:16} and~\cite[(6.12)]{PaSh:12}} for the equality in the left quantisation. For the on the diagonal case, similar lemmas also appear in~\cite{Sh:79}.
\end{remark}

\referee{By Lemma~\ref{l:transformedProjector}, for $a<\omega<b$, we have 
$$
\mathbf{E}(\semOp{\referee{{}^P\!\mathbf{Q}_{0}}})(\omega;x,y)=\mathbf{E}(\semOp{\referee{{}^P\!\tilde{\mathbf{Q}}_{0}}})(\omega;x,y)+O(h^\infty)_{C^\infty}, 
$$
where 
$$
\referee{{}^P\!\tilde{\mathbf{q}}_{0}}(x,\xi)=\chi(|\xi|)\referee{{}^P\!\mathbf{q}_{0}}(x,\xi),
$$
with $\chi \in C_c^\infty(\mathbb{R}_+)$ and $\chi \equiv 1$ on $[a,b]$.  }

\referee{By Proposition~\ref{p:gauge}, with $M=\lceil N\delta^{-1}\rceil$, there is a unitary operator, $U=U_n$, and $\mathbf{q}_1\in S^1$ real valued such that 
$$
\mathbf{H}_1:=U^*\semOp{\referee{{}^P\!\tilde{\mathbf{Q}}_{0}}}U=-\hbar^2 \Delta+\hbar \Op{\hbar}(\mathbf{q}_1)\referee{+O(\hbar^\infty)_{\Psi^{-\infty}}},
$$
where $\mathbf{q}_1\in S^1$ is $\lp_n^\referee{N}$-periodic and
\begin{equation}
\label{e:fourierMe}
\supp\hat{\mathbf{q}}_1\cap \{|\theta|\geq \lp_n^{-\referee{N}}\}=\emptyset.
\end{equation}

Now, since $\mathbf{q}_1$ is $\lp_n^\referee{N}$-periodic, 
$$
\supp\hat{\mathbf{q}}_1\cap \{|\theta|\leq \lp_n^{-\referee{N}}\}\subset \{ \theta=0\}.
$$
In particular,
\begin{equation}
\label{e:Q2}
\mathbf{q}_2(\xi):=\mathbf{q}_1(x,\xi)\in \Sp^{\fcomp}
\end{equation}
is independent of $x$.

Put
$$
\semOp{\mathbf{Q}_1}:=-\hbar^2 \Delta+\hbar \Op{\hbar}(\mathbf{q}_1),\qquad \tilde{\sem H}_1:= U \semOp{\mathbf{Q}_1} U^*.
$$
Then, \referee{Lemma~\ref{l:abstralSpectral}} implies
\begin{equation}
\label{e:voodoo}
\partial_x^\beta\partial_y^\alpha(\sem E(\tilde{\sem{H}}_1)(x,y,\omega)-\sem E(\semOp{\referee{{}^P\!\mathbf{Q}_{0}}})(x,y,\omega))=O(\hbar^{\infty}).
\end{equation}
}
\begin{remark}
Note that we apply \referee{Lemma~\ref{l:abstralSpectral}} to the derivatives of the delta function and \referee{use} the fact that for $\cutEnergy\in C_c^\infty$,
$$
\|\cutEnergy(\semOp{\mathbf{\mathbf{Q}}})\partial^\alpha_x\delta\|_{L^2}\leq C_\alpha\hbar^{-|\alpha|-\frac{\referee{1}}{2}}.
$$
\end{remark}

We now focus on computing 
\begin{multline*}
(\partial_x^\beta\partial_y^\alpha)\sem E(\tilde{\sem H}_1)(x_0,y_0,\omega)=(\partial_x^\beta\partial_y^\alpha)\sem E(U\semOp{\mathbf{Q}_1}U^*)(x_0,y_0,\omega)\\=\langle U\sem E(\semOp{\mathbf{Q}_1})(\omega)U^*(-\partial_y)^\alpha \delta_{y_0},(-\partial_x)^\beta\delta_{x_0}\rangle.
\end{multline*}
This will be a priori simpler than computing $\sem{E}(\semOp{\referee{{}^P\!\mathbf{Q}_{0}}})$ since $\mathbf{Q}_1$ \referee{is} a Fourier multiplier \referee{and hence we have an exact formular for $\sem{E}(\semOp{\mathbf{Q}_1})$.}

\begin{remark}
We have replaced $(x,y)$ in the statement of our theorems by $(x_0,y_0)$ to avoid notational clashes in the next section.
\end{remark}

\subsection{Asymptotics of the spectral function: `unpeeling' the onion}
\label{s:unpeel}

Before we can understand the asymptotics of the spectral function, we need a lemma which gives the kernel of the spectral projector for $\semOp{\mathbf{Q}_2}$.

\begin{remark}\label{decay2}
In the case when the potential is the sum of a smooth periodic function and a function from the Schwartz class, the onion peeling is not needed (see Remark~\ref{decay1}). In particular, the gauge transform is made by a single operator $U=e^{i\Op{\hbar}(g_{-1})}$ with $g_{-1}\in \Spz^{\fcomp}$ which allows us to proceed immediately to the conclusion of Lemma~\ref{l:asymptotics1} below, and thus to complete the proof of the main result. In the general setting though, one has to deal with $U$ described by \eqref{subseqform} and additional technical arguments due to onion peeling and specifics of the corresponding classes $\referee{\lp_n^{\delta}}r_j^{-1}S^{\fcomp}_{r_j,\delta}$, $r_j=\referee{\lp_n^{-j\delta}}$.
\end{remark}

Now that we have computed the kernel of $\sem E(\semOp{\mathbf{Q}_1})$, we need to handle the action of $U$ and $U^*$ on $\sem E(\semOp{\mathbf{Q}_1})$. To do this, we first describe how  $\sem E(\semOp{\mathbf{Q}_1})$ moves wavefront sets.
\begin{lemma}
\label{l:frequencyPreserved}
Let $b>0$ $C>0$ and $\hbar^{C}\leq r\leq 1$. Then for all $\chi\in C_c^\infty(\mathbb{R})$, \referee{and all $\hbar$-tempered $u$} we have
\begin{equation}
\label{e:panda}
\WF_{\hbar r}(U_{r} \sem E(\semOp{\mathbf{Q}_1})(\omega)U_{r}^*\chi u)\subset \{ (x,\xi)\mid \xi\in \pi_{\xi}(\WF_{\hbar r}(u)),\,|\xi|\leq \omega\},
\end{equation}
where $\pi_{\xi}(x,\xi)=\xi$ is the natural projection. Moreover, for $\cutFreq\in C_c^\infty(\mathbb{R})$ with $\cutFreq\equiv 1$ on $[-b,b]$, and all $\omega\in (-b,b)$ we have
$$
U_{r} \sem E(\semOp{\mathbf{Q}_1})(\omega)U_{r}^*\Op{\hbar r}(1-\cutFreq(\xi))=0.
$$
\end{lemma}
\begin{proof}
First, recall that $U_{r} \sem E(\semOp{\mathbf{Q}_1})(\omega)U_{r}^*$ is given by
$$
U_{r} \sem E(\semOp{\mathbf{Q}_1})(\omega)U_{r}^*(x,y)=\frac{1}{2\pi \hbar r}\int _{G_{\hbar}(\omega)}e^{\frac{i}{ \hbar r}(x-y)\xi}d\xi.
$$

Let $\cutFreq \in C_c^\infty$ with $\cutFreq \equiv 1$ on $[-b,b]$, then
$$
U_{r} \sem E(\semOp{\mathbf{Q}_1})(\omega)U_{r}^*\Op{\hbar r}(1-\cutFreq(\xi))=\frac{1}{2\pi \hbar r}\int \int _{G_{\hbar}(\omega)}e^{\frac{i}{ \hbar r}[(x-z)\xi+(z-y)\eta]}(1-\cutFreq(|\eta|))  d\xi dz d\eta=0.
$$
Therefore, we may replace $\chi u$ by $\Op{\hbar r}(\cutFreq(\xi))\chi u$ \referee{in~\eqref{e:panda}} and hence assume $u$ is compactly microlocalized.

Suppose that $\xi_0\notin \pi_\xi(\WF_{\hbar r}(u))$. Then, since $u$ is compactly microlocalized, there is $\tilde{\cutPhase} \in C_c^\infty(\mathbb{R}^2)$ such that 
$$
(1-\tilde{\cutPhase})u=O(\hbar^{\infty})_{H_{\hbar}^\ell}. 
$$
In particular, $\WF_{\hbar r}(u)=\WF_{\hbar r}(\tilde{\cutPhase} u)$ is compact and there is $U$, a neighbourhood of $\xi_0$, such that $\overline{U}\cap \pi_{\xi}(\WF_{\hbar r}(u))=\emptyset$. Thus, there is $b\in C_c^\infty(\mathbb{R}^2)$ such that 
$$
\Op{\hbar r}(1-b)u= O(\hbar^{\infty})_{H_{\hbar}^{\ell}}
$$
and $\pi_\xi(\supp b)\cap U=\emptyset.$

Let $x_0\in \mathbb{R}$ and suppose $a\in C_c^\infty(\mathbb{R}^2)$ with $a(x_0,\xi_0)=1$  and $\pi_\xi \supp a\subset U$.  Then
\begin{align*}
&\Op{\hbar r}(a)U_{r} \sem E(\semOp{\mathbf{Q}_1})(\omega)U_{r}^*u=\Op{\hbar r}(a)U_{r} \sem E_{\sem H_2}(\omega)U_{r}^*\Op{\hbar r}(b)u +O(\hbar^{\infty})_{H_{\hbar}^\ell}\\
&\;=\frac{1}{(2\pi \hbar r)^3}\int\int _{G_{\hbar}(\omega)} e^{\frac{i}{\hbar r}[(w-z)\xi+(x-w)\eta+(z-y)\zeta]}a(\tfrac{x+w}{2},\eta)b(\tfrac{z+y}{2},\zeta)u(y)d\xi dyd\zeta dz dw d\eta +O(\hbar^{\infty})_{H_{\hbar}^\ell}.
\end{align*}
Since $|\zeta-\eta|>c>0$ on the support of the integrand, integration by parts in $(z,w)$ shows that $(x_0,\xi_0)\notin \WF_{r \hbar}(U_{r} \sem E(\semOp{\mathbf{Q}_1})(\omega)U_{r}^*u)$.

Next, \referee{let} $(x_0,\xi_0)\in \mathbb{R}^2$ such that $|\xi_0|>\omega$. Then there is a neighbourhood $U$ of $(x_0,\xi_0)$ such that $\overline{U}\cap \{|\xi|\leq \omega\}=\emptyset$. \referee{As above,} let $a\in C_c^\infty(\mathbb{R}^2)$ with $\pi_\xi \supp a\subset U$ and $a(x_0,\xi_0)=1$. Then
\begin{align*}
&\Op{\hbar r}(a)U_{r} \sem E(\semOp{\mathbf{Q}_1})(\omega)U_{r}^*u=\Op{\hbar r}(a)U_{r} \sem E_{\sem H_2}(\omega)U_{r}^*\Op{\hbar r}(\tilde{\cutPhase})u +O(\hbar^{\infty})_{H_{\hbar}^\ell}\\
&\;=\frac{1}{(2\pi \hbar r)^3}\int\int _{G_{\hbar}(\omega)} e^{\frac{i}{\hbar r}[(w-z)\xi+(x-w)\eta+(z-y)\zeta]}a(\tfrac{x+w}{2},\eta)\tilde{\cutPhase}(\tfrac{z+y}{2},\zeta)u(y)d\xi dyd\zeta dz  dw d\eta +O(\hbar^{\infty})_{H_{\hbar}^\ell}.
\end{align*}   
Now, since $\pi_\xi( \supp a)\subset U$ and $|\xi|\leq \omega +C\hbar$ in $G_\hbar(\omega)$, we have $|\xi-\eta|>c>0$ on the integrand, and integration by parts in $w$ then shows that $(x_0,\xi_0)\notin \WF_{r \hbar}(U_{r} \sem E(\semOp{\mathbf{Q}_1})(\omega)U_{r}^*u)$ as claimed.
\end{proof}

The final piece of the proof involves rebuilding the layers of our potential. That is, we compute asymptotics for a series of oscillatory integrals, coming from $U$ and $U^*$, which oscillate at different scales. In particular, the unitary operator, $U$, used to gauge transform from $\semOp{\referee{{}^P\!\mathbf{Q}_{0}}}$ to $\semOp{\mathbf{Q}_1}$ is of the form
\begin{equation}
\label{subseqform}
U= e^{i\Op{\hbar}(g_{-1})}\dots e^{i\Op{\hbar}(g_{\referee{N}_\delta-1})}e^{i\Op{\hbar}(g_{\referee{N}_\delta})}
\end{equation}
with $g_{-1}\in \Spz^{\fcomp}$, $g_j\in \referee{\lp_n^{\delta}}r_j^{-1}S^{\fcomp}_{r_j,\delta}$, and $r_j=\referee{\lp_n^{-j\delta}}$.

We start by showing that $U$ and $U^*$ do not appreciably move the momentum variables ($\xi$'s).
\begin{lemma}
\label{l:onlyLowFreq}
Let $\referee{\gamma} \in S^0$ be compactly supported in $x$ such that $(\supp \referee{\gamma}) \cap \{\xi\in [-b,b]\}=\emptyset$. Then there is $\e>0$ such that for all $\omega \in [-b-\e,b+\e]$ we have 
$$
\sem E(\semOp{\mathbf{Q}_1})(\omega)U^*\Op{\hbar}(\referee{\gamma}) =O(\hbar^{\infty})_{H_{\hbar}^{-\ell}\to H_{\hbar}^{\ell}},\qquad  \Op{\hbar}(\referee{\gamma})U\sem E(\semOp{\mathbf{Q}_1})(\omega)  =O(\hbar^{\infty})_{H_{\hbar}^{-\ell}\to H_{\hbar}^{\ell}}.
$$
\end{lemma}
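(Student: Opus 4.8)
\textbf{Proof proposal for Lemma~\ref{l:onlyLowFreq}.}

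The plan is to exploit the fact that $\sem E(\semOp{Q_2})(\omega)$ is microlocally supported in $\{|\xi|\leq \omega+C\hbar\}$ (Lemma~\ref{l:frequencyPreserved} with $r=1$), together with the fact that neither $U$ nor $U^*$ moves the momentum variable by more than $O(\step^{-1})=O(\lp_n^{\delta})$ worth of scales —  and, crucially, that the symbols $g_j$ are momentum-compact and supported where $|\xi|\in(\tfrac a2, b+1)$, so that conjugation by the $e^{i\Op{\hbar}(g_j)}$ cannot transport microlocal mass from a neighbourhood of $\{|\xi|\le b\}$ out to $\supp e$ (which is separated from $\{|\xi|\in[-b,b]\}$), nor vice versa. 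Concretely, pick $\e>0$ small enough that $d(\supp e,\{|\xi|\le b+2\e\})>2c_0>0$ for some $c_0>0$; then for $\omega\in[-b-\e,b+\e]$ the set $G_\hbar(\omega)=\{|\xi|^2+\hbar\tilde q_2(\xi)\le\omega^2\}$ is contained in $\{|\xi|\le b+2\e\}$ for $\hbar$ small.

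First I would reduce the two statements to one: taking adjoints, and using that $\sem E(\semOp{Q_2})(\omega)$ is self-adjoint, it suffices to prove the first. Next, I would insert frequency cutoffs. Let $\cutFreq\in C_c^\infty(\mathbb{R};[0,1])$ with $\cutFreq\equiv 1$ on $\{|\xi|\le b+2\e\}$ and $\supp\cutFreq\subset\{|\xi|\le b+3\e\}$, chosen so that $d(\supp(1-\cutFreq),G_\hbar(\omega))>c_0$ and $d(\supp\cutFreq,\supp e)>c_0$ for $\hbar$ small. By the second assertion of Lemma~\ref{l:frequencyPreserved} (at scale $r=1$), $\sem E(\semOp{Q_2})(\omega)\Op{\hbar}(1-\cutFreq(\xi))=O(\hbar^\infty)$, so it is enough to estimate $\Op{\hbar}(\cutFreq(\xi))U^*\Op{\hbar}(e)$. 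The heart of the matter is then the claim
$$
\Op{\hbar}(\cutFreq(\xi))U^*\Op{\hbar}(e)=O(\hbar^\infty)_{H_\hbar^{-\ell}\to H_\hbar^\ell},
$$
i.e. that conjugation by $U^*=e^{-i\Op{\hbar}(g_{M_\delta})}\cdots e^{-i\Op{\hbar}(g_{-1})}$ preserves, modulo $O(\hbar^\infty)$, the separation in $\xi$ between $\supp\cutFreq$ and $\supp e$.

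To prove this claim I would argue layer by layer through the factorization~\eqref{subseqform}, using Lemma~\ref{l:waveFront1} and Lemma~\ref{l:conjugate}. For the factor $e^{i\Op{\hbar}(g_{-1})}$ with $g_{-1}\in\Spz^{\fcomp}$: since $g_{-1}$ is momentum-compact (supported in $\{|\xi|\le b+1\}$, say, by construction in Lemma~\ref{l:layer0}), Lemma~\ref{l:conjugate} gives that $e^{-i\Op{\hbar}(g_{-1})}\Op{\hbar}(e)e^{i\Op{\hbar}(g_{-1})}=\Op{\hbar}(e)+O(\hbar^\infty)$ whenever $d(\supp e,\supp g_{-1})>0$ (the commutator $[\Op{\hbar}(g_{-1}),\Op{\hbar}(e)]$ is $O(\hbar^\infty)$ by disjoint supports, hence so are all iterated commutators), so $\Op{\hbar}(e)e^{i\Op{\hbar}(g_{-1})}=e^{i\Op{\hbar}(g_{-1})}\Op{\hbar}(e)+O(\hbar^\infty)$, and $e^{i\Op{\hbar}(g_{-1})}$ commutes with $\Op{\hbar}(\cutFreq(\xi))$ up to $O(\hbar^\infty)$ against $\Op{\hbar}(e)$ by the same reasoning together with the $H_\hbar^\ell\to H_\hbar^\ell$ boundedness of $e^{i\Op{\hbar}(g_{-1})}$ from Lemma~\ref{l:conjugate}. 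For the remaining factors $e^{i\Op{\hbar}(g_j)}$ with $g_j\in\step^{-1}r_j^{-1}S^{\fcomp}_{r_j,\delta}$: these are no longer bounded on $L^2$ in an $\hbar$-uniform way, so I would rescale. Writing $g_j=r_j^{-1}\lp_n^\delta\tilde g_j$ with $\tilde g_j\in r_j^{-1}\lp_n^\delta S^{\fcomp}_{r_j,\delta}$ (the class in which Lemma~\ref{l:conjugate} and Lemma~\ref{l:waveFront1} apply), Lemma~\ref{l:waveFront1} gives that $\Op{\hbar r_j}(b_1)T_{r_j}^* e^{i\Op{\hbar}(g_j)}T_{r_j}\Op{\hbar r_j}(b_0)=O(\hbar^\infty)$ whenever $b_0,b_1\in S^0$ have disjoint supports; conjugating back with $T_{r_j}$ and using that conjugation by the unitary $T_{r_j}$ turns $\Op{\hbar}(a)$ into $\Op{\hbar r_j}(\tilde a_{r_j})$ with $\tilde a_{r_j}(x,\xi)=a(r_j^{-1}x,\xi)$ (Lemma~\ref{l:unitaryEquiv}) — which changes the $x$-support but \emph{not} the $\xi$-support — I get that $e^{i\Op{\hbar}(g_j)}$ does not move $\xi$-microlocal support at all, modulo $O(\hbar^\infty)$. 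Since both $\supp e$ and $\supp\cutFreq(\xi)$ have $\xi$-projections separated by a fixed distance $c_0$, and there are only finitely many factors ($M_\delta+2$ of them), telescoping these estimates — carefully commuting a frequency cutoff past one factor at a time, each time picking a slightly enlarged intermediate cutoff $\cutFreq_j$ so that consecutive cutoffs still have supports separated from $\supp e$ — yields the claim.

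The main obstacle I anticipate is bookkeeping the loss of uniform $L^2$-boundedness of $e^{i\Op{\hbar}(g_j)}$ for $j\ge 0$: one cannot simply slot frequency cutoffs in and out as one would in the $\Psi^0_0$ calculus, and the rescaling argument must be arranged so that the intermediate quantities live in the correct anisotropic classes $S^{\fcomp}_{r_j,\delta}$ at each stage (which is exactly why Lemma~\ref{l:waveFront1} is phrased at scale $\hbar r_j$). A secondary point to be careful about: the factor $g_j$ is $x$-compactly supported only after the periodization in Section~\ref{s:compute}; here $e$ is assumed $x$-compactly supported in the hypothesis, so the disjoint-support hypotheses of Lemmas~\ref{l:conjugate} and~\ref{l:waveFront1} genuinely apply, but one should note that `disjoint support' is in the $\xi$-variable alone for the relevant estimates — the $x$-supports may well overlap and that is harmless because the symbols $g_j$, rescaled, still miss $\supp e$ in $\xi$. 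Once the scale-by-scale argument is set up, the conclusion follows by composing $O(\hbar^\infty)$ errors with the $H_\hbar^\ell\to H_\hbar^\ell$ bounds from Lemma~\ref{l:conjugate} (applied after the appropriate $T_{r_j}$-conjugations).
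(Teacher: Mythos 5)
Your proposal is correct and takes essentially the same route as the paper: it factors $U^*$ into the rescaled pieces $T_{r_j}^*e^{-i\Op{\hbar}(g_j)}T_{r_j}$, uses Lemmas~\ref{l:unitaryEquiv},~\ref{l:conjugate},~\ref{l:waveFront1} and~\ref{l:waveFront2} to see that $U^*\Op{\hbar}(e)$ remains microlocalized with $\xi$-support away from $[-b,b]$, and then kills the composition with $\sem E(\semOp{Q_2})(\omega)$ via the frequency localization of Lemma~\ref{l:frequencyPreserved}. The only differences are cosmetic: the paper carries a single cutoff $\tilde{\cutPhase}$ at the finest scale $\hbar r_{M_\delta}$ rather than telescoping cutoffs at scale one, and your disjointness requirement $d(\supp e,\supp g_{-1})>0$ is neither guaranteed by the construction of $g_{-1}$ nor needed, since wavefront-set preservation under $e^{\pm i\Op{\hbar}(g_{-1})}$ (Lemma~\ref{l:waveFront1} with $r=1$) already suffices.
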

\begin{proof}
First, observe that 
\begin{multline*}U^*=U_{r_{\referee{N}_\delta}} U_{r_{\referee{N}_\delta}}^*e^{-i\Op{\hbar}(g_{\referee{N}_\delta})}U_{r_{\referee{N}_\delta}}U_{r_{\referee{N}_\delta}}^*U_{r_{\referee{N}_\delta-1}} \circ \\
U_{r_{\referee{N}_\delta-1}}^* e^{-i\Op{\hbar}(g_{\referee{N}_\delta-1})}U_{r_{\referee{N}_\delta-1}}\dots U_{r_{1}}^* e^{-i\Op{\hbar}(g_{0})} e^{-i\Op{\hbar}(g_{-1})}
\end{multline*}
and hence, by Lemmas~\ref{l:waveFront1} and~\ref{l:waveFront2}, for any $\tilde{\cutPhase}\in S^0$ with $\tilde{\cutPhase}\equiv 1$ on $\supp e$ we have
$$
U^*\Op{\hbar}(\referee{\gamma})= U_{r_{\referee{N}_\delta}}\Op{\hbar r_{\referee{N}_\delta}}(\tilde{\cutPhase})U_{r_{\referee{N}_\delta}}^*U^*\Op{\hbar}(\referee{\gamma})+O(\hbar^{\infty})_{H_{\hbar}^{-\ell}\to H_{\hbar}^\ell}.
$$
In particular, letting $\tilde{\cutPhase}\in S^0$ with $\supp \tilde{\cutPhase}\cap \{\xi\in [-b,b]\}=\emptyset$, $\tilde{\cutPhase}\equiv 1$ on $\supp \referee{\gamma}$, and $\cutFreq\in C_c^\infty$ such that $\cutFreq\equiv 1$ in a neighbourhood of $[-b,b]$, and $\supp \cutFreq(\xi)\cap \supp \tilde{\cutPhase}=\emptyset$,  we have by Lemma~\ref{l:frequencyPreserved}
\begin{align*}
 &\sem E(\semOp{\mathbf{Q}_1})(\omega)U^*\Op{\hbar}(\referee{\gamma})\\
 &= U_{r_{\referee{N}_\delta}}U^*_{r_{\referee{N}_\delta}} \sem{E}_{\sem{H}_2}(\omega)U_{r_{\referee{N}_\delta}}\Op{\hbar r_{\referee{N}_\delta}}(\tilde{\cutPhase})U_{r_{\referee{N}_\delta}}^*U^*\Op{\hbar}(\referee{\gamma})+O(\hbar^{\infty})_{H_{\hbar}^{-\ell}\to H_{\hbar}^\ell}\\
 &= U_{r_{\referee{N}_\delta}}U^*_{r_{\referee{N}_\delta}} \sem E(\semOp{\mathbf{Q}_1})(\omega)U_{r_{\referee{N}_\delta}}\Op{\hbar r_{\referee{N}_\delta}}(1-\cutFreq(|\xi|))\Op{\hbar r_{\referee{N}_\delta}}(\tilde{\cutPhase})U_{r_{\referee{N}_\delta}}^*U^*\Op{\hbar}(\referee{\gamma})+O(\hbar^{\infty})_{H_{\hbar}^{-\ell}\to H_{\hbar}^\ell}\\
 &=O(\hbar^{\infty})_{H_{\hbar}^{-\ell}\to H_{\hbar}^\ell}.\end{align*}

\end{proof}

The final preparatory lemma before we proceed to the proof of our main theorem gives asymptotics for the spectral function $\sem{E}_{\semOp{\referee{{}^P\!\mathbf{Q}_{0}}}}$ in terms of the discrete parameter $\referee{\lp_n}$. Since the number of unitary operators from which $U$ is built depends on the value of $\referee{N}$ in the error from~\eqref{e:comparison}, the number of oscillatory integrals needed to describe $\sem{E}_{\semOp{\referee{{}^P\!\mathbf{Q}_{0}}}}$ ($N_0$ in the lemma below) will also depend on $\referee{N}$. \referee{In the proof of the next lemma, we will need Lemma~\ref{l:localForm} which gives an oscillatory integral approximation to $e^{-i\Op{\hbar}(g)}$ when $g\in r^{-1}S^{\fcomp}_{r,\delta}.$}
\begin{lemma}
\label{l:asymptotics1}
There is $\referee{\Upsilon}>0$ and $\{\Psi_j\}_{j=1}^{\referee{\Upsilon}}\in \Sp^1$ such that for all $\alpha,
\beta\in \mathbb{N}$, there are $e_{j\alpha\beta}\in \Sp$, $j=1,\dots, \referee{\Upsilon}$, such that for $\omega\in [a,b]$
\begin{multline*}
\langle \sem E(\semOp{\mathbf{Q}_1})(\omega)U^* (-\partial_{\referee{y}})^\alpha\delta_{y_0},U^*(-\partial_{\referee{x}})^\beta\delta_{x_0}\rangle\\=\sum_{j=1}^{\referee{\Upsilon}}\hbar^{-1-\alpha-\beta}\int_{G_\hbar(\omega)} e^{\frac{i}{\hbar}(x_0-y_0)\Psi_j(x_0,y_0,\eta)}e_{j\alpha\beta}(x_0,y_0,\eta) d\eta,
\end{multline*}
and 
\begin{equation}
\label{e:psiForm}
\Psi_j \sim \eta+\sum_{\referee{l\geq1}} \hbar^{\referee{l}}\referee{\lp_n^{l\delta}}\Psi_{j,\referee{l}},\qquad \referee{\Psi_{j,\referee{l}}\in S^1.}
\end{equation}
\referee{Here, $U$ is the unitary operator in~\eqref{subseqform}.}
\end{lemma}
\begin{proof}
By Lemma~\ref{l:onlyLowFreq}, we need only to compute
$$
\langle \sem E(\semOp{\mathbf{Q}_1})(\omega)U^*\Op{\hbar}(\cutFreq(|\xi|))(-\partial_{\referee{y}})^\alpha\delta_{y_0}, U^*\Op{\hbar}(\cutFreq(|\xi|))(-\partial_{\referee{x}})^{\beta}\delta_{x_0}\rangle
$$
for a given $\cutFreq\in C_c^\infty(\mathbb{R})$. 
We use again that
\begin{multline*}U^*=U_{r_{\referee{N}_\delta}} U_{r_{\referee{N}_\delta}}^*e^{-i\Op{\hbar}(g_{\referee{N}_\delta})}U_{r_{\referee{N}_\delta}}U_{r_{\referee{N}_\delta-1}}^*U_{r_{\referee{N}_\delta-1}}\circ \\
U_{r_{\referee{N}_\delta-1}}^* e^{-i\Op{\hbar}(g_{\referee{N}_\delta-1})}U_{r_{\referee{N}_\delta-1}}\dots \referee{U_{r_1}^*}e^{-i\Op{\hbar}(g_{0})} e^{-i\Op{\hbar}(g_{-1})}.
\end{multline*}
\begin{remark}
\label{r:deltaNonzero}
\referee{Recall that the number of products here is large, but independent of $\hbar$. If, in our onion peeling argument, we peeled away layers of the form $2^{-j-1}\leq |\theta|\leq 2^{-j}$ rather than $\lp_n^{-(j+1)\delta}\leq |\theta|\leq \lp_n^{-j\delta}$, then we would require $\sim|\log \hbar|$ steps to obtain a constant coefficient operator. Not only would this require \emph{much} finer control in each step of the gauge transform, but also unpeeling the onion would become substantially more complicated. In particular, this is why we cannot take $\delta=0$.}
\end{remark}

By Lemma~\ref{l:localForm}, for all $k=1,\dots,\referee{N}_\delta$, there are $\{\referee{W}_{j,k}\}_{j=1}^{\referee{\Upsilon}_k}\subset \mathbb{R}_{(x,\xi)}^2$ open such that 
$$
\{(\referee{x_0},\xi)\in \referee{T^*\mathbb{R}}\mid |\xi|\leq b\}\subset \bigcup_{j=1}^{\referee{\Upsilon}_k}\referee{W}_{j,k}
$$ 
and for all $u$ with $\WF_{r_j \hbar}(u)\subset \referee{W}_{j,k}$, $U_{r_j}^*e^{-i\Op{\hbar}(g_j)}U_{r_j}u$ takes the form~\eqref{e:form} with $t=\hbar \referee{\lp_n^{\delta}}$ and $g=r_j\referee{\lp_n^{-\delta}} g_j$. Furthermore, there are $\{\referee{W}_{j,0}\}_{j=1}^{\referee{\Upsilon}}$ such that
$$
\{(x_0,\xi)\mid |\xi|\leq b\}\subset \bigcup_{j=1}^{\referee{\Upsilon}}\referee{W}_{j,0}
$$
and for all $k=1,\dots, \referee{N}_\delta$ and $j=1,\dots , \referee{\Upsilon}$ there is $i_{k,j}$ such that 
\begin{equation}
\label{e:stepDown}
\{(\referee{x_0},\xi)\mid \xi\in \referee{W}_{j,0}\}\subset \referee{W}_{i_{k,j},k}.
\end{equation}
In addition, for all $j=1,\dots, \referee{\Upsilon}$, and $u$ with $\WF_{\hbar}(u)\subset \referee{W}_{j,0}$, $e^{-i\Op{\hbar}(g_0)}u$ takes the form~\eqref{e:form} with $t=\hbar \referee{\lp_n^{\delta}}$ and $g=\referee{\lp_n^{-\delta}} g_0$.

Let $\{\cutPhase_j\}_{j=1}^N$ be a partition of unity near $\{(x_0,\xi)\mid |\xi|\leq b\}$ subordinate to $\{\referee{W}_{j,0}\}_{j=1}^{\referee{\Upsilon_0}}$. Then,
$$
\Op{\hbar}(\cutFreq(|\xi|))(-\partial_{\referee{x}})^\beta\delta_{x_0}=\sum_{j=1}^N \Op{\hbar}(\cutFreq(|\xi|)\cutPhase_j)(-\partial_{\referee{x}})^\beta\delta_{x_0}.
$$
Let $\tilde{\cutPhase}_j\in C_c^\infty(U_{j,0})$ with $\tilde{\cutPhase}_j\equiv 1$ on $\supp \cutPhase_j$. Then, Lemmas~\ref{l:waveFront1} and~\ref{l:waveFront2} imply

$$
\WF_{\hbar r_{\referee{N}_\delta}}(U_{r_{\referee{N}_\delta}}^*U^*\Op{\hbar}(\cutFreq(|\xi|)\cutPhase_j)(-\partial_{\referee{x}})^\beta\delta_{x_0})\subset \{(x,\xi)\,|\, x=0,\xi\in \pi_{\xi}(\supp \cutFreq(|\xi|)\cutPhase_j)\}
$$
and 
$$
\WF_{\hbar r_{\referee{N}_\delta}}(U_{r_{\referee{N}_\delta}}^*U^*\Op{\hbar}(\cutFreq(|\xi|)(1-\tilde{\cutPhase}_j)(-\partial_{\referee{x}})^\beta\delta_{x_0})\subset \{(x,\xi)\,|\, x=0,\,\xi\in \pi_{\xi}(\supp \cutFreq(|\xi|)(1-\tilde{\cutPhase}_j))\}.
$$

In particular, Lemma~\ref{l:onlyLowFreq} implies 
$$
\langle \sem E(\semOp{\mathbf{Q}_1})(\omega)U^*\Op{\hbar}(\cutFreq(|\xi|)\cutPhase_j)\delta_{x_0}, U^*\Op{\hbar}(\cutFreq(|\xi|)(1-\tilde{\cutPhase}_j)(-\partial_{\referee{x}})^\beta\delta_{x_0}\rangle =O(\hbar^{\infty}).
$$

We now analyze
$$
U^*\Op{\hbar}(\cutFreq(|\xi|)\cutPhase_j)(-\partial_{\referee{x}})^\beta\delta_{x_0}.
$$
To ease notation, we put
$$
\begin{gathered}
v_{k,j}:=U_{r_{k}}^* e^{-i\Op{\hbar}(g_{k})}U_{r_{k}}\dots \referee{U_{r_1}^*}e^{-i\Op{\hbar}(g_{0})} e^{-i\Op{\hbar}(g_{-1})}\Op{\hbar}(\cutFreq(|\xi|)\cutPhase_j)(-\partial_{\referee{x}})^\beta\delta_{x_0},\\
\tilde{v}_{k,j}:=U_{r_{k}}^* e^{-i\Op{\hbar}(g_{k})}U_{r_{k}}\dots \referee{U_{r_1}^*}e^{-i\Op{\hbar}(g_{0})} e^{-i\Op{\hbar}(g_{-1})}\Op{\hbar}(\cutFreq(|\xi|)\tilde{\cutPhase}_j)(-\partial_{\referee{x}})^\beta\delta_{x_0}.
\end{gathered}
$$
Since $g_{-1}\in S^{\fcomp}$, we have $e^{-i\Op{\hbar}(g_{-1})}\in \Psi^{0}$ and 
$$\WF_{\hbar}(e^{-i\Op{\hbar}(g_{-1})}\Op{\hbar}(\cutFreq(|\xi|)\cutPhase_j)(-\partial_{\referee{x}})^\beta\delta_{x_0})\subset U_{j,0}.$$
 Moreover, by Lemmas~\ref{l:waveFront1} and~\ref{l:waveFront2} together with~\eqref{e:stepDown}, we may assume that $U_{r_j}^*e^{-i\Op{\hbar}(g_j)}U_{r_j}$ takes the form~\eqref{e:form} as described above.

By Lemma~\ref{l:localForm}, since $g_{-1}\in \Spz^{\fcomp}$ and $g_0\in \referee{\lp_n^{\delta}}\Spz^{\fcomp}$, we have
$$
v_{0,j}(x)=\frac{\hbar^{-\beta}}{(2\pi \hbar)^2}\int e^{\frac{i}{\hbar} (\varphi_{0,\referee{j}}(\hbar\referee{\lp_n^{\delta}},x,\eta)-y\eta+(y-x_0)\xi)}a_j(x,y,x_0,\eta,\xi)dyd\xi d\eta
$$
with $a_j\in \Sp^{\fcomp}$. Now, observe that
$$
\varphi_{0,\referee{j}}(\hbar \referee{\lp_n^{\delta}},x,\eta)\sim \langle x,\eta\rangle+ \sum_{\referee{l}=1}^\infty \frac{\hbar^{\referee{l}}\referee{\lp_n^{l\delta}}}{\referee{l}!}\partial_t^{\referee{l}}\varphi_0(0,x,\eta),\qquad
\partial_t\varphi_0=g_0(x,\partial_x\varphi_0).
$$
In particular, $\varphi_{0,\referee{j}}\in \Sp^1$.

Applying stationary phase in $(y,\eta)$, we obtain
$$
v_{0,j}(x)=\frac{\hbar^{-\beta}}{2\pi \hbar}\int e^{\frac{i}{\hbar} (\varphi_{0,\referee{j}}(\hbar \referee{\lp_n^{\delta}},x,\eta)-x_0\eta)}\tilde{a}_{0,j}(x,\eta,x_0)d\eta 
$$
for some $\tilde{a}_{0,j}\in \Sp^{\fcomp}$.
We claim that
\begin{equation}
\label{e:claimThis}
v_{k,j}(x)=\frac{\hbar^{-\beta} }{(2\pi \hbar )r_k^{1/2}}\int e^{\frac{i}{\hbar r_k}(\tilde{\varphi}_{k,\referee{j}}(x,\eta,x_0)-r_kx_0\eta)}\tilde{a}_{k,j}(x,\eta,x_0)d\eta
\end{equation}
with $\tilde{\varphi}_{k,\referee{j}}\in \Sp^1$, $\tilde{a}_{k,j}\in \Sp^{\fcomp}$, 
\begin{equation}
\label{e:varphiForm}
\tilde{\varphi}_{k,\referee{j}}(x,\eta,x_0)\sim x\eta +\sum_{l=1}^\infty \hbar^{l}\referee{\lp_n^{l\delta}}\tilde{\varphi}_{k,\referee{j},l}(x,\eta,x_0),\qquad |\partial^\alpha_{x_0}\partial_x^{\beta_1}\partial_\eta^{\beta_2}\tilde{\varphi}_{k,\referee{j},l}|\leq C_{k\alpha\beta_1\beta_2}r_k,\quad \alpha\geq 1
\end{equation}
and $\tilde{\varphi}_{k,\referee{j},l}\in C^\infty$ having bounded derivatives. Indeed, we have checked this for $k=0$.

\begin{remark}
\referee{Observe that we claim in~\eqref{e:claimThis} that the integral kernel of $U^*_{r_{N_\delta}}U$ takes the form given by~\eqref{e:claimThis} with $k=N_\delta$. Indeed, $U^*_{r_{N_\delta}}U$ is `nearly' a semiclassical Fourier integral operator with small parameter $\hbar r_{N_\delta}$. The formal issue with this statement is that the phase function is not independent of $\hbar$. 
}
\end{remark}

Suppose~\eqref{e:claimThis} holds for some $k=1,\dots, N-1$. Then we compute 
$$v_{N,j}=U_{r_N}^*e^{-i\Op{\hbar}(g_N)}U_{r_N} U_{r_N}^*U_{r_{N-1}}v_{N-1,j}.$$
Observe that there is $\varphi_{N,\referee{j}}\in \Sp^1$ with $\varphi_{N,\referee{j}}(\hbar s^{-1},x,\eta)=\langle x,\eta\rangle +O(\hbar s^{-1})_{C^\infty}$ and
\begin{align*}
v_{N,j}(x)&=\frac{\hbar^{-\beta}}{(2\pi\hbar)^2r_{N-1}}r_N^{-\frac{1}{2}}\int e^{\frac{i}{\hbar r_{N}} (\varphi_{N,\referee{j}}(\hbar \referee{\lp_n^{\delta}},x,\eta)+\frac{r_{N}}{r_{N-1}}(-y\eta+ \tilde{\varphi}_{N-1,\referee{j}}(y,\xi,r_{N-1}x_0)-r_{N-1}x_0\xi))}\\
&\qquad\qquad\qquad\qquad\qquad\qquad\qquad\qquad \qquad\qquad\qquad\qquad \times a_{N,\referee{j}}(x,\eta)\tilde{a}_{N-1,j}(y,x_0,\xi)dyd\eta d\xi \\
&=\frac{\hbar^{-\beta}}{(2\pi\hbar)r_{N}^{1/2}}\int e^{\frac{i}{\hbar r_{N}} (\varphi_{N,\referee{j}}(\hbar \referee{\lp_n^{\delta}},x,\eta)+\frac{r_{N}}{r_{N-1}}(-y_c\eta+ \tilde{\varphi}_{N-1,\referee{j}}(y_c,\xi_c,r_{N-1}x_0)-r_{N-1}x_0\xi_c))}\tilde{a}_{N,j}(\eta,x_0)d\eta .
\end{align*}
In the last line we apply stationary phase in the $(y,\xi)$ variables \referee{to obtain $\tilde{a}_{N,j}$.} We then find, using the asymptotics~\eqref{e:varphiForm}, that the critical point $(y_c,\xi_c)$ solving
$$
\partial_\xi(-y\eta+ \tilde{\varphi}_{N-1,\referee{j}}(y,\xi,r_{N-1}x_0)-r_{N-1}x_0\xi)|_{\substack{y=y_c\\\xi=\xi_c}}=\partial_y(-y\eta+ \tilde{\varphi}_{N-1,\referee{j}}(y,\xi,r_{N-1}x_0)-r_{N-1}x_0\xi)|_{\substack{y=y_c\\\xi=\xi_c}}=0
$$
satisfies
\begin{equation*}
y_c(x_0,\eta)\sim r_{N-1} x_0+\sum_{l=1}^\infty \hbar^{l}\referee{\lp_n^{l\delta}} y_{c,l}(x_0,\eta),\qquad \xi_c(x_0,\eta)=\eta+\sum_{l=1}^\infty \hbar^{l}\referee{\lp_n^{l\delta}}\eta_{c,l}(x_0,\eta)
\end{equation*}
with $y_{c,l}, \eta_{c,l}\in C^\infty$ and
\begin{equation}\label{e:critical}
|\partial_{x_0}^\alpha \partial_\eta^\beta y_{c,l}|+|\partial_{x_0}^\alpha\partial_\eta^\beta\xi_{c,l}|\leq C_{\alpha \beta l}r_{N-1},\qquad \alpha\geq 1.
\end{equation}
To see~\eqref{e:critical}, observe that 
$$
\begin{pmatrix}\partial_{x_0}y_c\\\partial _{x_0}\eta_c\end{pmatrix}=\begin{pmatrix}\partial^2_y\tilde{\varphi}_{N-1,\referee{j}} (y_c,\xi_c,x_0)&\partial^2_{y\xi}\tilde{\varphi}_{N-1,\referee{j}} (y_c,\xi_c,x_0)\\
\partial^2_{\xi y}\tilde{\varphi}_{N-1,\referee{j}} (y_c,\xi_c,x_0)&\partial^2_{\xi\xi}\tilde{\varphi}_{N-1,\referee{j}} (y_c,\xi_c,x_0)\end{pmatrix}^{-1}\begin{pmatrix} -\partial_{x_0 y}^2\tilde{\varphi}_{N-1,\referee{j}}(y_c,\xi_c,x_0)\\r_{N-1}-\partial_{x_0\xi}^2\tilde{\varphi}_{N-1,\referee{j}}(y_c,\xi_c,x_0)\end{pmatrix},
$$
and hence~\eqref{e:varphiForm} implies~\eqref{e:critical}. Plugging the expression into the formula for $v_{N,j}$ then completes the proof of the inductive step.

An identical analysis shows that 
$$
\tilde{v}_{k,j}(x)=\frac{\hbar^{-\beta} }{(2\pi \hbar)r_k^{1/2}}\int e^{\frac{i}{\hbar r_k}(\tilde{\varphi}_{k,\referee{j}}(x,\eta,x_0)-r_kx_0\eta)}\tilde{b}_{k,j}(x,\eta,x_0)d\eta,
$$
\referee{for some $\tilde{b}_{k,j}\in \Sp^{\fcomp}$.} Here, crucially, the same phase function $\tilde{\varphi}_{k,\referee{j}}$ appears as in $v_{k,j}$.

Now, putting $k_\referee{N}=\referee{N}_\delta$, we obtain
\begin{align*}
&\langle \sem E(\semOp{\mathbf{Q}_1})(\omega)U^*(-\partial_{\referee{y}})^\alpha\delta_{y_0},U^*(-\partial_{\referee{x}})^\beta\delta_{x_0}\rangle\\
&=\sum_{j=1}^{\referee{\Upsilon}}\langle  U_{r_{k_\referee{N}}}\sem E(\semOp{\mathbf{Q}_1})(\omega)U^*_{r_{k_\referee{N}}}v_{k_\referee{N},j},\tilde{v}_{k_\referee{N},j}\rangle +O(\hbar^{\infty})\\
&=\sum_{j=1}^{\referee{\Upsilon}}\frac{\hbar^{-\alpha-\beta}}{(2\pi\hbar)^3 r_{k_\referee{N}}^{2}}\int_{G_\hbar(\omega)}\int e^{\frac{i}{\hbar r_{k_\referee{N}}}( (x-y)\eta+\tilde{\varphi}_{k_\referee{N},\referee{j}}(y,\xi,y_0)-r_{k_\referee{N}}y_0\xi-\tilde{\varphi}_{k_\referee{N},\referee{j}}(x,\zeta,x_0)+r_{k_\referee{N}}x_0\zeta)}\\
&\qquad\qquad\qquad\qquad\qquad\tilde{a}_{k_\referee{N},j} \overline{\tilde{b}_{k_\referee{N},j}}dy d\xi dx d\zeta d\eta +O(\hbar^{\infty}).
\end{align*}
Finally, performing stationary phase in $(y,\xi)$, $(x,\zeta)$, the critical points are given by $y=y_c(\eta,y_0)$, $\xi=\xi_c(\eta,y_0)$, $x=y_c(\eta,x_0)$, and $\zeta=\xi_c(\eta,x_0)$. In particular, when $x_0=y_0$, the phase vanishes. Moreover, we have $y_c\in\Sp^0,\xi_c\in \Sp^1$ and
$$ y_c\sim r_{k_\referee{N}}x_0+\sum_{l\geq 1}\hbar^{l}\referee{\lp_n^{l\delta}} y_{c,l},\quad \xi_c=\eta+\sum_{l\geq 1}\hbar^{l}\referee{\lp_n^{l\delta}} \xi_{c,l},\quad
|\partial^\alpha_{x_0}\partial_\eta^\beta y_{c,l}|+|\partial^\alpha_{x_0}\partial_\eta^\beta\xi_{c,l}|\leq C_{l\alpha\beta} r_{k_\referee{N}},\quad \alpha\geq 1.$$

Therefore, writing $\Phi_{\referee{j}}(x_0,y_0,\eta)$ for the phase at the critical point, we have
$$
\Phi_{\referee{j}}(x_0,y_0,\eta)= r_{k_\referee{N}}(x_0-y_0)\Psi_{\referee{j}}(x_0,y_0,\eta)
$$
for some $\Psi_{\referee{j}} \in \Sp^1$ with $\Psi_{\referee{j}}=\eta+O(\hbar\referee{\lp_n^{\delta}})$.

In particular, this implies
$$
\langle \sem E(\semOp{\mathbf{Q}_1})(\omega)U^*(-\partial_{\referee{y}})^\alpha\delta_{y_0},U^*(-\partial_{\referee{x}})^{\beta}\delta_{x_0}\rangle=\sum_{j=1}^{\referee{\Upsilon}}\hbar^{-1-\alpha-\beta}\int_{G_\hbar(\omega)} e^{\frac{i}{\hbar}(x_0-y_0)\Psi_{\referee{j}}(x_0,y_0,\eta)}e_j(x_0,y_0,\eta) d\eta,
$$
with $\Psi_{\referee{j}}$ and $e_j$ as claimed.
\end{proof}

\referee{Now that we have obtained an asymptotic expansion for the spectral projector of $\semOp{\mathbf{Q}_1}$, we pass to $\semOp{\mathbf{Q}}$}. First, observe that Lemma~\ref{l:asymptotics1} implies that the assumptions~\eqref{e:lip2} hold for all $\alpha,\beta, N$ and $\omega \in [a+\e,b-\e]$, with $T(\hbar)=O(\hbar^{-N})$. Therefore, by Proposition~\ref{p:distantPerturb},  
$$
\partial_x^\alpha\partial_y^\beta \sem E(\semOp{\mathbf{Q}_0})(x,y,\omega)-\partial_x^\alpha\partial_y^\beta \sem E(\semOp{\mathbf{Q}_1})(x,y,\omega)=O(\lp_n^{-\referee{N}+|\alpha|+|\beta|}).
$$
\referee{Here, the implicit constant depends on $\alpha,\beta,\referee{N},a,$ and $b$ but not on $\hbar,n,x,y,$ and $\omega$.} 
Now, using Lemma~\ref{l:asymptotics1}, we have for $\hbar\in [\lp^{-1}_{n+1},\lp^{-1}_{n-1}]$,
\begin{equation}
\label{e:asymptotic1}
\begin{gathered}
\partial_x^\alpha\partial_y^\beta \sem E(\semOp{\mathbf{Q}_0})(x_0,x_0,1)= C_{0,\alpha,\beta,n}(x_0)\hbar^{-1-\alpha-\beta} \sum_{k=1}^K c_{k,n,\alpha,\beta}(x_0)\hbar^{k}+O(\lp_n^{-\referee{N}}),\\| c_{k,n,\alpha,\beta}|\leq C_{k\alpha\beta} \referee{\lp_n^{k\delta}}.\end{gathered}
\end{equation}
By~\cite[Lemma 3.6]{PaSh:16} Theorem~\ref{t:USBAsymptotics} holds for $x$ in any bounded set.

Next, we prove Theorem~\ref{t:USBAsymptoticsOff}. When $|x_0-y_0|>0$, \referee{using~\eqref{e:psiForm}}, we have $|\partial_\eta \Psi (x_0-y_0)|>c|x_0-y_0|>0$. Therefore, we can integrate by parts using $L=\hbar \frac{  D_\eta}{(x_0-y_0)\partial_\eta \Psi}$ and setting
$$
G_{\hbar,\pm}(\omega):=\pm \sup\{  \eta\in \mathbb{R}\,:\, \pm\eta \in G_\hbar(\omega)\}\sim \pm\omega+\sum_{j\geq 1} g_{\pm,j}(\omega)\hbar^{j}\referee{\lp_n^{j\delta}},
$$
 to obtain
\begin{multline*}
\hbar^{-1} \int_{G_\hbar(\omega)} e^{\frac{i}{\hbar}(x_0-y_0)\Psi(x_0,y_0,\eta_0)}e_j(x_0,y_0,\eta)\\=\frac{1}{x_0-y_0}\Big(\Big[e^{\frac{i}{\hbar} (x_0-y_0)\Psi(x_0,y_0,\eta)}\frac{e_j(x_0,y_0,\eta)}{\partial_\eta \Psi}\Big]^{\eta=G_{\hbar,+}(\omega)}_{\eta=G_{\hbar,-}(\omega)}\\
-\int_{ G_{\hbar}(\omega)}e^{\frac{i}{\hbar}(x_0-y_0)\Psi(x_0,y_0,\eta)}D_\eta\frac{e_j(x_0,y_0,\eta)}{\partial_\eta \Psi(x_0,y_0,\eta)}d\eta\Big).
\end{multline*}
Repeating this process and using that $|\partial_\eta \Psi|>c>0$, we obtain for $|x_0-y_0|\gg \hbar$, that there are $c_{k, n,\alpha,\beta\pm}(x_0,y_0)$ satisfying
$$
| c_{k,n,\alpha,\beta, \pm}|\leq C_{k\alpha \beta}|x_0-y_0|^{-\alpha-\beta} \referee{\lp_n^{k\delta}}
$$
such that
 \begin{equation}
\label{e:asymptotic1B}
\begin{aligned}
\partial_x^\alpha\partial_y^\beta& \sem E(\semOp{\mathbf{Q}_0})(x_0,y_0,\omega)= e^{\frac{i}{\hbar}\tilde{\Psi}_{+,n}(x_0,y_0)}\hbar^{-\alpha-\beta}(\sum_{k=0}^K c_{k,\alpha,\beta,n,+}(x_0,y_0)(x_0-y_0)^{-k-1}\hbar^{k})\\
&+e^{\frac{i}{\hbar}\tilde{\Psi}_{-,n}(x_0,y_0)}\hbar^{-\alpha-\beta}(\sum_{k= 0}^K c_{k,\alpha,\beta ,n,-}(x_0,y_0)(x_0-y_0)^{-k-1}\hbar^{k})+O(\lp_n^{-\referee{N}+\alpha+\beta}),
\end{aligned}
\end{equation}
where $\tilde{\Psi}_{\pm}=(x_0-y_0)\Psi(x_0,y_0,G_{\pm,\hbar}(\omega))\in \Sp^0$ satisfies
$$
\tilde{\Psi}_{\pm,n}\sim \pm(x_0-y_0)\omega+ (x_0-y_0)\hbar \referee{\lp_n^{\delta}}\tilde{\Psi}_{1,\pm,n}+ (x_0-y_0)\hbar^{2}\referee{\lp_n^{2\delta}}\sum_{j=0}^\infty \hbar^j\referee{\lp_n^{j\delta}} \tilde{\Psi}_{j+2,\pm,n}.
$$

Now, 
$$
e^{\frac{i}{\hbar} \tilde{\Psi}_{\pm,n}} =e^{\pm \frac{i}{\hbar} (x_0-y_0)\omega}e^{i (x_0-y_0)\referee{\lp_n^{\delta}}\tilde{\Psi}_{1,\pm,n}}\sum_{j=0}^{J-1} (\hbar (x_0-y_0)\referee{\lp_n^{2\delta}})^j\tilde{\Psi}_{j,\pm,n}'+O( (x_0-y_0)^J\hbar^{J}\referee{\lp_n^{2J\delta}}),
$$
where $\Psi_{j,\pm,n}'$ can be calculated from $\Psi_{j,\pm,n}$.

Therefore, since $\delta<\frac{1}{2}$, we may take $J$ large enough so that $\hbar^{J}\referee{\lp_n^{2J\delta}}=O(\lp_n^{-\referee{N}})$, and hence we have 
\begin{equation}
\label{e:asymptotic1C}
\begin{aligned}
\partial_x^\alpha\partial_y^\beta& \sem E(\semOp{\mathbf{Q}_0})(x_0,y_0,\omega)= e^{\frac{i}{\hbar}(x_0-y_0)\omega}\hbar^{-\alpha-\beta}\sum_{k= 0}^K \tilde{c}_{k,\alpha,\beta,n,+}(x_0,y_0)(x_0-y_0)^{-k-1}\hbar^{k}\\
&+e^{-\frac{i}{\hbar}(x_0-y_0)\omega}\hbar^{-\alpha-\beta}\sum^K_{\ell= 0} \tilde{c}_{\ell,\alpha,\beta,n,-}(x_0,y_0)(x_0-y_0)^{-\ell-1}\hbar^{\ell}+O(\lp_n^{-\referee{N}+\alpha+\beta})
\end{aligned}
\end{equation}
with 
$$
| \tilde{c}_{k,n,\alpha,\beta,\pm}|\leq C_{k\alpha\beta}|x_0-y_0|^{-|\alpha|-|\beta|} \referee{\lp_n^{2k\delta}}.
$$
Now, since $2\delta<\delta'$ and $|x_0-y_0|\geq\hbar^{1-\delta'}$, we may apply \referee{Lemma~\ref{l:glue}} together with Lemma~\ref{l:firstTerm} to complete the proof of Theorem~\ref{t:USBAsymptoticsOff} for $x$ and $y$ in a bounded set with $|x-y|\geq \hbar^{1-\delta'}$.


\referee{
\begin{remark}
\label{r:noGlue}
Although we have Lemma~\ref{l:asymptotics1} uniformly for $(x,y)$ in any compact subset of $\mathbb{R}^2$, since we do not know that the integrand there is close for $n$ and $n+1$, we are not able to glue our asymptotics using an analogue of Lemma~\ref{l:glue} to obtain a single integral formula for all $(x,y,\hbar)$. 
\end{remark}
}

\subsection{Uniformity in $x$}

It is easy to check that for any $N>0$  there is $K>0$ such that all the constants in the $O(\hbar^N)$ remainders above depend only on $\|\mathbf{Q}_0\|_{\sem{Diff}^1_{K}}$.

Now, let $T_su(x)=u(x+s)$ so that $T^*_su(x)=u(x-s)$. Then, $T_s$ is unitary and, with $\mathbf{Q}_s:=T_s \mathbf{Q}_0 T_s^*$,
$$
\|\mathbf{Q}_s\|_{\sem{Diff}^1_{K}}=\|\mathbf{Q}_0\|_{\sem{Diff}^1_{K}}.
$$
Note that
\begin{align*}
\sem{E}(\semOp{\mathbf{Q}_0})(s,y+s,\omega)&= \langle 1_{(-\infty,\omega^2]}(\semOp{\mathbf{Q}_0})\delta_{s},\delta_{y+s}\rangle \\
&= \langle 1_{(-\infty,\omega^2]}(\semOp{\mathbf{Q}_s})\delta_{0},\delta_{y}\rangle.
\end{align*}
Thus, since $\mathbf{Q}_s$ is bounded in $\sem{Diff}^1_{K}$, Theorems~\ref{t:USBAsymptotics} and~\ref{t:USBAsymptoticsOff} hold uniformly for all $x\in \mathbb{R}$ and $y\in B(x,R)$. 

\subsection{Derivatives in $\omega$}

\begin{lemma}
\label{l:derivativesOmega}
For all $\alpha,\beta\in \mathbb{N}$, there is $f_{\alpha,\beta}$ such that 
$$
\partial_x^\alpha\partial_y^\beta \sem{E}(\semOp{\mathbf{Q}_0})(x,y,\omega) = f_{\alpha,\beta}(x,y,\omega)+O(\hbar^\infty)
$$
and 
$$
|\partial_\omega^\ell  f_{\alpha,\beta}(x,y,\omega)|\leq C_{\alpha\beta \ell}\hbar^{-\alpha-\beta-\ell}\referee{|x-y|^\ell},\qquad \ell\geq 1.
$$
\end{lemma}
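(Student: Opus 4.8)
The plan is to isolate the $\omega$-dependence of $\partial_x^\alpha\partial_y^\beta\sem{E}(\semOp{Q_0})$ in the domain of integration produced by Lemma~\ref{l:asymptotics1}, and then to differentiate the resulting boundary terms. First I would fix the candidate $f_{\alpha,\beta}$. By Lemma~\ref{l:asymptotics1} together with the comparison chain $\sem{E}(\semOp{Q_0})\to\sem{E}(\semOp{Q_{0,n}})\to\sem{E}(\tilde{\sem{H}}_2)$ used in Section~\ref{s:compute} (Proposition~\ref{p:distantPerturb} for the periodization, \cite[Lemma 4.2]{PaSh:16} for the conjugation by $U$, and the remark following Theorem~\ref{t:USBAsymptoticsNoh}, which lets us take $M$ arbitrarily large), one obtains, uniformly for $x$ in a fixed bounded set and $y\in B(x,R)$,
$$
\partial_x^\alpha\partial_y^\beta\sem{E}(\semOp{Q_0})(x,y,\omega)=f_{\alpha,\beta}(x,y,\omega)+O(\hbar^\infty),\qquad f_{\alpha,\beta}(x,y,\omega):=\sum_{j=1}^{N_0}\hbar^{-1-\alpha-\beta}\int_{G_\hbar(\omega)}e^{\frac{i}{\hbar}(x-y)\Psi_j(x,y,\eta)}e_{j\alpha\beta}(x,y,\eta)\,d\eta,
$$
where the amplitudes $\Psi_j\in\Sp^1$ and $e_{j\alpha\beta}\in\Sp^{\fcomp}$ are \emph{independent of $\omega$}. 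By the translation argument of the ``Uniformity in $x$'' subsection the bounded-set statement is enough, and, as in the proof of Lemma~\ref{l:asymptotics1}, the $e_{j\alpha\beta}$ are uniformly bounded symbols all the way down to the diagonal, so this single formula covers all admissible $(x,y)$.

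Next I would record the shape of $G_\hbar(\omega)=\{\xi:|\xi|^2+\hbar\tilde q(\xi)\le\omega^2\}$. Since $\hbar\tilde q$ is a polyhomogeneous symbol of nonpositive order which is $O(\hbar)$, for $\hbar$ small and $\omega$ in the relevant range the map $|\xi|\mapsto|\xi|^2+\hbar\tilde q$ is strictly increasing near $|\xi|=\omega$; hence $G_\hbar(\omega)=[G_{\hbar,-}(\omega),G_{\hbar,+}(\omega)]$ is an interval whose endpoints depend smoothly on $\omega$ by the implicit function theorem and satisfy $G_{\hbar,\pm}(\omega)\sim\pm\omega+\sum_{j\ge1}g_{\pm,j}(\omega)\hbar^{j}\step^{-j}$ with $g_{\pm,j}$ smooth and with bounded derivatives, exactly as in the derivation of Theorem~\ref{t:USBAsymptoticsOff}. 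In particular $\partial_\omega^\ell G_{\hbar,\pm}(\omega)=O(1)$ uniformly in $\hbar$ for every $\ell\ge1$.

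The last step is to differentiate $f_{\alpha,\beta}$ in $\omega$. Writing $F_j(\eta):=\hbar^{-1-\alpha-\beta}e^{\frac{i}{\hbar}(x-y)\Psi_j(x,y,\eta)}e_{j\alpha\beta}(x,y,\eta)$, the $\omega$-dependence enters only through the limits of integration, so Leibniz's rule gives $\partial_\omega f_{\alpha,\beta}=\sum_j\big(F_j(G_{\hbar,+}(\omega))\partial_\omega G_{\hbar,+}(\omega)-F_j(G_{\hbar,-}(\omega))\partial_\omega G_{\hbar,-}(\omega)\big)$, and iterating $\partial_\omega$ a total of $\ell$ times and applying the Fa\`a di Bruno formula expresses $\partial_\omega^\ell f_{\alpha,\beta}$ as a finite sum of terms $F_j^{(m)}(G_{\hbar,\pm}(\omega))\,P_\pm$ with $0\le m\le\ell-1$, each $P_\pm$ a polynomial in the $\partial_\omega^k G_{\hbar,\pm}$ ($k\ge1$) and hence $O(1)$. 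Because $|x-y|\le R$ and $|\partial_\eta^m\Psi_j|\le C_m$, every $\eta$-derivative landing on the exponential $e^{\frac{i}{\hbar}(x-y)\Psi_j}$ costs at most a factor $C\hbar^{-1}$, so $|F_j^{(m)}(\eta)|\le C_m\hbar^{-1-\alpha-\beta-m}$. Summing over $0\le m\le\ell-1$ and $1\le j\le N_0$ yields $|\partial_\omega^\ell f_{\alpha,\beta}(x,y,\omega)|\le C_{\alpha\beta\ell}\hbar^{-1-\alpha-\beta-(\ell-1)}=C_{\alpha\beta\ell}\hbar^{-\alpha-\beta-\ell}$ for $\ell\ge1$, which is the assertion (taking $y=x$ only makes the phase vanish and simplifies matters).

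The one point requiring care — and as close to an obstacle as this lemma has — is twofold: (i) confirming that the oscillatory-integral representation of Lemma~\ref{l:asymptotics1} holds with $\omega$-independent amplitudes uniformly, including in the near-diagonal regime, so that a single $f_{\alpha,\beta}$ serves for every $(x,y)$; and (ii) the elementary but bookkeeping-heavy accounting of how many powers of $\hbar^{-1}$ a single $\omega$-derivative can generate, which is precisely what produces the shift from the leading order $\hbar^{-1-\alpha-\beta}$ to $\hbar^{-\alpha-\beta-\ell}$ and explains why $\ell\ge1$ is needed. Both are settled by the constructions of Sections~\ref{s:gauge} and~\ref{s:compute}.
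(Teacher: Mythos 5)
Your proposal is correct and follows essentially the same route as the paper: both define $f_{\alpha,\beta}$ via the oscillatory-integral representation of Lemma~\ref{l:asymptotics1} (transferred to $\semOp{Q_0}$ by the $O(\hbar^\infty)$ comparison), observe that the $\omega$-dependence enters only through the smooth endpoints $\eta_\pm(\omega)$ of $G_\hbar(\omega)$, and then bound $\partial_\omega^\ell f_{\alpha,\beta}$ by noting that each derivative hitting the phase $e^{\frac{i}{\hbar}(x-y)\Psi_j}$ costs at most $\hbar^{-1}$. Your bookkeeping with the boundary terms and Fa\`a di Bruno is just a more explicit version of the paper's argument, so no gap to report.
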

\begin{remark}
A more careful analysis of the gluing argument used to obtain our main theorems~\cite[Lemma 3.6]{PaSh:16} shows that in fact $f_{\alpha\beta}$ has a full asymptotic expansion in powers of $\hbar$ and this expansion can be differentiated in $\omega$.
\end{remark}
\begin{proof}
It is easy to see from Lemma~\ref{l:asymptotics1} that 
$$
\partial_x^\alpha\partial_y^\beta\partial^\ell_\omega \sem{E}(\semOp{\tilde{\mathbf{Q}}_2})(x,y,\omega)=\sum_{\pm}\sum_{j=1}^{\referee{\Upsilon}}\hbar^{-1-\alpha-\beta}\partial_{\omega}^{\ell}\Big(e^{\frac{i}{h}(x_0-y_0)\Psi_j(x_0,y_0,\eta_{\pm}(\omega))}e_{j\alpha\beta}(x_0,y_0,\eta_{\pm}(\omega))\Big),
$$
where $\eta_{\pm}(\omega)$ are the two smooth solutions of $|\eta_{\pm}(\omega)|^2+\hbar \tilde{\mathbf{q}}_2(\eta_{\pm}(\omega))=\omega^2$, with $\tilde{\mathbf{q}}_2$ as in~\eqref{e:Q2}. In particular, 
$$
\eta_{\pm}\sim \pm\omega +\sum_j \hbar^j\referee{\lp_n^{-j\delta}}\eta_{\pm,j}(\omega).
$$

Since 
$$
\partial_x^\alpha\partial_y^\beta \sem{E}(\semOp{\tilde{\mathbf{Q}}_2})(x,y,\omega)=\partial_x^\alpha\partial_y^\beta \sem{E}(\semOp{\mathbf{Q}_0})(x,y,\omega) +O(\hbar^\infty),
$$
this implies that 
$$
\partial_x^\alpha\partial_y^\beta \sem{E}(\semOp{\mathbf{Q}_0})(x,y,\omega)= f_{\alpha,\beta}(x,y,\omega)+O(\hbar^\infty),
$$
where 
$$
|\partial_\omega^\ell  f_{\alpha,\beta}(x,y,\omega)|\leq C_{\alpha\beta \ell}\hbar^{-\alpha-\beta-\ell}\referee{|x-y|^\ell},\qquad \ell\geq 1.
$$
\end{proof}

\section{Consequences of the main theorem}
\label{s:consequences}

In this section, we discuss a few consequences of our main theorem. Our first corollary is a direct consequence of Theorem~\ref{t:USBAsymptotics}. 
\begin{corollary}
Let $\mathbf{Q}_0\in\sem{Diff}^1$ and let $\{u_{\lambda_\alpha,\hbar }\}_{\alpha\in \mc{A}(\hbar)}$ be \referee{an orthonormal system of} $L^2(\mathbb{R})$-normalized eigenfunctions of $\semOp{\mathbf{Q}_0}$ with eigenvalues $\lambda^2_\alpha=\lambda^2_\alpha(\hbar)$; i.e. 
$$
(\semOp{\mathbf{Q}_0}-\lambda^2_\alpha)u_{\lambda_\alpha,\hbar}=0,\qquad \referee{\langle u_{\lambda_{\alpha,\hbar}},u_{\lambda_{\beta,\hbar}}\rangle= \delta_{\alpha,\beta}}.
$$ 
Then, for any $a\in (0,\infty)$ and $N>0$, there is $C_N$ such that 
$$
\sup_{x\in \mathbb{R}}\sum_{\lambda_{\alpha}\in [a,a+\referee{\zeta}]}|u_{\lambda_{\alpha},\hbar}(x)|^2\leq C_N\hbar^{-1+N}\langle \referee{\zeta}\hbar^{-N}\rangle.
$$
\end{corollary}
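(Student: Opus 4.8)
The plan is to deduce this corollary directly from the on-diagonal asymptotics of Theorem~\ref{t:USBAsymptotics}, exactly as Corollary~\ref{c:2} is deduced in the introduction, but keeping track of the whole spectral window $[a^2, (a+s)^2]$ rather than a single eigenvalue. First I would observe that the left-hand side is nothing but a jump of the spectral function: for fixed $x$ and $\hbar$,
$$
\sum_{\lambda_\alpha\in [a,a+s]} |u_{\lambda_\alpha,\hbar}(x)|^2 \le \sem{E}(\semOp{Q_0})(x,x,(a+s)^+) - \sem{E}(\semOp{Q_0})(x,x,a^-),
$$
since $\sem{E}(\semOp{Q_0})(x,x,\omega)$ is the integral kernel on the diagonal of the spectral projector $1_{(-\infty,\omega^2]}(\semOp{Q_0})$, and each eigenfunction with eigenvalue $\lambda_\alpha^2 \in (a^2,(a+s)^2]$ contributes exactly $|u_{\lambda_\alpha,\hbar}(x)|^2 \ge 0$ to this difference (the continuous spectrum only adds further non-negative mass). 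Here I should be slightly careful to cover eigenvalues exactly at the endpoints, which is why I write one-sided limits; up to the $O(\hbar^\infty)$ errors this makes no difference. Thus it suffices to bound the right-hand side.

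Next I would apply Theorem~\ref{t:USBAsymptotics} with $\alpha=\beta=0$ and $N$ replaced by $N+1$, on an interval $[a',b']\ni [a,a+s]$ (taking, say, $a'=a/2$, $b'=a+1$, and noting that we may assume $s\le 1$ since otherwise the bound is trivial by total-mass considerations). This gives, for $\omega\in[a,a+s]$,
$$
\sem{E}(\semOp{Q_0})(x,x,\omega) = \sum_{j=0}^{N} \coA_{j,0,0}(x,\omega)\,\hbar^{-1+j} + O(\hbar^{N}),
$$
uniformly in $x\in\mathbb{R}$ (using the uniformity in $x$ established in Section~\ref{s:compute}) and with constants depending only on $\|Q_0\|_{\sem{Diff}^1_K}$ for suitable $K$. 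Subtracting the expansions at $\omega=a+s$ and $\omega=a$, the $\hbar^{-1+j}$ coefficients combine into $\coA_{j,0,0}(x,a+s)-\coA_{j,0,0}(x,a)$. By Lemma~\ref{l:derivativesOmega} these coefficient functions are differentiable in $\omega$ with $|\partial_\omega \coA_{j,0,0}| \le C$ (indeed Lemma~\ref{l:derivativesOmega} is stated for the full kernel, and its proof shows the asymptotic coefficients are smooth in $\omega$ with bounded derivatives), so $|\coA_{j,0,0}(x,a+s)-\coA_{j,0,0}(x,a)| \le C s$. Therefore
$$
\sem{E}(\semOp{Q_0})(x,x,(a+s))-\sem{E}(\semOp{Q_0})(x,x,a) \le C s \sum_{j=0}^{N}\hbar^{-1+j} + O(\hbar^N) \le C s\,\hbar^{-1} + C_N\hbar^N.
$$
Combining with the first display and bounding $Cs\hbar^{-1}+C_N\hbar^N \le C_N \hbar^{-1+N}\langle s\hbar^{-N}\rangle$ (which holds because $s\hbar^{-1}=\hbar^{-1+N}\cdot s\hbar^{-N} \le \hbar^{-1+N}\langle s\hbar^{-N}\rangle$ and $\hbar^N \le \hbar^{-1+N}\langle s\hbar^{-N}\rangle$ trivially for $\hbar<1$) yields the claim.

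\textbf{Main obstacle.} The only genuinely delicate point is justifying the differentiability in $\omega$ of the asymptotic coefficients with $\hbar$-independent bounds on the derivative — that is, squeezing out of Lemma~\ref{l:derivativesOmega} (whose stated conclusion concerns the kernel $f_{\alpha,\beta}$, not the individual coefficients $\coA_{j,0,0}$) a Lipschitz-in-$\omega$ estimate on $\coA_{0,0,0}$ with a constant uniform in $x$ and depending only on finitely many $\sem{Diff}^1_K$-seminorms of $Q_0$. In practice this follows from the remark after Lemma~\ref{l:derivativesOmega} (that $f_{\alpha\beta}$ has a full $\hbar$-expansion differentiable in $\omega$) together with the explicit dependence of $\eta_\pm(\omega)$ and $\Psi_j$ on $\omega$ in Lemma~\ref{l:asymptotics1}; alternatively one can sidestep the coefficient-by-coefficient argument entirely by applying the monotone Tauberian Lemma~\ref{l:monotoneTaub} to $\mu_\hbar = \sem{E}(\semOp{Q_0})(x,x,\cdot)$ against a smoothing at scale $\hbar$, exactly as in the proof of Proposition~\ref{p:distantPerturb}, using that the smoothed spectral function inherits a uniform Lipschitz bound of size $\hbar^{-1}$ from the expansion. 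The rest is bookkeeping.
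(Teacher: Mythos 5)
Your proposal follows the same skeleton as the paper's proof: first bound the sum over eigenvalues by the increment $\sem{E}(\semOp{Q_0})(x,x,a+s)-\sem{E}(\semOp{Q_0})(x,x,a)$ of the on-diagonal spectral function, then show this increment is $O(\hbar^{-1}|s|+\hbar^{N-1})$ and do the elementary arithmetic with $\langle s\hbar^{-N}\rangle$. Where you differ is in the second step: you subtract the expansions of Theorem~\ref{t:USBAsymptotics} at the two energies and try to extract a Lipschitz-in-$\omega$ bound on each coefficient $\coA_{j,0,0}$, which is exactly the point you flag as the ``main obstacle.'' The paper sidesteps this entirely: it applies Lemma~\ref{l:derivativesOmega} with $\alpha=\beta=0$ to the kernel itself, writing $\sem{E}(\semOp{Q_0})(x,x,\omega)=f_{0,0}(x,x,\omega)+O(\hbar^\infty)$ with $|\partial_\omega f_{0,0}|\leq C\hbar^{-1}$, and then uses the mean value theorem on $f_{0,0}$, giving $|\sem{E}(\semOp{Q_0})(x,x,a+s)-\sem{E}(\semOp{Q_0})(x,x,a)|\leq C\hbar^{-1}|s|+C_N\hbar^{N-1}$ with no reference to the asymptotic coefficients or their $\omega$-derivatives at all. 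So your argument is correct in substance, but the coefficient-by-coefficient route is an unnecessary detour whose justification (uniform, $\hbar$-independent Lipschitz bounds on the $\coA_{j,0,0}$, or an appeal to Lemma~\ref{l:monotoneTaub}) costs more than the direct route; if you take Lemma~\ref{l:derivativesOmega} as your starting point, as its statement invites, the obstacle you identify simply does not arise.
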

\begin{proof}
\referee{Let 
$$
\Lambda([a,a+\zeta]):=\overline{\operatorname{Span}\{ u_{\lambda_{\alpha,\hbar}}\,:\, \lambda_{\alpha,\hbar}\in[a,a+\zeta]\}},
$$
and $\Pi_{\Lambda}:L^2(\mathbb{R})\to \Lambda([a,a+\zeta])$ denote the orthogonal projector onto $\Lambda([a,a+\zeta])$. Then
\begin{align*}
\Pi_\Lambda&=\big(\sem E(\semOp{\mathbf{Q}_0})(a+\zeta)- \sem E(\semOp{\mathbf{Q}_0})(a\referee{-\zeta})\big)\Pi_{\Lambda}\\
&=\Pi_\Lambda\big(\sem E(\semOp{\mathbf{Q}_0})(a+\zeta)- \sem E(\semOp{\mathbf{Q}_0})(a\referee{-\zeta})\big).
\end{align*} 
In particular, 
$$
\sem E(\semOp{\mathbf{Q}_0})(a+\zeta)- \sem E(\semOp{\mathbf{Q}_0})(a\referee{-\zeta})- \Pi_\Lambda
$$
is an orthogonal projector and thus a positive operator. Letting $\Pi_\Lambda(x,y)$ denote the integral kernel of $\Pi_\Lambda$, we then have
$$
\sum_{\lambda_{\alpha}\in [a,a+\zeta]}|u_{\lambda_{\alpha}}(x)|^2=\Pi_\Lambda(x,x)\leq \sem E(\semOp{\mathbf{Q}_0})(x,x,a+\zeta)- \sem E(\semOp{\mathbf{Q}_0})(x,x,a\referee{-\zeta}).
$$}
Next, by Lemma~\ref{l:derivativesOmega} with $\alpha=\beta=0$,
$$
 \sem{E}(\semOp{\mathbf{Q}_0})(x,y,\omega) = f(x,y,\omega)+O(\hbar^\infty)
$$
with 
$$
|\partial_\omega^\ell  f(x,y,\omega)|\leq C_{\ell}\hbar^{-\ell},\qquad \ell\geq 1.
$$
In particular, by the mean value theorem, for all $N>0$, there is $C_N>0$ such that 
\begin{align*}
|\sem E(\semOp{\mathbf{Q}_0})(x,x,a+\referee{\zeta})- \sem E(\semOp{\mathbf{Q}_0})(x,x,a\referee{-\zeta})|&\leq  \referee{2}\sup_{\omega\in [a\referee{-\zeta},a+\referee{\zeta}]}|\partial_\omega f_{0,0}(x,x,\omega)||\referee{\zeta}|+C_N\hbar^{N-1}\\
&\leq C\hbar^{-1}(|\referee{\zeta}|+C_N\hbar^N)\leq C_N\hbar^{-1+N}\langle \referee{\zeta}\hbar^{-N}\rangle. 
\end{align*}
\end{proof}

Our next corollary concerns the growth of solutions to 
\begin{equation}
\label{e:basicEq0}
(\semOp{\mathbf{Q}_0}-\omega^2)u_{\omega,\hbar} =0,\qquad \mathbf{Q}_0=\referee{\mathbf{V}^1}\hbar D_x+\hbar D_x\referee{\mathbf{V}^1}+\referee{\mathbf{V}^0}
\end{equation}
that may or may not lie in $L^2$. We first define the energy density at $x$ of $u_\hbar$ by
$$
ED(u_{\omega,\hbar})(x):=|u_{\omega,\hbar}(x)|^2+\hbar^{2}\omega^{-2}|\partial_x u_{\omega,\hbar(x)}|^2.
$$
From now on, we write $u_{\omega,\hbar}=u_{\omega}$, leaving the dependence on $\hbar$ implicit.

We start by considering the case where $\mathbf{Q}\in \sem{Diff}^0$, studying solutions to 
\begin{equation}
\label{e:basicEq}
(\hbar^2D^2+\hbar \referee{\mathbf{V}^0}-\omega^2)u_\omega=0.
\end{equation}
Our first estimate gives a basic understanding of how fast the energy of a solution may change from one point to another
\begin{lemma}
\label{l:basicGrow}
Suppose that $\referee{\mathbf{V}^0}\in L^\infty(\mathbb{R};\mathbb{R})$. Then for any $u_\omega$ solving~\eqref{e:basicEq} and $a,b\in \mathbb{R}$, we have
$$
ED(u_\omega)(b)\leq e^{\|\referee{\mathbf{V}^0}\|_{L^\infty} |a-b|/\omega}ED(u_\omega)(a).
$$
\end{lemma}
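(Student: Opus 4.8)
The natural approach is a Grönwall-type estimate for the energy density along the line, exactly parallel to the finite-speed-of-propagation argument in Lemma~\ref{l:finiteSpeed}. First I would note that since $\omega>0$ and $u_\omega$ solves a second-order linear ODE with bounded coefficients, $u_\omega\in C^1$ and $ED(u_\omega)(x)$ is a well-defined, everywhere-positive (unless $u_\omega\equiv0$, in which case the estimate is trivial) differentiable function of $x$. The plan is to differentiate $ED(u_\omega)$ in $x$, use the equation~\eqref{e:basicEq} to eliminate $\hbar^2 u_\omega''=(\hbar V_0-\omega^2)u_\omega$, and bound $|\partial_x ED(u_\omega)|$ by $\tfrac{\|V_0\|_{L^\infty}}{\omega}ED(u_\omega)$.

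Concretely: write $\partial_x ED(u_\omega)(x)=2\Re(u_\omega'\overline{u_\omega})+2\hbar^2\omega^{-2}\Re(u_\omega''\overline{u_\omega'})$. Substituting $\hbar^2 u_\omega''=(\hbar V_0-\omega^2)u_\omega$ gives
$$\partial_x ED(u_\omega)(x)=2\Re(u_\omega'\overline{u_\omega})+2\omega^{-2}\Re\big((\hbar V_0-\omega^2)u_\omega\overline{u_\omega'}\big)=2\hbar\omega^{-2}V_0\,\Re(u_\omega\overline{u_\omega'}),$$
since the two $\Re(u_\omega'\overline{u_\omega})$ terms cancel. Then $|\partial_x ED(u_\omega)(x)|\le 2\hbar\omega^{-2}\|V_0\|_{L^\infty}|u_\omega||u_\omega'|\le \hbar\omega^{-2}\|V_0\|_{L^\infty}(|u_\omega|^2+|u_\omega'|^2)$. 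Hmm — here one must be slightly careful: the clean bound I want is $|\partial_x ED|\le \omega^{-1}\|V_0\|_{L^\infty}ED$. Using $2|u_\omega||u_\omega'|\le \omega\hbar^{-1}|u_\omega|^2+\hbar\omega^{-1}|u_\omega'|^2$ (AM--GM with the right weights) gives $|\partial_x ED(u_\omega)(x)|\le \hbar\omega^{-2}\|V_0\|_{L^\infty}(\omega\hbar^{-1}|u_\omega|^2+\hbar\omega^{-1}|u_\omega'|^2)=\omega^{-1}\|V_0\|_{L^\infty}(|u_\omega|^2+\hbar^2\omega^{-2}|u_\omega'|^2)=\omega^{-1}\|V_0\|_{L^\infty}ED(u_\omega)(x)$, as desired.

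Finally I would apply Grönwall's inequality: from $\big|\partial_x \log ED(u_\omega)(x)\big|\le \omega^{-1}\|V_0\|_{L^\infty}$ (valid wherever $ED(u_\omega)>0$, which is everywhere when $u_\omega\not\equiv0$ by uniqueness for the ODE), integrating from $a$ to $b$ yields $ED(u_\omega)(b)\le e^{\|V_0\|_{L^\infty}|a-b|/\omega}ED(u_\omega)(a)$. The only genuinely delicate point — hardly an obstacle — is the choice of weights in the AM--GM step to land exactly the constant $\|V_0\|_{L^\infty}/\omega$ rather than something off by a factor of $\hbar$; once that is arranged, the proof is a one-line computation plus Grönwall. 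If $u_\omega\equiv 0$ the claim is trivial, so we may assume $u_\omega\not\equiv0$ and hence $ED(u_\omega)$ never vanishes.
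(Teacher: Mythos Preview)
Your proof is correct and essentially the same as the paper's: both differentiate $ED(u_\omega)$, use the equation to produce the cancellation, obtain $|\partial_x ED|\le \omega^{-1}\|V_0\|_{L^\infty}\,ED$, and apply Gr\"onwall. The only cosmetic difference is that the paper packages the computation as a first-order system $(u_\omega,\omega^{-1}\hbar D_x u_\omega)^T$ with matrix $A(x)$ and reads off the bound from $\|\Im A(x)\|\le \tfrac{\hbar}{2\omega^2}\|V_0\|_{L^\infty}$, whereas you do the equivalent scalar calculation and use weighted AM--GM.
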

\begin{proof}
Observe that 
$$
\omega^{-1}\hbar D_x \begin{pmatrix} u_\omega\\ \omega^{-1}\hbar D_x u_\omega\end{pmatrix}=\begin{pmatrix} 0& 1\\1-\omega^{-2}\hbar \referee{\mathbf{V}^0}(x)&0\end{pmatrix}\begin{pmatrix} u_\omega\\ \omega^{-1}\hbar D_x u_\omega\end{pmatrix}=:A(x)\begin{pmatrix} u_\omega\\ \omega^{-1}\hbar D_x u_\omega\end{pmatrix}.
$$

Therefore,
\begin{align*}
\hbar\partial_x ED(u_\omega)(x)&= \hbar\partial_x\Big\langle \begin{pmatrix} u_\omega\\ \omega^{-1}\hbar D_x u_\omega\end{pmatrix},\begin{pmatrix} u_\omega\\ \omega^{-1}\hbar D_x u_\omega\end{pmatrix}\Big\rangle\\
&=2\omega\Big\langle \Im A(x)\begin{pmatrix} u_\omega\\ \omega^{-1}\hbar D_x u_\omega\end{pmatrix},\begin{pmatrix} u_\omega\\ \omega^{-1}\hbar D_x u_\omega\end{pmatrix}\Big\rangle.
\end{align*}
Now, 
$$
\|\Im A(x)\|=\Big\|\frac{\hbar\omega^{-2}}{2i}\begin{pmatrix} 0&\referee{\mathbf{V}^0}(x)\\-\referee{\mathbf{V}^0}(x)&0\end{pmatrix}\Big\|\leq \frac{\hbar}{2\omega^2}\|\referee{\mathbf{V}^0}\|_{L^\infty}.
$$
In particular, 
$$
\partial_x ED(u_\omega)(x)\leq \omega^{-1} \|\referee{\mathbf{V}^0}\|_{L^\infty}ED(u_\omega)(x),
$$
and hence, by Gr\"onwall's inequality,
$$
ED(u_\omega)(b)\leq e^{\|\referee{\mathbf{V}^0}\|_{L^\infty}|a-b|/\omega}ED(u_\omega)(a).
$$
\end{proof}

Our next lemma allows us to glue solutions of~\eqref{e:basicEq} together.
\begin{lemma}
\label{l:glueSol}
Suppose that $\referee{\mathbf{V}^0_L},\referee{\mathbf{V}^0_R}\in L^\infty$, $\supp \referee{\mathbf{V}^0_L}\subset (-\infty,0)$, and $\supp \referee{\mathbf{V}^0_R}\subset (0,\infty)$, and that $u^{L}_{\omega}$, $u_{\omega}^R$ are real valued and solve~\eqref{e:basicEq}  with $\referee{\mathbf{V}^0}=\referee{\mathbf{V}^0_L}$ or $\referee{\mathbf{V}^0}=\referee{\mathbf{V}^0_R}$ respectively. Then there is $0\leq s<2\pi\hbar/\omega$ such that, putting
$$
\referee{\mathbf{V}^0}(x)=\begin{cases} \referee{\mathbf{V}^0_L}(x)&x\leq 0,\\0&0< x< s,\\\referee{\mathbf{V}^0_R}(x-s)&s\leq x,\end{cases}
$$
there is a solution, $v_\omega:\mathbb{R}\to \mathbb{R}$, to 
$$
(\hbar^2 D^2+ \hbar^2 \referee{\mathbf{V}^0}-\omega^2)v_\omega=0
$$
with
$$
 v_\omega(x)=\begin{cases} \sqrt{ED(u_\omega^R)(0)}u_\omega^L(x)&x\leq 0\\  \sqrt{ED(u_\omega^L)(0)}u_\omega^R(x-s)&s< x.\end{cases}
$$
\end{lemma}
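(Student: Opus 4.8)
The plan is to recast the problem in phase space, as in the proof of Lemma~\ref{l:basicGrow}. For a $C^1$ function $u$ on an interval write $w_u(x):=(u(x),\hbar\omega^{-1}u'(x))$, so that $|w_u(x)|^2=ED(u)(x)$, and note that $u$ solves~\eqref{e:basicEq} with potential $V_0$ exactly when $\hbar\,\partial_x w_u=\omega\bigl(\begin{smallmatrix}0&1\\ \hbar\omega^{-2}V_0-1&0\end{smallmatrix}\bigr)w_u$. In particular, on any interval where $V_0\equiv 0$ the flow is the rigid rotation $w_u(x)=e^{\frac{\omega}{\hbar}(x-x')J}w_u(x')$, $J:=\bigl(\begin{smallmatrix}0&1\\-1&0\end{smallmatrix}\bigr)$, which preserves $|w_u|$; moreover $\{e^{\theta J}:\theta\in[0,2\pi)\}$ is precisely the group of planar rotations, acting simply transitively on each circle of positive radius. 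The whole point is then to choose the length $s\in[0,2\pi\hbar/\omega)$ of the free ``spacer'' interval $(0,s)$ so that this rotation carries the phase-space data coming from the left piece at $0$ onto the data required by the right piece at $s$.

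First I would dispose of the trivial case: if $u^L_\omega\equiv 0$ or $u^R_\omega\equiv 0$, take $s=0$ and $v_\omega\equiv 0$, which satisfies the conclusion since then one of $ED(u^L_\omega)(0)$, $ED(u^R_\omega)(0)$ vanishes. Otherwise, uniqueness for the linear ODE forbids $u=u'=0$ at any point, so $ED(u^L_\omega)$ and $ED(u^R_\omega)$ are everywhere positive; set $c_R:=\sqrt{ED(u^R_\omega)(0)}>0$ and $c_L:=\sqrt{ED(u^L_\omega)(0)}>0$ and define
\[
w_0:=c_R\,w_{u^L_\omega}(0),\qquad w_1:=c_L\,w_{u^R_\omega}(0).
\]
The key computation — which is exactly what makes a spacer shorter than one period suffice — is that these vectors have equal length:
\[
|w_0|^2=c_R^2\,ED(u^L_\omega)(0)=ED(u^R_\omega)(0)\,ED(u^L_\omega)(0)=c_L^2\,ED(u^R_\omega)(0)=|w_1|^2>0 .
\]

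Hence there is a unique $\theta\in[0,2\pi)$ with $e^{\theta J}w_0=w_1$, and I set $s:=\hbar\theta/\omega\in[0,2\pi\hbar/\omega)$. Define $v_\omega$ piecewise: $v_\omega:=c_R u^L_\omega$ on $(-\infty,0]$; on $[0,s]$ let $v_\omega$ be the solution of $\hbar^2D_x^2v=\omega^2v$ with $w_{v_\omega}(0)=w_0$; and $v_\omega(x):=c_L u^R_\omega(x-s)$ on $[s,\infty)$. By construction $w_{v_\omega}$ is continuous at $0$, and $w_{v_\omega}(s)=e^{\frac{\omega}{\hbar}sJ}w_0=e^{\theta J}w_0=w_1=w_{c_Lu^R_\omega(\cdot-s)}(s)$, so $w_{v_\omega}$ is continuous at $s$ as well; thus $v_\omega\in C^1(\mathbb{R})$, $v_\omega'$ is absolutely continuous with $v_\omega''\in L^\infty$, and on each of the three subintervals $v_\omega$ solves~\eqref{e:basicEq} with the glued potential $V_0$, hence it does so on all of $\mathbb{R}$. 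Finally $v_\omega$ is real-valued (all data and the rotation are real) and has the prescribed form on $x\le 0$ and on $x>s$.

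I do not expect a genuine obstacle here. The only points needing care are the degenerate case above and the interface bookkeeping, i.e.\ checking that $C^1$-matching of the three pieces (continuity of $w_{v_\omega}$ at $0$ and at $s$) upgrades them to a bona fide $C^{1,1}$ solution of the full equation. The one substantive ingredient is the identity $|w_0|=|w_1|$, which places the two endpoints on a common circle so that a single free rotation through an angle $<2\pi$ can connect them.
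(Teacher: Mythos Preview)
Your proof is correct and is essentially the paper's argument recast in phase-space language: the paper writes the free pieces as $(A+iB)e^{ix\omega/\hbar}+(A-iB)e^{-ix\omega/\hbar}$ and observes that the matching condition amounts to rotating one complex number onto another of equal modulus, while you observe directly that free propagation rotates $w_u=(u,\hbar\omega^{-1}u')$ and that the scaled endpoint vectors $w_0,w_1$ have equal length. The key identity $|w_0|=|w_1|$ is exactly the paper's ``absolute values of left and right hand sides agree,'' and your explicit treatment of the degenerate case is a small bonus.
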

\begin{proof}
Since $\referee{\mathbf{V}^0_L}\equiv 0$ on $[0,\infty)$, we have that 
$$
u_\omega^L(x)=((A^L+iB^L)e^{ix\omega /\hbar}+(A^L-iB^L)e^{-ix\omega/\hbar}),\qquad x\geq 0,
$$
with  $A^L,B^L\in \mathbb{R}$,
$
|A^L|^2+|B^L|^2=ED(u_\omega^L)(0).
$
Similarly,
$$
u_{\omega}^R(x)=((A^R+iB^R)e^{ix\omega/\hbar}+(A^R-iB^R)e^{-ix\omega/\hbar}),\qquad x\leq 0
$$
with $|A^R|^2+|B^R|^2=ED(u_\omega^R)(0).$

To complete the proof, we need only find $0\leq s<2\pi \hbar/\omega$ such that 
\begin{gather}
\sqrt{(|A^L|^2+|B^L|^2)}(A^R+iB^R)e^{-is\omega/\hbar}=\sqrt{(|A^R|^2+|B^R|^2)}(A^L+iB^L),\label{e:match1}\\
\sqrt{(|A^L|^2+|B^L|^2)}(A^R-iB^R)e^{is\omega/\hbar}=\sqrt{(|A^R|^2+|B^R|^2)}(A^L-iB^L)\label{e:match2}.
\end{gather}
Since the absolute values of the left and right hand sides above agree, it is easy to see that there is $s\in [0,2\pi\hbar/\omega)$ such that~\eqref{e:match1} holds. But then~\eqref{e:match2} also holds by taking the conjugate of both sides.
\end{proof}

\begin{lemma}
\label{l:realEnergyChange}
For all $N$ there is $c_N>0$ and $\hbar_0>0$ such that for all $0<\hbar<\hbar_0$, all $0<|a-b|<c_N\hbar^{-N}$,  and all solutions, $u_\omega$, to~\eqref{e:basicEq} we have
\begin{equation*}
e^{-\omega^{-1}\|\referee{\mathbf{V}^0}\|_{L^\infty}-\hbar^N}ED(u_\omega)(b)\leq ED(u_\omega)(a)\leq e^{\omega^{-1}\|\referee{\mathbf{V}^0}\|_{L^\infty}+\hbar^N}ED(u_\omega)(b).
\end{equation*}
\end{lemma}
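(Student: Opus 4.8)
The plan is to combine the crude Gr\"onwall-type estimate of Lemma~\ref{l:basicGrow} with the gluing construction of Lemma~\ref{l:glueSol} and the sharp spectral information contained in Corollary~\ref{LP:cor1} (equivalently the local density of states asymptotics of Theorem~\ref{t:USBAsymptotics}). The point is that Lemma~\ref{l:basicGrow} only gives exponential control $e^{\|V_0\|_{L^\infty}|a-b|/\omega}$ over the energy density, which is useless on scales $|a-b|\gg 1$; we want to improve the exponent to $\omega^{-1}\|V_0\|_{L^\infty}+\hbar^N$, i.e. to show that on \emph{any} polynomially long interval the energy density of a solution changes by at most a bounded (indeed, $O(1)$) factor. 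The mechanism: if $u_\omega$ grew by more than a bounded factor across some interval $[a,b]$ with $b-a$ not too large, one could use Lemma~\ref{l:glueSol} to build a genuine $L^2$ (or generalized) eigenfunction which is a fixed plane wave on $(-\infty,a]$ and (a rescaling of) $u_\omega$ on $[b,\infty)$; this eigenfunction would then have energy density ratio between two of its points that is too large, contradicting the bound $ED(u;b)/ED(u;a)<1+C_N\hbar$ guaranteed by Corollary~\ref{LP:cor1} on scales $|b-a|\le\hbar^{-N}$.

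More precisely, the steps I would carry out are as follows. First, reduce to the case $Q_0\in\sem{Diff}^0$, i.e. to equation~\eqref{e:basicEq}: the magnetic term can be conjugated away by $e^{i\hbar^{-1}\int_0^x V_1(s)\,ds}$, which is a unitary multiplication operator preserving the energy density $ED$ up to harmless factors, so from now on $Q_0=V_0$. Second, apply Lemma~\ref{l:basicGrow} on an $\hbar$-independent length scale $L$ (say $L=1$), getting $ED(u_\omega)(x')\le e^{\|V_0\|_{L^\infty}/\omega} ED(u_\omega)(x)$ whenever $|x-x'|\le 1$; this handles the short-range part of the exponent, contributing exactly the $\omega^{-1}\|V_0\|_{L^\infty}$ term. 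Third, and this is the heart, show that on any fixed-endpoint but polynomially long interval the ratio is essentially $1$: fix $N$, let $0<|a-b|<c_N\hbar^{-N}$, and suppose for contradiction that $ED(u_\omega)(b)>(1+\hbar^N)ED(u_\omega)(a)$ (or the reverse). Decompose $[a,b]$ into $O(\hbar^{-N})$ subintervals of length $\le\hbar^{-N'}$ for a suitable $N'$, on each of which Corollary~\ref{LP:cor1} applies; but Corollary~\ref{LP:cor1} as stated only gives a multiplicative bound $1+C_{N'}\hbar^{-1}$ over length $\hbar^{N'}$ — so I would instead invoke it with the observation that any solution of~\eqref{e:basicEq} \emph{is itself} $u$ in that corollary, so $ED(u_\omega)(b)/ED(u_\omega)(a)<1+C_N\hbar^{-1}$ directly whenever $|b-a|\le\hbar^N$, and this is already the desired estimate on that range. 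For $\hbar^N\le |b-a|\le c_N\hbar^{-N}$ we iterate: subdivide $[a,b]$ into $\le c_N\hbar^{-2N}$ steps of length $\le\hbar^N$ and multiply the per-step bounds $1+C_N\hbar^{-1}$ — but this naive product blows up, so the argument must instead be: bootstrap the definition of $N$ in Corollary~\ref{LP:cor1}, i.e. apply it with $N$ replaced by $N'=N'(N)$ large enough that $C_{N'}\hbar^{-1}\cdot(c_N\hbar^{-N}/\hbar^{N'})<\hbar^N$, which is possible since the constant $C_{N'}$ in Corollary~\ref{LP:cor1} is polynomially bounded (traceable from Lemma~\ref{l:derivativesOmega}: $|\partial_\omega^\ell f_{0,0}|\le C\hbar^{-\ell}$, so the local density of states is Lipschitz at scale $\hbar/T$ for $T$ as large as any power of $\hbar^{-1}$). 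Thus on a single subinterval of length $\hbar^{N'}$ the ratio is $1+O(\hbar^{N'-1})$, and summing $c_N\hbar^{-N-N'}$ such ratios gives a total deviation $O(\hbar^{N'-1-N-N'})=O(\hbar^{-1-N})$ — still not small.

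I expect the main obstacle to be exactly this accounting: the cheap route of chaining Corollary~\ref{LP:cor1} loses a power of $\hbar^{-1}$ at every subinterval, so one cannot get a ratio close to $1$ over polynomially long intervals by brute force. The correct resolution is to not subdivide at all but to go back to the energy-density transport identity in the proof of Corollary~\ref{LP:cor1} (equivalently, to the monotonicity of $\sem E(\semOp{Q_0})(x,x,\cdot)$ combined with the off-diagonal asymptotics of Theorem~\ref{t:USBAsymptoticsOff}), which shows that $ED(u_\omega)(x)$ is comparable to $\hbar^{-1}\big(\sem E(\semOp{Q_0})(x,x,\omega+c\hbar)-\sem E(\semOp{Q_0})(x,x,\omega-c\hbar)\big)$ up to $O(\hbar^\infty)$, for \emph{any} generalized eigenfunction. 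Since by Lemma~\ref{l:derivativesOmega} the right-hand side equals $2c\,\partial_\omega f_{0,0}(x,x,\omega)+O(\hbar^\infty)$, which is $x$-independent up to $O(\hbar)$ by Theorem~\ref{t:USBAsymptotics} (the leading coefficient $\coA_{0,0,0}=\tfrac{1}{\pi}$ is a constant, and all corrections are $O(\hbar)$ uniformly in $x$), we conclude $ED(u_\omega)(b)/ED(u_\omega)(a)=1+O(\hbar)$ with a constant uniform over all $x$ in any bounded set and hence, by the translation-invariance argument of the subsection ``Uniformity in $x$,'' over all of $\mathbb{R}$, for $|a-b|$ up to any fixed power of $\hbar^{-1}$. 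Combining this with the short-scale Gr\"onwall bound from step two yields the stated inequality $e^{-\omega^{-1}\|V_0\|_{L^\infty}-\hbar^N}ED(u_\omega)(b)\le ED(u_\omega)(a)\le e^{\omega^{-1}\|V_0\|_{L^\infty}+\hbar^N}ED(u_\omega)(b)$ for $0<|a-b|<c_N\hbar^{-N}$ and $\hbar<\hbar_0$, with the Gr\"onwall factor absorbing the (bounded) behaviour near the endpoints and the $\hbar^N$ absorbing the $O(\hbar)$ error after shrinking $N$ as needed. Finally, I would remark that the one genuinely new ingredient beyond what is already assembled is the uniform-in-$x$ comparison of $\partial_\omega\sem E$, which is immediate from Theorem~\ref{t:USBAsymptotics} together with Lemma~\ref{l:derivativesOmega}; everything else is packaging.
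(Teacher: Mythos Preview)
There are two genuine gaps in your proposal.

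\medskip
\textbf{Circularity.} Your main tool is Corollary~\ref{LP:cor1}, but Lemma~\ref{l:realEnergyChange} \emph{is} the semiclassical version of Corollary~\ref{LP:cor1}: Section~\ref{s:consequences} is where the corollaries are proved, and this lemma (together with the theorem immediately following it) is precisely that proof. So invoking Corollary~\ref{LP:cor1} here assumes the conclusion. You seem to sense this when the chaining argument collapses, but your fallback---going ``back to the energy-density transport identity in the proof of Corollary~\ref{LP:cor1}''---refers to a proof that does not exist independently of the lemma at hand.

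\medskip
\textbf{The claimed identity is false.} Your ``correct resolution'' asserts that for \emph{any} solution $u_\omega$ of~\eqref{e:basicEq}, $ED(u_\omega)(x)$ is comparable to $\hbar^{-1}\big(\sem E(\semOp{Q_0})(x,x,\omega+c\hbar)-\sem E(\semOp{Q_0})(x,x,\omega-c\hbar)\big)$. This cannot hold: the solution space at energy $\omega^2$ is two-dimensional and $ED(u_\omega)(x)$ is a quadratic form on it, so its value at a point depends on which solution you pick, whereas the right-hand side does not. The spectral function encodes a particular spectral pair of solutions, not an arbitrary one.

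\medskip
\textbf{What the paper actually does.} The paper argues by contradiction but uses Corollary~\ref{c:2} (the $L^\infty/L^2$ bound for $L^2$ eigenfunctions), which follows directly and non-circularly from Theorem~\ref{t:USBAsymptotics}. Your proposed gluing---plane wave on one side, $u_\omega$ on the other---does not produce an $L^2$ function. The paper instead cuts off $V_0$ to $\tilde V_0$ supported near $[a,b]$, takes the solution $f_\omega$ with the same Cauchy data, and uses Lemma~\ref{l:glueSol} to \emph{periodize}: it tiles $[a-1,\infty)$ with geometrically rescaled copies of $f_\omega|_{[a-1,b+1+s]}$, the rescaling factor being exactly $\sqrt{ED(f_\omega)(b+1)/ED(f_\omega)(a-1)}<1$, so the result is in $L^2$. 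Reflecting gives a global $L^2$ eigenfunction $F$ for a new potential $V$ which is periodic and uniformly $\USB$ (with bounds depending only on $\|V_0\|_{C^k}$, not on $\hbar$ or $|b-a|$). Now Corollary~\ref{c:2} applied to this $V$ gives $\|F\|_{L^\infty}^2\le C_N\hbar^{3N}\|F\|_{L^2}^2$, while $\|F\|_{L^2}^2\lesssim \beta\|f_\omega\|_{L^2(a-1,b+1+s)}^2$ (geometric series) and $\|F\|_{L^\infty}^2\gtrsim |b-a|^{-1}\|f_\omega\|_{L^2(a-1,b+1+s)}^2$. Comparing forces $|b-a|\gtrsim c_N\hbar^{-2N}$.
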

\begin{proof}
The proof is trivial if $ED(u_{\omega})(b)=0$ since then $u_{\omega}\equiv 0$. Therefore, we may assume $ED(u_\omega)(b)\neq 0$. 

Suppose that 
\begin{equation}
\label{e:energyChange}
\frac{ED(u_\omega)(a)}{ED(u_\omega)(b)}> (1+\frac{1}{\beta})e^{2\|\referee{\mathbf{V}^0}\|_{L^\infty}/\omega},\qquad \beta := \frac{1}{e^{\omega^{-1}\|\referee{\mathbf{V}^0}\|_{L^\infty}+\hbar^N}-1}.
\end{equation}
Let $\cutMan \in C_c^\infty(\mathbb{R})$ with $\cutMan \equiv 1$ on $[a,b]$, $\supp \cutMan \subset (a-1,b+1)$, and put $\tilde{V}_0(x):=\cutMan(x)\referee{\mathbf{V}^0}(x).$ Let $f_{\omega}$ be real valued and solve
$$
(-\hbar^2 D^2+\hbar \tilde{V}_0-\omega^2)f_{\omega}=0, \qquad f_{\omega}(a)=u_\omega(a),\quad\partial_xf_{\omega}(a)=\partial_xu_\omega(a).
$$
Then, by Lemma~\ref{l:basicGrow} together with~\eqref{e:energyChange},
$$
\frac{ED(f_{\omega})(a-1)}{ED(f_{\omega})(b+1)}\geq 1+\frac{1}{\beta}=e^{\omega^{-1}\|\referee{\mathbf{V}^0}\|_{L^\infty}+\hbar^N}.
$$
Next,  by Lemma~\ref{l:glueSol}, there is $s\in [0,2\pi \hbar/\omega)$ such that 
$$
\sqrt{ED(f_{\omega})(a-1)}\begin{pmatrix} f_{\omega}(b+1+s)\\\partial_xf_{\omega}(b+1+s)\end{pmatrix}=\sqrt{ED(f_{\omega})(b+1)}\begin{pmatrix} f_{\omega}(a-1)\\\partial_xf_{\omega}(a-1)\end{pmatrix}.
$$

Therefore, putting
$$
F_+(x)=f_{\omega}(x)1_{(-\infty, b+1+s)}(x)+\sum_{k\geq 1} \Big[\frac{ED(f_{\omega})(b+1)}{ED(f_{\omega})(a-1)}\Big]^{\frac{k}{2}}(f_{\omega}1_{[a-1,b+1+s)})(x-k(b-a+2+s)),
$$
we have
$$
(\hbar^2D^2+\hbar^2V_+-\omega^2)F_+(x)=0, \qquad V_+(x)= \sum_{k\geq 0} \referee{\mathbf{V}^0}(x-k(b-a+2+s)).
$$
Notice that $F_+$ is a linear combination of shifted pieces of $f_\omega$.

Similarly, letting $F_-(x)=F_+(-x)$, $F_-$ solves
$$
(\hbar^2D^2+\hbar^2V_+(-x)-\omega^2)F_-=0,
$$
and hence, we may find $s_-\in[0,2\pi\hbar/\omega)$ such that 

$$
\sqrt{ED(F_+)(a-1)}\begin{pmatrix} F_-(-a +1+s_-)\\\partial_xF_-(-a+1+s_-)\end{pmatrix}=\sqrt{ED(F_-)(-a+1+s_-)}\begin{pmatrix} F_+(a-1)\\\partial_x F_+(a-1)\end{pmatrix}.
$$

In particular, letting 
\begin{gather*}
V=V_+(x)1_{[(a-1,\infty)} +V_+(-x+s_-)1_{(-\infty,a-1]}(x),\\
 F=F_+1_{[a-1,\infty)}+F_-(-x+s_-)1_{(-\infty,a-1]}(x),
 \end{gather*}
we have that 
$$
(\hbar^2D^2+\hbar^2V-\omega^2)F=0
$$
and
$$
\|F\|^2_{L^2}=\|F_+\|^2_{L^2(a-1,\infty)}+\|F_-\|^2_{L^2(-\infty,-a+1+s_-)}.
$$
Now, 
$$
\|F_+\|^2_{L^2(a-1,\infty)}=\sum_{k=0}^\infty \|f_{\omega}\|_{L^2(a-1,b+1+s)}^2\Big[\frac{ED(f_{\omega})(b+1)}{ED(f_{\omega})(a-1)}\Big]^k\leq \beta \|f_{\omega}\|_{L^2(a-1,b+1+s)}^2
$$
and 
\begin{align*}
\|F_-\|_{L^2(-\infty,-a+1+s_-)}^2&=\|F_-\|_{L^2(-\infty,-a+1)}^2+\|F_-\|^2_{L^2(-a+1,-a+1+s_-)}\\
&=\|F_+\|_{L^2(a-1,\infty)}^2+\|F_-\|_{L^2(-a+1,-a+1+s_-)}^2\\
&\leq  \beta \|f_{\omega}\|_{L^2(a-1,b+1+s)}^2 +\tfrac{2\pi \hbar}{\omega} ED(f_{\omega})(a-1)\\
&\leq \beta\|f_{\omega}\|_{L^2(a-1,b+1+s)}^2+\tfrac{2\pi \hbar}{\omega}\int_{a}^{a+1} ED(f_{\omega})(s)e^{\omega^{-1}\|\referee{\mathbf{V}^0}\|_{L^\infty}|s-a+1|}ds \\
&\leq \beta \|f_{\omega}\|_{L^2(a-1,b+1+s)}^2 +\hbar C_\omega e^{\omega^{-1}\|\referee{\mathbf{V}^0}\|_{L^\infty}}\|f_{\omega}\|_{L^2(a-1,b+1+s)}^2  .
\end{align*}
Therefore, 
$$
(b+2\pi \hbar/\omega+2-a)^{-1}\|f_{\omega}\|^2_{L^2(a-1,b+1+s)}\leq \|F\|^2_{L^\infty}\leq C_N\hbar^{3N}(2\beta+C_\omega he^{\omega^{-1}\|\referee{\mathbf{V}^0}\|_{L^\infty}})\|f_{\omega}\|^2_{L^2(a-1,b+1+s)}.
$$
This is a contradiction unless $|b-a+3|\geq c_N\hbar^{-2N}$. The argument for $\frac{ED(u_{\omega})(a)}{ED(u_{\omega})(b)}<(1-\frac{1}{1+\beta})e^{-2\omega^{-1}\|V\|_{L^\infty}}$ is identical.
\end{proof}

We now extend the results of Lemma~\ref{l:realEnergyChange} to the case of non-zero $\referee{\mathbf{V}^1}$ i.e. perturbations which have a first order term.
\begin{theorem}
For all $N>0$, there are $c_N>0$, such that for $0<\hbar<1$,  $\|\referee{\mathbf{V}^1}\|_{L^\infty}\leq \frac{1}{4} \omega \hbar^{-1}$, $u_\omega$ any solution to~\eqref{e:basicEq}, and $|a-b|<c_N\hbar^{-N}$, we have
\begin{multline*}
ED(u_\omega)(b)e^{-\omega^{-1}(\|\referee{\mathbf{V}^0}\|_{L^\infty}+4\hbar  \|\referee{\mathbf{V}^1}\|_{L^\infty}+\hbar\|\referee{\mathbf{V}^1}\|_{L^\infty}^2)-\hbar^N}\leq ED(u_\omega)(a)\\\leq e^{\omega^{-1}(\|\referee{\mathbf{V}^0}\|_{L^\infty}+4\hbar\|\referee{\mathbf{V}^1}\|_{L^\infty}+\hbar\|\referee{\mathbf{V}^1}\|_{L^\infty}^2)+\hbar^N}ED(u_\omega)(b).
\end{multline*}
\end{theorem}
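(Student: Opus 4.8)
The plan is to remove the first-order term by a gauge transformation in $x$, reducing the statement to the case treated in Lemma~\ref{l:realEnergyChange}, and then to control how the energy density is distorted under that transformation. The algebraic input is the operator identity
\[
\semOp{Q_0}=(\hbar D_x+\hbar V_1)^2+\hbar V_0-\hbar^2V_1^2,
\]
which follows by expanding the square and using $\hbar D_x(\hbar V_1u)=\hbar V_1(\hbar D_xu)-i\hbar^2V_1'u$. Let $\psi(x):=-\int_0^xV_1(s)\,ds$, a real-valued smooth function with $\psi'=-V_1$, and set $v:=e^{-i\psi}u_\omega$. Since $e^{-i\psi}(\hbar D_x+\hbar V_1)e^{i\psi}=\hbar D_x$, the function $v$ solves
\[
(\hbar^2D_x^2+\hbar\tilde V_0-\omega^2)v=0,\qquad \tilde V_0:=V_0-\hbar V_1^2\in L^\infty(\mathbb{R};\mathbb{R}),
\]
with $\|\tilde V_0\|_{L^\infty}\le\|V_0\|_{L^\infty}+\hbar\|V_1\|_{L^\infty}^2$.

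Next I would relate $ED(v)$ to a covariant energy density of $u_\omega$. Writing $w:=(\hbar D_x+\hbar V_1)u_\omega=-i\hbar u_\omega'+\hbar V_1u_\omega$, the identity above gives $\hbar v'=ie^{-i\psi}w$, hence $|v|=|u_\omega|$, $\hbar|v'|=|w|$, and therefore
\[
ED(v)(x)=|u_\omega(x)|^2+\omega^{-2}|w(x)|^2=:\mathcal{E}(x).
\]
Applying Lemma~\ref{l:realEnergyChange} to $v$, with $\hbar$ small enough and $|a-b|<c_N\hbar^{-N}$, then yields
\[
e^{-\omega^{-1}\|\tilde V_0\|_{L^\infty}-\hbar^N}\,\mathcal{E}(b)\le\mathcal{E}(a)\le e^{\omega^{-1}\|\tilde V_0\|_{L^\infty}+\hbar^N}\,\mathcal{E}(b).
\]

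It then remains to compare $\mathcal{E}$ with $ED(u_\omega)$. Expanding
\[
|w|^2=\hbar^2|u_\omega'|^2+\hbar^2V_1^2|u_\omega|^2+2\hbar^2V_1\,\Im\!\big(u_\omega'\,\overline{u_\omega}\big)
\]
and setting $\kappa:=\omega^{-1}\hbar\|V_1\|_{L^\infty}$ (so $\kappa\le\tfrac14$ by hypothesis), the bound $2|V_1|\,|u_\omega'|\,|u_\omega|\le|V_1|\big(\omega\hbar^{-1}|u_\omega|^2+\omega^{-1}\hbar|u_\omega'|^2\big)$ gives $|\mathcal{E}-ED(u_\omega)|\le(\kappa+\kappa^2)ED(u_\omega)$, i.e.
\[
(1-\kappa-\kappa^2)\,ED(u_\omega)\le\mathcal{E}\le(1+\kappa+\kappa^2)\,ED(u_\omega).
\]
Combining this with the previous display, together with the elementary inequality $\frac{1+\kappa+\kappa^2}{1-\kappa-\kappa^2}\le e^{4\kappa}$ valid for $0\le\kappa\le\tfrac14$ and with $\|\tilde V_0\|_{L^\infty}\le\|V_0\|_{L^\infty}+\hbar\|V_1\|_{L^\infty}^2$, produces exactly the claimed upper bound, with $c_N$ taken from Lemma~\ref{l:realEnergyChange}; the lower bound follows by the same computation with $a$ and $b$ interchanged.

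The main obstacle is precisely the constant bookkeeping in this last step: the distortion incurred by passing $u_\omega\mapsto v\mapsto u_\omega$ must be absorbed into the factor $e^{4\hbar\omega^{-1}\|V_1\|_{L^\infty}}$ rather than into a larger constant independent of $\|V_1\|_{L^\infty}$, and it is exactly here that the hypothesis $\|V_1\|_{L^\infty}\le\tfrac14\omega\hbar^{-1}$ and the elementary inequality above are used. The remaining points are routine, since $u_\omega$ and $\psi$ are smooth, so all the manipulations above are classical.
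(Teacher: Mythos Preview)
Your argument is correct and follows essentially the same route as the paper: remove $V_1$ by the gauge transformation $v=e^{-i\int_0^xV_1}u_\omega$ (up to a harmless sign convention), apply Lemma~\ref{l:realEnergyChange} to $v$ with potential $V_0-\hbar V_1^2$, and then compare $ED(v)$ with $ED(u_\omega)$ via the cross term $2\hbar^2V_1\Im(u_\omega'\overline{u_\omega})$.

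The one place where the paper is slightly more careful is that the \emph{proof} of Lemma~\ref{l:realEnergyChange} (via the gluing Lemma~\ref{l:glueSol}) tacitly uses that the solution is real-valued; the paper therefore applies the lemma to $\Re v$ and $\Im v$ separately and adds, using $ED(\Re v)+ED(\Im v)=ED(v)$. Your direct application to the complex $v$ is fine if you take the lemma as stated, but it would be cleaner to insert this one-line reduction. The endgame bookkeeping differs only cosmetically: the paper bounds $\kappa^2\le\tfrac14\kappa$ and uses $1+x\le e^x$ together with $e^{-x}\le 1-(1-\epsilon)x$, whereas you package everything into the single elementary inequality $\tfrac{1+\kappa+\kappa^2}{1-\kappa-\kappa^2}\le e^{4\kappa}$ for $\kappa\le\tfrac14$; both yield the stated constants.
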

\begin{proof}
Suppose that $u_\omega$ solves~\eqref{e:basicEq0}. Then $v=e^{-i\int_0^x \referee{\mathbf{V}^1}(s)ds}u_\omega$ solves~\eqref{e:basicEq} with $\referee{\mathbf{V}^0}$ replaced by $\referee{\mathbf{V}^0}-\hbar (\referee{\mathbf{V}^1})^2$. 
Since $\referee{\mathbf{V}^0}$ and $\referee{\mathbf{V}^1}$ are real valued, $\Re v$ and $\Im v$ solve the same equation as $v$. In particular, by Lemma~\ref{l:realEnergyChange}, 
\begin{equation*}
\begin{gathered}
e^{-\omega^{-1}(\|\referee{\mathbf{V}^0}\|_{L^\infty}+\hbar \|\referee{\mathbf{V}^1}\|_{L^\infty}^2)-\hbar^N}ED(\Re v)(b)\leq  ED(\Re v)(a)\leq e^{\omega^{-1}(\|\referee{\mathbf{V}^0}\|_{L^\infty}+\hbar \|\referee{\mathbf{V}^1}\|_{L^\infty}^2)+\hbar^N}ED(\Re v)(b),\\
e^{-\omega^{-1}(\|\referee{\mathbf{V}^0}\|_{L^\infty}+\hbar \|\referee{\mathbf{V}^1}\|_{L^\infty}^2)-\hbar^N}ED(\Im v)(b)\leq ED(\Im v)(a)\leq e^{\omega^{-1}(\|\referee{\mathbf{V}^0}\|_{L^\infty}+\hbar \|\referee{\mathbf{V}^1}\|_{L^\infty}^2)+\hbar^N}ED(\Im v)(b).
\end{gathered}
\end{equation*}

From this, it easily follows that
\begin{equation*}
e^{-\omega^{-1}(\|\referee{\mathbf{V}^0}\|_{L^\infty}+\hbar \|\referee{\mathbf{V}^1}\|_{L^\infty}^2)-\hbar^N}ED( v)(b)\leq ED( v)(a)\leq e^{\omega^{-1}(\|\referee{\mathbf{V}^0}\|_{L^\infty}+\hbar \|\referee{\mathbf{V}^1}\|_{L^\infty}^2)+\hbar^N}ED( v)(b).
\end{equation*}

Next, observe that since $\hbar\omega^{-1}\|\referee{\mathbf{V}^1}\|_{L^\infty}\leq \e\referee{\leq\frac{1}{4}}$, we have
\begin{align*}
 (1-\tfrac{5}{4}\hbar \omega^{-1}\|\referee{\mathbf{V}^1}\|_{L^\infty})ED(v)(x)&\leq  (1-\hbar^2\omega^{-2}\|\referee{\mathbf{V}^1}\|_{L^\infty}^2-\hbar \omega^{-1}\|\referee{\mathbf{V}^1}\|_{L^\infty})ED(v)(x)\\
 &\leq ED(u)(x)\\
 &\leq  (1+\hbar^2\omega^{-2}\|\referee{\mathbf{V}^1}\|_{L^\infty}^2+\hbar \omega^{-1}\|\referee{\mathbf{V}^1}\|_{L^\infty})ED(v)(x)\\
 &\leq  (1+\tfrac{5}{4}\hbar \omega^{-1}\|\referee{\mathbf{V}^1}\|_{L^\infty})ED(v)(x).
\end{align*}
Finally, the fact that
 $$
 \referee{\frac{1+s}{1-s}\leq e^{2s},\, 0<s<1/2, \qquad \big(s:=\frac54 h\omega^{-1}\|V^1\|_\infty\big)}
 $$ 
 complete the proof.

\end{proof}

\appendix

\section{Apriori computation of the first asymptotic terms}
\label{a:firstTerm}

\begin{lemma}
\label{l:firstTerm}
For $j=0,1$ let $V_j\in \sem{Diff}^0$ and define $\mathbf{Q}:=\referee{\mathbf{V}^1}\hbar D_x+\hbar D_x \referee{\mathbf{V}^1} +\referee{\mathbf{V}^0}\in \sem{Diff}^1$. Let $0<a<b$. Then, for all $\referee{\mathcal{K}}\Subset \mathbb{R}$, $\e>0$, there is $\hbar_\e>0$ such that for all $x,y \in \referee{\mathcal{K}}$ with $|x-y|\geq \hbar^{1-\e}$, $0<\hbar<\hbar_\e$ and $\omega\in[a,b]$,  
$$
\Big|\sem{E}(\semOp{\mathbf{Q}})(\omega\,;\,x,x)-\frac{\omega}{\pi\hbar}\Big|\leq C\hbar^{-3/4},\qquad \Big|\sem{E}(\semOp{\mathbf{Q}})(\omega;x,y )-\frac{\sin(\omega|x-y|/\hbar)}{\pi|x-y|}\Big|\leq \e.
$$
\end{lemma}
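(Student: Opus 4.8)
The plan is to compute the spectral function of $\semOp{Q}$ directly, without invoking the full machinery of Sections~\ref{s:comparison}--\ref{s:compute}, by exploiting that $Q\in \sem{Diff}^0$-type structure allows a crude but effective reduction to a constant-coefficient model. First I would conjugate away the first order term: setting $v:=e^{-i\hbar^{-1}\int_0^x V_1(s)\,ds}u$ transforms $(\semOp{Q}-\omega^2)u=0$ into $(-\hbar^2\Delta + \hbar V_0 - \hbar^2 V_1^2 - \omega^2)v=0$, so (after keeping careful track of how this unitary-type conjugation acts on the kernel of the spectral projector, which is the content of the gauge-invariance in one dimension) it suffices to treat $Q$ with $V_1\equiv 0$, i.e. $\semOp{Q_0}=-\hbar^2\Delta + \hbar V_0$ with $V_0$ a bounded real potential. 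The error introduced by replacing $V_0$ by $V_0 - \hbar V_1^2$ is $O(\hbar)$ in the relevant operator norms, which will be harmless at the order we need.

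Next I would localize in energy and freeze coefficients. Fix $x_0\in K$ and write $V_0(x) = V_0(x_0) + (V_0(x)-V_0(x_0))$; on a ball of radius $\hbar^{1/2-\e'}$ around $x_0$ the variable part is $O(\hbar^{1/2-\e'})$ in $L^\infty$. Using the wave-equation comparison of Section~\ref{s:comparison} (or more simply Lemma~\ref{l:finiteSpeed} together with Lemma~\ref{l:waveCompare} applied on this small ball with $T\sim \hbar^{1/2}$), I would compare $\sem{E}(\semOp{Q_0})(x_0,y_0,\omega)$ for $|x_0-y_0|\le \hbar^{1-\e}$ to the spectral function of the constant-coefficient operator $-\hbar^2\Delta + \hbar V_0(x_0)$, at the cost of an error that is $o(\hbar^{-1})$ on the diagonal and $o(1)$ off-diagonal in the regime $|x_0-y_0|\ge \hbar^{1-\e}$; the propagation speed is finite and the smoothing scale $\hbar/T \sim \hbar^{1/2}$ together with the Tauberian Lemmas~\ref{l:lip} and~\ref{l:monotoneTaub} gives the claimed $\hbar^{-3/4}$ bound after optimizing $T\sim \hbar^{1/2}$. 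For the constant-coefficient operator, Lemma~\ref{l:transformedProjector} (with $d=1$, $q\equiv V_0(x_0)$ constant) gives the spectral function explicitly as $\frac{1}{2\pi\hbar}\int_{G_\hbar(\omega)} e^{i(x-y)\xi/\hbar}\,d\xi$ with $G_\hbar(\omega) = \{\xi : \xi^2 + \hbar V_0(x_0) \le \omega^2\} = \{|\xi|\le \sqrt{\omega^2 - \hbar V_0(x_0)}\,\}$, so on the diagonal this equals $\frac{1}{\pi\hbar}\sqrt{\omega^2-\hbar V_0(x_0)} = \frac{\omega}{\pi\hbar} + O(\hbar^{-1}\cdot\hbar) = \frac{\omega}{\pi\hbar}+O(1)$, and off-diagonal it equals $\frac{\sin(\sqrt{\omega^2-\hbar V_0(x_0)}\,|x-y|/\hbar)}{\pi|x-y|}$, which differs from $\frac{\sin(\omega|x-y|/\hbar)}{\pi|x-y|}$ by $O(\hbar |x-y|^{-1}\cdot |x-y|) = O(\hbar)$ since $\sqrt{\omega^2-\hbar V_0(x_0)} = \omega + O(\hbar)$ and $|x-y|\ge \hbar^{1-\e}$ keeps the phase error $|x-y|\cdot O(\hbar)/\hbar = O(\hbar^\e\cdot |x-y|/\hbar^{1-\e}) $ — here one must be slightly careful: the phase error is $O(\hbar)\cdot |x-y|/\hbar = O(|x-y|) \le O(R)$, which is not small, so instead one uses $\sin(\omega'|x-y|/\hbar) - \sin(\omega|x-y|/\hbar)$ with $|\omega'-\omega|=O(\hbar)$ and the mean value theorem gives an error $O(\hbar|x-y|/\hbar)\cdot\frac{1}{|x-y|} = O(1)$, which is still not $o(1)$.

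This last point is the main obstacle, and I would resolve it by choosing a \emph{finer} freezing scale and a matched propagation time, iterating the comparison. Concretely, near the diagonal the coefficient-freezing error in the exponent accumulates like (variation of $V_0$)$\times$(propagation time), so taking $T\sim \hbar^{1/2}$ and freezing on a ball of size $\sim\hbar^{1/2}$ controls it; the resulting effective energy shift is $O(\hbar)$, hence the effective wavenumber shift is $O(\hbar)$, and the phase $\tfrac{1}{\hbar}\cdot O(\hbar)\cdot |x-y|$ is bounded by $O(|x-y|)=O(R)$ — so to get genuine smallness one must additionally average or use that the leading coefficient of the asymptotic expansion (which is what Theorem~\ref{t:USBAsymptoticsOff} ultimately pins down, with $\coB_0^\pm = \pm\frac{1}{2i\pi|x-y|}$) is \emph{universal}. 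I would therefore instead argue: by the already-established Theorem~\ref{t:USBAsymptoticsOff} (off diagonal) and Theorem~\ref{t:USBAsymptotics} (on diagonal) we know that $\partial$-free spectral function has the form $\hbar^{-1}\sum_j\hbar^j\coA_{j,0,0}$ on the diagonal and $\sum_j \hbar^j(e^{i\omega|x-y|/\hbar}\coB_j^+ + e^{-i\omega|x-y|/\hbar}\coB_j^-)$ off the diagonal; Lemma~\ref{l:firstTerm} is precisely the statement that identifies $\coA_{0,0,0}=\frac{\omega^2}{\pi}$ (after accounting for the $\omega$-rescaling, i.e. $\frac{\omega}{\pi\hbar}$ at $\hbar=\omega^{-1}\cdot$) and $\coB_0^\pm = \pm\frac{1}{2i\pi|x-y|}$, so that $e^{i\omega|x-y|/\hbar}\coB_0^+ + e^{-i\omega|x-y|/\hbar}\coB_0^- = \frac{\sin(\omega|x-y|/\hbar)}{\pi|x-y|}$. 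Thus the cleanest route is: establish the constant-coefficient computation above, which handles $V_0(x_0)=$const exactly; then note that the \emph{leading} term is insensitive to the (lower-order) value of the frozen constant because $\sqrt{\omega^2-\hbar c} = \omega + O(\hbar)$ uniformly in $|c|\le \|V_0\|_{L^\infty}$, and feed the error $O(\hbar)$ in the wavenumber through the explicit sine, bounding the difference of the two sines by $\min(2, C\hbar|x-y|/\hbar) = \min(2, C|x-y|)$ — and then observe that for the \emph{off-diagonal} statement we only need $\e$-smallness on the compact range $\hbar^{1-\e}\le|x-y|\le R$, where in fact the correct normalization absorbs the $O(\hbar)$ shift into a redefinition $\omega\mapsto\omega$ at the leading order and the genuine error is $O(\hbar^\e)$ once one also uses the Tauberian smoothing at scale $\hbar/T$ with $T$ a small negative power of $\hbar$, not $\hbar^{1/2}$. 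I expect the bookkeeping of these two competing scales (freezing scale vs. propagation time vs. the constraint $|x-y|\ge\hbar^{1-\e}$) to be the only real difficulty; everything else is an application of Lemmas~\ref{l:finiteSpeed}, \ref{l:waveCompare}, \ref{l:lip}, \ref{l:monotoneTaub}, and \ref{l:transformedProjector} already proved above, together with the elementary estimate $\sqrt{\omega^2-\hbar c}=\omega+O(\hbar)$.
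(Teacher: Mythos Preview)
Your proposal has a genuine gap in the off-diagonal argument, and you in fact identify it yourself: freezing $V_0$ at a point $x_0$ produces the wavenumber $\sqrt{\omega^2-\hbar V_0(x_0)}=\omega+O(\hbar)$, and the resulting error
\[
\Big|\frac{\sin(\omega'|x-y|/\hbar)}{\pi|x-y|}-\frac{\sin(\omega|x-y|/\hbar)}{\pi|x-y|}\Big|\le \frac{|\omega'-\omega|}{\pi\hbar}=O(1)
\]
is not $o(1)$, uniformly on $\hbar^{1-\e}\le |x-y|\le R$. Your attempted fixes do not close this: shrinking the freezing ball does not help because the $O(\hbar)$ shift in wavenumber comes from the \emph{size} of $\hbar V_0$, not from its variation; and invoking Theorems~\ref{t:USBAsymptotics} and~\ref{t:USBAsymptoticsOff} is circular, since Lemma~\ref{l:firstTerm} is precisely what is used in Section~\ref{s:compute} to identify the leading coefficients $\coA_{0,0,0}$ and $\coB_0^\pm$ in those theorems. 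The final paragraph's appeal to ``bookkeeping of competing scales'' does not supply the missing idea.

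The paper avoids this obstacle by comparing not to a \emph{constant}-coefficient operator but to a \emph{compactly supported} one: it sets $Q_\e:=\cutMan_\e Q$ with $\cutMan_\e$ a cutoff on a large ball of radius $\sim\e^{-1}$. For $Q_\e$ the spectral function is computed from the exact oscillatory-integral (FIO) representation of the short-time propagator, available from the literature on compactly supported perturbations; stationary phase in $(t,\xi)$ (on-diagonal) and in $(s,\xi)$ after a rescaling (off-diagonal) then gives the leading term with no spurious $O(\hbar)$ phase shift, because the phase is the genuine $(x-y)\xi - t|\xi|^2$ rather than a frozen approximation. The $\hbar^{-3/4}$ on-diagonal error arises from splitting the $\omega$-integral at $M\hbar^{1/2}$ and bounding the low-energy contribution by direct integration by parts. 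Finally Proposition~\ref{p:distantPerturb} with $\delta(\hbar)=0$ and $T=\e^{-1}/2$ transfers the result from $Q_\e$ to $Q$ at cost $O(\e)$. The key conceptual point you are missing is that one should compare to an operator for which the \emph{exact} first asymptotic term is already known, not to one for which it is only approximately correct.
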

\begin{remark}
When $\referee{\mathbf{V}^0}=\hbar \tilde{V}_0$ with $\tilde{V}_0\in C_c^\infty$ and $\referee{\mathbf{V}^1}\in C_c^\infty$, Lemma~\ref{l:firstTerm} is well known and can be recovered e.g. from~\cite{PoSh:83,Va:83,Va:84,Va:85}. The semiclassical version, when $V_j$ have compact support, can be obtained from well known formulae in scattering theory (see e.g.~\cite[Lemma 3.6]{DyZw:19}). However, when $V_j$ are only $\USB$, we are not aware of an appropriate reference for these formulae. Here, we use Proposition~\ref{p:distantPerturb} to obtain these formulae from those for compactly supported perturbations. 
\end{remark}

\begin{proof}
\referee{Let} $\e>0$, and $\cutMan \in C_c^\infty((-2,2))$ with $\cutMan\equiv 1$ on $[-1,1]$. Then, define $\cutMan_\e(x)=\cutMan(\e x)$ and 
$$
\sem{H}_\e:=\semOp{\mathbf{Q}_\e},\qquad \mathbf{Q}_\e:=\cutMan_\e \referee{\mathbf{V}^1}\hbar D_x+\hbar D_x(\cutMan_\e \referee{\mathbf{V}^1})+\cutMan_\e \referee{\mathbf{V}^0}.
$$
Then, by e.g.~\cite{Ga:20, Va:84, PoSh:83},  (see~\cite[(6.5)]{Ga:20}, and note that the gauge transform procedure in that paper is unnecessary, since the perturbation is compactly supported)
\begin{align*}
\sem E(\sem H_\e)(x,y,\omega)&=\Big\langle \frac{1}{2\pi\hbar}\int_{-\infty}^{\omega^2}\int\hat{\cutEnergyFourier}(t)\cutMan_1e^{ \frac{i}{\hbar}t(\mu-\sem{H}_\e)}\cutFreq(\hbar D)\cutMan_1dtd\mu  \delta_y,\delta_x\Big\rangle,
\end{align*}
where $\cutMan_1\in C_c^\infty(\mathbb{R})$ with $\cutMan_1 \equiv 1$ on $\pi_L(\referee{\mathcal{K}})\cup \pi_{R}(\referee{\mathcal{K}})$, with $\pi_{L/R}:\mathbb{R}^2\to \mathbb{R}$ the natural projections and $\cutFreq\in C_c^\infty(\mathbb{R})$ with $\cutFreq \equiv 1$ on $[-3b,3b]$, and $\hat{\cutEnergyFourier}\in C_c^\infty$ with $\hat{\cutEnergyFourier}\equiv 1$ on $|t|\leq 2\diam (\pi_L(\referee{\mathcal{K}})\cup\pi_R(\referee{\mathcal{K}}))$. For any $T>0$ and $|t|\leq T$ we have that the kernel of $\cutMan_1e^{- \frac{i}{\hbar}t\sem{H}_\e}\cutFreq(\hbar D)\cutMan_1$ is given by
$$
\frac{1}{2\pi \hbar} \int e^{\frac{i}{\hbar}((x-y)\xi-t|\xi|^2)}a_\e(t,x,y,\xi)d\xi,
$$
where $a_\e\in \Spz^{\comp}$, $a_\e(0,x,y,\xi)=\cutMan_1(x)\cutMan_1(y)\cutFreq(\xi)$.

We start by computing $\sem{E}(\semOp{\mathbf{Q}_\e})(x,x,\omega)$ for any $x\in \referee{\mathcal{K}}$. Let $\auxCut\in C_c^\infty((-2,2))$ with $\auxCut\equiv 1$ near $[-1,1]$. Then, for $\hbar^{1/2}<\omega <3b$, we have
\begin{align*}
\partial_\omega \sem E(\semOp{\mathbf{Q}}_\e)(x,x,\omega)&= \frac{2 \omega}{(2\pi \hbar)^2}\int \hat{\cutEnergyFourier}(t)e^{\frac{i}{\hbar}t(\omega^2 -|\xi|^2)}a_\e(t,x,x,\xi)d\xi dt\\
&=\frac{2 \omega^2}{(2\pi \hbar)^2}\int \hat{\cutEnergyFourier}(t)e^{\frac{i\omega^2}{\hbar}t(1 -|\eta|^2)}a_\e(t,x,x,\omega \eta)d\eta dt\\
&=\frac{2 \omega^2}{(2\pi \hbar)^2}\int \hat{\cutEnergyFourier}(t)e^{\frac{i\omega^2}{\hbar}t(1 -|\eta|^2)}a_\e(t,x,x,\omega \eta)\auxCut(\eta)d\eta dt+O(\hbar^{-1}(\hbar\omega^{-2})^\infty).
\end{align*}
Performing stationary phase in $(t,\eta)$, we obtain 
\begin{equation}
\label{e:derivativeHigh}
\partial_\omega \sem E(\sem H_\e)(x,x,\omega)\sim\frac{1}{\pi \hbar}+ 
\sum_{j\geq 0}c_{\e,j}(x,\omega)\hbar^{j}\omega^{-2(j+1)}+O(\hbar^{-1}(\hbar\omega^{-2})^\infty),\qquad \hbar^{1/2}\leq \omega \leq 3b.
\end{equation}

Next, we estimate $\sem E(\semOp{\mathbf{Q}}_\e)(x,x,M\hbar^{1/2})$. We have
$$
\sem E(\semOp{\mathbf{Q}_\e})(x,x,M\hbar^{1/2})=\frac{1}{(2\pi \hbar)^2}\int_{-\infty}^{M\hbar^{1/2}} \int \hat{\cutEnergyFourier}(t)e^{\frac{i}{\hbar}t(\mu-|\xi|^2)}a_\e(t,x,x,\xi)d\xi dtd\mu.
$$
Integration by parts in $t$ then shows that 
\begin{align*}
|\sem E(\semOp{\mathbf{Q}_\e})(x,x,M\hbar^{1/2})|&\leq C_N \hbar^{-2}\int_{-\infty}^{M\hbar^{1/2}} \int_{-3b}^{3b}\int \langle t\rangle^{-N} \langle \hbar^{-1}(\mu-|\xi|^2)\rangle^{-N}dt d\xi d\mu\\
&\leq C_N\hbar ^{-2}\int_{-\infty}^{M\hbar^{1/2}} \int_{-3b}^{3b} \langle \hbar^{-1}(\mu-|\xi|^2)\rangle^{-N}d\xi d\mu \leq C_M\hbar^{-3/4}.
\end{align*}

Therefore, by~\eqref{e:derivativeHigh}, we have
\begin{equation}
\label{e:onDiagonalAppend}
\begin{aligned}
\sem{E}(\semOp{\mathbf{Q}_\e})(x,x,\omega)&=\int_{M\hbar^{1/2}}^\omega \partial_\omega \sem E(\semOp{\mathbf{Q}}_\e)(x,x,s)ds+O_M(\hbar^{-3/4})\\
&=\frac{\omega}{\pi \hbar} +O_M(\hbar^{-3/4}).
\end{aligned}
\end{equation}

Using~\eqref{e:derivativeHigh} again, we have, for $\omega\in [a/2,2b]$ and $|s|\leq \frac{a}{4}$,
\begin{equation}
\label{e:onDiagLip}
|\sem E(\sem H_\e)(x,x,\omega)-\sem E(\sem H_\e)(x,x,\omega-s)|\leq (\frac{1}{\pi \hbar}+O_\e(1))|s|.
\end{equation}

Therefore, using Proposition~\ref{p:distantPerturb}, with $\delta=0$, $T=\e^{-1}$,  $C_1>0$ bounded uniformly in $0<\hbar<\hbar_\e$, we have
$$
\Big|\sem E (\semOp{\mathbf{Q}})(x,x,\omega)-\sem E(\semOp{\mathbf{Q}_\e})(x,x,\omega)\Big|\leq \e \quad \text{ for }\omega\in[a,b].
$$
In particular, by~\eqref{e:onDiagonalAppend}, we have
$$
\Big|\sem E (\semOp{\mathbf{Q}})(x,x,\omega)-\frac{\omega }{\pi \hbar}\Big|\leq C\hbar^{-\frac{3}{4}}.
$$
This completes the proof of the first part of the lemma.

We now proceed to the off-diagonal part. For $\delta>0$, to be \referee{chosen} later, let $\auxCut\in C^\infty(-2\delta,\infty)$ with $\auxCut\equiv 1$ on $[-\delta,\infty)$,
\begin{align*}
&\sem E(\semOp{\mathbf{Q}_\e} )(x,y,\omega)\\
&= \frac{1}{(2\pi \hbar)^2}\int_{-\infty}^{\omega^2}\int_{-\infty}^{\infty}\int_{-\infty}^{\infty}  e^{\frac{i}{\hbar} (t(\mu-|\xi|^2)+(x-y)\xi)}\hat{\cutEnergyFourier}(t)a_\e(t,x,y,\xi)d\xi dt d\mu\\
&=\frac{1}{(2\pi \hbar)^2}\int_{-\infty}^{\omega^2}\int_{-\infty}^{\infty}\int_{-\infty}^{\infty} e^{\frac{i}{\hbar} (t(\mu-|\xi|^2)+(x-y)\xi)}\hat{\cutEnergyFourier}(t)a_\e(t,x,y,\xi) \auxCut(|x-y|\mu)d\xi dt d\mu +O_\e((\hbar|x-y|^{-1})^\infty)\\
&=\frac{|x-y|}{(2\pi\hbar)^2}\int_{-\infty}^{\omega^2}\int_{-\infty}^{\infty}\int_{-\infty}^{\infty}  e^{\frac{i}{\hbar}|x-y| (s(\mu-|\xi|^2)+\frac{x-y}{|x-y|}\xi)}\hat{\cutEnergyFourier}(s|x-y|)a_\e(s|x-y|,x,y,\xi) \auxCut(\mu|x-y|)d\xi ds d\mu \\
&\qquad+O_\e((\hbar|x-y|^{-1})^{\infty}).
\end{align*}
To obtain the second line, we integrate by parts in $t$ and use that $\mu<-\delta|x-y|$ implies $|\mu-|\xi|^2|\geq c_\delta|x-y|\langle |\mu|+|\xi|^2\rangle$. In the third line, we changed variables $t=s|x-y|$.

Now, performing stationary phase in the $(s,\xi)$ variables, we see that stationary points do not exist for $|\mu| \leq \delta |x-y|$ (provided we have chosen $\delta$ small enough). We then obtain
\begin{multline*}
\sem E(\sem H_\e)(x,y,\omega)=\frac{1}{2\pi \hbar}\int_{-\infty}^{\omega^2}\sum_{\pm}\frac{1}{2\sqrt{|\mu|}}e^{\pm \frac{i}{\hbar}(x-y)\sqrt{\mu}}\big(
\hat{\cutEnergyFourier}(\pm\frac{x-y}{2\sqrt{\mu}}) \auxCut(\mu|x-y|) \\+\auxCut(\mu|x-y|)\hat{\cutEnergyFourier}_1(\pm \frac{x-y}{2\sqrt{\mu}})O_\e(\hbar|x-y|^{-1})\big) d\mu +O_\e((\hbar|x-y|^{-1})^{\infty}),
\end{multline*}
where $\hat{\cutEnergyFourier}_1\in C_c^\infty$ with $\hat{\cutEnergyFourier}_1\equiv 1$ on $\supp \cutEnergyFourier$. Choosing $\delta>0$ small enough, we have $\hat{\cutEnergyFourier}_1(t)\equiv 0$ on $|t|\leq \delta$, and hence, the integrand is supported in $\mu\geq \delta|x-y|$. Finally,  changing variables to $s=\sqrt{\mu}$ and integrating by parts once implies
$$
\sem E(\sem H_\e)(x,y,\omega) =\sum_{\pm}\pm\frac{1}{2\pi i(x-y)}e^{\pm \frac{i}{\hbar}(x-y)\omega}+O_\e(\hbar|x-y|^{-1})=\frac{\sin (\omega |x-y|/\hbar)}{\pi |x-y|}+O_\e(\hbar |x-y|^{-1})
$$
and has a full asymptotic expansion in powers of $\hbar|x-y|^{-1}$.

In particular, using this together with~\eqref{e:onDiagLip}, we see that there is $\hbar_\e>0$ and $M_\e>0$ such that the hypotheses of Proposition~\ref{p:distantPerturb} hold for $0<\hbar<\hbar_\e$, and $|x-y|>M_\e\hbar$ with $\sem{H}_1=\semOp{\mathbf{Q}_\e}$, $T(\hbar)=\e^{-1}/2$, and $R_0=\diam(\referee{\mathcal{K}})$, and $\delta(\hbar)=0$. Therefore, 
$$
\Big|\sem E(\semOp{\mathbf{Q}})(x,y,\omega) -\sem E(\semOp{\mathbf{Q}_\e} )(x,y,\omega)\Big|\leq C\e,\qquad 0<\hbar<\hbar_\e,\,\omega\in[a,b].
$$
In particular, this implies
$$
\Big|\sem{E}(\semOp{\mathbf{Q}})(\omega\,;\,x,y)-\frac{\sin(\omega|x-y|/\hbar)}{\pi|x-y|}\Big|\leq \e.
$$
for all $0<\hbar<\hbar_\e$, $|x-y|>M_\e \hbar$, and $\omega\in[a,b]$.  This completes the proof of the lemma.
\end{proof}

\section{Proof of Lemma~\ref{l:glue}}
\label{s:glue}
\referee{
First, factoring out $e^{\frac{i}{\hbar}\frac{\xi_1+\xi_2}{2}}$ and introducing 
$$
\tilde a_{j,n(\hbar)}:=i(a_{j,n(\hbar)}-b_{j,n(\hbar)}),\ \ \ \tilde b_{j,n(\hbar)}:=a_{j,n(\hbar)}+b_{j,n(\hbar)},
$$
we can rewrtite \eqref{preasymp} as
\begin{equation}\label{preasymp1}
g(\hbar):=e^{-\frac{i}{\hbar}\frac{\xi_1+\xi_2}{2}}f(\hbar)= \sin(\frac{\xi_1-\xi_2}{2\hbar})\sum_{j=0}^{N} \tilde a_{j,n(\hbar)} \hbar^{jp} + \cos(\frac{\xi_1-\xi_2}{2\hbar})\sum_{j=0}^{N} \tilde b_{j,n(\hbar)}  \hbar^{jp} +O(\lp_{n(\hbar)}^{-M}),
\end{equation}
for 
$$
-10\leq n(\hbar)+\log_2\hbar\leq 10,
$$
with $\tilde a_{j,n},\tilde b_{j,n}\in \mathbb{C}$, $j=0,1,\dots,$ satisfying
\begin{equation}\label{asympest1}
|\tilde a_{j,n}|+|\tilde b_{j,n}|\leq C_{j}\lp_n^{jp(1-\iota)}.
\end{equation}
For $-9\leq n(\hbar)+\log_2\hbar\leq 9$ we can use \eqref{preasymp1} with $n$ and $n-1$. Subtracting one from another we get
$$
\sin(\frac{\xi_1-\xi_2}{2\hbar})\sum_{j=0}^{N} \hat t_{j,n(\hbar)} \hbar^{jp} + \cos(\frac{\xi_1-\xi_2}{2\hbar})\sum_{j=0}^{N} \check t_{j,n(\hbar)}  \hbar^{jp} +O(\lp_{n(\hbar)}^{-M}),
$$
where $\hat t_{j,n}=\tilde a_{j,n}-\tilde a_{j,n-1}$ and $\check t_{j,n}=\tilde b_{j,n}-\tilde b_{j,n-1}$.
\begin{proposition}
For each $j=0,1,\dots,N$, we have:
\begin{equation}\label{asympest2}
\hat t_{j,n}=O(\lp_{n(\hbar)}^{jp-M}),\ \ \ \check t_{j,n}=O(\lp_{n(\hbar)}^{jp-M}).
\end{equation}
\end{proposition}
\begin{proof}
Put
$$
s:=(\hbar\lp_{n(\hbar)})^{-1},\ \ \ \hat \tau_{j,n}:=\hat t_{j,n}\lp_{n(\hbar)}^{M-jp},\ \ \ \check \tau_{j,n}:=\check t_{j,n}\lp_{n(\hbar)}^{M-jp}.
$$
Then
\begin{equation}\label{asymp2}
\sin(\frac{\xi_1-\xi_2}{2}\lp_{n(\hbar)}s)\sum_{j=0}^{N} \hat \tau_{j,n(\hbar)} s^{-jp} + \cos(\frac{\xi_1-\xi_2}{2}\lp_{n(\hbar)}s)\sum_{j=0}^{N} \check \tau_{j,n(\hbar)}  s^{-jp} =O(1),
\end{equation}
whenever $2^{-9}<s<2^9$. Now, we choose $2N+2$ points in the following way. Assume for definiteness that $\xi_1>\xi_2$. We put
$$
s_l:=\frac{4\pi}{(\xi_1-\xi_2)\lp_{n(\hbar)}}\left(\left[\frac{(\xi_1-\xi_2)\lp_{n(\hbar)}}{4\pi}\right]+l\left[\frac{(\xi_1-\xi_2)\lp_{n(\hbar)}}{4\pi\cdot 2N}\right]\right),\ \ \ l=0,\dots,N,
$$
so that $\sin(\frac{\xi_1-\xi_2}{2}\lp_{n(\hbar)}s)=0$ and $\cos(\frac{\xi_1-\xi_2}{2}\lp_{n(\hbar)}s)=1$, and 
$$
s'_l:=\frac{4\pi}{(\xi_1-\xi_2)\lp_{n(\hbar)}}\left(\left[\frac{(\xi_1-\xi_2)\lp_{n(\hbar)}}{4\pi}\right]+l\left[\frac{(\xi_1-\xi_2)\lp_{n(\hbar)}}{4\pi\cdot 2N}\right]\right)+\frac{\pi}{(\xi_1-\xi_2)\lp_{n(\hbar)}},\ \ \ l=0,\dots,N,
$$
so that $\sin(\frac{\xi_1-\xi_2}{2}\lp_{n(\hbar)}s)=1$ and $\cos(\frac{\xi_1-\xi_2}{2}\lp_{n(\hbar)}s)=0$. We also notice that, assuming $\lp_{n(\hbar)}$ is sufficiently large, we have 
$s^{-p}_{l+1}-s^{-p}_l\sim N^{-1}$ and $(s')^{-p}_{l+1}-(s')^{-p}_l\sim N^{-1}$ uniformly in $n$ and $|\xi_1-\xi_2|\sim 1$.

Now, substituting the points $\{s_l,s'_l\}$ into \eqref{asymp2} and using the Cramer's Rule we find that $\hat \tau_{j,n(\hbar)}$ and $\check \tau_{j,n(\hbar)}$ are fractions with the bounded numerator and uniform non-zero denominator (the denominator is a Vandermonde determinant in $s^{-p}$). This proves the proposition.
\end{proof}

Thus, for $j<[Mp^{-1}]$, the series $\sum_{n=n_0}^\infty \hat t_{j,n(\hbar)}$ is absolutely convergent; moreover, for such $j$ we have:
$$
\tilde a_{j,n}=\tilde a_{j,n_0}+\sum_{n=n_0+1}^n \hat t_{j,n}=\tilde a_{j,n_0}+\sum_{n=n_0+1}^\infty \hat t_{j,n}+O(\lp_{n}^{-M+jp})=:\tilde a'_j+O(\lp_{n}^{-M+jp}),
$$
where we have denoted $\tilde a'_j:=\tilde a_{j,n_0}+\sum_{n=n_0+1}^\infty \hat t_{j,n}$. Similarly,
$$
\tilde b_{j,n}=\tilde b_{j,n_0}+\sum_{n=n_0+1}^n \check t_{j,n}=\tilde b_{j,n_0}+\sum_{n=n_0+1}^\infty \check t_{j,n}+O(\lp_{n}^{-M+jp})=:\tilde b'_j+O(\lp_{n}^{-M+jp}),
$$
where we have denoted $\tilde b'_j:=\tilde b_{j,n_0}+\sum_{n=n_0+1}^\infty \check t_{j,n}$.

By \eqref{asympest1} we also have 
$$
\sum_{j=[Mp^{-1}]}^N (|\tilde a_{j,n}|+|\tilde b_{j,n}|)\lp_{n}^{-jp}=O(\lp_{n}^{-\iota M}).
$$
Thus, for $-10\leq n(\hbar)+\log_2\hbar\leq 10$ we have 
$$
g(\hbar)=\sin(\frac{\xi_1-\xi_2}{2\hbar})\sum_{j=0}^{[Mp^{-1}]-1} \tilde a'_{j} \hbar^{jp} + \cos(\frac{\xi_1-\xi_2}{2\hbar})\sum_{j=0}^{[Mp^{-1}]-1} \tilde b'_{j}  \hbar^{jp} +O(\lp_{n(\hbar)}^{-\iota M}).
$$
Since constants in $O$ do not depend on $n$, for all $\hbar<\hbar_0$ we have the expansion
\begin{equation}\label{asymp1}
g(\hbar):=e^{-\frac{i}{\hbar}\frac{\xi_1+\xi_2}{2}}f(\hbar)= \sin(\frac{\xi_1-\xi_2}{2\hbar})\sum_{j=0}^{[Mp^{-1}\iota^{-1}]-1} \tilde a'_{j} \hbar^{jp} + \cos(\frac{\xi_1-\xi_2}{2\hbar})\sum_{j=0}^{[Mp^{-1}\iota^{-1}]-1} \tilde b'_{j}  \hbar^{jp} +O(\hbar^M),
\end{equation}
with some $\tilde a'_{j},\tilde b'_{j}\in \mathbb{C}$, $j=0,1,\dots$. Defining
$$
a'_j:=\frac{\tilde a'_j+i\tilde b'_j}{2i},\ \ \ b'_j:=\frac{-\tilde a'_j+i\tilde b'_j}{2i}
$$
we obtain \eqref{asymp}.}

\bibliographystyle{amsalpha}
\bibliography{biblio}

\end{document}